\newtheorem{theorem}{Theorem}[section]
\newtheorem{proposition}[theorem]{Proposition}
\newtheorem{lemma}[theorem]{Lemma}
\newtheorem{corollary}[theorem]{Corollary}
\newtheorem{definition}[theorem]{Definition}
\def\Ep{E^{(p)}}
\def\mcT{\mathcal{T}}
\def\mcJ{\mathcal{J}}
\def\mcE{\mathcal{E}}
\def\Osc{\text{Osc}}
\numberwithin{equation}{section}
\begin{document}
\title[$p$-energies on p.c.f. self-similar sets]{$p$-energies on p.c.f. self-similar sets}

\author{Shiping Cao}
\address{Department of Mathematics, Cornell University, Ithaca 14853, USA}
\email{sc2873@cornell.edu}

\author{Qingsong Gu}
\address{Department of Mathematics, Nanjing University, Nanjing, 210093, P. R. China}
\email{qingsonggu@nju.edu.cn}

\author{Hua Qiu}
\address{Department of Mathematics, Nanjing University, Nanjing, 210093, P. R. China}
\thanks{The research of Qiu was supported by the National Natural Science Foundation of China, grant 12071213, and the Natural Science Foundation of Jiangsu Province in China, grant BK20211142.}
\email{huaqiu@nju.edu.cn}

\subjclass[2010]{Primary 28A80, 31E05}

\date{}

\keywords{$p$-energy, p.c.f. self-similar sets, existence, effective resistance, renormalization map, eigenform}

\begin{abstract}

We study $p$-energies on post critically finite (p.c.f.) self-similar sets for $1<p<\infty$, as limits of discrete $p$-energies on approximation graphs, extending the construction of Dirichlet forms, the $p=2$ setting. By suitably enlarging the choices of discrete $p$-energies, and employing the energy averaging method developed by Kusuoka-Zhou, we prove the existence of symmetric $p$-energies on affine nested fractals, and extend Sabot's celebrated criterion for existence and non-existence of Dirichlet forms on p.c.f. self-similar sets to the $1<p<\infty$ setting.
\end{abstract}
\maketitle
\tableofcontents

\section{Introduction}
In this paper, we consider the problem of constructing $p$-energy forms ($p\in(1,\infty)$) on post critically finite (p.c.f.) self-similar sets. A typical example is on the unit interval $[0,1]$, for each $f\in W^{1,p}$, the $p$-energy of $f$ is defined as
\[\mcE_p(f)=\int_0^1|\nabla f(x)|^pdx,\]
which can be approximated by discrete energies,
\[\mcE_p(f)=\lim_{k\to\infty}2^{(p-1)k}\sum_{l=1}^{2^k}\left|f\left(\frac
l{2^k}\right)-f\left(\frac{l-1}{2^k}\right)\right|^p.\]
For $p=2$, on p.c.f. self-similar sets, the same construction was introduced by Kigami \cite{ki1,ki2} to construct Dirichlet forms. In this paper, for $1<p<\infty$, on p.c.f. self-similar sets, we will define the $p$-energy forms in a similar manner, by taking the limit of the averaged  discrete energies on first $n$ level graphs approximating to the fractal, inspired by Kusuoka-Zhou's method \cite{KZ}.

In history, the energies ($p=2$ case) on p.c.f. self-similar sets were motivated by the study of Brownian motions on fractals, with pioneering works of Kusuoka \cite{kus}, Goldstein \cite{G} and Barlow-Perkins \cite{BP} on the Sierpinski gasket, and of Lindstr{\o}m \cite{Lindstrom} on nested fractals. By introducing the tool of Dirichlet forms \cite{ki2}, all the constructions can be done in a purely analytic way (see books \cite{B,ki3,s}). There have been deep studies on the existence of such forms \cite{HMT,M1,M2,M3,Pe,Pe2,Sabot}. In particular, a famous test was introduced in the celebrated work of Sabot \cite{Sabot} (also contributes to the uniqueness under some reasonable assumptions).

On the other hand, for $p\neq 2$, on a general p.c.f. self-similar set $K$ (with associated iterated function system $\{F_i\}_{i=1}^N$, $N\geq 2$), we can not expect a direct extension of Kigami's approach, since there is no eigenforms of the renormalization map acting only on $p$-energies of the (standard) form
\[\sum_{x,y\in V_0}c_{x,y}|f(x)-f(y)|^p.\]
Here $V_0$ is the boundary of $K$ which consists of finitely many points, the renormalization map refers to the process of defining a new energy form on $V_0$ out of an old one, by first defining a form on $V_1$ (the first level iteration of $V_0$ under $\{F_i\}_{i=1}^N$) in a self-similar way, and then tracing it back on $V_0$. This was first observed in the explorative work \cite{HPS} of Herman, Peirone and Strichartz on the Sierpinski gasket, and the problem was resolved by considering the renormalization map on a larger class which includes certain non-standard $p$-energies. In this work, we will admit the same idea of \cite{HPS} and show the existence of an eigenform on affine nested fractals, and generalize Sabot's test to $p\neq 2$, which greatly extends the results of \cite{HPS}. 

To construct a natural $p$-energy form on a fractal $K$, we admit the construction of Kusuoka-Zhou \cite{KZ}. As long as we show the existence of an eigenform of the renormalization map with scaling factors $\{r_i\}_{i=1}^N$, we see that
\[\frac{1}{n}\sum_{m=0}^{n-1}\sum_{w\in W_m}r_w^{-1}\big|f(F_wx)-f(F_wy)\big|^p\]
admits a converging subsequence,
\[\mcE(f)=\lim\limits_{l\to\infty}\frac{1}{n_l}\sum_{m=0}^{n_l-1}\sum_{w\in W_m}r_w^{-1}\big|f(F_wx)-f(F_wy)\big|^p,\]
where the limit is defined to be the self-similar $p$-energy of a function $f$ on $K$ (see Section \ref{sec3} for the standard notations of $F_w$, $r_w$ and $W_m$). In particular, when $p=2$, the same construction gives the Dirichlet form on $K$. Following Kumagai's method \cite{ku}, we will see that even in the non-regular case ($r_i>1$ for some $i$), the $p$-energy can also be well understood as defined on $L^p(K,\mu)$ providing the measure $\mu$ is suitably chosen.

In fact, the technique of Kusuoka and Zhou \cite{KZ} was firstly invented to construct self-similar Dirichlet forms on the Sierpinski carpets, typical symmetric fractals with infinitely ramification property. Recently, by introducing necessary arguments replacing the ``Knight move'' construction \cite{BB,BB3,BBKT}, two of the authors successfully show a purely analytic construction of Dirichlet forms on a wider class of planar fractals named unconstrained Sierpinski carpets \cite{CQ}. Subsequently, Kigami \cite{ki4} and Shimizu \cite{shi} introduced the arguments and the notion of $p$-modules to the setting of $p$-energies for $1<p<\infty$ on Sierpinski carpets and some other square-based fractals. However, a theory for $p$-energy forms on general p.c.f. self-similar sets remains blank and is urgent to be set up. By using the method of $\Gamma$-convergence of Besov-type seminorms, Gao, Yu and Zhang \cite{GYZ} constructed $p$-energy forms on a class of p.c.f. self-similar sets under additional assumptions on the critical Besov space. The purpose of this paper is to attempt to establish general pratical criterions for the existence and non-existence of $p$-energy forms on general p.c.f. self-similar sets.

We organize the structure of the paper as follows. It is roughly divided into four parts.

In Section \ref{sec2}-\ref{sec4}, we consider the problem of finding an eigenform of the renormalization map. In Section \ref{sec2}, following the pioneering work \cite{HPS}, we introduce several classes of non-standard $p$-energy forms on finite sets, and prove that one can compare two forms via their associated $p$-effective resistances. In Section \ref{sec3}, we introduce the basic concepts of p.c.f. self-similar sets and the renormalization maps, and develop some basic properties. In Section \ref{sec4}, by modifying the idea of Kusuoka-Zhou \cite{KZ}, we obtain an equivalent condition for the existence of an eigenform, denoted as assumption (\textbf{A}).

In Section \ref{sec5}, we turn to the construction of $p$-energies on p.c.f. self-similar sets under assumption (\textbf{A}). In particular, in Section \ref{subsec52}, \ref{subsec53}, along with Appendix \ref{AppendixA}, we will focus on the non-regular case.

The short section, Section \ref{sec6}, will deal with affine nested fractals. By verifying assumption (\textbf{A}), we show that for any given symmetric renormalization factors, there always exists a symmetric $p$-energy form on the fractals.

Finally, in Section \ref{sec7} and \ref{sec8}, for general p.c.f. self-similar sets, we prove criterion for assumption (\textbf{A}). We will follow the spirit of Sabot in \cite{Sabot} and utilize the technique of preserved equivalent relations in the proof. However, many of the arguments need essential modifications due to the wild properties of discrete $p$-energy forms. Moreover, we are able to drop the (\textbf{H}) condition of Sabot (see \cite[page 649]{Sabot}). Readers may notice that  Metz has improved Sabot's results in an earlier work  \cite{M3}, and recently Peirone presented a new proof \cite{Pe2} based on a fixed point theorem of anti-attracting maps.\vspace{0.2cm}

Throughout this paper, on any set $X$, we write $l(X)$ for the space of real-valued functions on $X$; for $f\in l(X)$, we write $\Osc(f)=\sup\{|f(x)-f(y)|: x,y\in X\}$, the oscillation of $f$ on $X$; if $(X,d)$ is a metric space, we write $C(X)$ for the space of continuous functions on $X$. We always make the convention that $\frac{1}{\infty}=0$ and $\frac{1}{0}=\infty$. For two variables $a$ and $b$, we sometimes use $a\gtrsim b$  to mean $a\geq Cb$ for some constant $C>0$, and  $a\lesssim b$ in a similar way. We write $a\asymp b$ if both $a\gtrsim b$ and $a\lesssim b$ hold. We also denote $a\vee b:=\max\{a,b\}$ and $a\wedge b:=\min\{a,b\}$.

\section{Discrete $p$-energies}\label{sec2}
In this section, we define three classes of discrete $p$-energies on a finite set $A$:
\[\mathcal{M}_p(A)\supset \mathcal{Q}_{p}(A)\supset \mathcal{S}_{p}(A).\]
Here $\mathcal{S}_p(A)$ stands for standard energies of the form $\Ep(f)=\sum_{x,y\in A}c_{x,y}|f(x)-f(y)|^p$, that are natural analogs to discrete Dirichlet forms ($p=2$ case). Unfortunately, when $p\neq 2$, the trace of standard forms is in general non-standard, so we define $\mathcal{Q}_p(A)$ for the closure of traces of standard forms, where ``$\mathcal{Q}$'' stands for quasi-standard. $\mathcal{M}_p(A)$ is the most general class introduced in \cite{HPS}, which can be avoided in the statement of theorems, but provides us freedom in the proofs (essential for Sections \ref{sec7} and \ref{sec8}). We also write $\widetilde{\mathcal{M}}_p(A)$ for the class including degenerate forms, extending $\mathcal{M}_p(A)$.

We will list the basic properties of these $p$-energies that we need to use in Section \ref{sec3}-\ref{sec6}, and we will return to talk more in Section \ref{sec7}.\vspace{0.2cm}

First, we follow \cite{HPS} to introduce a large class of discrete $p$-energies on a finite set $A$. The main result in this section is Proposition \ref{prop26}, which is a $p$-version of  \cite[Lemma 2.7]{C}. In earlier celebrated work by Sabot \cite{Sabot}, there was also a weaker version (\cite[Lemma 1.19]{Sabot}).

\begin{definition}\label{def21}
Let $A$ be a finite set with $\# A\geq 2$ and $1<p<\infty$. A functional $\Ep$ on $l(A)$ is called a $p$-energy on $A$ if it satisfies
	
\noindent(i). {\bf Non-negativity:}  $\Ep(f)\geq0$ for all $f\in l(A)$;
	
\noindent (ii). {\bf Convexity:} $E^{(p)}(tf+(1-t)g)\leq tE^{(p)}(f)+(1-t)E^{(p)}(g)$ for all $f,g\in l(A)$ and $t\in(0,1)$;
	
\noindent (iii). {\bf Homogeneity of degree $p$:} $E^{(p)}(tf)= |t|^pE^{(p)}(f)$ for all $f\in l(A)$ and $t\in \mathbb{R}$;

\noindent (iv). {\bf invariant under addition of constants:} $\Ep(f+t)=\Ep(f)$ for all $f\in l(A)$ and $t\in\mathbb R$;
	
\noindent (v). {\bf Markov property:} $\Ep(\bar{f})\leq \Ep(f)$ for any $f\in l(A)$, where $\bar {f}=(f\vee 0)\wedge1$;

\noindent (vi). {\bf Non-degeneracy:} $\Ep(f)=0$ if and only if $f$ is constant on $A$.

We will write $\mathcal{M}_p(A)$ for the collection of all $p$-energies $\Ep$ satisfying (i)--(vi). We also write $\widetilde{\mathcal{M}}_p(A)$ for the collection of $\Ep$ satisfying (i)--(v) and that $\Ep(1)=0$.
\end{definition}

The following proposition lists some simple properties of $p$-energies which can be checked directly and we leave the proof to readers.

\begin{proposition}\label{prop22} Let $A$ be a finite set and $B\subset A$.

(a). If $\Ep_1, \Ep_2\in \widetilde{\mathcal{M}}_p(A)$, then $\Ep_1+\Ep_2\in \widetilde{\mathcal{M}}_p(A)$; if further $\Ep_1$ or $\Ep_2$ is in $\mathcal{M}_p(A)$, then $\Ep_1+\Ep_2\in\mathcal{M}_p(A)$.

(b). If $\Ep\in \widetilde{\mathcal{M}}_p(B)$, then $E'^{(p)}\in\widetilde{\mathcal{M}}_p(A)$, where $E'^{(p)}: l(A)\to [0,\infty)$ is defined by
\[E'^{(p)}(f)=\Ep(f|_{B}), \quad \text{ for } f\in l(A).\]

(c). If $\Ep\in \widetilde{\mathcal{M}}_p(A)$, then $E'^{(p)}\in\widetilde{\mathcal{M}}_p(B)$, where $E'^{(p)}: l(B)\to [0,\infty)$ is defined by
\[E'^{(p)}(f)=\inf \big\{\Ep(f'): f'\in l(A), f'|_B=f\big\}, \quad \text{ for } f\in l(B).\]
We will always write this $E'^{(p)}$ as $[\Ep]_B$ and call it the trace of $\Ep$ to $B$. In addition, $[\Ep]_B\in\mathcal M_p (B)$ providing that $\Ep\in\mathcal{M}_p(A)$.

(d). For $\Ep\in \widetilde{\mathcal{M}}_p(A)$ and $f\in l(A)$, if we denote $f_+=f\vee 0$ and $f_-=f_+-f$, then $\Ep(f_+)\leq\Ep(f)$ and $\Ep(f_-)\leq \Ep(f)$.
\end{proposition}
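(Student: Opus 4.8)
The plan is to verify the six axioms (i)--(vi) (or (i)--(v) plus $\Ep(1)=0$ for the $\widetilde{\mathcal M}_p$ cases) for each of the four constructions, using only the definitions; no deep input is needed, so the argument is essentially bookkeeping. For part (a), non-negativity, convexity, homogeneity, translation-invariance and the $\Ep(1)=0$ condition are all preserved under addition because each is either an inequality or an identity that is linear (or sublinear) in $\Ep$; the Markov property is likewise preserved since $\Ep_1(\bar f)+\Ep_2(\bar f)\le \Ep_1(f)+\Ep_2(f)$. For the upgrade to $\mathcal M_p(A)$: if say $\Ep_1$ is non-degenerate and $\Ep_1(f)+\Ep_2(f)=0$, then $\Ep_1(f)=0$ (both summands are non-negative), hence $f$ is constant; conversely a constant $f$ makes both terms vanish. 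For part (b), pulling back along restriction $f\mapsto f|_B$: the axioms (i)--(v) and $\Ep(1)=0$ transfer because $(tf+(1-t)g)|_B = t f|_B+(1-t)g|_B$, $(tf)|_B = t(f|_B)$, $(f+t)|_B = f|_B + t$, and $\overline{f}|_B = \overline{f|_B}$, so every axiom for $E'^{(p)}$ reduces verbatim to the same axiom for $\Ep$ applied to restrictions. Note $E'^{(p)}$ is generally degenerate even if $\Ep$ is non-degenerate (any $f$ constant on $B$ but not on $A$ kills it), which is exactly why the conclusion only claims membership in $\widetilde{\mathcal M}_p(A)$.

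For part (c), the trace $[\Ep]_B(f)=\inf\{\Ep(f'): f'|_B=f\}$: here one must also check that the infimum is attained (or at least finite and well-behaved) — finiteness is clear by extending $f$ arbitrarily, and non-negativity is immediate. Convexity of the infimum: given $f,g\in l(B)$ with near-optimal extensions $f',g'$, the function $tf'+(1-t)g'$ extends $tf+(1-t)g$, so $[\Ep]_B(tf+(1-t)g)\le \Ep(tf'+(1-t)g')\le t\Ep(f')+(1-t)\Ep(g')$, and taking infima gives the claim. Homogeneity and constant-invariance follow because $f'\mapsto tf'$ and $f'\mapsto f'+c$ are bijections between the relevant extension sets. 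For the Markov property, if $f'$ extends $f$ then $\overline{f'}$ extends $\overline f$ and $\Ep(\overline{f'})\le \Ep(f')$, so $[\Ep]_B(\overline f)\le [\Ep]_B(f)$. The condition $\Ep(1)=0$ (resp. non-degeneracy for the $\mathcal M_p$ upgrade) is the one place requiring a real argument: if $\Ep\in\mathcal M_p(A)$ and $[\Ep]_B(f)=0$, then there is a minimizing sequence $f'_n$ with $\Ep(f'_n)\to 0$; one needs compactness to extract a limit $f'$ with $\Ep(f')=0$, hence $f'$ constant, hence $f$ constant. Here I would invoke that on the finite-dimensional space $l(A)$, modulo constants, $\Ep$ is a norm to the $p$-th power (by (ii),(iii),(iv),(vi)) so its sublevel sets modulo constants are bounded; combined with translation-invariance one normalizes $f'_n$ to have a fixed value at one point of $B$ and passes to a convergent subsequence. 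This compactness/attainment step is the main obstacle — everything else is a direct substitution.

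Part (d) is a one-line consequence of the Markov property together with homogeneity and scaling: for any real $M>0$, the truncation $(f\vee 0)\wedge M$ is a "Markov-type" contraction, and one shows $\Ep(f\vee 0)\le \Ep(f)$ by applying (v) to a rescaled function or by a limiting argument $M\to\infty$ in $\Ep\big((f\vee 0)\wedge M\big)\le \Ep(f)$ (using continuity of $\Ep$, which follows from convexity and finite-dimensionality), then noting $\Ep(f\vee 0)=\lim_{M\to\infty}\Ep((f\vee 0)\wedge M)$ by monotone-type control. Applying the same to $-f$ gives $\Ep(f_-)=\Ep((-f)\vee 0)\le \Ep(-f)=\Ep(f)$, using (iii) with $t=-1$. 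In writing this up I would first record the elementary fact that every $\Ep\in\widetilde{\mathcal M}_p(A)$ is continuous on $l(A)$ and that $f\mapsto \Ep(f)^{1/p}$ is a seminorm vanishing on constants — this is used implicitly in (c) and (d) — and then dispatch (a)--(d) in order as above.
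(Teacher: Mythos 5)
Your proposal is correct, and it is exactly the direct axiom-by-axiom verification that the paper intends when it says the proposition "can be checked directly" and leaves the proof to the reader: the extension/restriction arguments for (a)--(c), the seminorm property of $\Ep^{1/p}$ (from convexity plus homogeneity) giving the oscillation bound and compactness needed for non-degeneracy of the trace, and the rescaled Markov trick for (d) are all sound. The only cosmetic simplifications available are that in (d) no limit $M\to\infty$ is needed since $(f\vee 0)\wedge M=f\vee 0$ once $M\geq\max_A f$ on a finite set, and in (c) the subsequence extraction can be bypassed by noting $\Osc(f)\leq\Osc(f'_n)\lesssim\Ep(f'_n)^{1/p}\to 0$ directly.
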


\begin{definition}\label{def23}
Let $\Ep\in \widetilde{\mathcal{M}}_p(A)$. For any $x\neq y\in A$, we define the $p$-effective resistance ($p$-resistance for short) between $x,y$ to be
\[R^{(p)}(x,y)=\frac{1}{\inf\big\{\Ep(f):f(x)=0,f(y)=1\big\}}\in (0,\infty], \]
and write $R^{(p)}(x,x)=0$ for any $x\in A$ by convention.
\end{definition}

We have the following simple properties of $p$-resistances.
\begin{proposition}\label{prop24}
(a). For $\Ep\in \widetilde{\mathcal{M}}_p(A)$, $R^{(p)}$ is symmetric, i.e. $R^{(p)}(x,y)=R^{(p)}(y,x)$ for any $x,y\in A$.

(b). For $\Ep\in \widetilde{\mathcal{M}}_p(A)$, if for some $x\neq y\in A$, $R^{(p)}(x,y)<\infty$, then $R^{(p)}(x,y)$ is realized by some function $f\in l(A)$, i.e. there exists a function $f$ satisfying $f(x)=0$, $f(y)=1$ and $\Ep(f)=R^{(p)}(x,y)^{-1}$.

(c). For $\Ep\in \widetilde{\mathcal{M}}_p(A)$ and $x,y,z\in A$ such that $R^{(p)}(x,y)$, $R^{(p)}(x,z)$, $R^{(p)}(y,z)<\infty$, it holds that
\begin{equation}\label{eqn21}
	\big(R^{(p)}(x,y)\big)^{1/p}\leq \big(R^{(p)}(x,z)\big)^{1/p}+\big(R^{(p)}(z,y)\big)^{1/p}.
	\end{equation}
In particular, if $\Ep\in {\mathcal{M}}_p(A)$, $\{R^{(p)}(\cdot,\cdot)\}^{1/p}$ is a metric on $A$.
\end{proposition}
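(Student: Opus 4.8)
The plan is to handle the three parts in order, with part (c) as the only one requiring a genuine idea; throughout I would use only the axioms (i)--(vi) of Definition \ref{def21}. For (a), the observation is that $f\mapsto 1-f$ maps $\{f\in l(A):f(x)=0,\ f(y)=1\}$ bijectively onto $\{f\in l(A):f(x)=1,\ f(y)=0\}$, while $\Ep(1-f)=\Ep(-f)=\Ep(f)$ by the constant-invariance (iv) followed by the homogeneity (iii); hence the two infima defining $R^{(p)}(x,y)$ and $R^{(p)}(y,x)$ agree.

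For (b), fix $x\neq y$ with $R^{(p)}(x,y)<\infty$, so that $c_0:=\inf\{\Ep(f):f(x)=0,\ f(y)=1\}=R^{(p)}(x,y)^{-1}\in(0,\infty)$, and take a minimizing sequence $(f_n)$. The one subtlety is that $(f_n)$ need not be bounded; I would remedy this by passing to the truncations $\bar f_n=(f_n\vee 0)\wedge 1$, which retain the boundary values $\bar f_n(x)=0$, $\bar f_n(y)=1$ and, by the Markov property (v), satisfy $\Ep(\bar f_n)\le\Ep(f_n)$, so they still form a minimizing sequence. They now lie in the compact slice $\{f\in[0,1]^A:f(x)=0,\ f(y)=1\}$, and $\Ep$ is continuous (a finite convex function on the finite-dimensional space $l(A)$), so a convergent subsequence yields an admissible $f_0$ with $\Ep(f_0)=c_0$, as required.

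For (c), the tempting first move---splicing a near-optimal function for $(x,z)$ to one for $(z,y)$---is awkward since all functions live on the whole of $A$, so I would run the argument dually. The key elementary estimate is
\[
|f(x)-f(y)|^p\le R^{(p)}(x,y)\,\Ep(f)\qquad(f\in l(A)),
\]
valid whenever $R^{(p)}(x,y)<\infty$: it is trivial if $f(x)=f(y)$, and otherwise one normalizes $f$ to $(f-f(x))/(f(y)-f(x))$ and invokes (iii) and (iv) exactly as in (a). Then, given $x,y,z$ with all three resistances finite, take the extremal function $f_0$ for the pair $(x,y)$ provided by part (b), apply the estimate to the pairs $(x,z)$ and $(z,y)$, and combine with the scalar triangle inequality:
\[
1=|f_0(x)-f_0(y)|\le |f_0(x)-f_0(z)|+|f_0(z)-f_0(y)|\le \Ep(f_0)^{1/p}\big(R^{(p)}(x,z)^{1/p}+R^{(p)}(z,y)^{1/p}\big).
\]
Since $\Ep(f_0)^{1/p}=R^{(p)}(x,y)^{-1/p}$, multiplying through by $R^{(p)}(x,y)^{1/p}$ gives \eqref{eqn21}. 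For the last assertion, if $\Ep\in\mathcal{M}_p(A)$ and $x\neq y$, then $c_0$ is finite (being at most $\Ep$ of any function equal to $0$ at $x$ and $1$ at $y$), so $R^{(p)}(x,y)>0$; and $c_0>0$, for otherwise the compactness-and-continuity argument of (b) would produce a nonconstant function of zero energy, contradicting non-degeneracy (vi), so $R^{(p)}(x,y)<\infty$. Combined with (a), (c) and the convention $R^{(p)}(x,x)=0$, this exhibits $\{R^{(p)}(\cdot,\cdot)\}^{1/p}$ as a metric.

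I expect the only non-routine point to be the reorganization in part (c)---testing the already-optimal function for $(x,y)$ against the two legs rather than trying to build a test function by gluing. The remaining steps are bookkeeping, the one recurring technical device being the use of the Markov property to compress minimizing sequences into a compact set.
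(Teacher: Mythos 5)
Your proposal is correct and follows essentially the same route as the paper: symmetry via $\Ep(f)=\Ep(1-f)$, attainment via a truncated (hence uniformly bounded) minimizing sequence plus continuity of $\Ep$, and the triangle inequality by testing the extremal function for $(x,y)$ against the estimate $|f(x)-f(z)|^p\leq R^{(p)}(x,z)\Ep(f)$ on the two legs. Your added verification that $R^{(p)}$ is finite and positive when $\Ep\in\mathcal{M}_p(A)$ is a minor elaboration the paper leaves implicit.
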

\begin{proof}
(a). By the homogeneity of $\Ep$, for any $f\in l(A)$, we have $\Ep(f)=\Ep(-f)$, and also $\Ep(-f)=\Ep(1-f)$ by the invariance under addition of constants, hence $\Ep(f)=\Ep(1-f)$, and this implies that $R^{(p)}(x,y)=R^{(p)}(y,x)$ for any $x,y\in A$.

(b). Let $f_n\in l(A)$ be a sequence of functions satisfying $f_n(x)=0, f_n(y)=1$ and $0\leq f_n\leq1$ such that
\begin{equation*}
\lim_{n\rightarrow\infty}\Ep(f_n)=R^{(p)}(x,y)^{-1}.
\end{equation*}
Since $f_n$ are uniformly bounded, we may choose a subsequence, which still denote by $f_n$, such that for any $z\in A$, $\lim_{n\rightarrow\infty}f_n(z)$ exists, denoted by $f(z)$. Then the function $f$ is as required by the continuity of $\Ep$ (see Lemma \ref{lemmaa1} for a proof of a stronger result).

(c). Indeed, let $f\in l(A)$ such that $f(x)=0$, $f(y)=1$ and $\Ep(f)=\frac{1}{R^{(p)}(x,y)}$. Then by the homogeneity of $\Ep$, we have $|f(x)-f(z)|\leq\big (R^{(p)}(x,z)\Ep(f)\big)^{\frac 1 p}=\left(\frac{R^{(p)}(x,z)}{R^{(p)}(x,y)}\right)^{\frac{1}{p}}$ and $|f(z)-f(y)|\leq \left(R^{(p)}(z,y)\Ep(f)\right)^{\frac 1 p}=\left(\frac{R^{(p)}(z,y)}{R^{(p)}(x,y)}\right)^{\frac{1}{p}}$. Then \eqref{eqn21} follows from the triangle inequality $|f(x)-f(y)|\leq |f(x)-f(z)|+|f(z)-f(y)|$.
\end{proof}

In $p=2$ the quadratic case, a discrete Dirichlet form on $A$ is uniquely determined by the associated resistance metric \cite{ki3}. But for $p\neq 2$, $\widetilde{\mathcal{M}}_p(A)$ is a large class, a $p$-energy is not determined by the $p$-resistance. However, we still have useful estimates from the $p$-resistance, which we will see soon in Proposition \ref{prop26}.

\begin{lemma}\label{lemma25}
(a). For any $\Ep\in \widetilde{\mathcal{M}}_p(A)$ and $f\in l(A)$, we have $\Ep(|f|)\leq 2^p\Ep(f)$.
	
(b). For any $\Ep\in \widetilde{\mathcal{M}}_p(A)$ and $f,g\in l(A)$, we have
\[\begin{cases}
\Ep(f\wedge g)\leq 2^{{2p}-1}\big(\Ep(f)+\Ep(g)\big),\\
\Ep(f\vee g)\leq 2^{{2p}-1}\big(\Ep(f)+\Ep(g)\big).
\end{cases} \]

(c). There exists a constant $C>0$ depending on $\#A$ and $p$, such that for any $\Ep\in \widetilde{\mathcal{M}}_p(A)$ and $B\subsetneq A$, we have
\[C^{-1}\sum_{x\in B,y\in A\setminus B}R^{(p)}(x,y)^{-1}\leq\Ep(1_B)\leq C\sum_{x\in B,y\in A\setminus B}R^{(p)}(x,y)^{-1}. \]
\end{lemma}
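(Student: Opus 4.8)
My plan is to handle the three parts in order, each reducing to the axioms (i)--(vi) of Definition 2.21 together with the elementary identities/inequalities already proved in Proposition 2.22. For part (a), the key observation is that $|f| = (f\vee 0) + ((-f)\vee 0) = f_+ + f_-$ in the notation of Proposition 2.22(d). By convexity and homogeneity of degree $p$ (axioms (ii), (iii)), $\Ep\big(\tfrac12 g + \tfrac12 h\big)\le \tfrac12\Ep(g)+\tfrac12\Ep(h)$, so $\Ep(g+h) = 2^p\,\Ep\big(\tfrac12(g+h)\big)\le 2^{p-1}\big(\Ep(g)+\Ep(h)\big)$; applying this with $g=f_+$, $h=f_-$ and then invoking $\Ep(f_\pm)\le \Ep(f)$ from Proposition 2.22(d) gives $\Ep(|f|)\le 2^{p-1}\cdot 2\,\Ep(f) = 2^p\,\Ep(f)$. (The crude constant $2^p$ is all that is claimed.)

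For part (b), I would write $f\wedge g = \tfrac12(f+g) - \tfrac12|f-g|$ and $f\vee g = \tfrac12(f+g) + \tfrac12|f-g|$. Using the same ``convexity+homogeneity'' inequality $\Ep(u+v)\le 2^{p-1}(\Ep(u)+\Ep(v))$ with $u = \tfrac12(f+g)$ and $v = \mp\tfrac12|f-g|$, together with homogeneity to pull out the scalars $\tfrac12$, this reduces the bound to controlling $\Ep(f+g)$ and $\Ep(|f-g|)$. The first is again $\le 2^{p-1}(\Ep(f)+\Ep(g))$; for the second, $\Ep(|f-g|)\le 2^p\,\Ep(f-g)$ by part (a), and $\Ep(f-g)\le 2^{p-1}(\Ep(f)+\Ep(-g)) = 2^{p-1}(\Ep(f)+\Ep(g))$ using homogeneity of degree $p$ with $t=-1$. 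Collecting the powers of $2$ yields a bound of the form $C_p(\Ep(f)+\Ep(g))$; one checks the bookkeeping gives $C_p\le 2^{2p-1}$, as stated. This part is entirely routine once (a) is in hand.

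Part (c) is the substantive one and I expect it to be the main obstacle, since it asserts a \emph{two-sided} comparison between $\Ep(1_B)$ and $\sum_{x\in B, y\in A\setminus B} R^{(p)}(x,y)^{-1}$ with a constant depending only on $\#A$ and $p$ (uniform over all $\Ep\in\widetilde{\mathcal M}_p(A)$). The upper bound $\Ep(1_B)\lesssim \sum_{x,y} R^{(p)}(x,y)^{-1}$ is the harder direction: fix any $x_0\in B$ and $y_0\in A\setminus B$; for each pair $(x,y)$ with $x\in B$, $y\in A\setminus B$ I would take, via Proposition 2.24(b), a minimizer $g_{x,y}$ with $g_{x,y}(x)=0$, $g_{x,y}(y)=1$, $\Ep(g_{x,y}) = R^{(p)}(x,y)^{-1}$, truncate to $\bar g_{x,y}\in[0,1]$ using the Markov property (axiom (v), which only decreases energy), and then build $1_B$ as a finite combination/min-max of at most $\#A$ such truncated functions together with constants. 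Concretely one can realize $1_B$ (or a function agreeing with $1_B$ on $A$) by iterated $\wedge$ and $\vee$ of the functions $\bar g_{x,y}$ over $x\in B$, $y\in A\setminus B$; applying part (b) repeatedly (at most $(\#A)^2$ times) converts this into $\Ep(1_B)\le C(\#A,p)\sum_{x,y}\Ep(\bar g_{x,y}) \le C(\#A,p)\sum_{x,y} R^{(p)}(x,y)^{-1}$. For the lower bound, for each $x\in B$, $y\in A\setminus B$ the function $1_B$ itself is a competitor in the variational problem defining $R^{(p)}(x,y)$ (it is $0$ at $y$ and $1$ at $x$), so $R^{(p)}(x,y)^{-1}\le \Ep(1_B)$ for every such pair; summing over the at most $(\#A)^2$ pairs gives $\sum_{x,y} R^{(p)}(x,y)^{-1}\le (\#A)^2\,\Ep(1_B)$. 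The care needed in part (c) is (1) checking that every $R^{(p)}(x,y)$ appearing is finite (equivalently $\Ep$ is ``connected enough'' — but note $1_B$ being a valid competitor already forces $R^{(p)}(x,y)<\infty$ whenever $\Ep(1_B)<\infty$, and if $\Ep(1_B)=\infty$ both sides are handled by the convention $\frac1\infty=0$, $\frac10=\infty$), and (2) tracking that the combinatorial reconstruction of $1_B$ uses only boundedly many $\wedge/\vee$ operations so the constant stays a function of $\#A$ and $p$ alone.
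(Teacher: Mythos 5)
Your proposal is correct and follows essentially the same route as the paper: (a) via $|f|=f_++f_-$ with Proposition \ref{prop22}(d) and the convexity/homogeneity inequality, (b) via $f\wedge g=\frac12\big((f+g)-|f-g|\big)$ together with (a), and (c) by using $1_B$ as a competitor for the lower bound and a max--min combination of truncated minimizers, estimated by repeated application of (b), for the upper bound — exactly the paper's argument. The only quibble is your parenthetical claim that $1_B$ being a competitor forces $R^{(p)}(x,y)<\infty$ (it only yields $R^{(p)}(x,y)\geq \Ep(1_B)^{-1}$), but this is immaterial: the compactness argument of Proposition \ref{prop24}(b) produces the (possibly zero-energy) minimizers in all cases, and the convention $\frac{1}{\infty}=0$ keeps both sides of the comparison consistent.
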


\begin{proof}
(a). It follows from Proposition \ref{prop22} (d) that $\Ep(f_+)\leq \Ep(f)$ and $\Ep(f_-)\leq \Ep(f)$. So by the homogeneity and convexity of $\Ep$, we have $\Ep(|f|)=\Ep(f_++f_-)\leq 2^{p-1}\big(\Ep(f_+)+\Ep(f_-)\big)\leq2^p\Ep(f)$. 	
	
(b). We only deal with the ``$f\wedge g$" case, since the ``$f\vee g$" case is similar. By using homogeneity, convexity of $E^{(p)}$, and by (a), it follows
\[\begin{aligned}
\Ep(f\wedge g)&=\Ep\left(\frac{f+g-|f-g|}{2}\right)\leq 2^{-1}\big(\Ep(f+g)+\Ep(|f-g|)\big)\\
&\leq 2^{p-1}\big(\Ep(f+g)+\Ep(f-g)\big)\\
&\leq 2^{2p-1}\big(\Ep(f)+\Ep(g)\big).
\end{aligned}\]

(c). For the lower bound, note that for any $x\in B$ and $y\in A\setminus B$, we have \[R^{(p)}(x,y)^{-1}\leq \Ep(1_B).\]
By summing up the above inequality over all $x\in B$ and $y\in A\setminus B$ and observe that $\#A$, $\#B$ are finite, we get the desired estimate.

For the upper bound, for each $x\in B$ and $y\in A\setminus B$, let $f_{x,y}\in l(A)$ satisfy $f_{x,y}(x)=1,f_{x,y}(y)=0$, $0\leq f_{x,y}\leq 1$, and $\Ep(f_{x,y})=(R^{(p)}(x,y))^{-1}$. Observe that $1_B=\max\limits_{x\in B}\min\limits_{y\in A\setminus B}f_{x,y}$. By applying (b) a few times to $\Ep(1_B)=\Ep(\max\limits_{x\in B}\min\limits_{y\in A\setminus B}f_{x,y})$, we obtain the inequality as required.
\end{proof}

\begin{proposition}\label{prop26}
There exists a constant $C>0$ depending only on $\#A$ and $p$, such that for any $\Ep_1,\Ep_2\in \widetilde{\mathcal{M}}_p(A)$ and non-constant function $f\in l(A)$, we have
\[C^{-1}\inf_{x\neq y\in A}\frac{R^{(p)}_2(x,y)}{R^{(p)}_1(x,y)}\leq \frac{\Ep_1(f)}{\Ep_2(f)}\leq C\sup_{x\neq y\in A}\frac{R^{(p)}_2(x,y)}{R^{(p)}_1(x,y)},\]
where $R^{(p)}_i$ is the $p$-resistance associated with $\Ep_i$ for $i=1,2$, and we do not count $\frac{\infty}{\infty}$ in the supremum or infimum.
\end{proposition}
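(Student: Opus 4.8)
The plan is to deduce both inequalities from a single two-sided comparison of an arbitrary $\Ep\in\widetilde{\mathcal M}_p(A)$ with the \emph{standard} energy built from its own resistances,
\[ \widehat{E}^{(p)}(f):=\sum_{x\neq y\in A}R^{(p)}(x,y)^{-1}\,|f(x)-f(y)|^p, \]
namely $c^{-1}\widehat{E}^{(p)}(f)\le\Ep(f)\le c\,\widehat{E}^{(p)}(f)$ for some $c=c(\#A,p)$. Granting this, the Proposition is pure algebra: with $a_{xy}=R^{(p)}_1(x,y)^{-1}$, $b_{xy}=R^{(p)}_2(x,y)^{-1}$ and $t_{xy}=|f(x)-f(y)|^p\ge 0$ one has $\inf_{x\neq y}(a_{xy}/b_{xy})\sum b_{xy}t_{xy}\le\sum a_{xy}t_{xy}\le\sup_{x\neq y}(a_{xy}/b_{xy})\sum b_{xy}t_{xy}$, and $a_{xy}/b_{xy}=R^{(p)}_2(x,y)/R^{(p)}_1(x,y)$, so applying the comparison to $\Ep_1$ and to $\Ep_2$ gives the claim with $C=c^2$. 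It suffices to prove the right-hand inequality, the left-hand one being its $1\leftrightarrow 2$ swap; and one may assume $\sup_{x\neq y}R^{(p)}_2(x,y)/R^{(p)}_1(x,y)<\infty$, since otherwise the right side is infinite and the assertion holds under the stated conventions (the only borderline case, $\Ep_1(f)=\Ep_2(f)=0$, being degenerate).

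The lower bound $\widehat{E}^{(p)}(f)\le(\#A)(\#A-1)\,\Ep(f)$ in the comparison is immediate: for each ordered pair $x\neq y$ with $f(x)\neq f(y)$ the function $(f-f(x))/(f(y)-f(x))$ is admissible in Definition \ref{def23}, so $R^{(p)}(x,y)^{-1}|f(x)-f(y)|^p\le\Ep(f-f(x))=\Ep(f)$ by invariance under constants; summing over the $(\#A)(\#A-1)$ ordered pairs does it.

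The substantive step is the upper bound $\Ep(f)\le c\,\widehat{E}^{(p)}(f)$. Slicing $f$ by its level sets and bounding $\Ep(1_{B_k})$ through Lemma \ref{lemma25}(c) only produces an estimate linear in the increments $|f(x)-f(y)|$, which (after normalising $\Osc(f)$) is strictly weaker than a bound by their $p$-th powers; so instead I would represent $f$ as a bounded-depth lattice expression in \emph{rescaled} resistance functions. For $x\neq y$ with $f(x)\neq f(y)$ choose, by Proposition \ref{prop24}(b) (or, when $R^{(p)}(x,y)=\infty$, a near-minimiser), a function $g_{xy}$ with $g_{xy}(x)=1$, $g_{xy}(y)=0$ and $\Ep(g_{xy})\le R^{(p)}(x,y)^{-1}+\gep$, and set $h_{xy}:=(f(x)-f(y))g_{xy}+f(y)$ (and $h_{xy}\equiv f(x)$ when $f(x)=f(y)$). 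Then $h_{xy}(x)=f(x)$, $h_{xy}(y)=f(y)$, and by homogeneity together with invariance under constants $\Ep(h_{xy})=|f(x)-f(y)|^p\,\Ep(g_{xy})\le|f(x)-f(y)|^p\big(R^{(p)}(x,y)^{-1}+\gep\big)$. Using only the boundary values $h_{xz}(z)=f(z)$ (from $g_{xz}(z)=0$) and $h_{zy}(z)=f(z)$ (from $g_{zy}(z)=1$), one checks directly the pointwise identity
\[ f=\max_{y\in A}\ \min_{x\in A}\ h_{xy}. \]
Applying the $\wedge$- and $\vee$-estimates of Lemma \ref{lemma25}(b) a number of times bounded in terms of $\#A$ then gives $\Ep(f)\le c(\#A,p)\sum_{x,y\in A}\Ep(h_{xy})\le c(\#A,p)\sum_{x\neq y}|f(x)-f(y)|^p\big(R^{(p)}(x,y)^{-1}+\gep\big)$, and letting $\gep\downarrow 0$ yields $\Ep(f)\le c(\#A,p)\,\widehat{E}^{(p)}(f)$.

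The crux I expect is the identity $f=\max_y\min_x h_{xy}$ and recognising it as the right device: it is precisely what upgrades the ``linear in increments'' behaviour forced by convexity to the ``$p$-th power in increments'' behaviour required, by trading on homogeneity of degree $p$ and on the fact that finitely many lattice operations cost only a dimensional constant (Lemma \ref{lemma25}(b)). The remainder is bookkeeping: handling pairs of infinite resistance through the parameter $\gep$, verifying the lattice identity at every point, and making the $1\leftrightarrow 2$ symmetry correctly deliver the infimum bound.
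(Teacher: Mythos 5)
Your argument is correct, but it takes a genuinely different route from the paper's. The paper proves Proposition \ref{prop26} by the layer-cake decomposition $f=l_1+\sum_{j}(l_j-l_{j-1})1_{\{f\geq l_j\}}$, bounding each $\Ep_i\big((l_j-l_{j-1})1_{\{f\geq l_j\}}\big)$ two-sidedly through Lemma \ref{lemma25}(c) (convexity for the upper bound, the Markov property for the lower bound), so that each $\Ep_i(f)$ is comparable to the same weighted sum of cut resistances and the ratio bound follows termwise. You instead prove, directly and without Lemma \ref{lemma25}(c), that every $\Ep\in\widetilde{\mathcal M}_p(A)$ is two-sidedly comparable to the standard form $\sum_{x\neq y}R^{(p)}(x,y)^{-1}|f(x)-f(y)|^p$ — which is exactly Corollary \ref{coro27}, obtained in the paper only afterwards, as a consequence of Proposition \ref{prop26} plus a chaining comparison of $\bar R^{(p)}$ with $R^{(p)}$ — and then deduce the Proposition by comparing the two weighted sums termwise (your handling of the $\infty$ cases via the stated conventions is fine). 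The substantive new ingredient, the identity $f=\max_{y}\min_{x}h_{xy}$ with $h_{xy}=(f(x)-f(y))g_{xy}+f(y)$ built from $\gep$-near-minimisers, checks out (only the boundary values $g_{zy}(z)=1$, $g_{xz}(z)=0$ are used), and it is a natural generalisation of the lattice trick the paper applies only to indicators inside the proof of Lemma \ref{lemma25}(c); combined with Lemma \ref{lemma25}(b) and homogeneity it gives $\Ep(f)\leq c(\#A,p)\sum_{x\neq y}|f(x)-f(y)|^p R^{(p)}(x,y)^{-1}$, while your lower bound via admissible rescalings of $f$ is immediate. What each approach buys: yours delivers Corollary \ref{coro27} at once and in a logically self-contained way, reversing the paper's order of deduction; the paper's route is shorter given that Lemma \ref{lemma25}(c) is already in hand. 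One small inaccuracy in your commentary: the level-set approach is not actually too weak — homogeneity applied to $(l_j-l_{j-1})1_{\{f\geq l_j\}}$ produces the $p$-th powers of the increments, and this is precisely how the paper's proof succeeds — but this remark does not affect the validity of your argument.
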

\begin{proof}
	Assume $f(A)=\{l_j\}_{j=1}^m$, where we order $l_1<l_2<\cdots<l_m$ with $2\leq m\leq \#A$. Then, we have
	\[f=l_1+\sum_{j=2}^m \big(l_j-l_{j-1}\big)\cdot 1_{\{f\geq l_j\}}.\]

	On one hand, by applying the right inequality in Lemma \ref{lemma25} (c), for $i=1,2$, we have
	\[
	\begin{aligned}
	\Ep_i(f)\leq C_1\sum_{j=2}^m \Ep_i\big((l_j-l_{j-1})\cdot 1_{\{f\geq l_j\}}\big)\leq C_2\sum_{j=2}^m(l_j-l_{j-1})^p\sum_{x\in \{f\geq l_j\},y\in \{f<l_j\}}R_i^{(p)}(x,y)^{-1},
    \end{aligned}
    \]
    where $C_1,C_2>0$ are constants only depending on $\#A$ and $p$.

    On the other hand, by the Markov property and the left inequality in Lemma \ref{lemma25} (c), for $i=1,2$, we obtain
    \[
    \begin{aligned}
    	\Ep_i(f)&\geq \max_{2\leq j\leq m} \Ep_i\big((l_j-l_{j-1})\cdot 1_{\{f\geq l_j\}}\big)\geq \frac{1}{\#A-1}\sum_{j=2}^m \Ep_i\big((l_j-l_{j-1})\cdot 1_{\{f\geq l_j\}}\big)\\
    	&\geq \frac{C_3}{\#A-1}\sum_{j=2}^m(l_j-l_{j-1})^p\sum_{x\in \{f\geq l_j\},y\in \{f<l_j\}}R_i^{(p)}(x,y)^{-1}\\
        &= C_4\sum_{j=2}^m(l_j-l_{j-1})^p\sum_{x\in \{f\geq l_j\},y\in \{f<l_j\}}R_i^{(p)}(x,y)^{-1},
    \end{aligned}
    \]
    where $C_3,C_4>0$ are constants only depending on $\#A$ and $p$.

    Now the proposition follows immediately from the above two estimates.
\end{proof}

The following is a direct corollary of Proposition \ref{prop26}.

\begin{corollary}\label{coro27}
There is a constant $C>0$ depending on $\#A$ and $p$, such that for any $\Ep\in {\widetilde{\mathcal{M}}_p(A)}$, we can find $\bar{E}^{(p)}\in {\widetilde{\mathcal{M}}_p(A)}$ of the form
\begin{equation}\label{standardform}
\bar{E}^{(p)}(f)=\sum_{x,y\in A,\ x\neq y}c_{x,y}\big|f(x)-f(y)\big|^p,
\end{equation}
	with $c_{x,y}\geq0$ such that for any $f\in l(A)$,
\begin{equation}\label{eqEbar}
C^{-1}\bar{E}^{(p)}(f)\leq \Ep(f)\leq C\bar{E}^{(p)}(f).
\end{equation}
\end{corollary}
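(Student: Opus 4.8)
The plan is to deduce Corollary \ref{coro27} directly from Proposition \ref{prop26} by comparing the given $\Ep$ against a suitably chosen standard form built from its own $p$-resistances. Concretely, given $\Ep\in\widetilde{\mathcal{M}}_p(A)$, first I would define
\[
\bar E^{(p)}(f)=\sum_{x,y\in A,\ x\neq y}R^{(p)}(x,y)^{-1}\big|f(x)-f(y)\big|^p,
\]
where $R^{(p)}$ is the $p$-resistance associated with $\Ep$, and where we use the convention $\frac{1}{\infty}=0$ so that pairs at infinite resistance contribute nothing. One must first check $\bar E^{(p)}\in\widetilde{\mathcal{M}}_p(A)$: properties (i)--(v) of Definition \ref{def21} are immediate for any nonnegative linear combination of the elementary forms $|f(x)-f(y)|^p$ (these are standard computations — convexity and degree-$p$ homogeneity of $t\mapsto|t|^p$, translation invariance, and the Markov property since $|\bar f(x)-\bar f(y)|\le|f(x)-f(y)|$), and $\bar E^{(p)}(1)=0$ is obvious. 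Note I do not need to claim $\bar E^{(p)}\in\mathcal{M}_p(A)$: the corollary only asserts membership in $\widetilde{\mathcal{M}}_p(A)$, so non-degeneracy (vi) is not required, which is fortunate because if $\Ep$ itself is degenerate the associated graph need not be connected.

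Next I would compute the $p$-resistances of $\bar E^{(p)}$, call them $\bar R^{(p)}$, and show $\bar R^{(p)}(x,y)\asymp R^{(p)}(x,y)$ uniformly, with constants depending only on $\#A$ and $p$. For the bound $\bar R^{(p)}(x,y)\lesssim R^{(p)}(x,y)$: testing $\bar E^{(p)}$ against the function realizing $R^{(p)}(x,y)$ (using Proposition \ref{prop24}(b)), or more simply using that $\bar E^{(p)}$ dominates the single term $R^{(p)}(x,y)^{-1}|f(x)-f(y)|^p$, gives $\bar E^{(p)}(f)\ge R^{(p)}(x,y)^{-1}$ for $f$ with $f(x)=0,f(y)=1$, hence $\bar R^{(p)}(x,y)\le R^{(p)}(x,y)$. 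For the reverse bound $\bar R^{(p)}(x,y)\gtrsim R^{(p)}(x,y)$, i.e. $\bar R^{(p)}(x,y)^{-1}\lesssim R^{(p)}(x,y)^{-1}$: one bounds $\bar E^{(p)}(f)$ for any unit-potential $f$ by summing the triangle-type inequalities — for each pair $x',y'$ contributing to $\bar E^{(p)}$, use $|f(x')-f(y')|\le \big(R^{(p)}(x',y')\,\Ep(f)\big)^{1/p}$ (the estimate from the proof of Proposition \ref{prop24}(c)) together with chaining along a minimal-resistance path from $x$ to $y$; this yields $\bar E^{(p)}(f)\lesssim \Ep(f)$, and since $\Ep$ has resistances $R^{(p)}$ this closes the loop. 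Actually the cleanest route is to observe directly that $\bar E^{(p)}(f)\asymp\Ep(f)$ for all $f$: the upper bound $\bar E^{(p)}(f)\lesssim\Ep(f)$ follows from $|f(x)-f(y)|^p\le R^{(p)}(x,y)\,\Ep(f)$ summed over all pairs (at most $\binom{\#A}{2}$ of them, those with $R^{(p)}<\infty$), while the lower bound $\Ep(f)\lesssim\bar E^{(p)}(f)$ would follow once we know both $\bar E^{(p)}\in\widetilde{\mathcal{M}}_p(A)$ and that its resistances are comparable to $R^{(p)}$, via Proposition \ref{prop26}.

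Putting this together, apply Proposition \ref{prop26} to the pair $\Ep_1=\Ep$, $\Ep_2=\bar E^{(p)}$: since $\bar R^{(p)}(x,y)\asymp R^{(p)}(x,y)$ for all $x\neq y$ (and infinite exactly when $R^{(p)}$ is infinite, so the excluded $\frac{\infty}{\infty}$ cases match up), both $\sup$ and $\inf$ of $\bar R^{(p)}(x,y)/R^{(p)}(x,y)$ are pinched between two positive constants depending only on $\#A$ and $p$, whence $C^{-1}\bar E^{(p)}(f)\le\Ep(f)\le C\bar E^{(p)}(f)$ for all non-constant $f$, and trivially also for constant $f$ since both sides vanish. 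The main obstacle, and the only place real care is needed, is the resistance comparison $\bar R^{(p)}(x,y)\gtrsim R^{(p)}(x,y)$ — equivalently the estimate $\bar E^{(p)}(f)\lesssim\Ep(f)$ without circular reasoning; I expect to prove this from scratch by the chaining/triangle argument of Proposition \ref{prop24}(c) applied term by term, so that the subsequent invocation of Proposition \ref{prop26} is legitimate. Everything else is routine verification of the axioms of Definition \ref{def21} for a nonnegative combination of $|f(x)-f(y)|^p$ terms, which the paper has already implicitly flagged as leavable to the reader.
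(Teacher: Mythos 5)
Your proposal is correct, and it follows the paper's skeleton --- the same choice $c_{x,y}=R^{(p)}(x,y)^{-1}$, a comparison of the new resistance $\bar R^{(p)}$ with $R^{(p)}$, and an appeal to Proposition \ref{prop26} --- but it handles the key comparison step by a different and more elementary argument. The paper proves the lower bound $\bar R^{(p)}(x,y)\geq (\#A)^{-p-2}R^{(p)}(x,y)$ by a cut-set construction: with $\rho=R^{(p)}(x,y)^{1/p}$ it forms the cluster $A_x$ of points reachable from $x$ by steps of resistance below $(\rho/\#A)^p$, uses the triangle inequality of Proposition \ref{prop24}(c) to see $y\notin A_x$, and tests $\bar E^{(p)}$ on the indicator $1_{A_x^c}$; it then feeds both resistance bounds into Proposition \ref{prop26} to obtain both halves of \eqref{eqEbar}. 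You instead obtain the pointwise bound $\bar E^{(p)}(f)\leq \#A(\#A-1)\,\Ep(f)$ directly, by summing over pairs the elementary estimate $|f(x)-f(y)|^p\leq R^{(p)}(x,y)\Ep(f)$ (the same estimate used in the proof of Proposition \ref{prop24}(c), which only needs homogeneity and translation invariance, hence is valid in $\widetilde{\mathcal M}_p(A)$, with the convention $1/\infty=0$ disposing of pairs at infinite resistance). This already gives one half of \eqref{eqEbar} without Proposition \ref{prop26} and yields $\bar R^{(p)}\gtrsim R^{(p)}$ immediately; combined with the trivial $\bar R^{(p)}\leq R^{(p)}$ (so the two resistances are infinite for exactly the same pairs and the excluded $\infty/\infty$ ratios indeed match up), Proposition \ref{prop26} is then needed only for the remaining half $\Ep\lesssim\bar E^{(p)}$. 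Your first sketch of the reverse bound via ``chaining along a minimal-resistance path'' is vague and in fact unnecessary, but you correctly supersede it with the direct summation, so there is no gap; note only that the paper's cluster argument is not redundant in the larger scheme, since essentially the same construction is reused later (see the proof of Lemma \ref{lemma73}).
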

\begin{proof}
	We simply choose $c_{x,y}=\frac1{R^{(p)}(x,y)}$ for any $x\neq y\in A$, and denote $\bar{R}^{(p)}$ the $p$-resistance associated with $\bar{E}^{(p)}$. Clearly, $\bar{E}^{(p)}\in\widetilde{\mathcal M}_p(A)$.

We will show that $C_1R^{(p)}(x,y)\leq\bar{R}^{(p)}(x,y)\leq R^{(p)}(x,y)$ for some constant $C_1>0$ depending only on $\# A$ and $p$, then the corollary follows from Proposition \ref{prop26}.

The upper bound $\bar{R}^{(p)}(x,y)\leq R^{(p)}(x,y)$ is immediate from the definition. It remains to show \begin{equation}\label{eqn24}
	C_1R^{(p)}(x,y)\leq\bar{R}^{(p)}(x,y).
	\end{equation}
	
For this purpose, we fix $x,y$ and let $\rho=\big(R^{(p)}(x,y)\big)^{\frac{1}{p}}$.
Let $A_x\subsetneq A$ be the set of all points $z$ having the property:

 there exists a sequence of distinct points $z=z_0,z_1,\cdots,z_m=x$ such that
\[
R^{(p)}(z_i,z_{i+1})<\left(\frac{\rho}{\#A}\right)^p.
\]
Clearly, $x\in A_x$, and $y\notin A_x$ by Proposition \ref{prop24} (c).
We then define a function $f$ taking values $0$ on $A_x$ and $1$ on $A_x^c$. Then by $f(x)=0, f(y)=1$, we have
\begin{equation}\label{eqRR1}
\bar{R}^{(p)}(x,y)^{-1}\leq\bar{E}^{(p)}(f).
\end{equation}
Also, for any $z\in A_x$ and $w\in A_x^c$, we must have $R^{(p)}(z,w)\geq \left(\frac{\rho}{\#A}\right)^p$, and hence
\begin{equation}\label{eqRR2}
\bar{E}^{(p)}(f)=\sum_{z\in A_x,w\in A_x^c}\frac{1}{R^{(p)}(z,w)}\leq \#A_x\cdot\#A_x^c\cdot\left(\frac{\#A}{\rho}\right)^p\leq (\#A)^{p+2}R^{(p)}(x,y)^{-1}.
\end{equation}
Thus (\ref{eqn24}) holds by combining \eqref{eqRR1}, \eqref{eqRR2} and taking $C_1= (\#A)^{-p-2}$.
\end{proof}\vspace{0.2cm}

In most part of the paper, we will restrict ourselves to more concrete classes $\mathcal{S}_p(A),\mathcal{Q}_p(A)$ of $p$-energies. All the stories in Sections \ref{sec3}-\ref{sec6} can be stated with only these two classes.

\begin{definition}\label{def28}
We introduce a norm $\|\cdot\|_{\widetilde{\mathcal M}_p(A)}$ on $\widetilde{\mathcal M}_p(A)$ as follows:
\[\|\Ep\|_{\widetilde{\mathcal M}_p(A)}=\sup_{f\in l(A)\setminus Constants}\frac{\Ep(f)}{\max_{x,y\in A}|f(x)-f(y)|^p},\quad \forall \Ep\in \widetilde{\mathcal M}_p(A).\]

(a). Define
\[\begin{aligned}
	\mathcal{S}_p(A)=\big\{\Ep\in \mathcal{M}_p(A):&\text{ There exist constants  }c_{x,y}\geq 0 \text{ depending only on }x,y\\&\text{ so that }\Ep(f)=\sum_{x,y\in A}c_{x,y}|f(x)-f(y)|^p\big\}.
\end{aligned}\]

(b). Define
\[\begin{aligned}
	\mathcal{Q}'_p(A)=\big\{\Ep\in \mathcal{M}_p(A):\ &\text{There exists }B\supset A\text{ and }\Ep_B\in \mathcal{S}_p(B)\text{ so that }\Ep=[\Ep_B]_A\big\}.
\end{aligned}\]
Take $\mathcal{Q}_p(A)$ to be the closure of $\mathcal{Q}'_p(A)$ in $\big(\mathcal{M}_p(A),\|\cdot\|_{\widetilde{\mathcal M}_p(A)}\big)$.
\end{definition}

\noindent\textbf{Remark 1.} The class $\widetilde{\mathcal M}_p(A)$ is a cone. The spanned space of $\widetilde{\mathcal M}_p(A)$ under $\|\cdot\|_{\widetilde{\mathcal M}_p(A)}$ is a complete normed space. However, $\mathcal{S}_p(A)\subset\mathcal{Q}_p(A)\subset\mathcal{M}_p(A)$ are all not closed in $\widetilde{\mathcal M}_p(A)$ under this norm.\vspace{0.2cm}

\noindent\textbf{Remark 2.} Any $\Ep\in \mathcal{S}_p(A)$ is uniformly convex (\cite{H}), which means for any $\delta>0$ and any $f,g\in l(A)$ such that $\Ep(f)=\Ep(g)=1$ and $\Ep(f-g)\geq \delta$, we have
\[\Ep\left(\frac{f+g}{2}\right)\leq 1-\varepsilon\]
for some $\varepsilon>0$ depending on $p$ and $\delta$. So it is direct to check that any $\Ep\in \mathcal{Q}_p(A)$ is also uniformly convex, hence strictly convex.\vspace{0.2cm}

\noindent\textbf{Remark 3.} It follows from Corollary \ref{coro27} that, for any $\Ep\in \mathcal{M}_p(A)$, we can find  $\bar{E}^{(p)}\in {\mathcal{S}_p(A)}$ such that the two energies are comparable. \vspace{0.2cm}

We will list some important properties of $\mathcal{Q}_p(A)$ in Appendix \ref{AppendixA}, which will be used in Section \ref{subsec52}.

\section{p.c.f. self-similar sets and the renormalization maps}\label{sec3}

In this section, we introduce the renormalization map in $p$-energy setting on the post critically finite (p.c.f. for short) self-similar sets introduced by Kigami \cite{ki2, ki3}. Please refer to \cite{HMT, HN, Lindstrom, M1,M2, N,Pe} and the references therein for fruitful previous works for the $p=2$ case, especially, the celebrated work of Sabot \cite{Sabot} on the existence and uniqueness of an eigenvector of this map for nested fractals.

For simplicity, we will focus on the self-similar sets in $\mathbb R^d$. Let $\{F_i\}_{i=1}^N$ be a sequence of $\alpha_i$-similitudes on $\mathbb R^d$, i.e. for $i=1,\cdots,N$, $N\geq 2$, each $F_i:\ \mathbb R^d\rightarrow \mathbb R^d$ is of the form
\begin{equation*}
F_i(x)=\alpha_i U_ix+a_i,
\end{equation*}
where $\alpha_i\in(0,1)$, $U_i$ is an orthogonal matrix, $a_i\in \mathbb{R}^d$.
We call $\{F_i\}_{i=1}^N$ an \textit{iterated function system} (i.f.s. for short). Let $K$ be the associated {\it self-similar set}, i.e. $K$ is the unique non-empty compact set in $\mathbb{R}^d$ satisfying
\begin{equation*}
K=\bigcup_{i=1}^N F_iK.
\end{equation*}

 We then define the associated symbolic space. Let $W= \{1,\cdots, N\}$ be the alphabet. Let $W_0=\{\emptyset\}$,  $W_n$ be the set of \textit{words} $w = w_1\cdots w_n$ of length $n$ with $w_i\in W$, $n\geq 1$ (we also denote $|w|=n$ for the \textit{length} of $w$), and write $W_*=\bigcup_{n\geq 0}W_n$. For $w\in W_n$, we write $F_w = F_{w_1} \circ \cdots \circ F_{w_n}$, and call $F_w K$ an \textit{$n$-cell} of $K$.  Let $\Sigma$ be the set of \textit{infinite words} $\omega = \omega_1 \omega_2 \cdots$ with $\omega_i\in W$. There is a continuous surjection   $\pi : \Sigma \to K$   defined by
$$ \{\pi(\omega)\}=\bigcap_{n\geq1}F_{[\omega]_n}K,$$ where for $\omega=\omega_1\omega_2\cdots$ in $\Sigma$ we write $[\omega]_n=\omega_1\cdots\omega_n\in W_n$ for each $n\geq 1$.

Following \cite{ki3},  we define the
\emph{critical set} $ \mathcal{C} $ and the \emph{post-critical set } $\mathcal{P}$ for $K$  by
\begin{equation*}
\mathcal{C}=\pi^{-1}\Big({\bigcup}_{i\neq j }\big( F_i K \cap F_jK\big )\Big),\quad \mathcal{P}={\bigcup}_{m\geq 1}\tau^m(\mathcal{C}),
\end{equation*}
where $\tau :\Sigma \to \Sigma$ is the left shift map defined as $\tau(\omega_1\omega_2\cdots)=\omega_2\omega_3\cdots$. If  $\mathcal{P}$ is a finite set, we call $\{F_i\}_{i=1}^N$ a {\it post-critically finite} (p.c.f.) i.f.s., and $K$  a \textit{p.c.f. self-similar set}. In what follows, we always assume that $K$ is a connected p.c.f. self-similar set.
 The  {\it boundary}  of $K$ is defined to be  $V_0= \pi(\mathcal{P})$.  We also define
$
V_n=\bigcup_{i=1}^NF_iV_{n-1}
$
inductively,
and write $V_*=\bigcup_{n\geq 0}V_n$.
It is clear that $V_*$ is a proper subset of $K$ and the closure of $V_*$ under the Euclidean metric is $K$.  \vspace{0.2cm}

When the fractal is symmetric, we will be particularly interested in symmetric $p$-energy forms. More precisely, we will consider a symmetric group $\mathscr{G}$ as follows:\vspace{0.2cm}
	
	\noindent\textbf{($\mathscr{G}$-symmetry).} Let $\mathscr{G}$ be a finite group of homeomorphisms $K\to K$. We say $(K,\{F_i\}_{i=1}^N)$ is \textit{$\mathscr{G}$-symmetric} (or simply $K$ is $\mathscr{G}$-symmetric) if for any $\sigma\in \mathscr G$ and $n\geq 0$, there is a permutation $\sigma^{(n)}:W_n\to W_n$ such that $\sigma\circ F_wK=F_{\sigma^{(n)}(w)}K$ and $\sigma\circ F_wV_0=F_{\sigma^{(n)}(w)}V_0$ for any $w\in W_n$.\vspace{0.2cm}

To construct a $p$-energy form on $K$, it suffices to study the forms on $V_n,n\geq 0$. In the rest of this section, we introduce a renormalization map on the forms on $V_n,n\geq 0$, and study some basic properties. We will talk more in Section \ref{sec7}.\vspace{0.2cm}

\noindent\textbf{Remark}. From now on, we study the limit of discrete $p$-energies on the expanding sequence $\{V_n\}_{n\geq0}$. Note that all the constants that involved will depend only on $\#V_0$ and $p$. In the following of this paper, we will fix $p\in(1,\infty)$. For simplicity, we will write $\mathcal{M}=\mathcal{M}_p(V_0)$, $\widetilde{\mathcal{M}}=\widetilde{\mathcal{M}}_p(V_0)$, and omit the index $p$ and superscript ${(p)}$ when no confusion is caused.\vspace{0.2cm}

We introduce a renormalization map $\mcT:\mathcal{M}(V_n)\to\mathcal{M}(V_n)$.

\begin{definition}\label{def31}
	Fix $\bm{r}=(r_1,r_2,\cdots,r_N)$ to be a positive vector. For $w=w_1\cdots w_n\in W_*$, we write $r_w=r_{w_1}\cdots r_{w_n}$. Let $n\geq 0$ and $E\in \widetilde{\mathcal{M}}(V_n)$.
	
	(a).  We define $\Lambda E\in \widetilde{\mathcal{M}}(V_{n+1})$ as
	\[\Lambda E(f)=\sum_{i=1}^Nr_i^{-1}E(f\circ F_i),\quad\text{ for }f\in l(V_{n+1}).\]
	
	(b).  We define $\mcT:\widetilde{\mathcal{M}}({V_{n}})\to \widetilde{\mathcal{M}}(V_n)$ as
	\[\mcT E=[\Lambda E]_{V_n}.\]
\end{definition}
\noindent{\bf Remark 1.} There is some abuses of notations, as the definition of $\Lambda,\mathcal{T}$ actually depends on $n$. However, we admit the notations since they provide great convenience. In particular, we can use the notation $\Lambda^m:\widetilde{\mathcal{M}}(V_{n})\to \widetilde{\mathcal{M}}(V_{n+m})$ for all $n\geq 0$. See the following Lemma \ref{lemma32} for example. \vspace{0.2cm}

\noindent{\bf Remark 2.} It follows from Proposition \ref{prop22} that the operators $\Lambda$ and $\mcT$ are well-defined.\vspace{0.2cm}

\noindent \textbf{More about ($\mathscr{G}$-symmetry).} Assume $K$ is $\mathscr{G}$-symmetric.
	We say $E\in \widetilde{\mathcal{M}}(V_n),n\geq 0$ is \textit{$\mathscr G$-symmetric} if $E(f\circ \sigma)=E(f)$ for any $f\in l(V_n)$ and $\sigma\in \mathscr{G}$. It is straightforward to see that if $E\in \widetilde{\mathcal{M}}(V_{n+1}),n\geq 0$ is $\mathscr{G}$-symmetric then $[E]_{V_n}$ is also $\mathscr{G}$-symmetric.
	
In addition, we assume $\bm{r}$ is $\mathscr{G}$-symmetric, i.e. $r_i=r_{\sigma^{(1)}(i)},\forall 1\leq i\leq N$. Then it is straightforward to check that if $E\in \widetilde{\mathcal{M}}(V_n),n\geq 0$ is $\mathscr{G}$-symmetric then $\Lambda E$ is also $\mathscr{G}$-symmetric.

\begin{lemma}\label{lemma32}
(a). Let $n\geq 0$ and $E\in \widetilde{\mathcal{M}}(V_{n+2})$, then $[[E]_{V_{n+1}}]_{V_{n}}=[E]_{V_n}$.

(b). Let $n\geq 0$ and $E\in \widetilde{\mathcal{M}}(V_n)$, then $[\Lambda^mE]_{V_n}=\mathcal{T}^mE$ for any $m\geq 1$.
\end{lemma}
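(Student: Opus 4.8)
The plan is to prove both statements directly from the definitions of trace and of $\Lambda$, using only the elementary facts collected in Proposition \ref{prop22}. For part (a), the key is the general "transitivity of traces" principle: if $B\subset C\subset D$ and $E\in\widetilde{\mathcal M}(D)$, then $[[E]_C]_B=[E]_B$. Indeed, unwinding the definition in Proposition \ref{prop22}(c),
\[
[[E]_C]_B(f)=\inf\Big\{[E]_C(g):g\in l(C),\ g|_B=f\Big\}
=\inf\Big\{\inf\{E(h):h\in l(D),\ h|_C=g\}:g|_B=f\Big\},
\]
and the iterated infimum over $g$ and then $h$ is exactly the single infimum over all $h\in l(D)$ with $h|_B=f$ (every such $h$ arises for $g=h|_C$, and conversely every admissible $(g,h)$ pair gives an admissible $h$). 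Hence it equals $[E]_B(f)$. Applying this with $B=V_n\subset C=V_{n+1}\subset D=V_{n+2}$ (the inclusions hold since $V_n\subset V_{n+1}$ always) gives $[[E]_{V_{n+1}}]_{V_n}=[E]_{V_n}$. I should remark that no compactness or continuity input is needed here; it is a purely order-theoretic identity about nested infima, and the fact that the relevant infima are finite / the forms lie in $\widetilde{\mathcal M}$ is guaranteed by Proposition \ref{prop22}(b),(c).

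For part (b), I would argue by induction on $m$. The base case $m=1$ is precisely the definition $\mathcal T E=[\Lambda E]_{V_n}$ (Definition \ref{def31}(b)), once one notes $\Lambda^1=\Lambda$. For the inductive step, assume $[\Lambda^m E]_{V_n}=\mathcal T^m E$ for some $m\ge1$; I want $[\Lambda^{m+1}E]_{V_n}=\mathcal T^{m+1}E$. The main point is to commute $\Lambda$ past a trace down to one level, i.e. the identity
\[
[\Lambda F]_{V_{n+m}}=\Lambda\big([F]_{V_{n+m-1}}\big)\quad\text{is \emph{not} what I want;}
\]
rather, the cleaner route is: apply the transitivity of traces (part (a), or its iterated form $[\,\cdot\,]_{V_n}=[[\,\cdot\,]_{V_{n+m}}]_{V_n}$) to write
\[
[\Lambda^{m+1}E]_{V_n}=\big[[\Lambda^{m+1}E]_{V_{n+m}}\big]_{V_n},
\]
and then I must show $[\Lambda^{m+1}E]_{V_{n+m}}=\Lambda\big([\Lambda^{m}E]_{V_n}\big)\circ(\text{appropriate reindexing})$ — more precisely that $[\Lambda(\Lambda^m E)]_{V_{n+m}}=\Lambda([\Lambda^m E]_{V_{n+m-1}})$ via the self-similar structure, which lets the outer trace $[\,\cdot\,]_{V_n}$ turn $\Lambda^m E$ into $\mathcal T^m E$ by the inductive hypothesis, yielding $\mathcal T(\mathcal T^m E)=\mathcal T^{m+1}E$. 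So the heart of the matter is a single lemma: $\Lambda$ commutes with tracing to the previous level, i.e. for $G\in\widetilde{\mathcal M}(V_k)$ one has $[\Lambda G]_{V_k}$ involves restricting $f\circ F_i$ to $V_{k-1}$ inside each cell. This follows because the map $f\mapsto f\circ F_i$ sends $l(V_{k})$ onto $l(F_i V_{k})\cong l(V_{k-1})$ compatibly with the cell decomposition $V_{k}=\bigcup_i F_i V_{k-1}$, and extension/restriction of $f$ on $V_{k}$ is carried out cell-by-cell since distinct cells meet only along $V_{k-1}$-type junction points which are themselves in the smaller set.

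The step I expect to require the most care is this commutation of $\Lambda$ with the one-level trace inside the inductive step of (b): one has to check that taking the infimum defining the trace on $V_{n+m}$ can be performed cell-by-cell, which uses the p.c.f. structure (cells overlap only at boundary points) together with the additivity of $\Lambda$ over cells. Everything else — finiteness, membership in $\widetilde{\mathcal M}$, symmetry remarks — is immediate from Proposition \ref{prop22} and the remarks following Definition \ref{def31}. I would present (a) first as a standalone "nested infima" computation, then deduce (b) by induction with (a) supplying the regrouping of traces.
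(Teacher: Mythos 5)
Your part (a) is correct and in fact slightly more economical than the paper's argument: the paper proves the two inequalities by producing minimizers $f_1,f_2$ attaining the successive traces, whereas your nested-infima identity $[[E]_C]_B=[E]_B$ for $B\subset C\subset D$ needs no attainment of infima at all; both give the same statement, and your general form also yields directly the iterated transitivity $[G]_{V_n}=\big[[G]_{V_{n+k}}\big]_{V_n}$ that (b) requires.

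For (b) you have isolated exactly the right key fact, namely that the trace splits cell by cell through the self-similar decomposition: $[\Lambda G]_{V_k}=\Lambda\big([G]_{V_{k-1}}\big)$ for $G\in\widetilde{\mathcal M}(V_k)$, $k\geq 1$, which holds because every point of $V_{k+1}\setminus V_k$ lies in exactly one $1$-cell (junction points lie in $F_iV_0\cap F_jV_0\subset V_1\subset V_k$ by the standard p.c.f. property), so extensions can be chosen independently in each cell while $\Lambda$ is additive over cells. This is in substance the same computation the paper performs: it proves $[\Lambda^mE]_{V_{n+m-1}}=\Lambda^{m-1}\mcT E$ in one stroke via a cellwise-minimizing extension over $w\in W_{m-1}$, and then invokes (a) and repeats. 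However, the way you close the induction does not work as written: from $[\Lambda^{m+1}E]_{V_n}=\big[\Lambda\big([\Lambda^mE]_{V_{n+m-1}}\big)\big]_{V_n}$, the inductive hypothesis $[\Lambda^mE]_{V_n}=\mcT^mE$ says nothing about the inner object $[\Lambda^mE]_{V_{n+m-1}}$, whose trace is taken at level $n+m-1$ rather than $n$; so the claim that ``the outer trace turns $\Lambda^mE$ into $\mcT^mE$'' is not justified at that point. The repair is to iterate your commutation lemma all the way down, $[\Lambda^{m+1}E]_{V_{n+m}}=\Lambda\big([\Lambda^{m}E]_{V_{n+m-1}}\big)=\cdots=\Lambda^{m}\big([\Lambda E]_{V_n}\big)=\Lambda^m(\mcT E)$, and then apply transitivity together with the inductive hypothesis \emph{quantified over all energies in} $\widetilde{\mathcal M}(V_n)$ (it must now be applied to $\mcT E$, not to $E$), giving $[\Lambda^{m+1}E]_{V_n}=[\Lambda^m(\mcT E)]_{V_n}=\mcT^m(\mcT E)=\mcT^{m+1}E$. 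With that adjustment your argument is complete and coincides with the paper's proof in all essentials.
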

\begin{proof}
(a). It suffices to show $[[E]_{V_{n+1}}]_{V_{n}}(f)=[E]_{V_n}(f)$ for each $f\in l(V_n)$.

First, one can find $f_1\in l(V_{n+1})$ so that $f_1|_{V_n}=f$ and  $[E]_{V_{n+1}}(f_1)=[[E]_{V_{n+1}}]_{V_{n}}(f)$; one can then find $f_2$ so that $f_2|_{V_{n+1}}=f_1$ and $E(f_2)=[E]_{V_{n+1}}(f_1)=[[E]_{V_{n+1}}]_{V_{n}}(f)$. So we have $[[E]_{V_{n+1}}]_{V_{n}}(f)\geq [E]_{V_n}(f)$ by definition.

Next, for any $g\in l(V_{n+2})$ satisfying $g|_{V_n}=f$, we have  $E(g)\geq [E]_{V_{n+1}}(g|_{V_{n+1}})\geq [E]_{V_{n+1}}(f_1)=E(f_2)$, hence $[[E]_{V_{n+1}}]_{V_{n}}(f)\leq [E]_{V_n}(f)$.

(b). It is easy to see that $[\Lambda^m E]_{V_{n+m-1}}=\Lambda^{m-1}\mathcal{T}E$. In fact, let $f\in l(V_{n+m-1})$, one can define $f_1\in l(V_{n+m})$ such that $\mathcal{T}E(f\circ F_w)=\Lambda E(f_1\circ F_w)$ for each $w\in W_{m-1}$, then for any $g\in l(V_{n+m})$ such that $g|_{V_{n+m-1}}=f$ we have
\[\begin{aligned}
\Lambda^{m} E(f_1)
&=\sum_{w\in W_{m-1}}r_w^{-1}\Lambda E(f_1\circ F_w)\\
&=\sum_{w\in W_{m-1}}r_w^{-1}\mathcal{T} E(f\circ F_w)\leq \sum_{w\in W_{m-1}}r_w^{-1}\Lambda E(g\circ F_w)=\Lambda^{m} E(g),
\end{aligned}\]
hence $[\Lambda^mE]_{V_{n+m-1}}(f)=\Lambda^{m} E(f_1)=\sum_{w\in W_{m-1}}r_w^{-1}\mathcal{T} E(f\circ F_w)=\Lambda^{m-1}\mathcal {T}E(f)$.

By using (a), one can now repeat the argument to see (b).
\end{proof}

Finally, we end this section with some basic properties of $\mathcal{T}$, which are essentially due to Metz \cite{M1} for the $p=2$ case.

\begin{definition}\label{def33}
	(a). Let $E_1,E_2\in \widetilde{\mathcal{M}}$. We define
	\[\begin{aligned}
		\sup(E_1|E_2)=\sup\left\{\frac{E_1(f)}{E_2(f)}:f\in l(V_0),E_1(f)+E_2(f)>0\right\},\\
		\inf(E_1|E_2)=\inf\left\{\frac{E_1(f)}{E_2(f)}:f\in l(V_0),E_1(f)+E_2(f)>0\right\}.
	\end{aligned}\]
	
	(b). Let $E\in \mathcal{M}$, we define
	\[\theta(E)=\frac{\sup(\mathcal{T}E|E)}{\inf(\mathcal{T}E|E)}.\]
\end{definition}

Following the routine argument on the $p=2$ setting, we can easily prove the following result.

\begin{lemma}\label{lemma34}
	(a). Let $E_1,E_2\in \mathcal{M}$, we have
	\[\begin{cases}
		\sup(\mathcal{T}E_1|\mathcal{T}E_2)\leq \sup(\Lambda E_1|\Lambda E_2)\leq\sup(E_1|E_2),\\
		\inf(\mathcal{T}E_1|\mathcal{T}E_2)\geq \inf(\Lambda E_1|\Lambda E_2)\geq\inf(E_1|E_2).
	\end{cases}
	\]
	
	(b). Let $E\in \mathcal{M}$, we have
	\[\theta(\mathcal{T}E)\leq \theta(E).\]
	
	(c). Let and $E_1,E_2\in \mathcal{M}$, we have
	\[\inf(\mathcal{T}E_1|E_1)\leq \sup(\mathcal{T}E_2|E_2).\]
\end{lemma}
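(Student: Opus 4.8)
\textbf{Proof plan for Lemma \ref{lemma34}.}

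The plan is to mimic the classical $p=2$ arguments of Metz, adapted to the homogeneous-of-degree-$p$ setting. Parts (a) and (b) are essentially formal consequences of the definitions of $\Lambda$, $\mcT$, $\sup(\cdot|\cdot)$ and $\inf(\cdot|\cdot)$; part (c) is where a genuine idea is needed.

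For part (a), I would first prove the inequality $\sup(\Lambda E_1|\Lambda E_2)\le \sup(E_1|E_2)$. Write $c=\sup(E_1|E_2)$, so that $E_1(g)\le c\,E_2(g)$ for every $g\in l(V_0)$ (the cases with $E_1(g)+E_2(g)=0$ being trivial since then both vanish by non-degeneracy is not needed --- we only use the inequality). Then for any $f\in l(V_1)$,
\[\Lambda E_1(f)=\sum_{i=1}^N r_i^{-1}E_1(f\circ F_i)\le c\sum_{i=1}^N r_i^{-1}E_2(f\circ F_i)=c\,\Lambda E_2(f),\]
which gives the bound; the matching statement for $\inf$ is identical with the inequality reversed. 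Next, for the step $\sup(\mcT E_1|\mcT E_2)\le \sup(\Lambda E_1|\Lambda E_2)$, recall $\mcT E_i=[\Lambda E_i]_{V_0}$. Fix $f\in l(V_0)$; by Proposition \ref{prop24}(b) (or rather the analogous realization statement, Proposition \ref{prop22}(c) combined with the continuity/compactness argument already used) choose $\tilde f\in l(V_1)$ extending $f$ with $\Lambda E_2(\tilde f)=\mcT E_2(f)$. Then
\[\mcT E_1(f)\le \Lambda E_1(\tilde f)\le \sup(\Lambda E_1|\Lambda E_2)\cdot\Lambda E_2(\tilde f)=\sup(\Lambda E_1|\Lambda E_2)\cdot\mcT E_2(f),\]
using that $\mcT E_1(f)$ is an infimum over extensions of $f$, hence $\le \Lambda E_1(\tilde f)$. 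Dividing gives the claim; the $\inf$ version is symmetric (choose the extension realizing $\mcT E_1(f)$ instead). Part (b) is then immediate: $\theta(\mcT E)=\sup(\mcT^2 E|\mcT E)/\inf(\mcT^2 E|\mcT E)$, and applying (a) with $E_1=\mcT E$, $E_2=E$ gives $\sup(\mcT^2E|\mcT E)\le \sup(\mcT E|E)$ and $\inf(\mcT^2E|\mcT E)\ge \inf(\mcT E|E)$, whence the ratio only decreases.

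For part (c), which I expect to be the main obstacle, the point is a minimax/averaging argument. Suppose for contradiction that $\inf(\mcT E_1|E_1)>\sup(\mcT E_2|E_2)$; call this gap, so there is a constant $\lambda$ with $\mcT E_1\ge \lambda E_1$ and $\mcT E_2\le \lambda E_2$ pointwise on $l(V_0)$. Consider the one-parameter family $E_t=(1-t)E_1+tE_2\in\mathcal M$ for $t\in[0,1]$ (convexity of the class is Proposition \ref{prop22}(a), and $\mathcal{M}$ is preserved). The idea is that $t\mapsto \inf(\mcT E_t|E_t)$ and $t\mapsto\sup(\mcT E_t|E_t)$ vary continuously — continuity in $t$ follows because $E_t$ depends continuously on $t$ in the norm $\|\cdot\|_{\widetilde{\mathcal M}_p(V_0)}$ and both $\mcT$ and the sup/inf ratios are continuous (the continuity of $\mcT$ being a consequence of the comparison estimates of Proposition \ref{prop26} together with the trace construction). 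At $t=0$ we have $\inf(\mcT E_0|E_0)=\inf(\mcT E_1|E_1)$ large, and at $t=1$ we have $\sup(\mcT E_1|E_1)$ replaced appropriately; but more to the point, $\Lambda$ is \emph{linear}, so $\Lambda E_t=(1-t)\Lambda E_1+t\Lambda E_2$, and one can estimate $\mcT E_t=[\Lambda E_t]_{V_0}$ from above by evaluating $\Lambda E_t$ on the extension $\tilde f_1$ realizing $\mcT E_1$, and from below by the sub-additivity of traces; combined with $\mcT E_1\ge \lambda E_1$, $\mcT E_2\le \lambda E_2$ this forces $\mcT E_t$ to sit between $\lambda E_1$-type and $\lambda E_2$-type bounds, contradicting the strict gap as $t$ runs across $[0,1]$. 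Concretely, one shows the function $t\mapsto \sup(\mcT E_t|E_t)-\inf(\mcT E_t|E_t)$ cannot be bounded away from being ``consistent'' and derives a contradiction at an intermediate value of $t$ where $\mcT E_t$ must simultaneously dominate and be dominated by $\lambda E_t$, which is impossible unless $\mcT E_t=\lambda E_t$, and then propagating equality back forces $\inf(\mcT E_1|E_1)\le \lambda\le\sup(\mcT E_2|E_2)$. The delicate point is establishing the continuity of $t\mapsto \mcT E_t$ in the relevant norm and ensuring that no degeneracy (escape to the boundary of $\mathcal M$) occurs along the path — this is exactly where the estimates of Section \ref{sec2}, particularly Proposition \ref{prop26} and Corollary \ref{coro27}, are invoked to keep everything uniformly comparable to a standard form.
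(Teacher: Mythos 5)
Your treatment of (a) and (b) is correct and coincides with the paper's: compare $\Lambda E_1$ and $\Lambda E_2$ termwise to get $\sup(\Lambda E_1|\Lambda E_2)\le\sup(E_1|E_2)$, then test $\Lambda E_1$ on the minimal-energy extension for $\Lambda E_2$ to pass to the traces, and (b) is a formal consequence. The problem is part (c), where your proposal has a genuine gap.

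Your contradiction argument along the segment $E_t=(1-t)E_1+tE_2$ never produces an actual contradiction. Granting continuity of $t\mapsto\sup(\mcT E_t|E_t)$ and $t\mapsto\inf(\mcT E_t|E_t)$, all an intermediate-value argument yields is some $t^*$ with $\inf(\mcT E_{t^*}|E_{t^*})\le\lambda\le\sup(\mcT E_{t^*}|E_{t^*})$, which is perfectly consistent and not "impossible unless $\mcT E_{t^*}=\lambda E_{t^*}$"; nothing in your sketch forces an eigenform at an intermediate $t$, and even if you had one, your final step ("propagating equality back forces $\inf(\mcT E_1|E_1)\le\lambda\le\sup(\mcT E_2|E_2)$") is exactly the statement (c) you are trying to prove, applied to the pairs $(E_1,E_{t^*})$ and $(E_{t^*},E_2)$ — i.e.\ it is circular. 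Moreover, existence of eigenforms is precisely what Section \ref{sec4} is about and requires condition (\textbf{A}); it cannot be invoked here. The intended argument is short and direct and needs no homotopy: since $E_1,E_2\in\mathcal{M}$ are continuous and non-degenerate, the ratio $E_1/E_2$ attains its supremum on the compact set $\{f:\Osc(f)=1,\sum_{x\in V_0}f(x)=0\}$, so choose $f$ with $\frac{E_1(f)}{E_2(f)}=\sup(E_1|E_2)$ and let $f_1,f_2\in l(V_1)$ be the minimal-energy extensions of $f$ for $\Lambda E_1,\Lambda E_2$. Then, using part (a),
\[\mcT E_1(f)=\Lambda E_1(f_1)\leq \Lambda E_1(f_2)\leq \sup(E_1|E_2)\,\Lambda E_2(f_2)=\frac{E_1(f)}{E_2(f)}\,\mcT E_2(f),\]
so $\frac{\mcT E_1(f)}{E_1(f)}\leq\frac{\mcT E_2(f)}{E_2(f)}$, and hence $\inf(\mcT E_1|E_1)\leq\sup(\mcT E_2|E_2)$. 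The whole content of (c) is this choice of the extremal comparison function $f$, which your proposal does not contain.
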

\begin{proof}
	For each $f\in l(V_0)$, we denote by $f_i\in l(V_1)$ the minimal energy extension of $f$ with respect to $\Lambda E_i$ for $i=1,2$.
	
	(a). First, for any $g\in l(V_{1})$, we have
	\[\frac{\Lambda E_1(g)}{\Lambda E_2(g)}=\frac{\sum_{i=1}^N r_i^{-1}E_1(g\circ F_i)}{\sum_{i=1}^N r_i^{-1} E_2(g\circ F_i)}\leq \sup(E_1|E_2),\]
	whence $\sup(\Lambda E_1|\Lambda E_2)\leq \sup( E_1| E_2)$.
	
	Next, for any $f\in l(V_0)$, we have
	\[\mathcal{T}E_1(f)=\Lambda E_1(f_1)\leq\Lambda E_1(f_2)\leq \sup(\Lambda E_1|\Lambda E_2)\Lambda E_2(f_2)=\sup(\Lambda E_1|\Lambda E_2)\mathcal{T}E_2(f),\]
	so $\sup(\mathcal{T}E_1|\mathcal{T}E_2)\leq \sup(\Lambda E_1|\Lambda E_2)$.
	
	Combining the above two arguments, we get the inequality for the ``$\sup$'' part.
	Noticing that $\inf(E_1|E_2)=\sup(E_2|E_1)^{-1}$, we also get the inequality for the ``$\inf$'' part.
	
	(b) is an immediate consequence of (a):
	\[
	\theta(\mathcal{T}E)=\frac{\sup(\mathcal{T}^2E|\mathcal{T}E)}{\inf(\mathcal{T}^2E|\mathcal{T}E)}\leq \frac{\sup(\mathcal{T}E|E)}{\inf(\mathcal{T}E|E)}=\theta(E).
	\]
	
	(c). Choose $f\in l(V_0)$ so that $\frac{E_1(f)}{E_2(f)}=\sup(E_1|E_2)$. Then
	\[\mathcal{T}E_1(f)=\Lambda E_1(f_1)\leq \Lambda E_1(f_2)\leq \sup(E_1|E_2)\Lambda E_2(f_2)=\frac{E_1(f)}{E_2(f)}\mathcal{T}E_2(f),\]
	whence
	\[\inf(\mathcal{T}E_1|E_1)\leq\frac{\mathcal{T}E_1(f)}{E_1(f)}\leq \frac{\mathcal{T}E_2(f)}{E_2(f)}\leq \sup(\mathcal{T}E_2|E_2).\]
\end{proof}

\section{The existence of an eigenform}\label{sec4}
In this section, we study the existence of an eigenvector (eigenform) of $\mathcal{T}$, i.e. to find $E\in \mathcal{M}$ and $\lambda>0$ such that
\[\mcT E=\lambda E.\]

\begin{definition}\label{def41}
	For $E\in\mathcal M$, we define
	\begin{equation*}
		\delta(E)=\frac{\min_{x\neq y} R(x,y)}{\max_{x\neq y} R(x,y)}.
	\end{equation*}
\end{definition}
In the rest of this paper, we refer to the following condition as (\textbf{A}):\vspace{0.2cm}

\noindent(\textbf{A}). There exists $E\in \mathcal{M}$ such that $\inf_{n\geq 0}\delta(\mathcal{T}^nE)>0$. \vspace{0.2cm}

This condition always holds for nested fractals (Section \ref{sec6}). For general p.c.f. fractals, we have a strengthened version of Sabot's criteria (Section \ref{sec8}).

Similar as the notations $\mathcal M$, $\widetilde{\mathcal M}$, from now on, we abbreviate $\mathcal{S}_p(V_0)$, $\mathcal{Q}'_p(V_0)$ and $\mathcal{Q}_p(V_0)$ to $\mathcal S$, $\mathcal Q'$ and $\mathcal Q$, respectively.

\begin{theorem}\label{thm42}
Let $1<p<\infty$ and $K$ be a p.c.f. self-similar set. Then there exists $E\in \mathcal{Q}$ and $\lambda>0$ such that $\mcT E=\lambda E$ if and only if (\textbf{A}) holds. In particular, $\lambda$ is unique.

In addition, if $K$ and $\bm{r}$ are $\mathscr{G}$-symmetric, then we can  also require that $E$ is $\mathscr{G}$-symmetric.
\end{theorem}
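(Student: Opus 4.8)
The plan is to establish the two directions separately, with the substance of the argument lying in the ``(\textbf{A}) $\Rightarrow$ existence'' direction. The ``only if'' direction is the easy one: if $\mcT E = \lambda E$ for some $E \in \mathcal{Q} \subset \mathcal{M}$, then $\mcT^n E = \lambda^n E$, so $\delta(\mcT^n E) = \delta(E) > 0$ for all $n$ (since $\delta$ is scale-invariant and $E$ being non-degenerate forces all pairwise resistances finite and positive), hence $\inf_n \delta(\mcT^n E) = \delta(E) > 0$, which is exactly (\textbf{A}). For the uniqueness of $\lambda$: if $\mcT E_1 = \lambda_1 E_1$ and $\mcT E_2 = \lambda_2 E_2$ with $E_i \in \mathcal{M}$, then by Lemma \ref{lemma34}(c), $\lambda_1 = \inf(\mcT E_1 | E_1) \le \sup(\mcT E_2 | E_2) = \lambda_2$, and symmetrically $\lambda_2 \le \lambda_1$, so $\lambda_1 = \lambda_2$.

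For the main direction, assume (\textbf{A}) holds, so there is $E_0 \in \mathcal{M}$ with $c := \inf_n \delta(\mcT^n E_0) > 0$. First I would replace $E_0$ by a comparable standard form using Corollary \ref{coro27}/Remark 3 if convenient, so we may work inside $\mathcal{S}$ or $\mathcal{Q}$; note that $\delta$ of a comparable form is bounded below, and more importantly, once we renormalize once, $\mcT E_0 \in \mathcal{Q}$ (trace of a self-similar standard form on $V_1$ back to $V_0$), so WLOG $E_0 \in \mathcal{Q}$. The key normalization is to consider the ``energy-averaging'' sequence
\[
\widehat{E}_n = \frac{1}{n}\sum_{m=0}^{n-1} \frac{\mcT^m E_0}{\lambda_m},
\]
or rather, following Kusuoka--Zhou's method, to work with $\mcT^n E_0$ suitably normalized (say, so that $\max_{x\neq y} R(x,y) = 1$, or so that $\mcT^n E_0(f_\star) = 1$ for a fixed non-constant $f_\star$), and extract a convergent subsequence. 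The uniform lower bound (\textbf{A}) together with Proposition \ref{prop26} is what guarantees that the normalized sequence $\{\mcT^n E_0 / \|\mcT^n E_0\|\}$ lies in a compact subset of $(\mathcal{M}, \|\cdot\|_{\widetilde{\mathcal M}})$: indeed, $\delta(\mcT^n E_0) \ge c$ controls the ratio of max to min resistance, hence via Proposition \ref{prop26} controls $E_1(f)/E_2(f)$ uniformly, preventing degeneration. Then I would pass to a subsequence along which the Cesàro averages $\frac{1}{n_l}\sum_{m<n_l} \mcT^m E_0 / \lambda^m$ (with $\lambda$ the candidate eigenvalue, obtained as $\lim \big(\text{normalization of } \mcT^{n+1}E_0 / \mcT^n E_0\big)$ or via $\inf(\mcT E|E)$) converge to some $E_\infty$; the averaging is the standard trick to upgrade ``$\mcT$ has a fixed direction up to bounded distortion'' to ``$\mcT$ has an honest eigenvector,'' since $\mcT$ is homogeneous of degree $1$ and monotone in the sense of Lemma \ref{lemma34}, so $\mcT \widehat{E}_n \approx \widehat{E}_n$ up to the boundary terms $\frac1n(\mcT^n E_0 - E_0)$ which vanish in the limit. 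One must check $E_\infty \in \mathcal{Q}$ (closedness of $\mathcal{Q}$ under the norm limit, which holds by definition as $\mathcal{Q}$ is the closure of $\mathcal{Q}'$; each $\mcT^m E_0 \in \mathcal{Q}$ and Cesàro averages stay in the cone, and $\mathcal{M}$ is preserved since $\delta(E_\infty) > 0$ rules out degeneracy) and $E_\infty \ne 0$ (again from the uniform $\delta$ bound), and finally that $\mcT E_\infty = \lambda E_\infty$.

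The main obstacle is the compactness/convergence step: unlike $p=2$, a $p$-energy is \emph{not} determined by its resistance metric, so Proposition \ref{prop26} only gives comparability of energies, not convergence; one needs the strict/uniform convexity of forms in $\mathcal{Q}$ (Remark 2 after Definition \ref{def28}) to pin down the limit and to show that the Cesàro-averaged minimal-energy extensions actually converge (not merely subsequentially in a weak sense), and to verify that the limiting form is genuinely $\mcT$-invariant rather than merely invariant up to distortion. Concretely, showing $\mcT E_\infty = \lambda E_\infty$ requires that $\mcT$ interacts well with the norm-limit: $\mcT$ is continuous on $(\mathcal{M}, \|\cdot\|)$ on the region where $\delta$ is bounded below (this continuity itself needs an argument, essentially that minimal energy extensions depend continuously on the form, using strict convexity), so $\mcT E_\infty = \lim \mcT \widehat{E}_{n_l} = \lim \big(\widehat{E}_{n_l} + \tfrac{1}{n_l}(\mcT^{n_l}E_0/\lambda^{n_l} - E_0)\big) = E_\infty$ after rescaling by $\lambda$. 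Finally, for the $\mathscr{G}$-symmetric addendum: if $K$ and $\bm r$ are $\mathscr{G}$-symmetric, start with a $\mathscr{G}$-symmetric $E_0$ (symmetrize: replace $E_0$ by $\frac{1}{\#\mathscr{G}}\sum_{\sigma} E_0(\cdot \circ \sigma)$, which stays in $\mathcal{M}$ by Proposition \ref{prop22}(a) and preserves comparability hence (\textbf{A})); by the remarks after Definition \ref{def31}, $\Lambda$ and $\mcT$ preserve $\mathscr{G}$-symmetry, so the entire averaging sequence is $\mathscr{G}$-symmetric, and $\mathscr{G}$-symmetry is a closed condition under $\|\cdot\|_{\widetilde{\mathcal M}}$, so $E_\infty$ is $\mathscr{G}$-symmetric.
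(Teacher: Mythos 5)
Your easy direction and the uniqueness of $\lambda$ (via Lemma \ref{lemma34}(c)) are fine, and your overall Kusuoka--Zhou averaging strategy is the right one, but the final step contains a genuine gap: you treat $\mcT$ as if it were additive on the Ces\`aro sums. You average the \emph{traced} forms, setting $\widehat{E}_n=\frac1n\sum_{m<n}\lambda^{-m}\mcT^m E_0$, and then assert the identity $\mcT \widehat{E}_{n}=\lambda\bigl(\widehat{E}_{n}+\frac1n(\lambda^{-n}\mcT^{n}E_0-E_0)\bigr)$ to conclude $\mcT E_\infty=\lambda E_\infty$ in the limit. But $\mcT$ is a trace (an infimum over extensions), so it is only \emph{super}additive: $\mcT(E+E')\geq \mcT E+\mcT E'$, with no matching upper bound. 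Your identity therefore holds only as the one-sided inequality $\lambda^{-1}\mcT\widehat{E}_n\geq \widehat{E}_n-\frac1n E_0$, and the limit $E_\infty$ is only a sub-eigenform, $\mcT E_\infty\geq\lambda E_\infty$. This is exactly where the paper's proof does more work: it averages \emph{before} tracing, i.e.\ works with $\mcE_n=\frac{1}{n+1}\sum_m\lambda^{-m}\Lambda^m E_0$ on $V_n$ (where $\Lambda$, a plain sum over cells, \emph{is} additive, so the telescoping identity is exact) and sets $E_n=[\mcE_n]_{V_0}\in\mathcal{Q}'$; and then, after extracting the limit $E'$ with $E'\leq\lambda^{-1}\mcT E'$, it performs a \emph{second} limiting step, using Lemma \ref{lemma44}(b) to see that the monotone increasing orbit $\lambda^{-n}\mcT^n E'$ is bounded, so that $E=\lim_n\lambda^{-n}\mcT^n E'$ exists and is a genuine fixed point of $\lambda^{-1}\mcT$. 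Without this monotone-iteration step your argument produces only a sub-eigenform, not an eigenform.

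Two smaller points. First, the existence of the scaling constant $\lambda$ together with the two-sided bounds $C^{-1}\lambda^n E_0\leq\mcT^n E_0\leq C\lambda^n E_0$ is not a given; it is Lemma \ref{lemma44}(b) (Proposition \ref{prop26} plus a submultiplicativity/Fekete argument under (\textbf{A})), and it is needed twice --- to show the averages stay comparable to $E_0$ (hence bounded and non-degenerate) and to show the orbit $\lambda^{-n}\mcT^n E'$ is bounded above in the missing final step --- so it should be proved, not just posited as ``the candidate eigenvalue.'' Second, your route requires knowing that sums of elements of $\mathcal{Q}'$ (the traced forms $\mcT^m E_0$) remain in $\mathcal{Q}'$ or $\mathcal{Q}$; this can be arranged (realize the sum as the trace of a standard form on a set obtained by gluing the auxiliary sets along $V_0$), but you do not address it, whereas the paper deliberately organizes the proof so that $E_n=[\mcE_n]_{V_0}$ is manifestly a single trace of a standard form, precisely because it is not obvious that $\mathcal{Q}$ is a cone. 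The $\mathscr{G}$-symmetry addendum in your proposal is fine.
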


We will prove the existence of an eigenform by the spirit of Kusuoka-Zhou's idea \cite{KZ}, using an averaging technique. The original proof was designed to construct a self-similar Dirichlet form directly, and has never been modified for the renormalization map before.

We also remark that in Proposition 4.4 of \cite{M1}, the $\omega$-limit technique was applied to show the existence of an eigenform (see Theorem 4.1 of \cite{N}), which can also be modified here to show the existence of an eigenform $E\in \mathcal{M}$ (it is not clear whether $\mathcal{Q}$ is a cone). Our new proof is self-contained and fundamental. Readers do not need further knowledge about fixed point theorems or deep results about Hilbert's projective metrics \cite{N}.\vspace{0.2cm}

We first state an easy fact about the norm $\|\cdot\|_{\widetilde{\mathcal{M}}}$.

\begin{lemma}\label{lemma43}
Let $E_n\in \widetilde{\mathcal{M}},n\geq 1$.
	
(a). If $\sup_{n\geq 1}\|E_n\|_{\widetilde{\mathcal{M}}}<\infty$, then there is a subsequence $n_l,l\geq 1$ and $E\in \widetilde{\mathcal{M}}$ so that $\|E_{n_l}-E\|_{\widetilde{\mathcal{M}}}\to 0$ as $l\to\infty$.

(b). If $E\in \mathcal{M}$ and $\|E_n-E\|_{\widetilde{\mathcal{M}}}\to 0$ as $n\to\infty$, then $\|\mathcal{T}E_n-\mathcal{T}E\|_{\widetilde{\mathcal{M}}}\to 0$.
\end{lemma}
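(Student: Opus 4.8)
The statement to prove is Lemma \ref{lemma43}, with two parts: (a) a compactness statement for bounded sequences in $\widetilde{\mathcal{M}}$ under the norm $\|\cdot\|_{\widetilde{\mathcal{M}}}$, and (b) continuity of $\mathcal{T}$ at points of $\mathcal{M}$.

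For part (a), the plan is to exploit that $\widetilde{\mathcal M}(V_0)$ lives in a finite-dimensional ambient space in a suitable sense: although $\widetilde{\mathcal M}$ is only a cone of functionals on $l(V_0)$, each $E_n$ is a convex, $p$-homogeneous, constant-invariant functional, so it is determined by its restriction to the unit sphere $\{f : \max_{x,y}|f(x)-f(y)| = 1\}$ modulo constants, which is a compact subset of the finite-dimensional space $l(V_0)/\mathrm{Constants}$. The bound $\sup_n \|E_n\|_{\widetilde{\mathcal M}} < \infty$ gives a uniform pointwise bound $E_n(f) \le \|E_n\|_{\widetilde{\mathcal M}} \max_{x,y}|f(x)-f(y)|^p$; combined with convexity this gives uniform Lipschitz bounds on compact sets (a family of convex functions with a common pointwise bound on a slightly larger compact set is equi-Lipschitz on the interior). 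Hence by Arzelà–Ascoli applied on a countable dense set (or directly, using that the relevant quotient space is finite-dimensional), one extracts a subsequence $E_{n_l}$ converging uniformly on compacta of $l(V_0)/\mathrm{Constants}$ to some functional $E$. One then checks $E$ inherits properties (i)--(v) and $E(1)=0$ by passing to the limit (non-negativity, convexity, $p$-homogeneity, constant-invariance, Markov property are all closed under pointwise limits), so $E \in \widetilde{\mathcal M}$; and finally that $\|E_{n_l} - E\|_{\widetilde{\mathcal M}} \to 0$, i.e. the convergence is uniform on the unit sphere, which follows from uniform convergence on that compact set.

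For part (b), I would first reduce $\mathcal{T} = [\Lambda \cdot]_{V_0}$ to two continuity statements: continuity of $E \mapsto \Lambda E$ from $\widetilde{\mathcal M}(V_0)$ to $\widetilde{\mathcal M}(V_1)$, and continuity of the trace map $[\cdot]_{V_0} : \widetilde{\mathcal M}(V_1) \to \widetilde{\mathcal M}(V_0)$ at points whose trace lies in $\mathcal{M}(V_0)$. The map $\Lambda$ is manifestly (essentially) linear in $E$, $\Lambda E(f) = \sum_i r_i^{-1} E(f \circ F_i)$, so it is Lipschitz with respect to the two norms with constant controlled by $\sum_i r_i^{-1}$ together with the comparison of $\max$-oscillations on $V_1$ versus on each $F_i V_0$; this step is routine. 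The trace map is the genuine issue: $[E]_{V_0}(f) = \inf\{E(f') : f'|_{V_0}=f\}$, an infimum over extensions, so it is only upper semicontinuous in general, not continuous. This is where the hypothesis $\mathcal{T}E \in \mathcal{M}(V_0)$ (non-degeneracy) is crucial, and I expect it to be the main obstacle. The idea is: since $\mathcal{T}E$ is non-degenerate, by Corollary \ref{coro27} / Proposition \ref{prop26} it is comparable to a standard form with strictly positive resistances, so minimal-energy extensions with respect to $\Lambda E_n$ (which converges to $\Lambda E$) stay in a fixed compact set of $l(V_1)$ and cannot ``escape to degeneracy''; one uses the strict convexity / uniform boundedness to show that minimizers $f'_n$ for $\Lambda E_n$ converge to a minimizer $f'$ for $\Lambda E$, giving $[\Lambda E_n]_{V_0}(f) \to [\Lambda E]_{V_0}(f)$ for each $f$. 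Concretely I would fix $f \in l(V_0)$, take minimal extensions $f'_n$ with respect to $\Lambda E_n$ normalized by, say, $f'_n|_{V_0}=f$; bound $\Lambda E_n(f'_n) \le \Lambda E_n(f^\sharp)$ for a fixed extension $f^\sharp$ of $f$, which via the uniform norm bound on $E_n$ (from (a), convergent sequences are bounded) and the non-degeneracy lower bound on $\Lambda E_n$ forces $\{f'_n\}$ to be uniformly bounded, hence precompact; any subsequential limit $f'$ satisfies $\Lambda E(f') \le \liminf \Lambda E_n(f'_n) \le \limsup \Lambda E_n(f^\sharp) = \Lambda E(f^\sharp)$, and optimizing over $f^\sharp$ shows $\Lambda E(f')=[\Lambda E]_{V_0}(f)$ while simultaneously $[\Lambda E_n]_{V_0}(f)=\Lambda E_n(f'_n)\to \Lambda E(f')$. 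Upper semicontinuity gives the matching inequality the other way, so $\mathcal{T}E_n(f)\to \mathcal{T}E(f)$ pointwise; finally upgrade pointwise convergence to norm convergence via the compactness/equi-Lipschitz argument from part (a) applied to the sequence $\mathcal{T}E_n$ (which is norm-bounded by Lemma \ref{lemma34}-type monotonicity or directly).

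The main obstacle, to restate, is the lower semicontinuity failure of the trace (infimum-over-extensions) operation: one must use non-degeneracy of the limit trace $\mathcal{T}E$ to confine the minimizing extensions to a compact set and rule out degeneration, and this is exactly why part (b) requires $E \in \mathcal{M}$ rather than merely $E \in \widetilde{\mathcal M}$. A secondary technical point is that $\widetilde{\mathcal M}$ is only a cone, so ``$E_{n_l}-E$'' and ``$\mathcal{T}E_n - \mathcal{T}E$'' must be interpreted in the ambient normed space spanned by $\widetilde{\mathcal M}$ (Remark 1 after Definition \ref{def28}), but this causes no real difficulty since all the estimates are in terms of values on functions.
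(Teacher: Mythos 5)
Your part (a) is essentially the paper's own argument: the norm bound gives a uniform bound on the compact set $M=\{f:\Osc(f)=1,\sum_{x\in V_0}f(x)=0\}$, convexity and homogeneity give equicontinuity there, Arzel\`a--Ascoli produces a uniformly (hence norm) convergent subsequence, and the limit stays in $\widetilde{\mathcal M}$.

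For part (b) you take a genuinely different route from the paper, and it does work, but the two arguments are worth contrasting. The paper never looks at minimizing extensions at all: it sets $S=\min_{f\in M}E(f)>0$ (using $E\in\mathcal M$ and compactness of $M$), notes that $\|E_n-E\|_{\widetilde{\mathcal M}}\to 0$ gives two-sided ratio bounds $\frac{S-\|E_n-E\|}{S}\leq \frac{E_n(f)}{E(f)}\leq \frac{S+\|E_n-E\|}{S}$ on $M$, and then invokes the monotonicity of $\sup(\cdot|\cdot)$ and $\inf(\cdot|\cdot)$ under $\Lambda$ and the trace (Lemma \ref{lemma34} (a)) to transfer exactly the same ratio bounds to $\mathcal{T}E_n/\mathcal{T}E$, yielding the quantitative Lipschitz estimate $\|\mathcal{T}E_n-\mathcal{T}E\|_{\widetilde{\mathcal M}}\leq \frac{\|\mathcal{T}E\|_{\widetilde{\mathcal M}}}{S}\|E_n-E\|_{\widetilde{\mathcal M}}$ in two lines. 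Your decomposition into continuity of $\Lambda$ plus continuity of the trace at non-degenerate limits is more hands-on: you must argue (as you implicitly do) that $E_n\in\mathcal M$ for large $n$ with a uniform coercivity bound $E_n\geq \frac{S}{2}\Osc(\cdot)^p$, push this to a uniform lower bound for $\Lambda E_n$ via a chaining argument on the connected cell structure of $V_1$, confine the (near-)minimizing extensions to a compact set, pass to subsequential limits using the joint continuity of $(E,f)\mapsto E(f)$ (Lemma \ref{lemmaa1}), and finally re-run the equicontinuity argument of (a) to upgrade pointwise convergence of $\mathcal{T}E_n$ to norm convergence. All of these steps go through (note that strict convexity, which you mention, is neither available for general elements of $\widetilde{\mathcal M}$ nor needed — boundedness and compactness suffice), so your proof is correct; what the paper's ratio-comparison buys is an explicit modulus of continuity for $\mathcal{T}$ at $E$ with no analysis of extensions, while your argument is more elementary in that it only uses the definition of the trace and would adapt to situations where a Lemma \ref{lemma34}-type ratio monotonicity is not at hand.
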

\begin{proof}
Let $M=\big\{f\in l(V_0):\Osc(f)=1,\sum_{x\in V_0}f(x)=0\big\}$, then $M$ is a compact subset of $l(V_0)$ (with respect to the $\|\cdot\|_{\infty}$ norm). 	
	
(a). By the assumption, $\{E_n\}_{n}$ are uniformly bounded on $M$; and also for $f\neq g\in M$, we have
\[E_n(f)\leq tE_n\big(t^{-1}(f-g)\big)+(1-t)E_n\big((1-t)^{-1}g\big)=t^{1-p}E_n(f-g)+(1-t)^{1-p}E_n(g),\]
and similarly $E_n(g)\leq t^{1-p}E_n(f-g)+(1-t)^{1-p}E_n(f)$. Hence,
\[\big|E_n(f)-E_n(g)\big|\leq t^{1-p}E_n(f-g)+\big((1-t)^{1-p}-1\big)\cdot\max\big\{E_n(f),E_n(g)\big\}.\]
Let $t=\big(E_n(f-g)\big)^{1/p}$, noticing that
$E_n(f-g)\leq\|f-g\|^p_{\infty}\|E_n\|_{\widetilde{\mathcal M}}$ and $E_n(f)\leq \|E_n\|_{\widetilde{\mathcal M}}, E_n(g)\leq \|E_n\|_{\widetilde{\mathcal M}}$, we can see
\[\big|E_n(f)-E_n(g)\big|\leq \|E_n\|_{\widetilde{\mathcal M}}^{1/p}\|f-g\|_{\infty}+\big((1-\|E_n\|_{\widetilde{\mathcal M}}^{1/p}\|f-g\|_{\infty})^{1-p}-1\big)\cdot\|E_n\|_{\widetilde{\mathcal M}}.\]
Thus $\{E_n\}_{n}$ are equicontinuous on $M$.

Hence by Arzel\`a-Ascoli theorem, there is subsequence $\{n_l\}_{l\geq 1}$ so that $E_{n_l}$ converges uniformly on $M$, and hence $E_{n_l}$ converges to some $E\in \widetilde{\mathcal{M}}$ pointwisely on $l(V_0)$, noticing that any function in $l(V_0)$ can be decomposed into the sum of a constant function and a multiple of some function in $M$.

(b).  Since $M$ is compact and $E\in \mathcal{M}$, we have $S:=\min_{f\in M}E(f)>0$. Then

\[-\frac{\|E_n-E\|_{\widetilde{\mathcal{M}}}}{S}\leq \frac{E_n(f)-E(f)}{E(f)}\leq \frac{\|E_n-E\|_{\widetilde{\mathcal{M}}}}{S},\quad \forall f\in M,\]
which implies
\[\frac{S-\|E_n-E\|_{\widetilde{\mathcal{M}}}}{S}\leq \frac{E_n(f)}{E(f)}\leq \frac{S+\|E_n-E\|_{\widetilde{\mathcal{M}}}}{S},\quad \forall f\in M.\]
Hence, by Lemma \ref{lemma34} (a),
\[\frac{S-\|E_n-E\|_{\widetilde{\mathcal{M}}}}{S}\leq \frac{\mcT E_n(f)}{\mcT E(f)}\leq \frac{S+\|E_n-E\|_{\widetilde{\mathcal{M}}}}{S},\quad \forall f\in M,\]
which is equivalent to
\[\frac{\big|\mcT E_n(f)-\mcT E(f)\big|}{\mcT E(f)}\leq \frac{\|E_n-E\|_{\widetilde{\mathcal{M}}}}{S},\quad \forall f\in M.\]
Thus, $\|\mcT E_n-\mcT E\|_{\widetilde{\mathcal{M}}}\leq \frac{\|\mcT E\|_{\widetilde{\mathcal{M}}}}{S}\|E_n-E\|_{\widetilde{\mathcal{M}}}$.
\end{proof}

The following lemma is an alternative to \cite[Theorem 7.16]{KZ} by Kusuoka and Zhou.
\begin{lemma}\label{lemma44}
Assume (\textbf{A}), and let $E\in \mathcal{M}$. Then

(a). $\inf_{n\geq 0}\delta(\mcT^nE)>0$.

(b). There exist $\lambda>0$ and $C>0$ such that
	\[C^{-1}\lambda^nE\leq \mcT^n E\leq C\lambda^nE,\quad\forall n\geq 0.\]
\end{lemma}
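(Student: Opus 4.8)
\textbf{Proof plan for Lemma \ref{lemma44}.}

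The plan is to exploit the key fact, coming from Proposition \ref{prop26}, that any two energies in $\widetilde{\mathcal M}$ are comparable with a constant governed by the ratio of their $p$-resistances; and that by Lemma \ref{lemma32} the resistance geometry is what propagates under $\mathcal T$. First I would dispose of part (a). By hypothesis (\textbf{A}) there is some $E_0\in\mathcal M$ with $c:=\inf_{n\geq 0}\delta(\mathcal T^n E_0)>0$. Given an arbitrary $E\in\mathcal M$, Corollary \ref{coro27} (or Proposition \ref{prop26} directly) shows $E$ and $E_0$ are comparable, say $C_0^{-1}E_0\leq E\leq C_0 E_0$; the point is that this comparability is exactly the statement that the $p$-resistances $R$ and $R_0$ satisfy $C_0^{-1}\leq R(x,y)/R_0(x,y)\leq C_0$ for all $x\neq y$ (by Lemma \ref{lemma25}(c), or again by applying Proposition \ref{prop26} with $f$ an indicator). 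The crucial observation is that comparability of energies on $V_0$ is preserved by $\Lambda$ with the same constant, hence by $\mathcal T=[\Lambda\cdot]_{V_0}$ and by iteration: $C_0^{-1}\mathcal T^n E_0\leq \mathcal T^n E\leq C_0\mathcal T^n E_0$ for every $n$. Passing this back to resistances (Lemma \ref{lemma25}(c) gives a two-sided bound between $\mathcal T^n E(1_B)$ and sums of inverse resistances, so a uniform energy comparison yields a uniform resistance comparison up to a constant depending only on $\#V_0$ and $p$), we get $\delta(\mathcal T^n E)\geq c' \delta(\mathcal T^n E_0)\geq c'c>0$ uniformly in $n$, which is (a).

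For part (b) I would first produce the scaling constant $\lambda$ and then the two-sided bound. By Lemma \ref{lemma34}(c) applied with $E_1=E_2=E$, the sequences $a_n:=\inf(\mathcal T^{n+1}E\mid \mathcal T^n E)$ and $b_n:=\sup(\mathcal T^{n+1}E\mid\mathcal T^n E)$ satisfy $a_n\leq b_m$ for all $n,m$ (apply part (c) of that lemma to the pair $\mathcal T^n E, \mathcal T^m E$ after noting $\mathcal T^n E,\mathcal T^m E\in\mathcal M$); combined with Lemma \ref{lemma34}(a), which shows $b_n$ is non-increasing and $a_n$ non-decreasing, we get $a_n\uparrow \lambda_-\leq \lambda_+\downarrow b_n$ for some limits. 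The heart of the matter is to show $\lambda_-=\lambda_+=:\lambda$, i.e. that $\theta_n:=b_n/a_n\to 1$; equivalently $\theta(\mathcal T^n E)\to 1$. This is where part (a) enters decisively: by Proposition \ref{prop26} applied to the pair $\mathcal T^{n+1}E$ and $\mathcal T^n E$, we have
\[
\theta(\mathcal T^n E)=\frac{\sup(\mathcal T^{n+1}E\mid \mathcal T^n E)}{\inf(\mathcal T^{n+1}E\mid \mathcal T^n E)}\leq C\,\frac{\sup_{x\neq y} R_{n}(x,y)/R_{n+1}(x,y)}{\inf_{x\neq y} R_{n}(x,y)/R_{n+1}(x,y)},
\]
where $R_n$ is the $p$-resistance of $\mathcal T^n E$, and $C$ depends only on $\#V_0,p$. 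Here $R_{n+1}$ is the resistance of the trace $\mathcal T(\mathcal T^n E)=[\Lambda \mathcal T^n E]_{V_0}$, and a direct computation with the explicit form of $\Lambda$ shows $R_{n+1}(x,y)\leq$ (a weighted sum of $r_w R_n$-type resistances along cells) and a matching lower bound — the ratio of these over distinct pairs is controlled, through the uniform non-degeneracy $\delta(\mathcal T^n E)\geq c'c$ from (a), by a fixed bound. So $\{\theta(\mathcal T^n E)\}$ is a bounded sequence; combined with monotonicity of $\theta(\mathcal T^n E)$ (Lemma \ref{lemma34}(b)), it converges; and then a standard argument — if $\theta(\mathcal T^n E)\to\theta_\infty>1$, extract a subsequential limit $E_\infty\in\mathcal M$ of the normalized forms $\mathcal T^n E/\|\mathcal T^n E\|_{\widetilde{\mathcal M}}$ using Lemma \ref{lemma43}(a), note $\theta(E_\infty)=\theta_\infty$ by continuity (Lemma \ref{lemma43}(b) and Lemma \ref{lemma34}), but then $\theta(\mathcal T E_\infty)<\theta(E_\infty)$ strictly unless $E_\infty$ is already an eigenform, contradicting that the limit of the non-increasing sequence is attained — forces $\theta_\infty=1$.

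Once $\theta(\mathcal T^n E)\to 1$ and $a_n\uparrow\lambda$, $b_n\downarrow\lambda$, the bound in (b) follows by telescoping: $\mathcal T^n E(f)=\prod_{k=0}^{n-1}\frac{\mathcal T^{k+1}E(f_k)}{\mathcal T^k E(f_k)}\cdot(\cdots)$ is the wrong way to phrase it; instead, directly, for any $f$,
\[
\Big(\prod_{k=0}^{n-1}a_k\Big)E(f)\leq \mathcal T^n E(f)\leq \Big(\prod_{k=0}^{n-1}b_k\Big)E(f),
\]
by inducting on $n$ using $a_k E'\leq \mathcal T E'\leq b_k E'$ with $E'=\mathcal T^k E$. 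Since $a_k\uparrow\lambda$ and $b_k\downarrow\lambda$ with $a_k\leq\lambda\leq b_k$, the products $\prod_{k<n} a_k$ and $\prod_{k<n} b_k$ stay within fixed multiplicative factors $C^{\mp1}$ of $\lambda^n$ (because $\sum_k\log(b_k/a_k)<\infty$ would be needed for exact control — but we only need boundedness, which follows from $\prod_k (b_k/a_k)$: if this diverges one recovers a contradiction with boundedness of $\mathcal T^n E/\|\cdot\|$ via Proposition \ref{prop26} again, since $\theta(E)\geq \prod_{k<n}(b_k/a_k)\cdot$const would be forced to grow). Concretely, $\theta(E)=b_0/a_0\cdot(\text{stuff})$ is not quite it, but the clean statement is: $\prod_{k=0}^{\infty}(b_k/a_k)<\infty$ because each partial product is $\leq \theta(E)\cdot C$ by repeatedly using $\sup(\mathcal T^{n}E\mid E)\leq \prod b_k$, $\inf(\mathcal T^n E\mid E)\geq\prod a_k$ and $\theta(E)\geq \sup(\mathcal T^n E\mid E)/\inf(\mathcal T^n E\mid E)$ via Lemma \ref{lemma34}. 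Therefore $\prod_{k<n} a_k\asymp\lambda^n\asymp\prod_{k<n} b_k$ with constants depending only on $E$ (through $\theta(E)<\infty$) and on $\#V_0,p$, giving $C^{-1}\lambda^n E\leq\mathcal T^n E\leq C\lambda^n E$.

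The main obstacle I anticipate is the rigorous proof that $\theta(\mathcal T^n E)\to 1$: the monotonicity from Lemma \ref{lemma34}(b) only gives convergence to \emph{some} limit $\geq 1$, and ruling out a limit strictly bigger than $1$ genuinely requires combining the uniform non-degeneracy from (a) (to get compactness of the normalized orbit via Lemma \ref{lemma43}(a), landing in $\mathcal M$ rather than merely $\widetilde{\mathcal M}$, so that $\theta$ is finite and continuous there) with a strict-contraction property of $\theta$ under $\mathcal T$ at non-eigenforms. Establishing that strict decrease — essentially that $\sup(\mathcal T E\mid E)$ is \emph{strictly} less than $\theta(E)\cdot\inf(\mathcal T E\mid E)$ unless the extremizing functions for sup and inf can be taken equal, which by strict convexity of forms in $\mathcal Q$ (Remark 2 after Definition \ref{def28}) forces $E$ to be an eigenform — is the delicate point and the place where the argument differs most from the $p=2$ Hilbert-metric proof.
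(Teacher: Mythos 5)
Part (a) of your proposal is fine and is essentially the paper's argument: comparability of $E$ with the form $E_0$ furnished by (\textbf{A}) propagates under $\mcT$ by Lemma \ref{lemma34}(a), and energy comparability transfers to resistance comparability, so $\delta(\mcT^nE)\gtrsim\delta(\mcT^nE_0)$ uniformly.

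Part (b) has a genuine gap, in two places. First, your plan hinges on showing $\theta(\mcT^nE)\to 1$, and the mechanism you propose — a strict decrease of $\theta$ under $\mcT$ at non-eigenforms, combined with a compactness/limit-point argument — is nowhere established and is exactly the delicate Hilbert-metric-type statement the paper avoids; moreover it is far stronger than what Lemma \ref{lemma44}(b) asserts, which only needs $\sup(\mcT^nE|E)\asymp\lambda^n\asymp\inf(\mcT^nE|E)$ with uniform constants, not asymptotic one-step proportionality. Second, your fallback estimate for the telescoping, namely $\prod_{k<n}(b_k/a_k)\leq C\,\theta(E)$, rests on the inequality $\theta(E)\geq\sup(\mcT^nE|E)/\inf(\mcT^nE|E)$, which does not follow from Lemma \ref{lemma34}: iterating that lemma only gives $\sup(\mcT^nE|E)/\inf(\mcT^nE|E)\leq\theta(E)^n$. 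Worse, the inequalities you do have go the wrong way: $\sup(\mcT^nE|E)\leq\prod_{k<n}b_k$ and $\inf(\mcT^nE|E)\geq\prod_{k<n}a_k$ show that $\prod_{k<n}(b_k/a_k)$ \emph{dominates} the $n$-step ratio, so even a uniform bound on $\sup(\mcT^nE|E)/\inf(\mcT^nE|E)$ (which is true, by (a) and Proposition \ref{prop26}) cannot control the product of one-step ratios, and monotone convergence $a_k\uparrow\lambda$, $b_k\downarrow\lambda$ without a summable rate does not keep $\prod_{k<n}a_k$ within a constant of $\lambda^n$. The paper's route bypasses all of this: set $M_n=\sup_{x\neq y}R(x,y)/R_n(x,y)$; by (a) the sup and inf of these ratios differ by at most $\delta^{-2}$, so Proposition \ref{prop26} gives $C_1^{-1}M_nE\leq\mcT^nE\leq C_1M_nE$ uniformly in $n$; applying $\mcT^m$ and using the same bound again yields $M_{n+m}\asymp M_nM_m$, and Fekete's lemma then gives $M_n\asymp\lambda^n$, which is (b). If you replace your $\theta\to1$ and product arguments by this uniform two-sided comparison plus sub/supermultiplicativity and Fekete (equivalently, work with $s_n=\sup(\mcT^nE|E)$ and $i_n=\inf(\mcT^nE|E)$, which are sub- and supermultiplicative by Lemma \ref{lemma34}(a) and uniformly comparable by (a) and Proposition \ref{prop26}), the proof closes.
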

\begin{proof}
(a) is an immediate consequence of Proposition \ref{prop26} and Lemma \ref{lemma34} (a).	
	
(b). Denote by $R_n$ and $R_0$ the $p$-resistances w.r.t. $\mcT^n E$ and $E$ respectively. Let $M_n=\sup_{x\neq y\in V_0}\frac{R(x,y)}{R_n(x,y)}$, and from (a), we denote $\delta=\inf_{n\geq 0}\delta(\mcT^n E)>0$. Then
\begin{equation}\label{equation4.1}
	M_n=\sup_{x\neq y\in V_0}\frac{R(x,y)}{R_n(x,y)}\geq \inf_{x\neq y\in V_0}\frac{R(x,y)}{R_n(x,y)}\geq \delta^2\sup_{x\neq y\in V_0}\frac{R(x,y)}{R_n(x,y)}=\delta^2M_n.
\end{equation}
Then by applying Proposition \ref{prop26} to $\mcT^n E$ and $E$, we see that there exists $C_1>1$ such that
\[C_1^{-1}M_nE\leq \mcT^n E\leq C_1M_nE,\qquad\forall n\geq 0.\]
By acting $\mcT^m$ to the above inequality and using it again, we obtain
\[\begin{aligned}
		C_1^{-2}M_nM_mE\leq C_1^{-1}M_n\mcT^mE\leq \mcT^{n+m}E\leq C_1M_n\mcT^mE\leq C^2_1M_nM_mE,\quad\forall m,n\geq 0,
\end{aligned}\]
whence by Proposition \ref{prop26} again, there exists $C_2>1$ such that
\[C_2M_nM_m\leq M_{n+m}\leq C_2M_nM_m.\]
It follows by Fekete's lemma that there exists $C_3>0$ and $\lambda>0$ such that
\[C_3^{-1}\lambda^n\leq M_n\leq C_3\lambda^n \qquad\forall n\geq1,\]
which together with \eqref{equation4.1} implies that
\begin{equation}\label{equation4.2}
C_4^{-1}\lambda^nR_n(x,y)\leq R(x,y)\leq C_4\lambda^n R_n(x,y),\qquad\forall x\neq y\in V_0\end{equation}
for some $C_4>1$. (b) then follows by \eqref{equation4.2} and  Proposition \ref{prop26}.
\end{proof}
\vspace{0.2cm}

\begin{proof}[Proof of Theorem \ref{thm42}] By Lemma \ref{lemma44} (a), we can choose $E_0\in\mathcal{S}$. In addition, if both $K$ and $\bm{r}$ are $\mathscr{G}$-symmetric, we can require that $E_0$ is also $\mathscr{G}$-symmetric, so all the constructions in the proof will be $\mathscr{G}$-symmetric.
	
Let $\lambda$ be the same constant in Lemma \ref{lemma44} (b) for $E_0$. We follow the idea of Kusuoka and Zhou \cite{KZ} to consider the averaged energy \[\mcE_n(f)=\frac{1}{n+1}\sum_{m=0}^{n} \lambda^{-m}\Lambda^mE_0(f|_{V_m}),\]
where $f\in l(V_{n})$. In \cite{KZ}, a normalized limit energy of $\mcE_n$ on $L^2(K,\mu)$ is considered, where $\mu$ is the normalized Hausdorff measure restricted on $K$. Thanks to the p.c.f. structure, here we can avoid considering the limit in the sense of energies on $L^p(K,\mu)$. We only need to consider the limit of
\[E_n=[\mcE_n]_{V_0}\in \mathcal{Q}'.\]

\noindent\textit{Claim. there is $C>0$ so that $C^{-1}E_0\leq E_n\leq CE_0$.}\vspace{0.2cm}

First, by Lemma \ref{lemma32} (b), one can see that
\[E_n\geq \frac1{n+1}\sum_{m=0}^{n}\lambda^{-m}[\Lambda^mE_0]_{V_0}=\frac1{n+1}\sum_{m=0}^{n}\lambda^{-m}\mcT^mE_0.\]
So by Lemma \ref{lemma44} (b), $E_n\geq C_1^{-1} E_0$, where $C_1>0$ is the constant in Lemma \ref{lemma44} (b).

Next, for $f\in l(V_0)$, let $f_n\in l(V_n)$ be an extension of $f$ which is a $p$-energy minimizer with respect to $\Lambda^n E_0$. Then, by Lemma \ref{lemma32} (b) and Lemma \ref{lemma44} (b), we have
\[
\begin{aligned}
	E_n(f)\leq\mcE_n(f_n)&=\frac{1}{n+1}\sum_{m=0}^{n} \lambda^{-m}\Lambda^mE_0(f_n|_{V_m})\\
	&\leq C_1\frac{1}{n+1}\lambda^{-n}\sum_{m=0}^{n}\Lambda^m\mcT^{n-m}E_0(f_n|_{V_m})=C_1\lambda^{-n}\mcT^nE_0(f)\leq C_1^2 E_0(f).
\end{aligned}
\]
This finishes the proof of the Claim.\vspace{0.2cm}

It is then from the claim that $\sup_{n\geq 1}\|E_n\|_{\widetilde{\mathcal M}}<\infty$, so by Lemma \ref{lemma43} (a), we can find a subsequence $\{E_{n_l}\}_{l\geq 1}$ that converges to some $E'\in\mathcal Q$ under norm $\|\cdot\|_{\widetilde{\mathcal M}}$. Note that for each $n\geq 1$, we have   \[\lambda^{-1}\Lambda\mcE_n-\mcE_n=\frac{1}{n+1}(\lambda^{-{n-1}}\Lambda^{n+1}E_0-E_0).\]
So that
\[\lambda^{-1}\mcT E_n\geq E_n-\frac{1}{n+1}E_0.\]
By taking the limit in the subsequence $\{E_{n_l}\}_{l\geq 1}$ of each side, and using Lemma \ref{lemma43} (b), we see that $E'\leq \lambda^{-1}\mcT E'\leq \lambda^{-2}\mcT^2 E'\leq \cdots$. We can use Lemma \ref{lemma44} (b) again to see the orbit $\lambda^{-n}\mcT^n E'$ is bounded from above, so we have $E=\lim\limits_{n\to\infty}\lambda^{-n}\mcT^n E'$ exists in $\mathcal Q$ (noticing that $\mathcal{Q}$ is closed in $\mathcal{M}$, although neither of which is complete), and clearly by Lemma \ref{lemma43} (b) again, \[\mcT E=\lim\limits_{n\to\infty}\lambda^{-n}\mcT^{n+1}E'=\lambda\lim\limits_{n\to\infty}\lambda^{-n}\mcT^nE'=\lambda E.\]
This shows that $\mcT$ has a $\mathscr{G}$-symmetric eigenform.

Finally $\lambda$ is unique since if there is another eigenform $\bar{E}$ such that $\mcT\bar{E}=\bar\lambda\bar{E}$ for some $\bar\lambda>0$, then by Lemma \ref{lemma34} (a), $\mcT^n\bar{E}\asymp \mcT^n E$ for any $n\geq 0$, which will force $\lambda=\bar\lambda$.
\end{proof}

\section{p-energies on p.c.f. self-similar sets}\label{sec5}
In this section, we construct  $p$-energies on p.c.f. self-similar sets, which covers the $2$-energy forms (Dirichlet forms) as a special case. We apply the idea of \cite{KZ} in this section.

Our story in this section is still based on Assumption (\textbf{A}). However, for convenience, we further require that $\lambda=1$ in Theorem \ref{thm42} (by multiplying $\bm{r}$ with a constant). Let us introduce a condition $(\textbf{A}')$ for convenience, which is essentially the same as (\textbf{A}) by Theorem \ref{thm42}. \vspace{0.2cm}

\noindent$(\textbf{A}')$. (\textbf{A}) holds and $\bm{r}$ is properly chosen so that there is $E\in \mathcal{Q}$ such that $\mcT E=E$. \vspace{0.2cm}

Another condition based on $(\textbf{A}')$ is called regular condition, under which, we will see the whole story happens on $C(K)$.\vspace{0.2cm}

\noindent(\textbf{R}). Assuming $(\textbf{A}')$, we say $(E, \bm{r})$ is \textit{regular} if $r_i<1$ for all $1\leq i\leq N$. \vspace{0.2cm}

\noindent\textbf{Remark.} Assuming $(\textbf{A}')$, by the later Lemma \ref{lemma54}, we can always find some $i$ such that $r_i<1$.

\begin{theorem}\label{thm51}
	Assume $(\textbf{A}')$. Let $E_0\in \mathcal{S}$ be defined as $E_0(f)=\frac{1}{2}\sum_{x\neq y}\big|f(x)-f(y)\big|^p, \forall f\in l(V_0)$.  Define
	\[\mathcal{F}=\{f\in l(V_*):\sup_{n\geq 1}\Lambda^nE_0(f|_{V_n})<\infty\}.\] Then there exists a subsequence $n_l,l\geq 1$ such that $\frac{1}{n_l+1}\sum_{m=0}^{n_l}\Lambda^mE_0(f|_{V_m})$ converges for every $f\in \mathcal{F}$. Define
	\[\mcE(f)=\lim\limits_{l\to\infty}\frac{1}{n_l+1}\sum_{m=0}^{n_l}\Lambda^mE_0(f|_{V_m}),\quad\forall f\in \mathcal{F}.\]
	Furthermore, we have the following:
	
	(a). Let $\sim$ be the equivalence relation on $\mathcal{F}$ defined by $f\sim g$ if and only if $f-g$ is a constant. Then $(\mathcal{F}/\sim,\mcE^{1/p})$ is a Banach space.
	
	(b). If (\textbf{R}) holds, then there exists $C>0$ such that for any $f\in\mathcal{F}$, for any $w\in W_*$,
\begin{equation*}
\Osc(f\circ F_w)\leq Cr_w^{1/p}\mcE(f\circ F_w)^{1/p},
\end{equation*}
Consequently, $\mathcal{F}$ embeds into $C(K)$ naturally (by continuous extension). In addition, $\mathcal{F}$ is dense in $C(K)$.
\end{theorem}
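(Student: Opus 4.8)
\textbf{Proof proposal for Theorem \ref{thm51}.}

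The plan is to build $\mcE$ as a subsequential limit of the Cesàro averages $\mcE_n(f) := \frac{1}{n+1}\sum_{m=0}^n \Lambda^m E_0(f|_{V_m})$, then verify (a) and (b) separately. For the existence of the limit, I would first observe that $\Lambda^m E_0(f|_{V_m})$ is monotone in a suitable sense controlled by $\mcT$: since $\mcT E_0 \asymp E_0$ (Assumption $(\textbf{A}')$ and Lemma \ref{lemma44} (b) with $\lambda=1$), the sequence $n\mapsto \mcE_n(f)$ is bounded for each $f\in\mathcal{F}$ by the same telescoping-and-extension estimate used in the Claim in the proof of Theorem \ref{thm42}; one shows $C^{-1}\sup_{n}\Lambda^n E_0(f|_{V_n}) \leq \mcE_n(f) \leq C\sup_n \Lambda^n E_0(f|_{V_n})$ uniformly. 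Then for each fixed $f$ the sequence $\mcE_n(f)$ lies in a bounded interval; to extract a single subsequence $n_l$ working for \emph{all} $f\in\mathcal{F}$ simultaneously I would use the fact that $\mcE_n$, viewed through its restriction $[\mcE_n]_{V_k}$ on each fixed level-$k$ vertex set, lives in the finite-dimensional-up-to-norm cone $\widetilde{\mathcal{M}}_p(V_k)$ with uniformly bounded $\|\cdot\|_{\widetilde{\mathcal{M}}_p(V_k)}$-norm (by the Claim-type bound and Lemma \ref{lemma32}), so Lemma \ref{lemma43} (a) gives a convergent subsequence on $l(V_k)$; a diagonal argument over $k\geq 0$ produces one subsequence $n_l$ for which $\mcE_{n_l}(f|_{V_k})$ converges for every $k$ and every $f$, hence $\mcE(f)$ is well-defined on $\mathcal{F}$.

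For part (a): $\mcE$ inherits non-negativity, convexity, degree-$p$ homogeneity, constant-invariance and the Markov property from each $\Lambda^m E_0$ (these pass to Cesàro averages and to pointwise limits), and non-degeneracy $\mcE(f)=0 \iff f$ constant follows because $\mcE(f) \geq C^{-1}[\mcE_{n_l}]_{V_0}(f|_{V_0}) \to C^{-1}\mcE$-comparable quantity $\gtrsim E_0(f|_{V_0})$ — more carefully, from $\mcE_n \geq C^{-1}\mcT^n E_0 \asymp C^{-1}E_0$ on $V_0$ one gets $\mcE(f)\gtrsim E_0(f|_{V_0})$, and iterating on cells gives $\mcE(f)=0$ forces $f$ constant on each $V_n$, hence on $V_*$. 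That $\mcE^{1/p}$ is a norm on $\mathcal{F}/\!\sim$ uses homogeneity plus the triangle inequality, which follows from convexity-plus-homogeneity in the standard way (if $\mcE(f),\mcE(g)\leq 1$ then $\mcE(\frac{f+g}{2})\leq 1$, then rescale). Completeness is the substantive point: given a Cauchy sequence $\{f_j\}$ in $\mcE^{1/p}$, the lower bound $\mcE(f)\gtrsim \Lambda^k E_0(f|_{V_k})$ for each $k$ (which itself comes from $\mcE_n \gtrsim$ the single term after passing through $\mcT$, cf. Lemma \ref{lemma32} (b)) shows $\{f_j|_{V_k}\}$ is Cauchy in the finite set $V_k$, so $f_j \to f$ pointwise on $V_*$; then $\sup_k \Lambda^k E_0(f|_{V_k}) \leq \liminf_j \sup_k \Lambda^k E_0(f_j|_{V_k}) < \infty$ by Fatou-type lower semicontinuity of each $\Lambda^k E_0$, giving $f\in\mathcal{F}$, and a similar lower-semicontinuity argument on the Cesàro averages (uniformly in $l$, using the two-sided bound) gives $\mcE(f_j - f)\to 0$.

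For part (b): assuming (\textbf{R}), i.e. $r_i<1$ for all $i$, I want $\Osc(f\circ F_w) \leq C r_w^{1/p}\mcE(f\circ F_w)^{1/p}$. By self-similarity of the construction it suffices to prove $\Osc(f) \leq C\mcE(f)^{1/p}$ for $f\in\mathcal{F}$ on $K$ and then apply it to $f\circ F_w$ noting $\mcE(f\circ F_w) \leq r_w \mcE(f)$-type scaling from the definition of $\Lambda$; the factor $r_w^{1/p}$ is exactly what the $\Lambda$-rescaling produces. The oscillation bound on $K$ is where regularity enters: one controls $|f(x)-f(y)|$ for $x,y$ in the same $n$-cell by a chaining argument through $V_n$, where each link $F_w V_0$ contributes $\lesssim r_w^{1/p}\Lambda^{|w|}E_0(f|_{F_wV_0})^{1/p}$ via the level-$0$ resistance estimate (Lemma \ref{lemma25} (c) / Proposition \ref{prop26}), and summing a geometric-type series in $r_w^{1/p} < 1$ along a decreasing chain of cells converges because $\max_i r_i^{1/p}<1$; crucially $\Lambda^n E_0(f|_{V_n}) \lesssim \mcE(f)$ uniformly in $n$ (from the two-sided Claim bound) keeps every term bounded by $\mcE(f)^{1/p}$. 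This gives uniform continuity, hence a continuous extension $\mathcal{F}\hookrightarrow C(K)$, injective modulo the non-degeneracy from (a). Density in $C(K)$: $\mathcal{F}$ contains all functions that are eventually "harmonic" (energy-minimizing extensions from some $V_n$), these separate points of $V_*$ (hence of $K$), form an algebra-like family closed under the lattice operations up to bounded energy cost (Lemma \ref{lemma25} (b)), so a Stone–Weierstrass argument applies. The main obstacle I anticipate is the simultaneous-subsequence extraction combined with proving the \emph{two-sided} comparability of $\mcE(f)$ with $\sup_n \Lambda^n E_0(f|_{V_n})$ with constants independent of $f$ — everything downstream (non-degeneracy, completeness, the oscillation estimate) rests on that uniform bound, and it has to be squeezed out of the $\mcT$-invariance via the same extension/telescoping trick as in Theorem \ref{thm42} but now kept uniform over the whole infinite-dimensional space $\mathcal{F}$.
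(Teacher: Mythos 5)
Your overall architecture (Cesàro averages, comparability with a $\mcT$-invariant reference energy, then (a) by lower semicontinuity and (b) by a resistance chaining argument) matches the paper's, but the step where you extract one subsequence $n_l$ working for \emph{all} $f\in\mathcal{F}$ has a genuine gap. You propose to apply Lemma \ref{lemma43} (a) to the traces $[\mcE_n]_{V_k}\in\widetilde{\mathcal{M}}_p(V_k)$ for each fixed level $k$ and diagonalize over $k$. Convergence of these traces, however, does not give convergence of $\mcE_{n_l}(f)$ for $f\in\mathcal{F}$: the trace to $V_k$ only records the \emph{minimized} energy of boundary data on $V_k$, whereas $\mcE_{n_l}(f)$ depends on the actual values of $f$ on $V_{n_l}$, and the discrepancy $\mcE_{n_l}(f)-[\mcE_{n_l}]_{V_k}(f|_{V_k})$ coming from the tail of $f$ beyond level $k$ is not controlled uniformly in $l$; the implicit interchange of the limits $l\to\infty$ and $k\to\infty$ is unjustified. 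The paper avoids this by first building the auxiliary monotone limit $\mcE_*(f)=\lim_n\Lambda^nE(f)$ from a fixed point $E\in\mathcal{Q}$ of $\mcT$ (Proposition \ref{prop53}), proving that $(\mathcal{F}/\!\sim,\mcE_*^{1/p})$ is a \emph{separable} Banach space --- separability resting on the density of piecewise $p$-harmonic functions, which in turn uses the uniform convexity available because $E\in\mathcal{Q}$ (Remark 2 after Definition \ref{def28}, Lemma \ref{lemmaa2}) --- and then diagonalizing over a countable dense subset of $\mathcal{F}$ itself, extending to all of $\mathcal{F}$ by the equicontinuity estimate $\frac{1}{n+1}\sum_{m=0}^n\Lambda^mE_0\leq C\,\mcE_*$. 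Your proposal never establishes (or uses) this separability, and your finite-level substitute for it does not suffice; this is the missing idea, and it is also where membership of the eigenform in $\mathcal{Q}$ (rather than merely $\mathcal{M}$) is actually needed.

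Two smaller points. First, your asserted uniform two-sided bound $C^{-1}\sup_{m}\Lambda^mE_0(f|_{V_m})\leq\mcE_n(f)$ for every finite $n$ is false as stated (take $f$ constant on $V_n$ but oscillating at deeper levels); the correct lower bound only holds after passing to the limit, via the Cesàro averages of the monotone sequence $\Lambda^mE(f)$, which is what the paper uses and is enough for non-degeneracy and completeness. Second, your density-in-$C(K)$ argument via a Stone--Weierstrass-type lattice claim is vaguer than needed: under (\textbf{R}) it follows more directly, as in Proposition \ref{prop53} (d), from the fact that piecewise $p$-harmonic extensions of arbitrary data on $V_n$ lie in $\mathcal{F}$ together with the oscillation estimate on cells.
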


However, if (\textbf{R}) fails, we need to consider the following condition (\textbf{B}) which involves a given probability Radon  measure $\mu$ on $K$. \vspace{0.2cm}

\noindent(\textbf{B}). $\sum_{m=0}^\infty \kappa^{1/p}_m<\infty$, where  $\kappa_m=\kappa_m(\mu,\bm{r}):=\max_{w\in W_m}r_w\mu(F_wK)$.\vspace{0.2cm}

\noindent\textbf{Remark.}  (\textbf{R}) always implies (\textbf{B}). Even if (\textbf{R}) fails, there are a lot of $\mu$ satisfying (\textbf{B}), for example, a self-similar probability measure $\mu$ on $K$ satisfying $\mu(F_iK)r_i<1$ for each $1\leq i\leq N$, noticing that we can always find some $1\leq j\leq N$ such that $r_j<1$ by Lemma \ref{lemma54}.

\begin{theorem}\label{thm52}
	Assume $(\textbf{A}')$ and (\textbf{B}). Let $\mathcal{F}_c=\{f\in \mathcal{F}: f\text{ is uniformly continuous}\}$, and consider the natural embedding (by continuous extension) $\Psi:\mathcal{F}_c\to C(K)\subset L^p(K,\mu)$. Then $\Psi$ can be extended continuously to $\mathcal{F}\to L^p(K,\mu)$ in the sense that if $\{f_n\}\subset \mathcal F_c$, $f\in\mathcal F$, $f_n\to f$ pointwisely on $V_*$ and $\mcE(f_n-f)\to 0$, then $\Psi(f_n)\to \Psi(f)$ under $L^p(K,\mu)$ norm. In addition, $\Psi$ is self-similar:
	\[\Psi(f\circ F_w)=\Psi(f)\circ F_w,\quad \forall w\in W_*,f\in \mathcal{F}.\]
	
	By identifying $\mathcal F$ with $\Psi (\mathcal F)$,  $(\mathcal{E},\mathcal{F})$ becomes a closed $p$-energy form on $L^p(K,\mu)$ in the sense that $\mathcal{E}$ has the Markov property and is uniformly convex, and $(\mathcal{F},\mathcal{E}_1^{1/p})$ is closed and separable, where $\mathcal{E}_1$ is  defined by $\mathcal{E}_1(f)=\mathcal{E}(f)+\|f\|^p_{L^p(K,\mu)}$. In addition, $(\mathcal{E},\mathcal{F})$ is regular and strongly local in the following senses:\vspace{0.15cm} \\
	\emph{(regular).} Let $\mathscr{C}=\mathcal{F}\cap C(K)$. $\mathscr{C}$ is dense in $C(K)$, and $\mathscr{C}$ is dense in $\mathcal{F}$ with respect to the norm $\mathcal{E}_1^{1/p}$. \\
	\emph{(strongly local).} For $f,g\in \mathscr{C}$, if $f$ is constant on a neighbourhood of $\text{supp}(g)$, then $\mcE(f+g)=\mcE(f)+\mcE(g)$.
\end{theorem}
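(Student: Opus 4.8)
The plan is to build $(\mathcal{E},\mathcal{F})$ as a $p$-energy form on $L^p(K,\mu)$ in several stages, reducing everything to the discrete machinery of Sections \ref{sec2}--\ref{sec4} and the subadditivity estimates coming from (\textbf{A}$'$). First I would establish the \emph{oscillation--energy estimate} on cells: for $w\in W_m$ and $f\in\mathcal{F}$, compare $\Osc(f\circ F_w)$ with $r_w^{1/p}\mcE(f\circ F_w)^{1/p}$. The key input here is that $\mcT E=E$ with $E\in\mathcal{Q}$, so $E\asymp E_0$ on $l(V_0)$ by Proposition \ref{prop26} (and Corollary \ref{coro27}), and hence $\Lambda^m E_0(f|_{V_m})$ dominates $E_0$ of the restriction to any single cell up to the factor $r_w$. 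Summing the telescoping inequality $\Osc(f\circ F_w)^p\lesssim r_w\,\mcE(f\circ F_w)$ over a chain of cells connecting two points of $K$ and using (\textbf{B}) ($\sum_m\kappa_m^{1/p}<\infty$) gives a uniform modulus of continuity on $\mathcal{F}_c$, which is exactly what makes $\Psi$ well-defined and extendable; this is essentially Theorem \ref{thm51}(b) under (\textbf{R}) and its (\textbf{B})-substitute here. I expect this step to be the \textbf{main obstacle}: controlling $\Osc(f\circ F_w)$ requires a chaining argument through the cell structure of a general p.c.f. set (not a nice grid), so one must use connectedness of $K$, finiteness of $V_0$, and the fact that $R^{(p)}$-resistances across $V_0$ are uniformly comparable under iterates of $\mcT$ (Lemma \ref{lemma44}), to bound the number and the resistance of cells in a connecting chain independently of $m$.

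Next I would verify that $\Psi$ extends continuously from $\mathcal{F}_c$ to all of $\mathcal{F}$. Given $f\in\mathcal{F}$, one approximates $f$ by its harmonic-type extensions: let $f_n\in l(V_n)$ minimize $\Lambda^nE_0$ subject to $f_n|_{V_n}=f|_{V_n}$ (these exist and are unique by strict convexity, Remark 2 of Section \ref{sec2}), extend each $f_n$ to $K$ by the cellwise minimal-energy extension — these lie in $\mathcal{F}_c$ by part (b)-type estimates — and show $\mcE(f_n-f)\to 0$ and $f_n\to f$ pointwise on $V_*$. The Cauchy property of $\{\Psi(f_n)\}$ in $L^p(K,\mu)$ follows from the oscillation estimate applied cellwise together with (\textbf{B}): $\|\Psi(f_n)-\Psi(f_m)\|_{L^p}^p\lesssim\sum_{w}\mu(F_wK)\Osc((f_n-f_m)\circ F_w)^p\lesssim\sum_k\kappa_k^{1/p}\cdot(\text{tail of }\mcE(f_n-f_m))$. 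Self-similarity $\Psi(f\circ F_w)=\Psi(f)\circ F_w$ is immediate on $\mathcal{F}_c$ from the definition of continuous extension and the self-similar structure of $V_*$, and passes to the limit.

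Then I would record the structural properties. The Markov property of $\mcE$ descends from the Markov property of each $\Lambda^mE_0$ (Definition \ref{def21}(v)), which is preserved under averaging and under the limit; uniform convexity of $\mcE$ follows from uniform convexity of $E_0$ (Remark 2) since $\Lambda^mE_0$ inherits it and averaging preserves a uniform convexity modulus. Closedness of $(\mathcal{F},\mcE_1^{1/p})$: if $\{f_n\}$ is $\mcE_1^{1/p}$-Cauchy, then it is $L^p$-Cauchy, so $f_n\to f$ in $L^p$; a diagonal/Fatou argument using $\Lambda^mE_0(\cdot|_{V_m})\le C\,\mcE(\cdot)$ for each fixed $m$ (which holds since the averaged sums dominate, up to the bounded factor $\lambda^{-m}$ with $\lambda=1$, each individual term — here I'd invoke Lemma \ref{lemma32}(b) and Lemma \ref{lemma44}(b)) shows $f$ has a representative in $\mathcal{F}$ and $\mcE(f_n-f)\to 0$. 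Separability follows because $\bigcup_n l(V_n)$ (finite-dimensional at each level) is $\mcE_1^{1/p}$-dense. Regularity — $\mathscr{C}=\mathcal{F}\cap C(K)$ dense in $C(K)$ and in $\mathcal{F}$ — uses the same cellwise minimal-energy extensions (which are continuous and $\mcE_1^{1/p}$-approximate any element of $\mathcal{F}$) together with a Stone--Weierstrass-type argument: $\mathscr{C}$ separates points of $K$ because $V_*$ is dense and one can build a continuous function in $\mathcal{F}$ taking prescribed distinct values at two given points. Strong locality: for $f,g\in\mathscr{C}$ with $f$ constant near $\operatorname{supp}(g)$, on each sufficiently deep cell $F_wK$ either $f$ or $g$ is constant, so $\Lambda^mE_0((f+g)|_{V_m})=\Lambda^mE_0(f|_{V_m})+\Lambda^mE_0(g|_{V_m})$ for all large $m$ by the constant-invariance (Definition \ref{def21}(iv)) and the cell decomposition of $\Lambda^m$; averaging and passing to the limit gives $\mcE(f+g)=\mcE(f)+\mcE(g)$. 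I would carry out the steps in this order — oscillation estimate, extension of $\Psi$, then Markov/convexity/closedness/separability, then regularity and strong locality — with the oscillation estimate (and the cell-chaining it requires) being where essentially all the real work lies; the rest is bookkeeping built on Section \ref{sec2}'s properties and the measure condition (\textbf{B}).
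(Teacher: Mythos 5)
You propose to carry the whole construction on the cellwise oscillation--energy estimate $\Osc(f\circ F_w)^p\lesssim r_w\,\mcE(f\circ F_w)$ plus a chaining argument, with (\textbf{B}) playing the role of a substitute for (\textbf{R}). This is exactly the step that fails: that estimate, and the boundedness of the resistance metric that the chaining in Proposition \ref{prop53}(d)/Theorem \ref{thm51}(b) produces, genuinely require $r_i<1$ for all $i$. Theorem \ref{thm52} is aimed at the non-regular case, where some $r_i\geq 1$ is allowed; there $R_*(F_wx,F_wy)\leq r_wR_*(x,y)$ gives no decay along such branches, elements of $\mathcal{F}$ need not be continuous at all (which is why the target space is $L^p(K,\mu)$ rather than $C(K)$), and (\textbf{B}) cannot rescue any sup-norm bound because $\kappa_m=\max_{w\in W_m}r_w\mu(F_wK)$ only controls quantities weighted by $\mu$: $\sum_m\kappa_m^{1/p}<\infty$ is perfectly compatible with $r_w\to\infty$ along some branch. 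Consequently your key inequality $\|\Psi(f_n)-\Psi(f_m)\|_{L^p}^p\lesssim\sum_{w}\mu(F_wK)\Osc\big((f_n-f_m)\circ F_w\big)^p$ is not available for general elements of $\mathcal{F}$, and the assertion that the cellwise minimal-energy extensions lie in $\mathcal{F}_c$ ``by part (b)-type estimates'' is unjustified for the same reason.

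The paper replaces the missing estimate by two different ingredients. First, continuity of piecewise $p$-harmonic functions is proved not by resistance chaining but by an oscillation-contraction property of the harmonic extension (Proposition \ref{prop55}, resting on the left-derivative/maximum-principle Lemma \ref{lemmaa3} and the comparison Lemma \ref{lemmaa2}), valid under $(\textbf{A}')$ alone; this yields $\mathscr{H}\subset C(K)$ (Corollary \ref{coro56}). Second, the $L^p$-Cauchy property is established for the successive harmonic projections $P_mf=\Psi\big(H_*(f|_{V_m})\big)$: on each $m$-cell, $P_{m+1}f-P_mf$ is determined by finitely many values on $F_wV_1$, so its sup-norm there is comparable (Lemma \ref{lemmaa2}, finite-dimensionality) to the discrete energy $\Lambda E\big((P_{m+1}f-P_mf)\circ F_w\big)$, and summing against $\mu(F_wK)$ produces exactly $\kappa_m$ (estimate (\ref{eqn53}) and Lemma \ref{lemma57}); no oscillation bound for $f$ itself is needed. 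Moreover, identifying $\mathcal{F}$ with $\Psi(\mathcal{F})$ requires injectivity of $\Psi$, which your proposal does not address; the paper proves it via the claim $\|f|_{V_0}\|_{l^\infty(V_0)}\lesssim\mcE^{1/p}(f)$ whenever $\Psi(f)=0$, combined with Lemma \ref{lemma54} ($r_w<1$ whenever $\dot w\in\mathcal{P}$). Once these pieces are in place, your remaining bookkeeping (Markov property, closedness, separability, regularity, strong locality) is broadly in line with the paper's argument, though closedness also needs the control of additive constants via $\|f_n-f_m\|_{l^\infty(V_0)}\lesssim\|P_0(f_n-f_m)\|_{L^p(K,\mu)}$ and Proposition \ref{prop53}(c) rather than a bare Fatou argument.
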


\noindent\textbf{Remark.} Theorem \ref{thm52} follows immediate from Theorem \ref{thm51} if (\textbf{R}) holds. Readers can skip Subsections \ref{subsec52},  \ref{subsec53} if they are only interested in the regular case.\vspace{0.2cm}

We will prove Theorem \ref{thm51} in Subsection \ref{subsec51}, and prove Theorem \ref{thm52} in Subsection \ref{subsec53}.

\subsection{The limit form}\label{subsec51}
The proof of Theorem \ref{thm51} is similar to the $p=2$ case. We will take the advantage of the existence of a fixed point of $\mcT$ guaranteed by Theorem \ref{thm42} under assumption ({\textbf{A}). For convenience, we simply write $E(f)$ instead of $E(f|_{V_n})$ for $E\in \widetilde{\mathcal{M}}_p(V_n)$ and $f\in l(V_*)$.

\begin{proposition}\label{prop53}
	Assume all the same conditions as in Theorem \ref{thm51}.  Let $E\in \mathcal{Q}$ be a fixed point of $\mcT$, i.e. $\mcT E=E$. Then,
	
	(a). For any $f\in l(V_*)$, we have $E(f)\leq \Lambda E(f)\leq \Lambda^2 E(f)\leq \cdots$. So the following functional $\mcE_*$ with extended real values is well defined,
	\[\mcE_*(f)=\lim\limits_{n\to\infty}\Lambda^nE(f),\quad\forall f\in l(V_*).\]
	
	(b). $\mathcal{F}=\{f\in l(V_*):\mathcal{E}_*(f)<\infty\}$.
	
	(c). $(\mathcal{F}/\sim,\mcE_*^{1/p})$ is a separable Banach space. In addition, if $\mcE_*(f_n-f)\to0$ as $n\to\infty$, then we can find constants $\{c_n\}$ so that $f_n-c_n$ converges to $f$ pointwisely on $V_*$.
	
	(d). If in addition (\textbf{R}) holds, then there exists $C>0$ such that for any $f\in\mathcal{F}$, for any $w\in W_*$,
\begin{equation*}
\Osc(f\circ F_w)\leq Cr_w^{1/p}\mcE_*(f\circ F_w)^{1/p}.
\end{equation*}
Consequently, $\mathcal{F}$ embeds densely into $C(K)$.

\end{proposition}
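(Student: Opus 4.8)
\textbf{Proof proposal for Proposition \ref{prop53}.}

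The plan is to establish the four parts in order, using the fixed point $E = \mcT E = [\Lambda E]_{V_0}$ as the building block. For part (a), the key observation is that $\mcT E = E$ unwinds into monotonicity along the tower: given $f \in l(V_*)$, I would fix $n$ and compare $\Lambda^n E(f|_{V_n})$ with $\Lambda^{n+1}E(f|_{V_{n+1}})$. Using Lemma \ref{lemma32}(b) to rewrite $[\Lambda^{n+1}E]_{V_n} = \mcT^{n+1} E = E$ (after iterating $\mcT E = E$) and $[\Lambda^{n+1} E]_{V_{n+1}} = \Lambda^n \mcT E = \Lambda^n E$, the trace inequality from Proposition \ref{prop22}(c) — namely $[\Ep]_B(g|_B) \le \Ep(g)$ — gives $\Lambda^n E(f|_{V_n}) = [\Lambda^{n+1}E]_{V_n}(f|_{V_n}) \le \Lambda^{n+1}E(f|_{V_{n+1}})$. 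Hence the sequence is nondecreasing and $\mcE_*$ is well defined with values in $[0,\infty]$. Part (b) is then almost a tautology: since $E \asymp E_0$ on $l(V_0)$ (both lie in $\mathcal{M}$, so Proposition \ref{prop26} applies with a constant depending only on $\#V_0, p$), by applying $\Lambda^n$ we get $\Lambda^n E \asymp \Lambda^n E_0$ on $l(V_n)$ with the same constant, so $\sup_n \Lambda^n E(f|_{V_n}) < \infty$ iff $\sup_n \Lambda^n E_0(f|_{V_n}) < \infty$, which is the definition of $\mathcal{F}$.

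For part (c), I would first check that $\mcE_*^{1/p}$ is a seminorm on $\mathcal{F}$ descending to a norm on $\mathcal{F}/\sim$: homogeneity and the non-degeneracy (constants only) come from the corresponding properties of each $\Lambda^n E$, and the triangle inequality follows from the fact that each $(\Lambda^n E)^{1/p}$ is a seminorm (itself a consequence of convexity plus degree-$p$ homogeneity, via the standard argument that $\Ep^{1/p}$ is subadditive) together with passage to the monotone limit. For completeness: given a $\mcE_*^{1/p}$-Cauchy sequence $\{f_n\}$ in $\mathcal{F}$, for each fixed $m$ the sequence $\{f_n|_{V_m}\}$ is Cauchy with respect to $(\Lambda^m E)^{1/p}$ modulo constants; since $V_m$ is finite and $\Lambda^m E \in \mathcal{M}(V_m)$ is non-degenerate, fixing the value at one point of $V_0$ makes $\{f_n|_{V_m}\}$ converge in $l(V_m)$; a diagonal/consistency argument across $m$ produces a limit $f \in l(V_*)$ and constants $c_n$ with $f_n - c_n \to f$ pointwise on $V_*$, and lower semicontinuity of $\mcE_*$ (being an increasing limit of the continuous functionals $\Lambda^m E$) gives $\mcE_*(f - f_n) \le \liminf_k \mcE_*(f_k - f_n) \to 0$, and $f \in \mathcal{F}$. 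Separability follows because $l(V_*)$ has a countable dense subset (finitely supported rational-valued functions) which one checks is $\mcE_*^{1/p}$-dense in $\mathcal{F}$ using the monotone structure.

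Part (d) is where the regularity hypothesis (\textbf{R}) enters and is the main obstacle. The goal is the oscillation bound $\Osc(f \circ F_w) \le C r_w^{1/p} \mcE_*(f \circ F_w)^{1/p}$. By self-similarity it suffices to prove $\Osc(f) \le C \mcE_*(f)^{1/p}$ for $f \in \mathcal{F}$, and then apply it to $f \circ F_w$ after noting $\mcE_*(f \circ F_w) \le r_w \mcE_*(f)$ (or, more carefully, a localized version; the precise bookkeeping with $r_w$ is the delicate point). The inequality $\Osc(f) \le C \mcE_*(f)^{1/p}$ would come from: $\Osc$ on $V_0$ is controlled by $E(f)^{1/p} \le \mcE_*(f)^{1/p}$ since $E \in \mathcal{M}(V_0)$ and any energy in $\mathcal{M}$ on a finite set dominates $c\,\Osc^p$ (compactness of $\{f : \Osc f = 1, \sum f = 0\}$ plus non-degeneracy); to pass from $\Osc$ on $V_n$ to $\Osc$ on all of $V_*$ and hence on $K$, I would use a chaining argument over cells: any two points of $V_*$ lie in a chain of cells, and within each $m$-cell $F_w K$ the oscillation is $\le C r_w^{1/p} E(f \circ F_w)^{1/p}$, and $\sum_w r_w^{1/p}$-type sums converge geometrically precisely when $r_i < 1$ for all $i$ (this is where (\textbf{R}) is indispensable — without it the geometric series diverges), so a Hölder/summation argument bounds $\Osc(f)$ on $V_*$ by a constant times $\mcE_*(f)^{1/p}$, uniformly, giving uniform continuity and hence a continuous extension to $K$. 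Density of $\mathcal{F}$ in $C(K)$: $\mathcal{F}$ separates points (it contains, e.g., harmonic-type extensions of arbitrary boundary data built from the fixed point $E$, or more simply piecewise functions adapted to cells) and contains constants, so Stone–Weierstrass applies once we know $\mathcal{F} \subset C(K)$ and $\mathcal{F}$ is a lattice or algebra-closed enough — here I would instead argue directly that functions in $l(V_*)$ that are "locally constant at level $n$" lie in $\mathcal{F}$ and are dense in $C(K)$.
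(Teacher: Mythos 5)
Your treatment of (a), (b), the seminorm and completeness halves of (c), and the oscillation/chaining estimate in (d) follows essentially the paper's route (monotonicity from the fixed point via the identity $[\Lambda^{n+1}E]_{V_n}=\Lambda^n\mcT E=\Lambda^nE$, the comparison $\Lambda^nE\asymp\Lambda^nE_0$, lower semicontinuity of the monotone limit, and a resistance-type chaining argument using $\mcE_*(f\circ F_w)\le r_w\mcE_*(f)$ and $r_i<1$). However, your separability argument in (c) has a genuine gap. The proposed countable dense set, finitely supported rational-valued functions on $V_*$, is not even contained in $\mathcal{F}$: for $x\in V_m$ and $n\ge m$ there is $w\in W_n$ with $x\in F_wV_0$, and then $\Lambda^nE(1_{\{x\}})\ge r_w^{-1}E(1_{\{q\}})$ with $q=F_w^{-1}x\in V_0$; under (\textbf{R}) one has $r_w\le(\max_ir_i)^n\to0$ while $E(1_{\{q\}})$ is bounded below, so $\mcE_*(1_{\{x\}})=\infty$ and every nonzero finitely supported function lies outside $\mathcal{F}$. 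No alternative dense family or density argument is offered. The paper's proof supplies the missing ingredient: it shows the piecewise $p$-harmonic functions $\mathscr{H}$ are dense in $(\mathcal{F}/\sim,\mcE_*^{1/p})$, and this uses the uniform convexity of $\mcE_*$ inherited from $\Lambda^nE\in\mathcal{Q}_p(V_n)$ (Remark 2 after Definition \ref{def28}): since $\Lambda^nE(f|_{V_n})\le\mcE_*(H_*(f|_{V_n}))\le\mcE_*(f)$ one gets $\mcE_*(H_*(f|_{V_n}))\to\mcE_*(f)$, and uniform convexity together with the midpoint comparison $\mcE_*(\tfrac12 H_*(f|_{V_n})+\tfrac12 f)\ge\mcE_*(H_*(f|_{V_n}))$ forces $\mcE_*(H_*(f|_{V_n})-f)\to0$; a countable set of harmonic extensions of rational boundary data then does the job. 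Your proposal never invokes this (or any) mechanism, and without it separability is unproved.

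A second, smaller gap is the density of $\mathcal{F}$ in $C(K)$ in (d): your fallback family of functions ``locally constant at level $n$'' collapses to the constants, because $K$ is connected and adjacent $n$-cells share boundary points, so a function constant on every $n$-cell is globally constant. The Stone--Weierstrass/lattice route (note $\mathcal{F}$ is a lattice by Lemma \ref{lemma25}(b)) could be completed, but you would still need point separation, which in this setting again comes from piecewise $p$-harmonic functions; the cleaner standard argument is to approximate $g\in C(K)$ by $H_*(g|_{V_n})$ and bound $\|H_*(g|_{V_n})-g\|_\infty$ by the cell oscillations of $g$ via the Markov property (maximum principle), which tend to $0$ by uniform continuity. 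Finally, a minor slip in (a): $[\Lambda^{n+1}E]_{V_n}$ equals $\Lambda^n\mcT E=\Lambda^nE$, not $\mcT^{n+1}E=E$ (that would be the trace to $V_0$); the inequality chain you actually write down uses the correct identity, so this is only notational.
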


Before proving Proposition \ref{prop53}, we introduce the concept of piecewise $p$-harmonic functions. For $n\geq 0$, $f\in l(V_n)$, we define $H_*f\in l(V_*)$ by
\[H_*f|_{V_n}=f,\quad \mcE_*(H_*f)=\min\{\mcE_*(g):g\in l(V_*), g|_{V_n}=f\},\]
and call $H_*f$ a \textit{$p$-harmonic extension} of $f$ to $V_*$ with respect to $\mathcal E_*$. Note that $H_*f$ is unique by Lemma \ref{lemmaa2}. Strictly speaking, the operator $H_*$ depends on $n$, but we insist not using a superscript $n$ for brevity and it will cause no confusion.
Define $$\mathscr{H}=\big\{h=H_*f:f\in l(V_n),n\geq 0\big\},$$ call it the collection of \textit{piecewise $p$-harmonic functions} with respect to $\mathcal E_*$.

\begin{proof}[Proof of Proposition \ref{prop53}]
	(a) is trivial, (b) follows from the fact that $E\asymp E_0$.
	
	(c). First, $\mcE_*(f)>0$ for any $f\in \mathcal{F}\setminus Constants$, and it is straightforward to check $\mcE_*^{1/p}$ is a norm on $\mathcal{F}/\sim$. In fact, it is not hard to see that for each $n\geq 0$, $(\Lambda^n E)^{1/p}$ defines a seminorm since $\Lambda^n E\in \mathcal{Q}(V_n)$. So the limit $\mcE_*^{1/p}$ is also a seminorm.
	
	Next, let $f_n\in l(V_*), n\geq1$ and $\mcE_*(f_n-f_m)\to 0$ as $n,m\to \infty$, we show that there is a sequence of constants $c_n>0$ and $f\in \mathcal{F}$ so that
	\[
	\begin{cases}
		\mcE_*(f_n-f)\to 0,\\
		f_n-c_n\to f\text{ pointwisely}.
	\end{cases}
	\]
	In fact, we fix $x\in V_0$, and choose $c_n=f_n(x)$. Then, for each $l\geq 0$, we have  $\Lambda^lE(f_n-f_m)\to 0$ as $n,m\to \infty$, so $\Osc(f_n-f_m)\to 0$ as $n,m\to \infty$, hence $f_n-c_n$ converges pointwisely on $V_l$. Let $f\in l(V_*)$ be the pointwise limit of $f_n-c_n$.
	
	Then, we show $f\in \mathcal{F}$ and $\mcE_*(f_n-f)\to 0$. In fact, for each $l\geq 0$, $\Lambda^lE$ is continuous on $l(V_*)$ with respect to the topology of pointwise convergence, and thus the monotone limit $\mcE_*$ is lower-semicontinuous (with extended real values on $l(V_*)$). Then the claim follows since $\mcE_*(f-f_n)\leq \liminf_{m\to\infty}\mcE_*(f_m-f_n)$ by the lower-semicontinuity of $\mcE_*$.
	
	Finally, since $\mcE_*$ is the limit of $\Lambda^nE\in \mathcal{Q}_p(V_n)$ (in some reasonable sense), by Remark 2 after Definition \ref{def28}, one can easily see that $\mcE_*$ is uniformly convex. So for any $f\in\mathcal F$, noticing that $\mcE_*(H_*f|_{V_n})\to \mcE_*(f)$ as $n\to\infty$ and $\mcE_*\big(\frac{1}{2}(H_*f|_{V_n})+\frac12f\big)\geq \mcE_*(H_*f|_{V_n})$, we can see that $\mcE_*(H_*f|_{V_n}-f)\to 0$ as $n\to\infty$,  so $\mathscr{H}/\sim$ is dense in $(\mathcal{F}/\sim,\mcE_*^{1/p})$. We can choose a countable dense subset of $\mathscr{H}/\sim$, so $(\mathcal{F}/\sim,\mcE_*^{1/p})$ is separable.
	
	(d) follows from a routine argument of resistance estimate. For $x,y\in V_n$, define $R_{n,*}(x,y)$ to be the $p$-resistance associated with $\Lambda^nE$. Then $R_{n,*}(x,y)=R_{n+1,*}(x,y)=\cdots$, so for any $x,y\in V_*$, we can define $R_*(x,y)=R_{n,*}(x,y)$ providing $x,y\in V_n$ for some $n$.
	
	By the renormalization process, we know $R_*(F_wx,F_wy)\leq r_wR_*(x,y)$, $\forall w\in W_*,x,y\in V_*$.  By Proposition \ref{prop24} (c), $R_*^{1/p}$ is a metric on $V_*$, so by using a chaining argument and noticing that $r_i<1,\forall 1\leq i\leq N$, we have
	\[R_*(x,y)\leq C^{1/p},\quad\forall x,y\in V_*,\]
	for some $C>0$  independent of $x,y$. Hence $\Osc(f)\leq C\mcE_*^{1/p}(f)$ for any $f\in \mathcal{F}$.
	In particular, $\Osc(f\circ F_w)\leq C\mcE_*^{1/p}(f\circ F_w)\leq Cr^{1/p}_w\mcE_*^{1/p}(f)$, for any $w\in W_*$. \end{proof}

\begin{proof}[Proof of Theorem \ref{thm51}]
	By Proposition \ref{prop53}, we choose a countable dense subset $\tilde{\mathcal F}$ of $(\mathcal{F}/\sim,\mathcal{E}_*^{1/p})$ and by choosing a representative from each equivalent class, we get $\tilde{\mathcal{F}}\subset\mathcal{F}$. Then, noticing that $\frac{1}{n_l+1}\sum_{m=0}^{n_l}\Lambda^mE_0(f)\leq C_1\mcE_*(f)$ for each $f\in \mathcal{F}$ for some constant $C_1>0$ independent of $f$ and $l$, by a diagonal argument, we can pick a subsequence such that
	\[\mcE(f)=\lim\limits_{l\to\infty}\frac{1}{n_l+1}\sum_{m=0}^{n_l}\Lambda^mE_0(f)\]
exists for each $f\in \tilde{\mathcal F}$. Finally, since $\Lambda^nE_0\asymp \Lambda^n E$, one has $\mcE\asymp \mcE_*$, and hence we can extend the definition of $\mcE(f)$ by continuity to any $f\in \mathcal{F}$. Also, $\mcE^{1/p}$ is a seminorm since it is the limit of the seminorms $(\frac{1}{n_l+1}\sum_{m=0}^{n_l}\Lambda^mE_0)^{1/p}$.
	
	Since $\mcE\asymp \mcE_*$, (a) follows immediately from  Proposition \ref{prop53} (c); (b) follows immediately  from Proposition \ref{prop53} (d).
\end{proof}

\noindent\textbf{Remark. } Now that once Theorem \ref{thm51} is proved, we can simply let \[E=[\mcE]_{V_0}\]
where $[\mcE]_{V_0}(f)=\min\{\mcE(g):g\in\mathcal{F},g|_{V_0}=f\}, \forall f\in l(V_0)$. Then it is not hard to check $E\in \mathcal{Q}$ and $\mcT E=E$. So the temporary form $\mcE_*$ can be just replaced with $\mcE$, and in addition, $\mathscr{H}$ is the space of piecewise $p$-harmonic functions associated with $(\mcE,\mathcal{F})$. However, we will still keep the notation $ H_*$.\vspace{0.2cm}

Finally, similar to the $p=2$ case (see \cite[Proposition 3.1.8]{ki3}),  we should get an estimate $r_w<1$, if $\dot{w}=www\cdots\in \mathcal{P}$. This will help us to see that (\textbf{R}) holds on nested fractals for certain ``good'' $\bm{r}$. The following argument is due to \cite[Theorem 5.9]{HPS}.

\begin{lemma}\label{lemma54}
	Assume $(\textbf{A}')$, if $\dot{w}=www\cdots\in \mathcal{P}$, then $r_w<1$.
\end{lemma}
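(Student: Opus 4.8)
\textbf{Proof strategy for Lemma \ref{lemma54}.}
The plan is to argue by contradiction: suppose $\dot{w}=www\cdots\in\mathcal{P}$ but $r_w\geq 1$. Write $|w|=k$. Since $\dot w\in\mathcal{P}$, the point $\pi(\dot w)$ lies in $V_0$; moreover, because $\dot w$ is the periodic word with period $w$, we have $F_w(\pi(\dot w))=\pi(\dot w)$, so $F_w$ fixes this boundary point, and $F_w K\cap V_0\supset\{\pi(\dot w)\}$. The key structural fact I want to exploit is that $F_w$ maps the cell $K$ into itself with $\pi(\dot w)$ as an accessible (boundary) fixed point, and by iterating, $F_{w^m}K=F_w^m K$ shrinks down to $\pi(\dot w)$ while always meeting $V_0$ at that point. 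This is exactly the situation where one can compare the energy $E$ (a fixed point of $\mcT$, guaranteed by $(\textbf{A}')$ via Theorem \ref{thm42}) at successive scales and derive a contradiction from $r_w\geq 1$.

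The concrete mechanism: using Lemma \ref{lemma32}(b), $\mcT^{mk}E=[\Lambda^{mk}E]_{V_0}=E$ for all $m\geq 0$. Pick a function $g\in l(V_0)$ achieving, say, $\max_{x\neq y}R(x,y)$ (the $p$-resistance of $E$), or more simply take the indicator-type test function separating $\pi(\dot w)$ from the rest of $V_0$. One extends $g$ to $V_{mk}$ by the $\Lambda^{mk}E$-harmonic extension and restricts attention to the contribution of the sub-cell indexed by $w^m=\underbrace{w\cdots w}_{m}$. Because $\Lambda^{mk}E(h)\geq r_{w^m}^{-1}E(h\circ F_{w^m})=r_w^{-m}E(h\circ F_{w^m})$, and because the point $F_{w^m}(\pi(\dot w))=\pi(\dot w)$ together with at least one other point of $V_0$ lies in $V_{mk}$ with a controlled boundary value, one gets that $E(g)=\mcT^{mk}E(g)$ is bounded below by $r_w^{-m}$ times the $p$-resistance-type quantity associated with separating $\pi(\dot w)$ inside $F_{w^m}K$. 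If $r_w\geq 1$ this lower bound does not decay, and combined with $\inf_{n}\delta(\mcT^n E)>0$ (Lemma \ref{lemma44}(a)) and the comparison Proposition \ref{prop26}, one forces $R(\pi(\dot w),y)$ for the other boundary point $y$ to be both bounded and bounded away from zero at every scale, while the chaining/triangle inequality \eqref{eqn21} applied along the shrinking cells $F_{w^m}K$ forces it to $0$ — the contradiction. Alternatively, and perhaps more cleanly, one shows $R_*(\pi(\dot w),z)\to 0$ for $z$ the ``entrance'' points of $F_{w^m}K$ using $R_*(F_{w^m}x,F_{w^m}y)\leq r_w^m R_*(x,y)$, but $r_w\geq 1$ makes this bound useless, so instead one runs the estimate in the reverse direction on the energy side.

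I expect the main obstacle to be pinning down the precise combinatorial/topological claim that $\pi(\dot w)$ is a genuine boundary fixed point of $F_w$ with a neighboring vertex structure that makes the resistance/energy comparison bite — essentially the $p$-analogue of the argument in \cite[Theorem 5.9]{HPS} and \cite[Proposition 3.1.8]{ki3}. In the $p=2$ case this uses that the harmonic extension of a nonconstant boundary function has strictly smaller energy on proper sub-cells unless the scaling forces otherwise; the $p\neq 2$ version must instead route through Proposition \ref{prop26} (comparison of energies via $p$-resistances, since energies are not determined by resistances) and Lemma \ref{lemma25}(c). The rest — setting up $\Lambda^{mk}E$, invoking $\mcT^{mk}E=E$, and extracting the contradiction from $\inf_n\delta(\mcT^nE)>0$ — should be routine once that structural claim is in place. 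I would first establish $F_w(\pi(\dot w))=\pi(\dot w)\in V_0$, then set up the scale-$mk$ energy identity, then derive the resistance bound and let $m\to\infty$.
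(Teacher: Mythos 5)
There is a genuine gap. The soft inequalities you set up (minimality of the harmonic extension plus the self-similar scaling $\Lambda^{m|w|}E(h)\geq r_w^{-m}E(h\circ F_{w^m})$) only yield the non-strict bound $\mcE(h)\leq r_w^{-m}\mcE(h)$, i.e.\ $r_w\leq 1$; the whole content of the lemma is the strictness, and your proposed contradiction mechanism for ruling out $r_w\geq 1$ does not hold up. The quantity you want bounded below uniformly in $m$ --- the ``$p$-resistance-type quantity associated with separating $\pi(\dot w)$ inside $F_{w^m}K$'', equivalently $E(h\circ F_{w^m})$ or the oscillation of the harmonic extension on $F_{w^m}V_0$ --- in general decays to $0$ as $m\to\infty$ (the harmonic function becomes nearly constant on the shrinking cells containing its boundary maximum), so no blow-up or contradiction is forced even if $r_w\geq 1$. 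Likewise the chaining step you invoke (resistance along the shrinking cells $F_{w^m}K$ tending to $0$) presupposes $R_*(F_{w^m}x,F_{w^m}y)\leq r_w^mR_*(x,y)$ with $r_w<1$, which is exactly what is to be proved, and you yourself note this bound is useless when $r_w\geq1$; the promised ``reverse direction on the energy side'' is never made concrete. Proposition \ref{prop26}, Lemma \ref{lemma25}(c) and $\inf_n\delta(\mcT^nE)>0$ only give two-sided comparability of energies at each fixed scale and cannot by themselves produce the strict inequality.

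The paper's proof supplies precisely the missing strictness mechanism, and it is a one-scale perturbation argument rather than a limit $m\to\infty$. Choose $m$ so that $F_w^mK\cap V_0=\{x\}$ with $x=\pi(\dot w)$, let $h=H_*(\delta_x|_{V_0})$, let $f=H_*(\delta_x|_{V_{m|w|}})$ (so that $\mcE(f)=r_w^{-m}\mcE(h)$ by self-similarity), and let $g=H_*g'$ where $g'\equiv1$ on $F_w^mV_0$ and $\equiv0$ on $V_{m|w|}\setminus F_w^mV_0$. Since $f$ vanishes off $F_w^mK$ and $g$ is constant on $F_w^mK$, strong locality and $p$-homogeneity give $\mcE\big((1-t)f+tg\big)=(1-t)^p\mcE(f)+t^p\mcE(g)$, whose derivative at $t=0$ is strictly negative because $\mcE(f)>0$; as $(1-t)f+tg$ has the same $V_0$-boundary data as $h$, minimality yields $\mcE(h)<\mcE(f)=r_w^{-m}\mcE(h)$ and hence $r_w<1$. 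If you want to salvage your outline, this convexity/locality perturbation (the $p$-analogue of the step in \cite[Theorem 5.9]{HPS}) is the ingredient you must add; the resistance-comparison tools you list do not substitute for it.
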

\begin{proof}
	Let $x=\pi(\dot{w})\in V_0$, and choose $m$ large enough so that $F^m_wK\cap V_0=\{x\}$. We let $h=H_*h'$ with $h'\in l(V_0)$ and $h'(y)=\delta_{x,y},\forall y\in V_0$. In other words, $h\in \mathcal{F}$ is a $p$-harmonic function with the boundary value $1_x$ on $V_0$.
	
	Next, we define some functions with the same boundary values as $h$ on $V_0$. First, we let
	$f=H_*f'$, where $f'\in l(V_{m|w|})$ is defined as $f'(y)=\delta_{x,y},\forall y\in V_{m|w|}$. So by self-similarity, we immediately see
	\begin{equation}\label{eqn51}
		\mcE(f)=r_w^{-m}\mcE(h).
	\end{equation}
	Define $g'\in l(V_{m|w|})$ as
	\[g'(y)=\begin{cases}
		1,\text{ if }y\in F^m_wV_0,\\
		0,\text{ if }y\in V_{m|w|}\setminus (F^m_wV_0).
	\end{cases}\]
Let $g=H_*g'$.
	Then, since $f|_{K\setminus F_w^mK}\equiv 0$ and $g|_{F_w^mK}\equiv 1$, by using the strongly local property and the homogeneity of $\mcE$, it is not hard to see
	\[
	\begin{aligned}
		u(t):=\mcE((1-t)f+tg)=(1-t)^p\mcE(f)+t^p\mcE(g), \quad {\forall}\ 0<t<1.
	\end{aligned}
	\]
	Since $\mcE(f)>0$, we can see that $\frac{d}{dt}u(0)<0$, and hence
	\[\mcE(h)\leq \min_{t\in [0,1]}\mcE((1-t)f+tg)<\mcE(f)=r_w^{-m}\mcE(h),\]
	where the first inequality is because $h|_{V_0}=\big((1-t)f+tg\big)|_{V_0}$ and $h$ is $p$-harmonic. Clearly, $\mcE(h)=E(h')>0$, so $r_w<1$.
\end{proof}

\subsection{Embedding $\mathscr{H}$ into $C(K)$}\label{subsec52}
In this subsection, we show that $H_*f$ satisfies some estimate of local oscillation, hence one can embed $\mathscr{H}$ into $C(K)$ by continuous extension. In particular, $\mathscr{H}$ is a dense subspace of $C(K)$, and we can see $\mathscr{H}/\sim$ is dense in $\mathcal{F}/\sim$ with respect to $\mcE^{1/p}$.

The proof is based on a similar idea of Lemma \ref{lemma54}, but much more complicated. The essential difficulty is solved by Lemma \ref{lemmaa3} in Appendix \ref{AppendixA}.

\begin{proposition}\label{prop55}
	Assume $(\textbf{A}')$. There exists $m\geq 1$ and $\eta<1$ so that $\Osc(H_*f\circ F_w)<\eta\Osc(f)$ for any non-constant function $f\in l(V_0)$ and $w\in W_m$.
\end{proposition}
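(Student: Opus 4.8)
The plan is to iterate a single-step contraction of oscillation. Fix a non-constant $f\in l(V_0)$ and consider $h=H_*f\in\mathscr{H}\subset\mathcal{F}$, the $p$-harmonic extension. By homogeneity and invariance under addition of constants we may normalize so that $\min_{V_0}f=0$, $\Osc(f)=1$, and so $0\le h\le 1$ on $V_*$ (by the Markov property, since truncating $h$ to $[0,1]$ cannot increase $\mathcal{E}_*$, and $H_*f$ is the unique minimizer). The goal is then to find $m$ and $\eta<1$, independent of $f$, such that $\Osc(h\circ F_w)\le\eta$ for every $w\in W_m$. The core mechanism is the same energy comparison as in Lemma \ref{lemma54}: if on some cell $h$ were close to taking both values near $0$ and near $1$, one could build a competitor with strictly smaller energy, contradicting $p$-harmonicity. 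I would first establish a \emph{one-step} estimate: there is $\eta_1<1$ so that for every $v\in W_1$ (equivalently on $V_1$), $\Osc(h\circ F_v)\le\eta_1\Osc(h)$ — but this is false in general at level one because a harmonic function can be monotone across a single cell with no loss; the point of passing to level $m$ is that connectivity of $K$ forces $h$ to ``spread out'' its transition region, and over $m$ levels the product of per-level factors beats $1$.

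The key steps, in order. (1) Set up the normalization and record that $h=H_*f$ is valued in $[0,1]$ and is $p$-harmonic, so $\mathcal{E}_*(h)=E(f)>0$ and $\mathcal{E}_*(h)\le\mathcal{E}_*(g)$ for every $g\in\mathcal{F}$ with $g|_{V_0}=f$. (2) Argue by contradiction and compactness: if the proposition fails, then for each $m$ there is a normalized $f_m$ with $\Osc(h_m\circ F_{w_m})>1-1/m$ for some $w_m\in W_m$; by finiteness of $l(V_0)/\sim$ up to scaling (its normalized slice is compact) pass to a convergent subsequence $f_m\to f_\infty$, with $h_m\to h_\infty=H_*f_\infty$ in $(\mathcal{F}/\sim,\mathcal{E}_*^{1/p})$ and pointwise on $V_*$ (using continuity of $H_*$ under this norm, Lemma \ref{lemmaa2}). (3) Along the cells $F_{w_m}K$, which shrink in diameter, extract points $x_m,y_m\in F_{w_m}V_0$ with $h_m(x_m)\to 1$, $h_m(y_m)\to 0$ while $d(x_m,y_m)\to 0$; since $h_\infty\in C(K)$ (Proposition \ref{prop53}(d) in the regular case; in general via Lemma \ref{lemmaa3}), this contradicts continuity of $h_\infty$ — \emph{unless} the limiting harmonic function itself is discontinuous, which is exactly where the argument needs care. (4) The clean way to avoid that trap is to make the energy-competitor construction quantitative rather than take a limit: for a word $w\in W_m$, if $\Osc(h\circ F_w)=1-\epsilon$ with $\epsilon$ small, use the level-$m$ cell structure to produce, as in Lemma \ref{lemma54}, functions $f_0$ (equal to $h$ off $F_wK$, pushed toward a constant inside) and $g_0$ (adjusting the values on $F_wV_0$), and by strong locality and the derivative computation $\frac{d}{dt}\mathcal{E}_*((1-t)f_0+tg_0)|_{t=0}<0$ obtain $\mathcal{E}_*(h)<\mathcal{E}_*(h)$ once $\epsilon$ is below a threshold depending only on $\#V_0$, $p$, and the comparison constants — the needed uniformity coming from Corollary \ref{coro27} and the $\mathcal{Q}_p$-properties in Appendix \ref{AppendixA} (Lemma \ref{lemmaa3}).

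The main obstacle, as flagged, is Step (4): carrying out the competitor construction at an \emph{arbitrary} level-$m$ cell $F_wK$ rather than at a cell of the special form $F_w^mK$ with $\dot w$ postcritical. In Lemma \ref{lemma54} one exploits that $F_w^mK$ meets $V_0$ in a single point, so the ``interior'' where $h$ can be flattened is cleanly separated; for a generic $w\in W_m$ the cell $F_wK$ touches its neighbors along several junction points, and flattening $h$ there perturbs the energy on adjacent cells. The resolution is to choose $m$ large, localize to a \emph{sub}-cell $F_{w'}K$ with $w'\in W_{m+k}$, $w'$ extending $w$, that sits well inside $F_wK$ (distance from $V_1$-type junctions controlled), and run the argument there; connectedness of $K$ together with the resistance/oscillation control $\Osc(\cdot)\lesssim r_{w'}^{1/p}\mathcal{E}^{1/p}(\cdot)$ from Proposition \ref{prop53}(d) and the uniform comparisons of Proposition \ref{prop26} then convert a large oscillation on $F_wK$ into a definite energy surplus that a competitor can shave off, yielding a fixed $\eta<1$. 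Quantifying ``well inside'' and extracting the uniform constant $\eta$ is the delicate part; this is precisely what Lemma \ref{lemmaa3} in Appendix \ref{AppendixA} is designed to supply, so in the write-up I would state the needed consequence of it as a black box and feed it into the contradiction/competitor scheme above.
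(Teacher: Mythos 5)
There is a genuine gap. The whole content of the proposition sits in what you call Step (4), and your proposal leaves exactly that step as a black box; moreover the way you frame the difficulty is off target. The missing idea is the choice of $m$: take $m$ so large that $\#\big((F_wV_0)\cap V_0\big)\le 1$ for every $w\in W_m$. Then, for a fixed non-constant $f$ with $h_f=(H_*f)|_{V_m}$, if some $\tau\in W_m$ had $\Osc(h_f\circ F_\tau)=\Osc(f)$, at least one of the two sets in $F_\tau V_0$ where $h_f$ attains the global maximum or the global minimum of $f$ is disjoint from $V_0$; say $F_\tau B\cap V_0=\emptyset$ with $h_f=\max f$ on $F_\tau B$. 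The perturbation $h_{f,t}=h_f+t\cdot 1_{F_\tau B}$ then keeps the boundary data $f$, and since the perturbed points are \emph{global} maxima of $h_f$ on $V_m$, Lemma \ref{lemmaa3} gives $\frac{d}{dt-}E(h_{f,t}\circ F_w)|_{t=0}\ge 0$ for every $w\in W_m$ and strict positivity for $w=\tau$; hence $\frac{d}{dt-}\Lambda^mE(h_{f,t})|_{t=0}>0$ and some $t<0$ strictly lowers $\Lambda^mE$, contradicting $p$-harmonicity. This completely removes the ``junctions with neighboring cells'' issue you propose to handle by retreating to a subcell ``well inside'' $F_wK$: no such localization is needed, the effect on adjacent cells is exactly what the global-maximum hypothesis of Lemma \ref{lemmaa3} controls, and your quantitative substitute via Proposition \ref{prop53}(d) is in any case unavailable, since that oscillation--energy bound requires (\textbf{R}), which is not assumed in Proposition \ref{prop55}.

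Your compactness scheme in Steps (2)--(3) is also problematic as set up: letting $m\to\infty$ and passing to a limit requires continuity of the limiting harmonic function on $K$, but continuity of piecewise $p$-harmonic functions (Corollary \ref{coro56}) is precisely what this proposition is used to establish, so that route is circular in the non-regular case --- you acknowledge the trap but the repair you sketch is the unproved Step (4). The correct uniformity argument is much lighter: with $m$ fixed as above, the perturbation argument gives the strict (not yet uniform) inequality $\max_{w\in W_m}\Osc(H_*f\circ F_w)<\Osc(f)$ for each fixed non-constant $f$, and a single $\eta<1$ then comes from compactness of the normalized slice $M=\{f\in l(V_0):\Osc(f)=1,\sum_{x\in V_0}f(x)=0\}$ together with continuity of $f\mapsto\max_{w\in W_m}\Osc(H_*f\circ F_w)$ on $l(V_0)$ (a consequence of the comparison principle, Lemma \ref{lemmaa2}). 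So the compactness is in $f$ only, at a fixed $m$, not along $m\to\infty$.
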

\begin{proof}
	We choose some fixed $m\geq 1$ so that $\#(F_wV_0)\cap V_0\leq 1$ for each $w\in W_m$.
	
	First, let us fix a non-constant function $f\in l(V_0)$, and complete the proof by taking advantage of compactness. For convenience, we write $h_f=(H_*f)|_{V_m}$, we then claim that
	\begin{equation}\label{eqn52}
\max_{w\in W_m}\Osc(h_f\circ F_w)<\Osc(f).
	\end{equation}
We will show this by contradiction. Indeed, if it is not true, we must have $\max_{w\in W_m}\Osc(h_f\circ F_w)=\Osc(f)$. So we can find $\tau\in W_m$ such that $\Osc(h_f\circ F_\tau)=\Osc(f)$, and hence there exist $A,B\subset V_0$ such that
	\[
	h_f\circ F_\tau|_A=\min_{x\in V_0}f(x),\quad h_f\circ F_\tau|_B=\max_{x\in V_0}f(x).
	\]
By assumption, we have $\#((F_\tau V_0)\cap V_0)\leq 1$, hence one of $F_\tau(A)$ and $F_\tau(B)$ must not intersect $V_0$. Without loss of generality, we may assume that $F_\tau(B)\cap V_0=\emptyset$ (otherwise consider $-f$). Then, we define $h_{f,t}=h_f+t\cdot 1_{F_\tau B}$ for $t\in\mathbb R$, where $1_{F_\tau B}\in l(V_m)$ is the indicator function of $F_\tau B$. We denote by $\frac{d}{dt-}$ the left derivative, i.e. $\frac{d}{dt-}u|_{t=0}=\lim\limits_{t\nearrow 0}\frac{u(t)-u(0)}{t}$. Then
	\begin{equation}\label{equation5.3}
	\frac{d}{dt-}\Lambda^mE(h_{f,t})|_{t=0}=\sum_{w\in W_m}r_w^{-1}\frac{d}{dt-}E(h_{f,t}\circ F_w)|_{t=0}.
	\end{equation}
Since $h_f\circ F_\tau|_B=\max_{x\in V_0} h_f(x)=\max_{x\in V_m} h_f(x)$, by applying Lemma \ref{lemmaa3}, one can see that each term in the right hand side of \eqref{equation5.3} is non-negative and in particular

\[\frac{d}{dt-}E(h_{f,t}\circ F_\tau)|_{t=0}>0,\]
thus we obtain $\frac{d}{dt-}\Lambda^mE(h_{f,t})|_{t=0}>0$. Hence, there is $t<0$ such that $\Lambda^mE(h_{f,t})<\Lambda^mE(h_{f,0})=\Lambda^mE(h_f)=E(f)$. This is a contradiction since $h_f$ is a $p$-harmonic extension of $f$ and $h_{f,t}|_{V_0}=f$.
	
	To complete the proof, we notice that $\max_{w\in W_m}\Osc(H_*f\circ F_w)$ is continuous on $l(V_0)$ by Lemma \ref{lemmaa2}. Hence, by (\ref{eqn52}), we have
	\[\max_{f\in M}\max_{w\in W_m}\Osc(H_*f\circ F_w)<1,\]
	where $M=\{f\in l(V_0):\Osc(f)=1,\sum_{x\in V_0}f(x)=0\}$ is a compact subset of $l(V_0)$.
\end{proof}

\begin{corollary}\label{coro56}
	Assume $(\textbf{A}')$. We have the natural embedding $\mathscr{H}\subset C(K)$.
\end{corollary}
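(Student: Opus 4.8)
The plan is to derive Corollary \ref{coro56} directly from the iterated oscillation decay supplied by Proposition \ref{prop55}. First I would fix $h\in\mathscr{H}$, say $h=H_*f$ with $f\in l(V_n)$ for some $n\ge 0$. For any word $v\in W_*$ with $|v|\ge n$, the restriction $h\circ F_v$ agrees on $V_0$ with the $p$-harmonic extension (with respect to $\mcE_*$, equivalently $\mcE$) of the boundary data $(h\circ F_v)|_{V_0}$, by the self-similarity of $\mcE_*$ and uniqueness of harmonic extensions (Lemma \ref{lemmaa2}); hence $h\circ F_v$ is itself piecewise $p$-harmonic and Proposition \ref{prop55} applies to it. Iterating: writing any long word as a concatenation of blocks of length $m$, I get $\Osc(h\circ F_w)\le \eta^{\lfloor (|w|-n)/m\rfloor}\Osc(h|_{V_n})$ for all $w$ with $|w|$ large, and since $\eta<1$ this tends to $0$ uniformly as $|w|\to\infty$.

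Next I would translate this cell-oscillation decay into uniform continuity of $h$ on $V_*$. Given $x,y\in V_*$ close in the Euclidean metric, they lie in a common $k$-cell (or in two adjacent $k$-cells sharing a boundary point) for $k$ growing as $|x-y|\to 0$, using the standard p.c.f.\ fact that the diameters of $k$-cells shrink to $0$ and that $K$ is connected; then $|h(x)-h(y)|$ is controlled by a bounded number of cell-oscillations at level $k$, each of which is $\le C\eta^{k/m}\Osc(h|_{V_n})$. This shows $h$ is uniformly continuous on the dense set $V_*\subset K$, so it extends uniquely to a continuous function on $K$; the extension is compatible with the inclusion $V_*\hookrightarrow K$, giving the natural embedding $\mathscr{H}\subset C(K)$.

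The main obstacle is the bookkeeping in the chaining step: I need that any two points of $V_*$ within Euclidean distance $\delta$ can be joined through a uniformly bounded number of $k(\delta)$-cells, with $k(\delta)\to\infty$ as $\delta\to 0$. This is a purely combinatorial-geometric feature of connected p.c.f.\ self-similar sets (it is exactly the ingredient used in the $p=2$ theory, e.g.\ \cite{ki3}), and here it is cheap because Proposition \ref{prop55} already gives geometric decay in the level, so even a crude bound on the number of cells in a chain suffices. A minor point to check is that $h|_{V_n}$ is automatically non-constant-or-constant handled correctly: if $h$ is constant there is nothing to prove, and otherwise Proposition \ref{prop55} applies verbatim. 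Putting these together yields $\Osc(h\circ F_w)\to 0$ uniformly over $w\in W_*$ with $|w|\to\infty$, hence continuity, hence $\mathscr{H}\subset C(K)$.
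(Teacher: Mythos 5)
Your argument is correct and is exactly the route the paper intends: Corollary \ref{coro56} is meant to follow from Proposition \ref{prop55} by iterating the oscillation decay over blocks of length $m$ (using that, by self-similarity of $\mcE_*$ and uniqueness of minimizers, the restriction of a piecewise $p$-harmonic function to a cell is again $p$-harmonic) and then extending by uniform continuity. Your chaining step is also sound, since for each level $k$ the finitely many pairwise disjoint $k$-cells are at positive Euclidean distance, so Euclidean-close points of $V_*$ lie in intersecting $k$-cells sharing a point of $V_k$, which is all the argument needs.
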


\subsection{Embedding $\mathcal{F}$ into $L^p(K,\mu)$}\label{subsec53}
The idea is essentially due to Kumagai \cite{ku}. Readers can also find the proof in the book \cite[Section 3.4]{ki3}.

By Corollary \ref{coro56}, for $f\in l(V_*)$, $m\geq 0$, we can naturally embed $H_*(f|_{V_m})$ into $L^p(K,\mu)$. Indeed, $\Psi(H_*(f|_{V_m}))\in C(K)\subset L^p(K,\mu)$, where $\Psi$ is the same operator in  the statement of Theorem \ref{thm52}. For short, we write
	\[P_mf=\Psi(H_*(f|_{V_m})), \quad f\in l(V_*), m\geq 0.\]

\begin{lemma}\label{lemma57}
	Assume $(\textbf{A}')$ and (\textbf{B}). Then
	$P_mf$ converges in $L^p(K,\mu)$ as $m\to\infty$ for any $f\in \mathcal F$.
\end{lemma}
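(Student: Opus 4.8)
\textbf{Proof plan for Lemma \ref{lemma57}.} The plan is to show that $\{P_mf\}_{m\geq 0}$ is a Cauchy sequence in $L^p(K,\mu)$, using condition (\textbf{B}) to control the tail. The key observation is that $P_mf$ and $P_{m+1}f$ agree at all points of $V_m$ (both interpolate $f|_{V_m}$ there), and on each $m$-cell $F_wK$ ($w\in W_m$) the difference $P_{m+1}f - P_mf$ is the continuous extension of a function that vanishes on $F_wV_0$ when restricted appropriately; more precisely, $(P_{m+1}f - P_mf)\circ F_w$ is a function on $K$ which is the difference of two piecewise $p$-harmonic functions sharing the same values on $V_0$. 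Hence its oscillation on $F_wK$ is bounded by a constant times the oscillations of $H_*(f|_{V_{m}})\circ F_w$ and $H_*(f|_{V_{m+1}})\circ F_w$.

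First I would estimate $\|P_{m+1}f - P_mf\|_{L^p(K,\mu)}^p$ by summing over $w\in W_m$:
\[
\|P_{m+1}f - P_mf\|_{L^p(K,\mu)}^p = \sum_{w\in W_m}\int_{F_wK}|P_{m+1}f - P_mf|^p\,d\mu \leq \sum_{w\in W_m}\mu(F_wK)\cdot\Osc\big((P_{m+1}f - P_mf)\circ F_w\big)^p.
\]
Then I would bound each $\Osc\big((P_{m+1}f-P_mf)\circ F_w\big)$ by $C\Osc(H_*f\circ F_w) \leq C'\Osc(f)$-type quantities, but this is too crude; instead I would use the oscillation-energy estimate. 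By Proposition \ref{prop55} (or the resistance estimate underlying Proposition \ref{prop53}(d) applied at the level of cells, which holds without (\textbf{R}) in the self-similar scaled form), one gets $\Osc(h\circ F_w)^p \leq C r_w \Lambda^{|w|+\cdot}E(h\circ F_w)$ for piecewise $p$-harmonic $h$; combining with the fact that the $p$-energies of $H_*(f|_{V_m})\circ F_w$ and $H_*(f|_{V_{m+1}})\circ F_w$ are dominated by $\mcE_*(f\circ F_w)$ and summing the local energies back to $\mcE_*(f)$, I obtain
\[
\|P_{m+1}f - P_mf\|_{L^p(K,\mu)}^p \leq C\,\kappa_m\,\mcE_*(f),
\]
where $\kappa_m = \max_{w\in W_m}r_w\mu(F_wK)$ as in (\textbf{B}).

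Taking $p$-th roots and using the triangle inequality in $L^p(K,\mu)$ gives $\|P_{m+k}f - P_mf\|_{L^p} \leq C^{1/p}\mcE_*(f)^{1/p}\sum_{j\geq m}\kappa_j^{1/p}$, which tends to $0$ as $m\to\infty$ by (\textbf{B}). Hence $\{P_mf\}$ is Cauchy and converges in $L^p(K,\mu)$. The main obstacle I anticipate is the local oscillation estimate: unlike in the regular case, here $r_w$ need not be small, so one cannot chain resistances globally; the correct move is to work cell-by-cell at level $m$, where the scaling factor $r_w$ appears explicitly and is then absorbed into $\kappa_m$ together with $\mu(F_wK)$. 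One must be careful that the comparison constant $C$ depends only on $\#V_0$, $p$ (and the fixed form $E$), not on $m$ or $w$; this follows because after rescaling by $F_w$ the estimate reduces to a fixed finite-dimensional comparison on $V_0$ versus $V_1$, exactly the kind of estimate furnished by Proposition \ref{prop26} and the properties of $\mathcal{Q}_p$ collected in Appendix \ref{AppendixA}. A secondary technical point is justifying that $(P_{m+1}f - P_mf)\circ F_w$ genuinely has the claimed oscillation bound: since $P_{m+1}f\circ F_w$ and $P_mf\circ F_w$ both lie in $\mathscr{H}$ on $K$ and agree on $V_0$, their difference vanishes on $V_0$, so its oscillation is controlled by its $p$-energy via the resistance diameter of $(V_*, R_*^{1/p})$ restricted to the relevant scale — here one uses that $R_*(x,y)\leq C$ for $x,y\in V_1$, which holds unconditionally (it is the global bound in the regular case that fails, not the one-step bound).
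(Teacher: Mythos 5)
Your overall architecture is the same as the paper's: estimate $\|P_{m+1}f-P_mf\|_{L^p(K,\mu)}^p$ cell-by-cell at level $m$, bound the sup of the difference on each cell $F_wK$ by the cell-local energy with a constant depending only on $\#V_0$, $p$ and the fixed eigenform, absorb $\mu(F_wK)r_w$ into $\kappa_m$ after summing the rescaled cell energies back to $C\mcE(f)$, and then telescope using (\textbf{B}). This is exactly the chain of estimates \eqref{eqn53}--\eqref{eqn54}.

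However, there is a gap at the one genuinely delicate step, namely the passage from $\|P_{m+1}f-P_mf\|_{L^\infty(F_wK)}$ to a quantity determined by the finitely many values on $F_wV_1$. You justify this by the resistance bound $R_*(x,y)\leq C$ for $x,y\in V_1$, but that bound only controls the difference \emph{at the points of} $F_wV_1$; the sup over the cell involves the values at $V_n\cap F_wK$ for all $n>m+1$ (equivalently, a.e.\ on the cell), and without (\textbf{R}) you cannot reach those points by chaining resistances inside the cell, since $\sum_k\max_{|v|=k}r_v^{1/p}$ may diverge when some $r_i\geq 1$ --- this is precisely the obstruction the non-regular case creates. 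The correct mechanism, and the one the paper uses for the equality in \eqref{eqn53}, is the comparison principle of Lemma \ref{lemmaa2}: both $P_mf$ and $P_{m+1}f$ are energy-minimizing (piecewise $p$-harmonic) extensions of their values on $V_{m+1}$, and within $F_wK$ these extensions depend only on the values on $F_wV_1$; applying Lemma \ref{lemmaa2} to $P_mf$ and $P_{m+1}f$ plus a suitable constant shows that the difference attains its extrema on $F_wV_1$, so $\|P_{m+1}f-P_mf\|_{L^\infty(F_wK,\mu)}=\|f_{m+1}\circ F_w-g_{m+1}\circ F_w\|_{l^\infty(V_1)}$ with $f_{m+1}=(P_mf)|_{V_{m+1}}$, $g_{m+1}=f|_{V_{m+1}}$. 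You mention that both functions lie in $\mathscr{H}$, which is the right hypothesis, but you never invoke the maximum/comparison principle that this membership is needed for; as written, your ``one-step resistance'' argument does not bound the difference at points deeper than level $m+1$. Once this step is repaired (and the $l^\infty(V_1)$ norm of the difference, which vanishes on $V_0$, is bounded by $C\,\Lambda E(\cdot)$ via the nondegeneracy of $\Lambda E$), the rest of your estimate goes through as in the paper.
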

\begin{proof}
	Fix $m\geq 0$, and let $f_{m+1}=(P_mf)|_{V_{m+1}}$ and $g_{m+1}=f|_{V_{m+1}}$. Then \[\sum_{w\in W_m}r_w^{-1}\Lambda E(f_{m+1}\circ F_w-g_{m+1}\circ F_w)=\Lambda^{m+1} E(f_{m+1}-g_{m+1})\leq 2^p\mcE(f).\]
	Hence, recall that $\kappa_m=\max_{w\in W_m}r_w\mu(F_wK)$, there is a constant $C>0$ independent of $f$ such that
	\begin{equation}\label{eqn53}
		\begin{aligned}
			\|P_{m+1}f-P_mf\|^p_{L^p(K,\mu)}
			&\leq \sum_{w\in W_m}\mu(F_wK)\|P_{m+1}f-P_mf\|^p_{L^\infty(F_wK,\mu)}\\
			&=\sum_{w\in W_m}\mu(F_wK)\|f_{m+1}\circ F_w -g_{m+1}\circ F_w\|^p_{l^\infty(V_1)}\\
			&\leq C\sum_{w\in W_m}\mu(F_wK)\Lambda E(f_{m+1}\circ F_w -g_{m+1}\circ F_w)\\
			&\leq 2^pC\kappa_m\mcE(f),
		\end{aligned}
	\end{equation}
	where we use Lemma \ref{lemmaa2} in the equality. Hence by (\textbf{B}),
	\begin{equation}\label{eqn54}
		\sum_{m=0}^\infty\|P_{m+1}f-P_mf\|_{L^p(K,\mu)}\leq 2C^{1/p}\left(\sum_{m=0}^\infty \kappa_m^{1/p}\right)\mcE^{1/p}(f)<\infty,
	\end{equation}
	whence $P_{m}f$ converges in $L^p(K,\mu)$ as $m\rightarrow\infty$.
\end{proof}

By Lemma \ref{lemma57}, we can extend $\Psi$ to $\mathcal{F}\to L^p(K,\mu)$ by defining $\Psi (f)=\lim\limits_{m\to\infty}P_mf$.

Then for $f$ uniformly continuous, $P_mf$ converges to $f$ uniformly, hence $\Psi(f)$ is simply the continuous extension of $f$, and the notation is in consistency with that in the statement of Theorem \ref{thm52}.

\begin{lemma}\label{lemma58}
	Assume $(\textbf{A}')$ and (\textbf{B}).
	The extended embedding operator $\Psi:\ \mathcal F\rightarrow L^p(K,\mu)$ is continuous in the sense that if $f_n\to f$ pointwisely and $\mcE(f_n-f)\to 0$, then $\Psi(f_n)\to \Psi(f)$ in $L^p(K,\mu)$. Hence $\Psi$ is linear. Furthermore, $\Psi:\mathcal{F}\to L^p(K,\mu)$ is injective.
\end{lemma}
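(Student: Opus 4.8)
The plan is to prove the three assertions — continuity, linearity, injectivity — in that order, since each builds on the previous one. For \emph{continuity}, suppose $f_n\to f$ pointwise on $V_*$ and $\mcE(f_n-f)\to 0$. The strategy is to estimate $\|\Psi(f_n)-\Psi(f)\|_{L^p(K,\mu)}$ by inserting the intermediate approximants $P_m$: write
\[
\|\Psi(f_n)-\Psi(f)\|_{L^p(K,\mu)}\leq \|\Psi(f_n)-P_mf_n\|_{L^p(K,\mu)}+\|P_mf_n-P_mf\|_{L^p(K,\mu)}+\|P_mf-\Psi(f)\|_{L^p(K,\mu)}.
\]
The first and third terms are controlled uniformly in $n$ by the tail bound coming out of \eqref{eqn54} in Lemma \ref{lemma57}, namely $\|\Psi(g)-P_mg\|_{L^p(K,\mu)}\leq 2C^{1/p}\big(\sum_{k\geq m}\kappa_k^{1/p}\big)\mcE^{1/p}(g)$, applied with $g=f_n$ and $g=f$; here I use that $\mcE^{1/p}(f_n)$ is bounded (it converges to $\mcE^{1/p}(f)$ since $\mcE^{1/p}$ is a seminorm and $\mcE(f_n-f)\to 0$). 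Given $\gep>0$, fix $m$ so that these two terms are each below $\gep/3$ for all $n$. For the middle term, $P_mf_n$ depends only on $f_n|_{V_m}$ through the $p$-harmonic extension $H_*$, which is continuous on $l(V_m)$ by Lemma \ref{lemmaa2}; since $f_n\to f$ pointwise on $V_m$ (a finite set), $P_mf_n\to P_mf$ uniformly on $K$, hence in $L^p(K,\mu)$, so the middle term is below $\gep/3$ for $n$ large. This gives continuity.

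For \emph{linearity}: given $f,g\in\mathcal F$ and scalars $a,b$, the combination $af+bg$ lies in $\mathcal F$ (a vector space, since $\mcE_*^{1/p}$ is a seminorm by Proposition \ref{prop53}(c)). Pick uniformly continuous approximants — e.g. $f_n=H_*(f|_{V_n})$ and $g_n=H_*(g|_{V_n})$, which lie in $\mathcal F_c$ by Corollary \ref{coro56}, converge pointwise on $V_*$ to $f,g$ respectively, and satisfy $\mcE(f_n-f)\to 0$, $\mcE(g_n-g)\to 0$ (this pointwise-plus-energy convergence of $H_*$-truncations is exactly what was established in the proof of Proposition \ref{prop53}(c)). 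On $\mathcal F_c$ the operator $\Psi$ is just continuous extension, so $\Psi(af_n+bg_n)=a\Psi(f_n)+b\Psi(g_n)$ as elements of $C(K)$. Since $af_n+bg_n\to af+bg$ pointwise on $V_*$ and $\mcE(af_n+bg_n-(af+bg))\to 0$ by the seminorm triangle inequality and homogeneity, continuity (just proved) gives $\Psi(af+bg)=a\Psi(f)+b\Psi(g)$.

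For \emph{injectivity}: by linearity it suffices to show $\Psi(f)=0$ in $L^p(K,\mu)$ implies $f$ is constant on $V_*$ (recall $\Psi$ is really defined on $\mathcal F/\!\sim$, or rather the kernel we want is the constants). Suppose $\Psi(f)=0$. For each $w\in W_*$, self-similarity of $\Psi$ (already proved in the paragraph preceding the lemma, via $P_m(f\circ F_w)=(P_{m-|w|}f)\circ F_w$ for $m\ge|w|$ and passing to the limit) gives $\Psi(f\circ F_w)=\Psi(f)\circ F_w=0$ in $L^p(F_wK,\mu)$; since $\mu(F_wK)>0$, we get $\Psi(f\circ F_w)=0$ $\mu$-a.e. on $K$. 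Now the point is to recover the values of $f$ on $V_*$ from $\Psi(f)$: for $x\in V_m$, $x=F_w(y)$ with $w\in W_m$, $y\in V_0$, the sequence $P_k f$ restricted to the cell $F_wK$ and expressed via $H_*$ has the value $f(x)$ pinned at the vertex $x$ for all $k\ge m$, and the oscillation estimate — Proposition \ref{prop53}(d) if (\textbf{R}) holds, or the analog controlling $\|P_{k+1}f-P_kf\|_{L^\infty(F_wK)}^p\lesssim r_w\Lambda E(\cdots)$ summed as in \eqref{eqn53}, combined with measure-positivity of sub-cells — forces the $L^p$-limit $\Psi(f)$ to have a genuine value near $x$ equal to $f(x)$; more cleanly, use that $\Psi(f)=0$ together with the tail bound $\|\Psi(f)-P_mf\|_{L^p}^p\lesssim \kappa_m\mcE(f)$ to conclude $\|P_mf\|_{L^p(K,\mu)}\to 0$, then localize to a sub-cell $F_\omega K$ on which $H_*(f|_{V_m})$ has small oscillation and value $f(x)$ at a vertex, yielding $|f(x)|^p\mu(F_\omega K)\lesssim \|P_mf\|_{L^p}^p + (\text{osc})^p\mu(F_\omega K)\to 0$. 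Taking $m\to\infty$ and letting the oscillation term vanish (it does, by Proposition \ref{prop53}(d) in the regular case, and by the $\kappa_m\to 0$ telescoping bound in general) gives $f(x)=0$ for all $x\in V_*$, i.e. $f=0$ in $\mathcal F/\!\sim$.

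The main obstacle is the injectivity step: one must extract pointwise values of $f$ on the dense skeleton $V_*$ from an a-priori merely-$L^p$ object, and in the non-regular case there is no uniform modulus of continuity to lean on. The resolution is to exploit that the $H_*$-extension pins exact vertex values at every level while the cell-wise oscillations are summable against $\mu$ (condition (\textbf{B})), so that the $L^p$-mass of $P_mf$ on a small sub-cell around $x\in V_m$ is, up to a vanishing oscillation correction, $|f(x)|^p\mu(\text{sub-cell})$; since $\Psi(f)=0$ makes all this mass disappear in the limit, every vertex value must vanish.
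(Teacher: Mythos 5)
Your continuity and linearity steps are fine and match the paper's route: the three-term split with the tail bound \eqref{eqn54}/\eqref{eqn53} and finite-dimensional continuity of $H_*$ (Lemma \ref{lemmaa2}) for the middle term is exactly the paper's continuity argument, and your approximation of $f,g$ by $H_*(f|_{V_n}),H_*(g|_{V_n})\in\mathscr H\subset C(K)$ is the natural way to make the paper's terse ``hence $\Psi$ is linear'' precise (note only the small slip that $\Psi$ does \emph{not} kill constants, since $\Psi(c)=c\neq 0$ in $L^p(K,\mu)$; injectivity here means genuine injectivity on $\mathcal F$, and your eventual conclusion $f\equiv 0$ is the correct target).

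The injectivity step, however, has a genuine gap, and precisely in the non-regular case that this subsection exists to handle. Your localization bound $|f(x)|^p\mu(F_\omega K)\lesssim \|P_mf\|_{L^p}^p+(\mathrm{osc})^p\mu(F_\omega K)$ needs the oscillation of $P_mf=\Psi(H_*(f|_{V_m}))$ on a \emph{fixed} sub-cell $F_\omega K$ to be small uniformly in $m$. When (\textbf{R}) fails this is false: for $m\geq|\omega|$ the function $P_mf$ is pinned to the values of $f$ at every point of $V_m\cap F_\omega K$, so its oscillation on the cell is at least $\Osc(f|_{V_m\cap F_\omega K})$, which increases with $m$ and can be large or even unbounded, since elements of $\mathcal F$ need not be bounded on a cell when some $r_i\geq 1$. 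The ``$\kappa_m\to 0$ telescoping bound'' you invoke only controls the global $L^p$ differences $\|P_{m+1}f-P_mf\|_{L^p(K,\mu)}$ and gives no sup-norm control on a cell. Shrinking the cell with $m$ does not help either, because you must then divide by $\mu(F_\omega K)$, which is not bounded below (nor even positive) under $(\textbf{A}')$ and (\textbf{B}), and there is no comparison between its decay and that of $\|P_mf\|_{L^p}$. The paper avoids all of this by proving the level-zero claim: if $\Psi(f)=0$ then $\|f|_{V_0}\|_{l^\infty(V_0)}\leq C\mcE^{1/p}(f)$, obtained from $\|P_0f\|_{L^p(K,\mu)}=\|\Psi(f)-P_0f\|_{L^p(K,\mu)}\lesssim\mcE^{1/p}(f)$ (a global estimate, no cell measures) together with norm equivalence on the finite-dimensional space $l(V_0)$; it then applies this claim to $f\circ F_{\tau w^m}$, using the self-similarity $\Psi(f\circ F_v)=\Psi(f)\circ F_v$ and, crucially, Lemma \ref{lemma54} ($r_w<1$ whenever $\dot w\in\mathcal P$), to get $|f(x)|\leq C(r_\tau r_w^m)^{1/p}\mcE^{1/p}(f)\to 0$ for every $x=\pi(\tau\dot w)\in V_*$. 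Some input of this kind (the geometric decay along the periodic post-critical address) is indispensable in the non-regular case, and it is absent from your argument.
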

\begin{proof}
	We first prove the continuity of $\Psi$. Assume $f_n|_{V_*}\to f|_{V_*}$ and $\mcE(f_n-f)\to 0$ as $n\rightarrow\infty$. Then, by (\ref{eqn53}) and Lemma \ref{lemmaa2}, for any $m\geq 1$,
	\[\begin{aligned}
		\|\Psi (f_n)-\Psi (f)\|_{L^p(K,\mu)}&\leq \|P_m f_n-P_m f\|_{L^p(K,\mu)}+\|P_m f-\Psi (f)\|_{L^p(K,\mu)}+\|P_m f_n-\Psi (f_n)\|_{L^p(K,\mu)}\\
		&\leq \|f_n-f\|_{l^\infty(V_m)}+C_1(\sum_{m'=m}^\infty \kappa_{m'}^{1/p})\big(\mcE^{1/p}(f)+\mcE^{1/p}(f_n)\big),
	\end{aligned}\]
for some $C_1>0$.	Hence, $\Psi(f_n)\to \Psi(f)$ in $L^p(K,\mu)$.
	
	Then we prove the injectivity of $\Psi$. The proof is due to \cite{ku}, see also \cite[ Lemma 3.4.4]{ki3}. The key observation is the following claim. \vspace{0.2cm}
	
	\noindent\textit{Claim}. If $f\in \mathcal{F}$ and $\Psi(f)=0$, then $\|f|_{V_0}\|_{l^\infty(V_0)}\leq C_2\mcE^{1/p}(f)$ for some $C_2>0$ independent of $f$.\vspace{0.2cm}
	
	On one hand, by (\ref{eqn53}), we know that there is $C_3>0$ such that
	\[\|P_0f\|_{L^p(K,\mu)}=\|\Psi(f)-P_0f\|_{L^p(K,\mu)}\leq C_3\mcE^{1/p}(f).\]
	On the other hand, noticing that by Lemma \ref{lemmaa2}, $\Psi\circ H_*$ is continuous on $l(V_0)$. Hence, by letting $C_4=\max\{\|P_0 f'\|_{L^p(K,\mu)}^{-1}:f'\in l(V_0),\|f'\|_{l^\infty(V_0)}=1\}$, we have
	\[\|f|_{V_0}\|_{l^\infty(V_0)}\leq C_4\|P_0f\|_{L^p(K,\mu)}.\]
	The claim follows immediately from the above two estimates.\vspace{0.2cm}
	
	To finish the proof, it suffices to apply Lemma \ref{lemma54}. For each $x\in V_*$, there is $\tau,w\in W_*$ so that $x=\pi(\tau\dot{w})$. Hence, if $f\in \mathcal{F}$ and $\Psi(f)=0$, using the claim on $F_{\tau w^m}(V_0)$, for any $m\geq 1$, we have
	\[|f(x)|\leq C_2\mcE^{1/p}(f\circ F_{\tau}\circ F_{w^m})\leq C_2(r_\tau r^m_w)^{1/p}\mcE^{1/p}(f).\]
	 Since $r_w<1$ by Lemma \ref{lemma54}, we have $f(x)=0$, whence $f=0$ since the argument works for any $x\in V_*$.
\end{proof}

\begin{proof}[Proof of Theorem \ref{thm52}]
	By Lemma \ref{lemma58}, $\Psi(\mathcal{F})$ is a subspace of $L^p(K,\mu)$. Since $P_m(f\circ F_w)=(P_{m+|w|}f)\circ F_w$ for any $w\in W_*$ and $m\geq 0$, we have $\Psi (f\circ F_w)=(\Psi f)\circ F_w$.\vspace{0.2cm}

	\noindent(\textit{Closedness}). Let $f_n\in \mathcal{F},n\geq 1$ and assume $\mcE_1(\Psi (f_n)-\Psi (f_m))\to 0$ as $n,m\to \infty$, we need to show there is $ f\in \mathcal{F}$ so that  $\mcE_1(\Psi (f_n)-\Psi (f))\to 0$.
	
	Since $\Psi$ is linear by Lemma \ref{lemma58}, we see that $\|\Psi(f_n-f_m)\|_{L^p(K,\mu)}\to 0$ as $n,m\to\infty$. In addition, by (\ref{eqn53}), we also see that $\|(\Psi-P_0)(f_n-f_m)\|_{L^p(K,\mu)}\to 0$. Hence, $\|f_n-f_m\|_{l^\infty(V_0)}\leq C_1\|P_0(f_n-f_m)\|_{L^p(K,\mu)}\to 0$ as $n,m\to\infty$.
	
	Then, we apply Proposition \ref{prop53} (c) to see that there is an $f\in \mathcal{F}$ so that $f_n\to f$ pointwisely, and $\mathcal{E}(f_n-f)\to 0$. Here we can choose $c_n=0$ in Proposition \ref{prop53} since $f_n|_{V_0}$ converges pointwisely. Hence by Lemma \ref{lemma58}, $\|\Psi (f_n)-\Psi (f)\|_{L^p(K,\mu)}\to 0$. \vspace{0.2cm}
	
	\noindent(\textit{Regularity}). Since by Corollary \ref{coro56}, $\mathscr{H}\subset\mathscr{C}:=\mathcal{F}\cap C(K)$, so $\mathscr{C}$ is dense in $C(K)$, and in addition, for each $f\in \mathcal F$, we have $\mathcal{E}_1(P_m(f)-\Psi (f))\to 0$ as $m\to \infty$, which implies that $\mathscr{C}$ is dense in $\mathcal{F}$ with respect to $\mcE_1^{1/p}$. \vspace{0.2cm}

	\noindent(\textit{Markov property}). Write $\bar{f}=(f\wedge 1)\vee 0$ for short. It suffices to show that $\Psi(\bar{f})=\overline{\Psi(f)},\forall f\in \mathcal{F}$. To see this, for any $f\in \mathcal{F}$, we choose a sequence $f_n\in \mathcal{F}$ so that $\Psi(f_n)\in C(K)$ and $\mcE_1(\Psi (f_n)-\Psi (f))\to 0$. By continuity of $\Psi(f_n)$, we immediately have $\bar{f}_n=\overline{\Psi(f_n)}$. In addition, one can check
	\begin{eqnarray}
		\label{eqn55}&\|\overline{\Psi(f)}-\overline{\Psi(f_n)}\|_{L^p(K,\mu)}\leq \|\Psi(f)-\Psi(f_n)\|_{L^p(K,\mu)}\to 0,\\
		\label{eqn56}&\mathcal{E}(\bar{f_n}-\bar{f})\to 0,\text{ and }\bar{f}_n\to \bar{f}\text{ pointwisely.}
	\end{eqnarray}
	In particular, by (\ref{eqn56}) and Lemma \ref{lemma58}, we know that $\overline{\Psi(f_n)}=\Psi(\bar{f}_n)\to \Psi(\bar{f})$ in $L^p(K,\mu)$. Hence, by (\ref{eqn55}), we have $\overline{\Psi(f)}=\Psi(\bar{f})$. \vspace{0.2cm}
	
	\noindent(\textit{Strong locality}). It follows from a routine argument, noticing that if $f,g\in \mathscr{C}$ and $f$ is constant on an neighbourhood of support of $g$, then $\Lambda^n E(f+g)=\Lambda^nE(f)+\Lambda^nE(g)$ for any $n$ large enough.
\end{proof}

\section{Affine nested fractals}\label{sec6}
In this section, we prove the existence of $p$-energy for any $p\in(1,\infty)$ on a class of highly symmetric p.c.f. fractals, namely the affine nested fractals. This class of fractals is firstly introduced in \cite{FHK}, which is a generalization of nested fractals introduced by Lindstr{\o}m \cite{Lindstrom} in that the contraction ratios of the i.f.s. are allowed to be distinct.\vspace{0.2cm}

The nested fractals are featured with the reflection symmetries interchanging essential fixed points. For convenience, we introduce the following notations for reflections in this section.

\begin{definition}\label{def61}
Let $x,y\in \mathbb{R}^d$ and $x\neq y$. Define
	\[H_{x,y}=\{z\in \mathbb R^d:\ |x-z|=|y-z|\},\]
and denote the reflection with respect to $H_{x,y}$ by $\sigma_{x,y}$. In addition, for convenience, we write
\[H_x^y=\{z\in \mathbb R^d:\ |x-z|\leq |y-z|\}\]
for the closed half space containing $x$. Similarly, $H_y^x=\{z\in \mathbb R^d:\ |y-z|\leq |x-z|\}$.
\end{definition}

In the following, we take the short definition of affine nested fractals from the book \cite[Section 3.8]{ki3}, noticing that we assume $K\subset \mathbb{R}^d$ and $F_i$ are similitudes for simplicity.

\begin{definition}\label{def62}
Let $K$ be a p.c.f. self-similar set associated with the i.f.s. $\{F_i\}_{i=1}^N$. We say $(K,\{F_i\}_{i=1}^N)$ is an affine nested fractal if $(K,\{F_i\}_{i=1}^N)$ has $\mathscr{G}$-symmetry, where $\mathscr{G}$ is the symmetry group generated by $\{\sigma_{x,y}:\ x\neq y\in V_0\}$.
\end{definition}

Our main result in this section is the following.
\begin{theorem}\label{thmaffnfs}
Let $(K,\{F_i\}_{i=1}^N)$ be an affine nested fractal. If the renormalization factor $\bm{r}$ is $\mathscr{G}$-symmetric, then condition (\textbf{A}) holds.
\end{theorem}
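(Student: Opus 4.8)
The plan is to verify condition (\textbf{A}) by exhibiting a single $\mathscr{G}$-symmetric form $E\in\mathcal{M}$ for which the orbit $\{\mathcal{T}^nE\}_{n\geq 0}$ has $\delta(\mathcal{T}^nE)$ bounded away from zero. First I would fix the natural symmetric candidate $E_0(f)=\tfrac12\sum_{x\neq y\in V_0}|f(x)-f(y)|^p\in\mathcal S$, which is $\mathscr{G}$-symmetric because $\mathscr{G}$ acts on $V_0$ by permutations; since $\bm r$ is $\mathscr{G}$-symmetric, Lemma \ref{lemma32} and the remarks after Definition \ref{def31} guarantee that every $\mathcal{T}^nE_0$ remains $\mathscr{G}$-symmetric and lies in $\mathcal Q\subset\mathcal M$. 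The key point is that for a $\mathscr{G}$-symmetric form, the symmetry group acts transitively enough on pairs of points of $V_0$ that the $p$-resistance $R^{(p)}_n(x,y)$ (associated with $\mathcal{T}^nE_0$) can take only a controlled number of values; in the nested (Lindström) case $\mathscr{G}$ is transitive on $V_0$ and on pairs, so $R^{(p)}_n(x,y)$ is \emph{constant} over all $x\neq y$ and $\delta(\mathcal{T}^nE_0)=1$ automatically. The real content of the theorem is the \emph{affine} case, where the differing contraction ratios break transitivity on pairs, so $\delta$ can a priori degenerate; this is the step I expect to be the main obstacle.

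To handle the affine case I would use the reflection structure built into Definition \ref{def62}. The group $\mathscr{G}$ is generated by the reflections $\sigma_{x,y}$, $x\neq y\in V_0$, so the orbit structure of $V_0$ under $\mathscr{G}$ together with the $\mathscr{G}$-invariance of $R^{(p)}_n$ forces $R^{(p)}_n(x,y)$ to depend only on the $\mathscr{G}$-orbit of the pair $\{x,y\}$; there are finitely many such orbits, and $\delta$ is bounded below as soon as these finitely many orbit-values stay comparable with constants independent of $n$. The mechanism to get such uniform comparability is the contraction/uniformization behavior of $\mathcal{T}$ captured by Lemma \ref{lemma34}(b): $\theta(\mathcal{T}^nE_0)$ is nonincreasing in $n$, so the ratio $\sup(\mathcal{T}^nE_0|E_0)/\inf(\mathcal{T}^nE_0|E_0)$ stays bounded, and combining this with Proposition \ref{prop26} shows that the resistance ratios $R^{(p)}_n(x,y)/R^{(p)}_n(x',y')$ are controlled uniformly in $n$ by $R^{(p)}_0(x,y)/R^{(p)}_0(x',y')$ times a universal constant — but this bounds the ratio of a pair against a \emph{fixed reference} pair, not the ratio across differently sized cells, so one still needs the symmetry to collapse the comparison to finitely many orbit classes. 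The clean route is therefore: (i) show $R^{(p)}_n$ is an orbit-invariant of pairs; (ii) within each orbit it is literally constant; (iii) across the finitely many orbits, the reflection symmetries of the fractal give a geometric chaining argument (à la \cite{Lindstrom}, \cite{FHK}) showing every orbit-distance is comparable to the ``diameter'' resistance $\max_{x\neq y}R^{(p)}_n(x,y)$, with comparison constants depending only on $\#V_0$ and the combinatorics of the fractal, hence not on $n$.

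Concretely the proof would run: fix $E_0$ and note $\mathcal{T}^nE_0\in\mathcal Q$ is $\mathscr{G}$-symmetric for all $n$; let $R_n$ be its $p$-resistance; use Proposition \ref{prop24}(c) to know $R_n^{1/p}$ is a $\mathscr{G}$-invariant metric on $V_0$. Then observe that because $\mathscr{G}$ contains all the reflections $\sigma_{x,y}$, any two points of $V_0$ can be connected by a chain each of whose steps is within one $n$-cell $F_iV_0$ of a ``short'' pair, and self-similarity plus the symmetric choice of $\bm r$ makes the resistance of a pair inside $F_iV_0$ exactly $r_i$ times (a restriction of) $R_{n-1}$; pushing this one level down and using induction shows the maximal and minimal values of $R_n$ over pairs in $V_0$ stay within a fixed factor. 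The one genuine subtlety — and where I expect to spend the most care — is that for $p\neq 2$ the trace $\mathcal{T}^nE_0$ is a genuinely non-standard form, so ``resistance of a subconfiguration equals $r_i$ times resistance at level $n-1$'' is not exact; one must instead run the estimate through Proposition \ref{prop26} (comparing the non-standard form to its standardization via Corollary \ref{coro27}) and through Lemma \ref{lemma34}(a) to see that the distortion introduced at each level is bounded by a universal constant, and crucially that these constants do not compound over $n$ because $\theta$ is monotone. Granting that, $\delta(\mathcal{T}^nE_0)\geq c>0$ uniformly, so (\textbf{A}) holds, and by the last clause of Theorem \ref{thm42} the resulting eigenform can be taken $\mathscr{G}$-symmetric.
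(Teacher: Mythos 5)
Your outline reproduces only the easy half of what is needed. The chaining step (strict $0$-walks from Lemma \ref{lemma64} plus the triangle inequality for $R_n^{1/p}$ from Proposition \ref{prop24}(c), together with $\mathscr{G}$-invariance of $R_n$) gives, essentially as in the paper, the upper bound $\max_{x\neq y}R_n(x,y)\leq N^pR_n(p_1,p_2)$ for a minimal-distance pair $p_1,p_2$. But $\delta(\mathcal{T}^nE)$ also requires the opposite inequality, a lower bound $R_n(x,y)\geq c\,R_n(p_1,p_2)$ valid for every pair and uniform in $n$, and nothing in your proposal produces it. The mechanism you lean on, that the level-by-level distortions ``do not compound over $n$ because $\theta$ is monotone'' (Lemma \ref{lemma34}(b)), is not a valid substitute: monotonicity of $\theta$ only bounds the one-step ratio $\sup(\mathcal{T}^{n+1}E|\mathcal{T}^nE)/\inf(\mathcal{T}^{n+1}E|\mathcal{T}^nE)$, and over $n$ steps these distortions do compound. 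Indeed, for the non-symmetric gasket of Section \ref{subsec81} one always has $\theta(\mathcal{T}^nE)\leq\theta(E)$, yet in the regime of Corollary \ref{thmSG}(a) condition (\textbf{A}) fails, so $\inf_{n\geq 0}\delta(\mathcal{T}^nE)=0$ for every $E\in\mathcal{M}$; bounded $\theta$ along the orbit therefore cannot prevent degeneration, and your outline never converts the reflection symmetry into a quantitative lower bound (orbit-invariance of $R_n$ alone says nothing about how the finitely many orbit values compare). Two subsidiary claims are also off: $R_n(F_ix,F_iy)=r_iR_{n-1}(x,y)$ fails even for $p=2$ (only ``$\leq$'' holds, by shorting through the rest of $V_n$), and $\mathscr{G}$ is in general not transitive on pairs even for Lindstr{\o}m's nested fractals (adjacent versus diagonal boundary pairs of the pentagasket), so $\delta\equiv 1$ is not automatic there either.

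The ingredient you are missing is the reflection (symmetrization) argument, and it dictates a different choice of initial form: the paper takes $E(f)=\frac12\sum_{|x-y|=\ell_0}|f(x)-f(y)|^p$, charging only pairs at the minimal distance $\ell_0$, rather than your complete-graph $E_0$. Given $x\neq y$ with $|x-y|>\ell_0$, pick $z\in V_0$ with $|x-z|=\ell_0$, let $f$ be the optimal potential on $V_n$ for $R_n(x,z)$, and set $h=f$ on $H_z^y\cap V_n$ and $h=f\circ\sigma_{y,z}$ on $H_y^z\cap V_n$. Then $h(x)=1$, $h(y)=f(z)=0$, and because only $\ell_0$-pairs are charged, the charged pairs with endpoints strictly on opposite sides of $H_{y,z}$ contribute nothing, so $\Lambda^nE(h)\leq 2\Lambda^nE(f)$; hence $R_n(x,y)\geq\frac12R_n(x,z)=\frac12R_n(p_1,p_2)$ and $\delta(\mathcal{T}^nE)\geq\frac12N^{-p}$ uniformly in $n$, which is exactly condition (\textbf{A}). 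Note that with the complete-graph form the reflected function would pick up cross-terms between the two half-spaces and the factor-$2$ bound is lost, so the nearest-neighbour choice of $E$ is part of the argument, not a cosmetic detail.
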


We refer to \cite[Section 3.8]{ki3} for basic properties of affine nested fractals, and an alternative proof of the existence of Dirichlet forms on nested fractals (essentially due to \cite{Lindstrom} and \cite{FHK}). In particular, we need an easy geometric fact about affine nested fractals (\cite[Lemma 3.8.6]{ki3}, see also \cite[IV.9 Lemma]{Lindstrom} for nested fractals), which we state as Lemma \ref{lemma64} below.\vspace{0.2cm}

Consider the distance set of points in $V_0$: $\{|x-y|:\ x,y\in V_0,x\neq y\}$, and we arrange them in order as $\ell_0<\ell_1<\cdots<\ell_m$, i.e. $\ell_0=\min\{|x-y|:\ x,y\in V_0,x\neq y\}$ is the minimal distance between two points in $V_0$, and $\ell_m$ is the maximal distance.
For any two points $x,y\in V_0$, a sequence $x=x_0,x_1\cdots,x_k=y$ in $V_0$ such that $|x_i-x_{i+1}|=\ell_0$ for $i=0,1,\cdots,k-1$ is called a \textit{strict $0$-walk} between $x$ and $y$.
\begin{lemma}\label{lemma64}
	Let $(K, \{F_i\}_{i=1}^N,V_0)$ be an affine nested fractal. Then there exists a strict $0$-walk between $x$ and $y$ for any $x,y\in V_0$, $x\neq y$.
\end{lemma}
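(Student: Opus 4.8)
The plan is to restate the lemma as a graph-connectivity statement and then exploit the reflection symmetries. Introduce the \emph{nearest-neighbour graph} $G_0$ on the vertex set $V_0$, joining $x,y\in V_0$ by an edge exactly when $|x-y|=\ell_0$. A strict $0$-walk between $x$ and $y$ is precisely a path from $x$ to $y$ in $G_0$, so the relation ``$x\sim y$ iff there is a strict $0$-walk from $x$ to $y$'' is an equivalence relation on $V_0$ (reflexive by the trivial walk, symmetric by reversal, transitive by concatenation), and the lemma asserts that $G_0$ is connected, i.e.\ that $\sim$ has a single class. Two preliminary observations: first, each $\sigma_{x,y}\in\mathscr{G}$ is a Euclidean isometry preserving $V_0$, hence preserves distances and maps edges of $G_0$ to edges of $G_0$; consequently every $\sigma\in\mathscr{G}$ is a graph automorphism of $G_0$ and permutes the $\sim$-classes. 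Second, since $\ell_0$ is attained by some pair $\{p,q\}$ and $\sigma_{x,p}$ carries $p$ to $x$, it carries the edge $\{p,q\}$ to an edge incident to $x$; thus \emph{every} vertex of $V_0$ has at least one neighbour in $G_0$.

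For the main step I would argue by contradiction. Suppose $G_0$ is disconnected, let $C$ and $D$ be two distinct $\sim$-classes, and choose $a\in C$, $b\in D$ minimising the cross-class distance $\delta:=|a-b|$. Since an $\ell_0$-edge would merge the two classes, $\delta>\ell_0$. Put $n=(a-b)/\delta$ and $\varphi(z)=\langle z,n\rangle$, so that the reflection hyperplane $H_{a,b}$ is $\{\varphi=c_0\}$ with $c_0=\tfrac12(\varphi(a)+\varphi(b))$ and $\varphi(a)-c_0=c_0-\varphi(b)=\delta/2$. Because $\sigma_{a,b}$ sends $a\in C$ to $b\in D$ and permutes classes, it maps $C$ onto $D$; hence for every $u\in C$ the point $\sigma_{a,b}(u)$ lies in $D$ and $|u-\sigma_{a,b}(u)|\ge\delta$. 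As $u-\sigma_{a,b}(u)=2(\varphi(u)-c_0)\,n$, this forces $|\varphi(u)-c_0|\ge\delta/2$ for all $u\in C$. Finally, since $C$ is connected in $G_0$ and one $\ell_0$-step changes $\varphi$ by at most $\ell_0<\delta$, no path inside $C$ can jump across the forbidden band $\{|\varphi-c_0|<\delta/2\}$ of width $\delta$; as $\varphi(a)-c_0=+\delta/2$, the whole class $C$ lies in $\{\varphi\ge c_0+\delta/2\}$, and symmetrically $D=\sigma_{a,b}(C)$ lies in $\{\varphi\le c_0-\delta/2\}$. Thus $H_{a,b}$ strictly separates $C$ from $D$ with a gap, and in particular $V_0\cap H_{a,b}=\emptyset$.

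To finish I would turn this separation into a contradiction with the connectedness of $K$. The symmetry $\sigma_{a,b}\in\mathscr{G}$ satisfies $\sigma_{a,b}(K)=K$ and permutes the cells $\{F_wK\}_{w\in W_m}$ at every level $m$, with $H_{a,b}$ as its fixed hyperplane. Using the nesting relation $F_iK\cap F_jK=F_iV_0\cap F_jV_0$ and the self-similar placement of boundary points, one propagates the separation from $V_0$ to the junction points in $V_*$ to conclude that no cell can straddle $H_{a,b}$, whence $K\cap H_{a,b}=\emptyset$ and $K=(K\cap\{\varphi>c_0\})\sqcup(K\cap\{\varphi<c_0\})$ splits $K$ into two nonempty, disjoint, relatively clopen pieces interchanged by $\sigma_{a,b}$ — contradicting the connectedness of $K$.

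The main obstacle is precisely this last step: converting the clean separation of the two classes touched by $\sigma_{a,b}$ into a genuine disconnection of $K$. The reflection argument controls only the two classes $C$ and $D$ moved by $\sigma_{a,b}$, so when $\sim$ has three or more classes one must additionally check that the remaining classes cannot bridge across $H_{a,b}$, and one must rule out cells straddling $H_{a,b}$ via the nesting axiom; this is exactly where the specific geometry of affine nested fractals enters (cf.\ \cite{Lindstrom,ki3}). By contrast, the reduction to $G_0$, the automorphism/transitivity observations, and the band-crossing estimate are routine.
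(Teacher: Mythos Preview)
The paper does not prove this lemma; it is quoted from \cite[Lemma~3.8.6]{ki3} (see also \cite[IV.9~Lemma]{Lindstrom}), so the comparison is with those standard arguments.

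Your reduction to connectivity of $G_0$ and the band argument --- that the component $C$ of $a$ lies in $\{\varphi\ge c_0+\delta/2\}$ and $D=\sigma_{a,b}(C)$ in $\{\varphi\le c_0-\delta/2\}$ --- are correct and close in spirit to how the references begin. The genuine gap is exactly where you place it, but it is worse than a technicality. First, ``in particular $V_0\cap H_{a,b}=\emptyset$'' does not follow from what precedes it: a point of $V_0$ fixed by $\sigma_{a,b}$ lies in a component that is fixed setwise by $\sigma_{a,b}$, hence in neither $C$ nor $D$, so a third component may well meet $H_{a,b}$. Second, the route through connectedness of $K$ cannot close as written. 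You need that no cell straddles $H_{a,b}$, but $\sigma_{a,b}$ may fix a cell $F_iK$ setwise (as the ``opposite'' cell does in the gasket picture), and such a cell is then symmetric about $H_{a,b}$ and necessarily straddles it. The $\mathscr{G}$-symmetry hypothesis only says $\sigma_{a,b}$ permutes cells and their boundary sets; it gives no mechanism to transport your conclusion on $V_0$ into $F_iV_0$, since $F_i^{-1}\sigma_{a,b}F_i$ need not belong to $\mathscr{G}$. The proofs in \cite{Lindstrom,ki3} avoid $K$ altogether: they stay inside $V_0$ and run an induction on the index $j$ with $|x-y|=\ell_j$, using a suitable reflection to manufacture an intermediate point of $V_0$ strictly closer to both endpoints. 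Your band estimate is essentially the separation step that drives that induction; the missing move is a combinatorial endgame on $V_0$, not a topological one on $K$.
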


\begin{proof}[Proof of Theorem \ref{thmaffnfs}] Let $E\in \mathcal{S}$ be defined as
\[E(f)=\frac{1}{2}\sum_{x,y\in V_0: |x-y|=\ell_0}\big|f(x)-f(y)\big|^p,\quad \forall f\in l(V_0).\]
For $n\geq 0$, let $R_n$ be the $p$-resistance associated with $\Lambda^n E$. Let $p_1,p_2\in V_0$ be such that $|p_1-p_2|=\ell_0$. We claim that
\begin{equation}\label{eqRcom}
	\frac{1}{2}R_n(p_1,p_2)\leq R_n(x,y)\leq N^p R_n(p_1,p_2)\qquad \forall x\neq y\in V_0,\forall n\geq 0.
\end{equation}
As a consequence, $\inf_{n\geq 0}\delta(\mcT^nE)\geq \frac 12 N^{-p}$, and hence condition (\textbf{A}) holds. The upper bound in (\ref{eqRcom}) will be proved by Lemma \ref{lemma64}, and the lower bound in (\ref{eqRcom}) can be proved by using an analogous reflection principle.\vspace{0.2cm}

``\textit{Upper bound}''. Indeed, for any $x,y\in V_0$, $x\neq y$, let $x=x_0,x_1\cdots,x_k=y$ be a $0$-walk in $V_0$, whose existence is guaranteed by Lemma \ref{lemma64}. Note that $k\leq N$. Then by the triangle inequality of $R_n^{1/p}$ (Proposition \ref{prop24}(c)), we obtain
\begin{equation*}
	R_n(x,y)^{1/p}\leq \sum_{i=0}^{k-1}R_n(x_i,x_{i+1})^{1/p}=kR_n(p_1,p_2)^{1/p}\leq NR_n(p_1,p_2)^{1/p},
\end{equation*}
which gives the upper bound estimate in \eqref{eqRcom}.\vspace{0.15cm}

``\textit{Lower bound}''. If $|x-y|=\ell_0$, then $R_n(x,y)=R_n(p_1,p_2)$, so we have nothing to prove. Thus, we assume $|x-y|=\ell_k$ for some $1\leq k\leq m$. We let $z\in V_0$ so that $|x-z|=\ell_0$.

Let $n\geq 0$.  We define $f\in l(V_n)$ such that
\[\begin{cases}
	f(x)=1,\ f(z)=0,\\
	\Lambda^nE(f)=\min\{\Lambda^n E(g):\ g\in l(V_n),\ g(x)=1,g(z)=0\}.
\end{cases}
\]
Hence $\Lambda^nE(f)=\big(R_n(x,z)\big)^{-1}$. One can then define $h\in l(V_n)$ by
\[h(q)=
\begin{cases}
	f(q),&\text{ if }q\in H_z^y\cap V_n,\\
	f\circ\sigma_{y,z}(q),&\text{ if }q\in H_y^z\cap V_n.
\end{cases}
\]
In particular, since $|x-z|=\ell_0<\ell_k=|x-y|$, we have $x\in H_z^y$ hence $h(x)=1$. In addition, $h(y)=h(\sigma_{y,z}(z))=0$. So we also have $\big(R_n(x,y)\big)^{-1}\leq \Lambda^n E(h)$.

Finally, noting that we collect a term $|f(p)-f(q)|^p$ in $E(f)$ if and only if $|p-q|=\ell_0$, and hence the contributions of terms $|h(p)-h(q)|^p$ in $\Lambda^nE(h)$ for $p\in H_y^z\setminus H_{y,z}$ and $q\in H_z^y\setminus H_{y,z}$ is $0$, so $\Lambda^n E(h)\leq 2\Lambda^n E(f)$. Combining all the information, we get
\[\big(R_n(x,y)\big)^{-1}\leq \Lambda^n E(h)\leq 2\Lambda^n E(f)=2\big(R_n(x,z)\big)^{-1}=2\big(R_n(p_1,p_2)\big)^{-1},\]
which gives the lower bound estimate in \eqref{eqRcom}.
\end{proof}

\section{Behavior of nearly degenerate $p$-energies}\label{sec7}
In Theorem \ref{thmaffnfs}, we obtain a uniform positive lower bound for $\delta(\Lambda^n E)$ on affine nested fractals by using the strong symmetry. On general p.c.f. self-similar sets, such an estimate is either not always true or hard to be established. Instead, we follow the idea of Sabot \cite{Sabot} to show that under suitable conditions, $\mcT$ will bounce near the boundary $\{E\in\widetilde{\mathcal{M}}:\delta(E)=0\}$, in other words when $\delta(E)$ is very small.

Before talking about Sabot's criteria, we return to study the general forms $E\in \widetilde{\mathcal{M}}_p(A)$ on a finite set $A$ when $\delta(E)$ is very small. The key idea, due to \cite{Sabot}, is to describe the distribution of the resistances using the equivalence relations. We will drop the (H) condition that involved in \cite[page 649]{Sabot}, so we need some results considering all the equivalence relations at the same time (see Lemma \ref{lemma73}, Corollary \ref{coro74} and Proposition \ref{prop75}).

\subsection{Equivalence relations}
Throughout this paper, we will use $\mcJ$ to denote an equivalence relation on a finite set $A$, and write $A/\mcJ$ for the set of equivalence classes. We will use the notation $I$ for an equivalence class of $\mcJ$, i.e. $I\in A/\mcJ$, and notice that $I\subset A$. We say that $\mcJ$ is \textit{trivial} if it is one of the following two relations on $A$: $\mcJ=0$ if $\#(A/\mcJ)=\#A$; $\mcJ=1$ if $\#(A/\mcJ)=1$; otherwise $\mcJ$ is called \textit{non-trivial}. In addition, for two relations $\mcJ,\mcJ'$, we say $\mcJ\subset \mcJ'$ if
\begin{equation*}
	x\mcJ y\Longrightarrow x\mcJ'y\qquad\text{for any $x,y\in A$}.
\end{equation*}
We introduce $\delta_{\mcJ,\mcJ'}(\Ep)$ to provide more information than $\delta(\Ep)$.

\begin{definition}\label{def71}
	Let $\mcJ,\mcJ'$ be two non-trivial equivalence relations on $A$ such that $\mcJ\subset \mcJ'$, and let $\Ep\in \mathcal{M}_p(A)$ with associated $p$-resistance $R^{(p)}$. We define
	\[
	\delta_{\mcJ,\mcJ'}(\Ep)={\frac{\max_{x\mcJ y} R^{(p)}(x,y)}{\min_{x\mcJ\mkern-10.5mu\backslash' y} R^{(p)}(x,y)}}.
	\]
	In particular, when $\mcJ=\mcJ'$, we simply write $\delta_{\mcJ}(\Ep)=\delta_{\mcJ,\mcJ}(\Ep)$.
\end{definition}

Of course, we are interested in the case that $\delta_\mcJ(\Ep)<1$. The following lemma explains why we only care about the case $\mcJ\subset \mcJ'$ in Definition \ref{def71}.
\begin{lemma}\label{lemma72}
	Let $A$ be a finite set and $\Ep\in\mathcal M_p(A)$.  Let $\mcJ,\mcJ'$ be two non-trivial equivalence relations such that $\delta_\mcJ(E)<1$ and $\delta_{\mcJ'}(E)<1$, then either $\mcJ\subset \mcJ'$ or $\mcJ'\subset \mcJ$.
\end{lemma}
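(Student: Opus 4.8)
\textbf{Proof proposal for Lemma \ref{lemma72}.}

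The plan is to argue by contradiction: suppose neither $\mcJ\subset\mcJ'$ nor $\mcJ'\subset\mcJ$. Then there exist $x,y\in A$ with $x\mcJ y$ but $x\mkern2mu\mcJ\mkern-10.5mu\backslash'\mkern2mu y$, and there exist $u,v\in A$ with $u\mcJ'v$ but $u\mkern2mu\mcJ\mkern-10.5mu\backslash\mkern2mu v$. The idea is to chain these two ``crossing'' pairs together using the triangle inequality for $\{R^{(p)}(\cdot,\cdot)\}^{1/p}$ from Proposition \ref{prop24}(c) (which is a genuine metric since $\Ep\in\mathcal M_p(A)$), and to derive numerical inequalities that contradict $\delta_\mcJ(\Ep)<1$ or $\delta_{\mcJ'}(\Ep)<1$.

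More precisely, first I would record what the hypotheses give quantitatively. Writing $m_\mcJ=\max_{a\mcJ b}R^{(p)}(a,b)^{1/p}$ and $M_\mcJ=\min_{a\mkern2mu\mcJ\mkern-10.5mu\backslash\mkern2mu b}R^{(p)}(a,b)^{1/p}$ (the $1/p$-th powers of numerator and denominator in Definition \ref{def71}), $\delta_\mcJ(\Ep)<1$ says $m_\mcJ<M_\mcJ$, and similarly $m_{\mcJ'}<M_{\mcJ'}$; note $m_{\mcJ'}\le M_\mcJ$ is false in general but the key point is that a pair which is $\mcJ$-related but not $\mcJ'$-related has resistance at least $M_{\mcJ'}$ (since not $\mcJ'$-related) and at most $m_\mcJ$ (since $\mcJ$-related), giving $M_{\mcJ'}\le m_\mcJ$; symmetrically, a pair which is $\mcJ'$-related but not $\mcJ$-related forces $M_\mcJ\le m_{\mcJ'}$. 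Combining, $M_{\mcJ'}\le m_\mcJ<M_\mcJ\le m_{\mcJ'}<M_{\mcJ'}$, a contradiction.

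The only gap in the above quick chain is that it uses the pair $(x,y)$ with $x\mcJ y$, $x\mkern2mu\mcJ\mkern-10.5mu\backslash'\mkern2mu y$ and the pair $(u,v)$ with $u\mcJ'v$, $u\mkern2mu\mcJ\mkern-10.5mu\backslash\mkern2mu v$ directly, so in fact no triangle inequality is needed at all — the contradiction is purely from reading off the two defining ratios. So the real content, and the step I expect to require the most care, is verifying that the two ``witness pairs'' exist and that ``$\mcJ\not\subset\mcJ'$'' genuinely produces a pair $x\mcJ y$ with $x\mkern2mu\mcJ\mkern-10.5mu\backslash'\mkern2mu y$ (this is immediate from the definition of $\subset$ for relations given in the text) and, dually for $\mcJ'\not\subset\mcJ$; one must also check the relevant $\max$/$\min$ sets in Definition \ref{def71} are non-empty, which holds because $\mcJ,\mcJ'$ are non-trivial (so there is at least one related non-diagonal pair and at least one unrelated pair for each). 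I would present the argument in the clean form: assume $\mcJ\not\subset\mcJ'$ and $\mcJ'\not\subset\mcJ$, extract the witness pairs, and conclude
\[
\min_{u\mkern2mu\mcJ\mkern-10.5mu\backslash\mkern2mu v}R^{(p)}(u,v)\le R^{(p)}(x,y)\le \max_{x\mcJ y}R^{(p)}(x,y)<\min_{x\mkern2mu\mcJ\mkern-10.5mu\backslash'\mkern2mu y}R^{(p)}(x,y)\le R^{(p)}(u,v)\le\max_{u\mcJ'v}R^{(p)}(u,v)<\min_{u\mkern2mu\mcJ\mkern-10.5mu\backslash\mkern2mu v}R^{(p)}(u,v),
\]
which is absurd. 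This finishes the proof; the triangle inequality is a backup tool in case the bookkeeping with the notation $\mcJ\mkern-10.5mu\backslash$ needs it, but I do not anticipate needing it.
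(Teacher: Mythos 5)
Your argument is correct and is essentially the paper's own proof: assume neither inclusion holds, pick a pair that is $\mcJ$-related but not $\mcJ'$-related and a pair that is $\mcJ'$-related but not $\mcJ$-related, and the hypotheses $\delta_{\mcJ}(\Ep)<1$, $\delta_{\mcJ'}(\Ep)<1$ then force each of the two resistances to be strictly smaller than the other, which is absurd — and, as you note, no triangle inequality is needed. One transcription slip in your final display: the two ``$\min$ over unrelated pairs'' terms have their relations interchanged — since it is $(x,y)$ that fails to be $\mcJ'$-related and $(u,v)$ that fails to be $\mcJ$-related, the chain should read $\min_{a\mcJ\mkern-10.5mu\backslash' b}R^{(p)}(a,b)\le R^{(p)}(x,y)\le\max_{a\mcJ b}R^{(p)}(a,b)<\min_{a\mcJ\mkern-10.5mu\backslash b}R^{(p)}(a,b)\le R^{(p)}(u,v)\le\max_{a\mcJ' b}R^{(p)}(a,b)<\min_{a\mcJ\mkern-10.5mu\backslash' b}R^{(p)}(a,b)$, with the strict steps coming from $\delta_\mcJ(\Ep)<1$ and $\delta_{\mcJ'}(\Ep)<1$ respectively; your prose chain $M_{\mcJ'}\le m_\mcJ<M_\mcJ\le m_{\mcJ'}<M_{\mcJ'}$ is the correct statement of exactly this.
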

\begin{proof}
	Assume neither $\mcJ\subset\mcJ'$ nor $\mcJ'\subset\mcJ$ is true, then one can find $x,y$ and $x',y'$ such that
	\begin{eqnarray}
		\label{eqn71}	x\mcJ y,\quad &x'\mcJ\mkern-10.5mu\backslash y',\\
		\label{eqn72}	x\mcJ\mkern-10mu\backslash' y,\quad&x'\mcJ'y'.
	\end{eqnarray}
	Then, since $\delta_{\mcJ}<1$, $\frac{R^{(p)}(x,y)}{R^{(p)}(x',y')}<1$ by (\ref{eqn71}); since $\delta_{\mcJ'}<1$, $\frac{R^{(p)}(x',y')}{R^{(p)}(x,y)}<1$ by (\ref{eqn72}). Clearly, this is impossible.
\end{proof}

By a similar argument as in the proof of Corollary \ref{coro27}, we can prove the following lemma.

\begin{lemma}\label{lemma73}
	Let $A$ be a finite set, $\Ep\in\mathcal M_p(A)$ and $R^{(p)}$ be the effective resistance associated with $\Ep$. Let $0<\delta<(\#A)^{-\#A(\#A-1)p/2}$.
	
	(a). If $\delta(\Ep)<\delta$, then there is a non-trivial equivalence relation $\mcJ$ such that $\delta_\mcJ(\Ep)<\delta^{\frac{2}{\#A(\#A-1)}}\cdot (\#A)^p$.
	
	(b). More generally, let $\mcJ,\mcJ'$ be two equivalence relations (can be trivial) such that $\mcJ\subsetneq \mcJ'$, $\delta_{\mcJ}(\Ep)<1$ if $\mcJ$ is non-trivial, and $\delta_{\mcJ'}(\Ep)<1$ if $\mcJ'$ is non-trivial.  If
	\begin{equation}\label{eq73}
	{\frac{\min\limits_{x\mcJ\mkern-10.5mu\backslash y, x\mcJ' y} R^{(p)}(x,y)}{\max\limits_{x\mcJ\mkern-10.5mu\backslash y, x\mcJ' y} R^{(p)}(x,y)}}{<\delta},\end{equation}
	then one can find $\tilde{\mcJ}$ such that  $\mcJ\subsetneq\tilde{\mcJ}\subsetneq \mcJ'$ and $\delta_{\tilde{\mcJ}}(\Ep)<\delta^{\frac{2}{\#A(\#A-1)}}\cdot (\#A)^p$.
\end{lemma}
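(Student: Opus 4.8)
\textbf{Proof proposal for Lemma \ref{lemma73}.}

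The plan is to imitate the proof of Corollary \ref{coro27}, extracting an equivalence relation from the "gap" in the sorted list of resistances. First I would prove part (a) as a special case of part (b): taking $\mcJ=0$ and $\mcJ'=1$ (both trivial), condition \eqref{eq73} becomes exactly $\delta(\Ep)<\delta$, since the pairs with $x\mcJ\mkern-10.5mu\backslash y$ and $x\mcJ' y$ are all pairs $x\neq y$. So it suffices to establish (b), and then (a) follows, with $\tilde{\mcJ}$ the non-trivial relation asserted. For (b), let me fix $\rho$ to be, say, the geometric mean (or an appropriate intermediate value) of $\min$ and $\max$ of $R^{(p)}(x,y)$ over pairs with $x\mcJ\mkern-10.5mu\backslash y$, $x\mcJ'y$; condition \eqref{eq73} guarantees the two extremes differ by a factor more than $\delta^{-1}$, which is large. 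I would then define $\tilde{\mcJ}$ to be the equivalence relation generated by declaring $x\tilde{\mcJ}y$ whenever $x\mcJ y$ or $R^{(p)}(x,y)$ is "small" (below some threshold cut $t$ chosen inside the gap, e.g. $t=\rho\cdot(\#A)^{-p}$ or similar). By transitivity this is an equivalence relation containing $\mcJ$.

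Next I would verify the three required properties: (i) $\mcJ\subsetneq\tilde{\mcJ}$: the containment $\mcJ\subset\tilde{\mcJ}$ is by construction; strictness follows because the pair realizing $\min_{x\mcJ\mkern-10.5mu\backslash y,\,x\mcJ'y}R^{(p)}(x,y)$ has resistance below the threshold (using that $\delta$ is small), so it gets merged though it was not $\mcJ$-equivalent. (ii) $\tilde{\mcJ}\subsetneq\mcJ'$: I must check no pair with $x\mcJ'\mkern-10.5mu\backslash y$ gets merged; since $R^{(p)}$ descends to a metric (via Proposition \ref{prop24}(c)) after taking $p$-th roots, a chain of "small" steps connecting $x$ to $y$ has total (rooted) resistance at most $(\#A-1)$ times the largest step, so it stays below $(\#A)\cdot t^{1/p}$; choosing $t$ appropriately this is still less than the resistance of any $\mcJ'$-separated pair, which by $\delta_{\mcJ'}(\Ep)<1$ (or the hypothesis when $\mcJ'$ is trivial) and \eqref{eq73} is large; hence such a pair is never merged, giving strictness of $\tilde{\mcJ}\subsetneq\mcJ'$ as well as the fact that $\tilde{\mcJ}$ is non-trivial on the relevant block. (iii) The bound on $\delta_{\tilde{\mcJ}}(\Ep)$: for $x\tilde{\mcJ}y$ the chaining argument gives $R^{(p)}(x,y)\le \big((\#A-1)t^{1/p}\big)^p\le (\#A)^p t$; for $x\tilde{\mcJ}\mkern-10.5mu\backslash y$ we have either $x\mcJ'\mkern-10.5mu\backslash y$, handled via $\delta_{\mcJ'}<1$, or $x\mcJ'y$ but the step is "large" so $R^{(p)}(x,y)\ge t'$ for the upper threshold $t'$; the ratio of thresholds is controlled by the gap, which exceeds $\delta^{-1}$, so after the bookkeeping with the $(\#A)^{p}$ and $(\#A)^{-\#A(\#A-1)p/2}$ factors one lands on $\delta_{\tilde{\mcJ}}(\Ep)<\delta^{2/(\#A(\#A-1))}\cdot(\#A)^p$. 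The pigeonhole on the $\binom{\#A}{2}$ many resistance values — there are at most $\#A(\#A-1)/2$ distinct ratios, so one of the consecutive gaps is at least the $2/(\#A(\#A-1))$ power of the total spread — is what produces the exponent $\frac{2}{\#A(\#A-1)}$.

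The main obstacle I anticipate is choosing the cut level $t$ (and the companion upper level $t'$) so that \emph{simultaneously} (a) $\mcJ$ is strictly enlarged, (b) $\mcJ'$ is not reached, and (c) the final $\delta_{\tilde{\mcJ}}$ bound comes out with the stated clean exponent — all while only assuming the weak hypotheses $\delta_{\mcJ}(\Ep)<1$, $\delta_{\mcJ'}(\Ep)<1$ rather than a quantitative gap for $\mcJ,\mcJ'$ themselves. The sorting-and-pigeonhole step needs care: one should sort the resistance values \emph{restricted to pairs with $x\mcJ\mkern-10.5mu\backslash y$, $x\mcJ'y$}, look for the largest multiplicative gap among consecutive values, show it exceeds $\delta^{-2/(\#A(\#A-1))}$, and cut there; handling the edge cases where $\mcJ$ or $\mcJ'$ is trivial (so the "$\delta_{\mcJ}<1$" hypothesis is vacuous) must be folded in cleanly. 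The metric property of $\big(R^{(p)}\big)^{1/p}$ from Proposition \ref{prop24}(c) is the essential tool that makes the "no accidental merging across the $\mcJ'$ barrier" argument go through, exactly as $R^{(p)}$-chaining did in Corollary \ref{coro27}.
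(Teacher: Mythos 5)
Your proposal follows essentially the same route as the paper's proof: pigeonhole the at most $\#A(\#A-1)/2$ resistance values of the pairs with $x\mcJ\mkern-10.5mu\backslash y$, $x\mcJ' y$ to find a multiplicative gap of size $\delta^{-2/(\#A(\#A-1))}$, cut there, define $\tilde\mcJ$ by chaining small resistances, use $\delta_\mcJ(\Ep)<1$ and $\delta_{\mcJ'}(\Ep)<1$ to place all $\mcJ$-pairs below and all $\mcJ'$-separated pairs above the gap, and bound merged pairs by $(\#A)^p$ times the cut via the $\big(R^{(p)}\big)^{1/p}$ triangle inequality, with $\delta<(\#A)^{-\#A(\#A-1)p/2}$ keeping this inside the gap. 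The only points to make explicit are that $\delta_\mcJ(\Ep)<1$ together with the nonempty small class forces every $\mcJ$-pair below the cut (so chains consist only of small steps), and that strictness of $\tilde\mcJ\subsetneq\mcJ'$ comes from the pair realizing the maximum in \eqref{eq73}, which the chaining bound shows cannot be $\tilde\mcJ$-equivalent — both of which your stated ingredients already supply.
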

\begin{proof}
	We first prove (b). 
%
%
%
%
By (\ref{eq73}), one can find a real $r>0$ such that
	\begin{eqnarray}
		&\label{eqn73}\text{either }R^{(p)}(x,y)< r\text{, or }R^{(p)}(x,y)> \delta^{-\frac{2}{\#A(\#A-1)}}r,\quad\forall x,y\text{ such that }x\mcJ\mkern-10.5mu\backslash y, x\mcJ' y,\\
		\label{eqn74} &\{(x,y):x\mcJ\mkern-10.5mu\backslash y, x\mcJ' y,R^{(p)}(x,y)<r\}\neq \emptyset,\\
		\label{eqn75} &\{(x,y):x\mcJ\mkern-10.5mu\backslash y, x\mcJ' y,R^{(p)}(x,y)> \delta^{-\frac{2}{\#A(\#A-1)}}r\}\neq \emptyset,
	\end{eqnarray}
	since there are at most $\#A(\#A-1)/2$ different values of $R^{(p)}(x,y)$. In addition, by the assumption $\delta_{\mcJ}(\Ep)<1$ and (\ref{eqn74}), we know $R^{(p)}(x,y)<r$ for any $x\mcJ y$; by the assumption $\delta_{\mcJ'}(\Ep)<1$ and (\ref{eqn75}), we know $R^{(p)}(x,y)>\delta^{-\frac{2}{\#A(\#A-1)}}r$ for any $x\mcJ\mkern-10.5mu\backslash' y$. Hence, (\ref{eqn73}) can be strengthened to be
	\begin{equation}\label{eqn76}
		\text{either }R^{(p)}(x,y)< r\text{, or }R^{(p)}(x,y)> \delta^{-\frac{2}{\#A(\#A-1)}}r,\quad\forall x\neq y\in A.
	\end{equation}
	
	One can define an equivalence relation {$\tilde{\mathcal{J}}$} on $A$ so that $x\tilde\mcJ y$ if and only if there exists a sequence of distinct points $x=z_0,z_1,\cdots,z_m=y$ such that
	\[
	R^{(p)}(z_i,z_{i+1})< r,\quad\forall 0\leq i\leq m-1.
	\]
	Also by (\ref{eqn76}), $R^{(p)}(x,y)> \delta^{-\frac{2}{\#A(\#A-1)}}r$ if $x\tilde\mcJ\mkern-10.5mu\backslash y$. On the other hand, by a same estimate as in the proof of Corollary \ref{coro27}, we have
	\begin{equation}\label{equation7.7}
	R^{(p)}(x,y)<(\#A)^pr,\quad \forall x\neq y, x\tilde\mcJ y.
	\end{equation}
	Hence, $\delta_{\tilde\mcJ}(\Ep)<\delta^{\frac{2}{\#A(\#A-1)}}\cdot (\#A)^p$.
	
	It remains to show $\mcJ\subsetneq \tilde{\mcJ}\subsetneq \mcJ'$. On one hand, since $x\mcJ y$ implies $R^{(p)}(x,y)<r$, we have $\mcJ\subset\tilde{\mcJ}$. On the other hand, by the assumption that $\delta<(\#A)^{-\#A(\#A-1)p/2}$ and \eqref{equation7.7}, we have for $x\tilde\mcJ y$,
	\begin{equation}\label{eq79}
	R^{(p)}(x,y)<(\#A)^pr<\delta^{-\frac{2}{\#A(\#A-1)}}r,
	\end{equation}
	 which implies $x\mcJ' y$, and hence $\tilde{\mcJ}\subset \mcJ'$. Finally, we see that $\mcJ\neq \tilde{\mcJ}$ by (\ref{eqn74}) and $\tilde\mcJ\neq \mcJ'$ by (\ref{eqn75}) and (\ref{eq79}).
	
	(a) follows from (b) by letting $\mcJ=0$ and $\mcJ'=1$.
\end{proof}

We can restate Lemma \ref{lemma73} (b) in the following way.

\begin{corollary}\label{coro74}
	Let $A$ be a finite set, $\Ep\in\mathcal M_p(A)$ and $R^{(p)}$ be the effective resistance associated with $\Ep$. Let $\mcJ,\mcJ'$ be two equivalence relations (can be trivial) such that $\mcJ\subsetneq \mcJ'$, $\delta_{\mcJ}(\Ep)<1$ if $\mcJ$ is non-trivial, and $\delta_{\mcJ'}(\Ep)<1$ if $\mcJ'$ is non-trivial.
	
	If for some $\delta\in (0,1)$, there does not exist $\tilde{\mcJ}$ such that $\mcJ\subsetneq\tilde{\mcJ}\subsetneq \mcJ'$ and $\delta_{\tilde{\mcJ}}(\Ep)<\delta$, then
	\[{\frac{\min\limits_{x\mcJ\mkern-10.5mu\backslash y, x\mcJ' y} R^{(p)}(x,y)}{\max\limits_{x\mcJ\mkern-10.5mu\backslash y, x\mcJ' y} R^{(p)}(x,y)}}\geq \delta^{\frac{\#A(\#A-1)}{2}}\cdot (\#A)^{-\#A(\#A-1)p/2}.\]
\end{corollary}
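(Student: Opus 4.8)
The plan is to obtain Corollary \ref{coro74} as the contrapositive of Lemma \ref{lemma73}(b), choosing the auxiliary parameter in that lemma so that its output threshold $\delta^{2/(\#A(\#A-1))}\cdot(\#A)^p$ collapses to exactly the $\delta$ appearing in the corollary. Write $n=\#A$ for brevity and set $\delta_0=\delta^{n(n-1)/2}\cdot n^{-n(n-1)p/2}$, which is precisely the quantity claimed as the lower bound. I would argue by contradiction, assuming
\[
{\frac{\min\limits_{x\mcJ\mkern-10.5mu\backslash y, x\mcJ' y} R^{(p)}(x,y)}{\max\limits_{x\mcJ\mkern-10.5mu\backslash y, x\mcJ' y} R^{(p)}(x,y)}}<\delta_0 .
\]
The index set here is non-empty since $\mcJ\subsetneq\mcJ'$, and all resistances between distinct points are finite and positive because $\Ep\in\mathcal M_p(A)$ makes $\{R^{(p)}\}^{1/p}$ a metric on $A$ by Proposition \ref{prop24}(c); so this ratio is a genuine positive real number.

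Next I would verify that the hypotheses of Lemma \ref{lemma73}(b) hold with its parameter $\delta$ replaced by $\delta_0$. The inequality \eqref{eq73} is exactly the assumption just made. The size requirement $\delta_0<n^{-n(n-1)p/2}$ follows because $n=\#A\geq 2$ forces $n(n-1)/2\geq 1$, hence $\delta^{n(n-1)/2}<1$ (as $\delta\in(0,1)$), and therefore $\delta_0=\delta^{n(n-1)/2}\cdot n^{-n(n-1)p/2}<n^{-n(n-1)p/2}$. The conditions $\delta_{\mcJ}(\Ep)<1$ and $\delta_{\mcJ'}(\Ep)<1$ (whenever the relations are non-trivial), as well as $\mcJ\subsetneq\mcJ'$, are inherited verbatim from the corollary.

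Then Lemma \ref{lemma73}(b) produces $\tilde\mcJ$ with $\mcJ\subsetneq\tilde\mcJ\subsetneq\mcJ'$ and $\delta_{\tilde\mcJ}(\Ep)<\delta_0^{2/(n(n-1))}\cdot n^p$. A routine exponent computation gives $\delta_0^{2/(n(n-1))}=\delta\cdot n^{-p}$, so $\delta_0^{2/(n(n-1))}\cdot n^p=\delta$, i.e. $\delta_{\tilde\mcJ}(\Ep)<\delta$, contradicting the hypothesis of the corollary that no such $\tilde\mcJ$ exists. This completes the proof. The only point requiring attention is the bookkeeping of exponents that pins down the value of $\delta_0$ and the elementary check $n(n-1)/2\geq 1$; there is no genuine obstacle, since all the real content is already carried out in Lemma \ref{lemma73}.
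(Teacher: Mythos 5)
Your proof is correct and is exactly the intended argument: the paper gives no separate proof of Corollary \ref{coro74}, presenting it as a direct restatement (contrapositive) of Lemma \ref{lemma73}(b), which is precisely what you carry out, with the exponent bookkeeping $\delta_0^{2/(\#A(\#A-1))}\cdot(\#A)^p=\delta$ done correctly and the size condition $\delta_0<(\#A)^{-\#A(\#A-1)p/2}$ duly checked.
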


We conclude this subsection with the following proposition.
\begin{proposition}\label{prop75}
	Let $A$ be a finite set and $\Ep\in\mathcal M_p(A)$. Let $0<\delta<1$ and assume there is a non-trivial equivalence relation $\mcJ$ so that $\delta_\mcJ(\Ep)<\delta$.
	
	Let $\{\mcJ_m\}_{m=1}^M$ be the set of all non-trivial equivalence relations satisfying $\delta_{\mcJ_m}(\Ep)<\delta$. Then
	
	(a). We can order $\mcJ_m$ in an increasing order, i.e. $\mcJ_1\subsetneq \mcJ_2\subsetneq \cdots \subsetneq \mcJ_M$.
	
	(b). Let $1\leq m_1<m_2\leq M$, then we have
	\[\delta(E^{(p)})\geq C(\delta)\cdot \delta_{\mcJ_{m_1},\mcJ_{m_2}}(E^{(p)})\cdot \prod_{m\notin [m_1,m_2]}\delta_{\mcJ_m}(\Ep),\]
	where $C(\delta)$ is a positive constant depending only on $\delta$, $\#A$ and $p$.
\end{proposition}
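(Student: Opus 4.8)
\textbf{Proof proposal for Proposition \ref{prop75}.}

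The plan is to analyze the structure of the set of non-trivial equivalence relations with small $\delta_{\mcJ}$ and then use the comparison machinery from Lemma \ref{lemma73} / Corollary \ref{coro74} to chain resistance ratios across the gaps between consecutive relations in the list. For part (a), I would apply Lemma \ref{lemma72}: since every $\mcJ_m$ satisfies $\delta_{\mcJ_m}(\Ep)<\delta<1$, any two of them are comparable under inclusion, so the collection $\{\mcJ_m\}_{m=1}^M$ is totally ordered by $\subset$, and we relabel so that $\mcJ_1\subsetneq\mcJ_2\subsetneq\cdots\subsetneq\mcJ_M$ (strict inclusions since distinct equivalence relations in a chain must be strict). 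That handles (a) essentially for free.

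For part (b), the key observation is that $\delta(\Ep) = \dfrac{\min_{x\neq y}R^{(p)}(x,y)}{\max_{x\neq y}R^{(p)}(x,y)}$, and I want to telescope this ratio by inserting the ``threshold'' resistance values associated with the chain $\mcJ_1\subsetneq\cdots\subsetneq\mcJ_M$. Concretely, for each pair of consecutive relations $\mcJ_m\subsetneq\mcJ_{m+1}$ in the chain (and also for the two end gaps $0\subsetneq\mcJ_1$ and $\mcJ_M\subsetneq 1$, whichever are non-trivial gaps), I would consider the quantity
\[
q_m:=\frac{\min\limits_{x\mcJ_m\mkern-10.5mu\backslash y,\, x\mcJ_{m+1} y} R^{(p)}(x,y)}{\max\limits_{x\mcJ_m\mkern-10.5mu\backslash y,\, x\mcJ_{m+1} y} R^{(p)}(x,y)}.
\]
Because no equivalence relation $\tilde\mcJ$ with $\mcJ_m\subsetneq\tilde\mcJ\subsetneq\mcJ_{m+1}$ has $\delta_{\tilde\mcJ}(\Ep)<\delta$ (by maximality of the list $\{\mcJ_m\}$), Corollary \ref{coro74} gives a lower bound $q_m\geq c(\delta):=\delta^{\#A(\#A-1)/2}\cdot(\#A)^{-\#A(\#A-1)p/2}$. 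Now partition the set of all resistance values $\{R^{(p)}(x,y):x\neq y\}$ according to which ``block'' of the chain the pair $(x,y)$ falls into — a pair with $x\mcJ_m\mkern-10.5mu\backslash y$ but $x\mcJ_{m+1} y$ sits in block $m$ — and order the blocks by $m$. Within block $m$ the ratio of max to min resistance is controlled by $\delta_{\mcJ_m}(\Ep)$ or $\delta_{\mcJ_{m+1}}(\Ep)$ (these are $<\delta<1$), and between block $m$ and block $m+1$ the resistances are ordered with a gap controlled by $q_m^{-1}$, hence by $c(\delta)^{-1}$. Multiplying the ``within-block'' factors and the ``between-block'' factors, $\delta(\Ep)$ is bounded below by a product of the $\delta_{\mcJ_m}(\Ep)$ over all $m$, times the gap factors $q_m$, times $\delta_{\mcJ_{m_1},\mcJ_{m_2}}(\Ep)$ which is exactly the contribution of the span of blocks from $m_1$ to $m_2$ collapsed into a single comparison. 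Collecting the blocks strictly outside $[m_1,m_2]$ separately gives the stated product $\prod_{m\notin[m_1,m_2]}\delta_{\mcJ_m}(\Ep)$, and all remaining $\delta$-dependent factors get absorbed into $C(\delta)$.

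I expect the main obstacle to be bookkeeping: precisely identifying, for a pair $x\neq y$, the unique block it belongs to (the smallest $m$ with $x\mcJ_{m} y$, if any, else the top block), verifying that within a block the max/min ratio is indeed at most $\delta_{\mcJ_m}(\Ep)$ or $\delta_{\mcJ_{m+1}}(\Ep)$ rather than just $<1$, and making sure the ``collapsed'' factor for the span $[m_1,m_2]$ equals $\delta_{\mcJ_{m_1},\mcJ_{m_2}}(\Ep)$ on the nose — this requires checking that $\max_{x\mcJ_{m_1} y}R^{(p)}(x,y)$ and $\min_{x\mcJ_{m_2}\mkern-10.5mu\backslash y}R^{(p)}(x,y)$ are the right representative values at the two ends of that span, which follows from $\delta_{\mcJ_{m_1}},\delta_{\mcJ_{m_2}}<1$ together with the chain ordering. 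A clean way to organize this is to sort the distinct resistance values $v_1<v_2<\cdots<v_k$, show each ``large gap'' $v_{j+1}/v_j$ either is bounded below by $c(\delta)^{-1}$ (a between-block gap) or corresponds to an index inside some $\delta_{\mcJ_m}$ or $\delta_{\mcJ_{m_1},\mcJ_{m_2}}$ factor, and then write $\delta(\Ep)=v_1/v_k=\prod_j (v_j/v_{j+1})$ and group the factors accordingly; the number of factors is at most $\#A(\#A-1)/2$, so only finitely many $c(\delta)^{\pm1}$ appear and are safely absorbed into $C(\delta)$.
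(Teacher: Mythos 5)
Your architecture for (b) is essentially the paper's: the paper also decomposes the range of resistances into the ``blocks'' cut out by the chain $\mcJ_1\subsetneq\cdots\subsetneq\mcJ_M$ (it picks one representative pair in each block, plus the extremal pairs at the two ends), invokes Corollary \ref{coro74} exactly as in your $q_m\geq c(\delta)$ step (justified by the maximality of the list $\{\mcJ_m\}$), and telescopes $\delta(\Ep)$ along the chain; part (a) is Lemma \ref{lemma72} in both treatments.

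However, your accounting swaps the two kinds of factors, and two of your intermediate claims are false as literally stated. Within block $m$ the max/min resistance ratio is at least $1$, so it cannot be ``controlled by $\delta_{\mcJ_m}(\Ep)$ or $\delta_{\mcJ_{m+1}}(\Ep)$,'' which are $<\delta<1$; what bounds the within-block spread is precisely your $q_m\geq c(\delta)$ from Corollary \ref{coro74}. Conversely, the gap between consecutive blocks is not bounded by $c(\delta)^{-1}$: since $\delta_{\mcJ_j}(\Ep)<1$ for every $j$, the largest resistance among pairs related by $\mcJ_{m+1}$ is attained in block $m$ and the smallest resistance among pairs not related by $\mcJ_{m+1}$ is attained in block $m+1$, so the ratio (max of block $m$)/(min of block $m+1$) is exactly $\delta_{\mcJ_{m+1}}(\Ep)$ — this is where the factors $\delta_{\mcJ_m}(\Ep)$, and after collapsing the span $[m_1,m_2]$ the factor $\delta_{\mcJ_{m_1},\mcJ_{m_2}}(\Ep)$, enter, and these can be arbitrarily small. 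The same swap recurs in your ``main obstacle'' paragraph and in the sorted-values reformulation (there the large, uncontrolled gaps are the between-block ones, while the within-block consecutive ratios are the ones bounded by $c(\delta)^{-1}$), so executing the plan as written would have you verifying false inequalities. Once the roles are restored — within-block spreads bounded via Corollary \ref{coro74} plus maximality, between-block gaps supplying the $\delta_{\mcJ_m}$ factors, the span $[m_1,m_2]$ collapsed to $\delta_{\mcJ_{m_1},\mcJ_{m_2}}$ using that its endpoints are the correct extremal values — the telescoping gives the stated bound with $C(\delta)$ a power of $c(\delta)$ (at most $\#A+1$ factors, since $M\leq\#A$), which is exactly the paper's proof.
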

\begin{proof}
	(a). is a consequence of Lemma \ref{lemma72}.
	
	(b). Let $x_0\neq y_0\in A$ such that $R^{(p)}(x_0,y_0)=\min\{R^{(p)}(x,y):x\neq y\in A\}$; let $x_M\neq y_M\in A$ such that $R^{(p)}(x_M,y_M)=\max\{R^{(p)}(x,y):x\neq y\in A\}$. We see that $x_0\mcJ_1 y_0$, since otherwise by choosing $x\mcJ_1 y$, $x\neq y$, we have $R^{(p)}(x,y)<R^{(p)}(x_0,y_0)$; we also see that $x_M\mcJ\mkern-10.5mu\backslash_M\ y_M$, since otherwise by choosing $x\mcJ\mkern-10.5mu\backslash_M\ y$ we have $R^{(p)}(x,y)>R^{(p)}(x_M,y_M)$.
	
	Next, for $1\leq m<M$, we choose $x_m,y_m\in A$ such that
	\[x_m\mcJ\mkern-10.5mu\backslash_{m}\ y_m,\quad x_m\mcJ_{m+1}\ y_m.\]
	Then, by Corollary \ref{coro74} and the fact that $\{\mcJ_m\}_{m=1}^M$ is the collection of all non-trivial equivalence relation $\mcJ$ such that $\delta_{\mcJ}(\Ep)<\delta$, we see that for any $1\leq m\leq M$,
	\begin{equation*}
\frac{R^{(p)}(x_{m-1},y_{m-1})}{R^{(p)}(x_m,y_m)}\geq C_1(\delta)\cdot \delta_{\mcJ_m}(\Ep),\quad \frac{R^{(p)}(x_{m_1-1},y_{m_1-1})}{R^{(p)}(x_{m_2},y_{m_2})}\geq C_1(\delta)\cdot \delta_{\mcJ_{m_1},\mcJ_{m_2}}(\Ep),\end{equation*}
where $C_1(\delta)>0$ is some small number depending only on $\delta$ and $\#A$. Hence,
	\[
	\delta(\Ep)=\frac{R^{(p)}(x_0,y_0)}{R^{(p)}(x_M,y_M)}\geq \big(C_1(\delta)\big)^{M-m_2+m_1}\cdot \delta_{J_{m_1},J_{m_2}}(\Ep)\cdot \prod_{m\notin [m_1,m_2]}\delta_{\mcJ_m}(\Ep).
	\]
	The lemma follows, noticing that $M\leq \#A$.
\end{proof}

\subsection{Trace of $p$-energies}
Next, we consider the trace of $p$-energies (recall Proposition \ref{prop22} (c)). In this subsection, we will fix an equivalence relation $\mcJ$. The results are inspired by \cite[Chapter 4]{Sabot}. Our proof is simplified by introducing a modified trace mapping.

\begin{definition}\label{def76}
	For $\delta>0$, $E^{(p)}\in\mathcal  M_p(A)$, we say that $\Ep\in \mathcal{M}_{p}(A,\mcJ,\delta)$ if the following two conditions hold:
	
	\noindent (i). $\delta_\mcJ(\Ep)<\delta$.
	
	\noindent (ii). There exists $\Ep_I\in \mathcal{M}_p(I)$ for each $I\in A/\mcJ$, and a finite collection $\{\Ep_i\}_{i=1}^m$ in $\widetilde{\mathcal M}_p(A)$ such that
	\begin{equation}\label{eqn77}
		\Ep(f)=\sum_{I\in A/\mcJ}\Ep_I(f|_I)+\sum_{i=1}^m\Ep_i(f),\qquad\text{ for any } f\in l(A),
	\end{equation}
	and for $i=1,\cdots, m$, $R_i^{(p)}(x,y)=\infty$ whenever $x\mcJ y, x\neq y$, where $R_i^{(p)}$ is the $p$-resistance associated with $E_i^{(p)}$.
\end{definition}

Clearly, $\{\Ep\in \mathcal{S}_p(A):\delta_\mcJ(\Ep)<\delta\}\subset \mathcal{M}_{p}(A,\mcJ,\delta)$. Thus, as an immediate consequence of Corollary \ref{coro27}, we have the following lemma.

\begin{lemma}\label{lemma77}
	There exists $C>1$ depending on $\#A$ and $p$, such that for any $\Ep\in \mathcal{M}_p(A)$ and any non-trivial equivalence relation $\mcJ$, we can find $\bar{E}^{(p)}\in \mathcal{M}_{p}\big(A,\mcJ,C^2\delta_\mcJ(\Ep)\big)$ such that $C^{-1}\Ep\leq \bar{E}^{(p)}\leq C\Ep$.
\end{lemma}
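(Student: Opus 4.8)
The plan is to obtain $\bar E^{(p)}$ directly from Corollary~\ref{coro27} and then to read off the decomposition demanded by Definition~\ref{def76} from its standard form. Given $\Ep\in\mathcal M_p(A)$ with associated $p$-resistance $R^{(p)}$, Corollary~\ref{coro27} (and its proof) furnishes $\bar E^{(p)}(f)=\sum_{x,y\in A,\,x\ne y}c_{x,y}|f(x)-f(y)|^p$ with $c_{x,y}=R^{(p)}(x,y)^{-1}$, together with a constant $C_0>1$ depending only on $\#A$ and $p$ such that $C_0^{-1}\bar E^{(p)}\le\Ep\le C_0\bar E^{(p)}$. The feature that drives the rest of the argument is that, since $\Ep\in\mathcal M_p(A)$ is non-degenerate, a routine compactness argument (truncate using the Markov property, extract a pointwise limit of minimizing functions, invoke continuity of $\Ep$ as in the proof of Proposition~\ref{prop24}(b)) gives $R^{(p)}(x,y)<\infty$ for all $x\ne y$; hence every coefficient $c_{x,y}$ is strictly positive, and in particular $\bar E^{(p)}\in\mathcal S_p(A)\subset\mathcal M_p(A)$.

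I would then verify condition (i) of Definition~\ref{def76}. The comparison $C_0^{-1}\bar E^{(p)}\le\Ep\le C_0\bar E^{(p)}$ transfers to the resistances as $C_0^{-1}R^{(p)}\le\bar R^{(p)}\le C_0 R^{(p)}$, hence $\delta_\mcJ(\bar E^{(p)})\le C_0^2\,\delta_\mcJ(\Ep)$; moreover $\delta_\mcJ(\Ep)>0$ since the resistances of a form in $\mathcal M_p(A)$ are positive and finite. So, setting $C:=C_0+1$, we get simultaneously $C^{-1}\Ep\le\bar E^{(p)}\le C\Ep$ and the strict inequality $\delta_\mcJ(\bar E^{(p)})<C^2\,\delta_\mcJ(\Ep)$ required by Definition~\ref{def76}(i).

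For condition (ii) I would split the standard form of $\bar E^{(p)}$ into its within-class and cross-class parts relative to $\mcJ$. For each non-singleton class $I\in A/\mcJ$ put $\bar E_I(g)=\sum_{x,y\in I,\,x\ne y}c_{x,y}|g(x)-g(y)|^p$ for $g\in l(I)$; since every $c_{x,y}>0$, the weighted graph on $I$ is complete and hence connected, so $\bar E_I$ is non-degenerate and $\bar E_I\in\mathcal S_p(I)\subset\mathcal M_p(I)$. For each pair $\{u,v\}$ with $u$ and $v$ in distinct $\mcJ$-classes put $\bar E_{u,v}(f)=c_{u,v}|f(u)-f(v)|^p$; each such single-edge functional clearly lies in $\widetilde{\mathcal M}_p(A)$. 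Then $\bar E^{(p)}(f)=\sum_I\bar E_I(f|_I)+\sum_{\{u,v\}}\bar E_{u,v}(f)$, which is exactly the shape of \eqref{eqn77}. It remains to check that $R^{(p)}_{u,v}(x,y)=\infty$ whenever $x,y$ lie in a common $\mcJ$-class with $x\ne y$: for such $x,y$ one has $\{u,v\}\ne\{x,y\}$, so one can choose $f$ with $f(x)=0$, $f(y)=1$ and $f(u)=f(v)$ (if $u$ or $v$ coincides with $x$ or $y$, assign the other endpoint that same value; otherwise take $f(u)=f(v)=0$), whence $\bar E_{u,v}(f)=0$, the defining infimum is $0$, and $R^{(p)}_{u,v}(x,y)=\infty$ by the convention $\frac{1}{0}=\infty$.

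None of these steps is really hard — the lemma is essentially bookkeeping layered on top of Corollary~\ref{coro27}, exactly as the preceding sentence in the paper suggests. The single ingredient worth isolating is the strict positivity of the coefficients $c_{x,y}$, equivalently the finiteness of all $p$-resistances of a form in $\mathcal M_p(A)$: this is precisely what makes each within-class piece $\bar E_I$ non-degenerate, and therefore a legitimate member of $\mathcal M_p(I)$, as Definition~\ref{def76}(ii) requires.
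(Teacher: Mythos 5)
Your proposal is correct and follows essentially the paper's own route: the paper derives the lemma as an immediate consequence of Corollary \ref{coro27}, using that the resulting standard form (with coefficients $c_{x,y}=R^{(p)}(x,y)^{-1}$, all strictly positive) splits into non-degenerate within-class pieces plus cross-class edge terms of infinite within-class resistance, exactly as you spell out. Your extra care about strictness of $\delta_\mcJ(\bar E^{(p)})<C^2\delta_\mcJ(\Ep)$ (enlarging the constant, using $0<\delta_\mcJ(\Ep)<\infty$) is a harmless refinement of the same argument.
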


We consider two kinds of traces of the $p$-energies in this paper.\vspace{0.2cm}

\begin{definition}\label{def78}
	Let $\Ep\in \widetilde{\mathcal{M}}_p(A)$. Let $B\subset A$, and let $\mcJ|_B$ be the restriction of $\mcJ$ on $B$. We write $I_{I'}$ for the unique equivalence class of $\mcJ$ containing $I'\in B/{\mcJ|_B}$.
	
	For each $I\in A/\mcJ$ we choose $x_I\in I$.
	In addition, for each $I'\in B/{\mcJ|_B}$, we require $x_{I_{I'}}\in I'$.
	
	(a). \textbf{Restriction on $A/\mcJ$}: We define $[[\Ep]]_{A/\mcJ}\in \widetilde{\mathcal{M}}_p(A/\mcJ)$ by
	\[[[\Ep]]_{A/\mcJ}(u)=\Ep\big(\sum_{I\in A/\mcJ}u(I)1_I\big),\quad \forall u\in l(A/\mcJ). \]
	
	(b). \textbf{Modified trace:} Assume $\Ep\in \mathcal{M}_p(A,\mcJ,\delta)$ with the decomposition (\ref{eqn77}). We can define
	\[<\Ep>_B(f)=\sum_{I'\in B/{\mcJ|_B}}[\Ep_{I_{I'}}]_{I'}(f|_{I'})+[[[\Ep]]_{A/\mcJ}]_{B
		/{\mcJ|_B}}(u_f),\quad \text{ for any } f\in l(B),\]
	where $u_f$ is defined as $u_f(I')=f(x_{I_{I'}})$ for any $I'\in B/{\mcJ|_B}$.
\end{definition}

\noindent\textbf{Remark 1.} The modified trace $<\Ep>_B$ actually depends on the decomposition (\ref{eqn77}) and the choice of $x_{I_{I'}}$, which are not reflected in the notation.

\noindent\textbf{Remark 2.} We will see in Proposition \ref{prop79} that for $\delta$ small enough,  $<\Ep>_B\in\mathcal{M}_p(B,\mathcal{J}|_{B},\delta)$ whenever $\Ep\in\mathcal{M}_p(A,\mathcal{J},\delta/2)$.\vspace{0.2cm}

\begin{proposition}\label{prop79}
With the same conditions as in Definition \ref{def78} (b), assume $\delta<\min\{1,C^{-p}\}$ for some constant $C>0$ depending only on $\#A$. Then it holds that
	\[(1+\varepsilon)^{-1}\cdot <\Ep>_B\leq [\Ep]_B\leq (1+\varepsilon)\cdot <\Ep>_B,\]
	where $\varepsilon=\varepsilon(\delta)=\frac{1}{1-C\delta^{1/p}}(\frac{\delta^{-1/p}}{\delta^{-1/p}-1})^{p-1}-1>0$. Clearly, $\varepsilon(\delta)\to 0$ as $\delta\to 0$.
\end{proposition}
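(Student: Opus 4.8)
The plan is to split the decomposition \eqref{eqn77} into its purely within-class part $\Ep_{\mathrm{in}}(f)=\sum_{I\in A/\mcJ}\Ep_I(f|_I)$ and its ``between-class'' part $\Ep_{\mathrm{out}}=\sum_{i=1}^m\Ep_i$, so that $\Ep=\Ep_{\mathrm{in}}+\Ep_{\mathrm{out}}$, and to exploit that when $\delta$ is small these two pieces essentially decouple under tracing. I would use the following elementary consequences of Definition \ref{def21}: convexity and $p$-homogeneity make $E(\cdot)^{1/p}$ a seminorm on $l(A)$ for every $E\in\widetilde{\mathcal M}_p(A)$ and give $E(g+h)\le(1-t)^{1-p}E(g)+t^{1-p}E(h)$ for $t\in(0,1)$; $\Ep_{\mathrm{in}}$ vanishes on every $g$ that is constant on each $\mcJ$-class, so for such $g=\sum_I u(I)1_I$ one has $\Ep(g)=\Ep_{\mathrm{out}}(g)=[[\Ep]]_{A/\mcJ}(u)$ and in particular $\Ep(1_I)=\Ep_{\mathrm{out}}(1_I)$; taking a trace preserves effective resistance, $R^{(p)}_{[\Ep]_B}(x,y)=R^{(p)}(x,y)$ for $x,y\in B$; and the map $I'\mapsto I_{I'}$ from $B/\mcJ|_B$ into $A/\mcJ$ is injective. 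Writing $A_1:=\sum_{I'\in B/\mcJ|_B}[\Ep_{I_{I'}}]_{I'}(f|_{I'})$ and $A_2:=[[[\Ep]]_{A/\mcJ}]_{B/\mcJ|_B}(u_f)$, so that $<\Ep>_B(f)=A_1+A_2$, the proposition reduces, for each fixed $f\in l(B)$, to $(1+\varepsilon)^{-1}(A_1+A_2)\le[\Ep]_B(f)\le(1+\varepsilon)(A_1+A_2)$.

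The one genuinely analytic ingredient, and the step I expect to be the main obstacle, is a \emph{cross-term estimate}: if $\phi\in l(A)$ is supported on a single class $I$, then $\Ep_{\mathrm{out}}(\phi)\le C\,\Osc(\phi)^p\,\Ep(1_I)$ with $C=C(\#A,p)$. This is precisely where the hypothesis $R^{(p)}_i\equiv\infty$ within classes enters: writing $\phi$ as a telescoping sum over its level sets $B_j$ as in the proofs of Lemma \ref{lemma25}(c) and Proposition \ref{prop26}, each $B_j$ is contained in $I$ or contains $A\setminus I$, so $\Ep_{\mathrm{out}}(1_{B_j})$ only ``sees'' the resistances between $I$ and $A\setminus I$, whence $\Ep_{\mathrm{out}}(1_{B_j})\lesssim\Ep_{\mathrm{out}}(1_I)=\Ep(1_I)$, and the seminorm triangle inequality closes it. Now Lemma \ref{lemma25}(c) gives $\Ep(1_I)\lesssim(\#A)^2/\rho$, where $\rho$ is the smallest resistance between points in distinct $\mcJ$-classes, while $\delta_\mcJ(\Ep)<\delta$ together with resistance-preservation gives, for $x,y\in I'\subset I$ realizing $\Osc(f|_{I'})$, the bound $\Osc(f|_{I'})^p\le R^{(p)}(x,y)\,[\Ep]_B(f)<\delta\rho\,[\Ep]_B(f)$. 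The $\rho$'s cancel, and summing over the at most $\#A$ classes yields $\Ep_{\mathrm{out}}(\psi)\le C'\delta\,[\Ep]_B(f)$ for the ``fluctuation'' $\psi$ below, and by the identical computation $\Ep_{\mathrm{out}}(\psi')\le C'\delta\,\Ep(f')$ for the fluctuation of an arbitrary extension $f'$, with $C'=C'(\#A,p)$.

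For the upper bound I would build a near-optimal extension: let $\tilde u$ be an $[[\Ep]]_{A/\mcJ}$-minimizing extension of $u_f$ to $A/\mcJ$, put $g=\sum_I\tilde u(I)1_I$, and for each $I'\in B/\mcJ|_B$ let $\psi_{I'}$ be the $\Ep_{I_{I'}}$-harmonic extension of $(f-\tilde u(I_{I'}))|_{I'}$ to $I_{I'}$, extended by $0$ off $I_{I'}$; set $f'=g+\psi$ with $\psi=\sum_{I'}\psi_{I'}$ (pairwise disjoint supports, by injectivity). One checks $f'|_B=f$, that $\Ep_{\mathrm{in}}(f')=A_1$ exactly (constant-invariance of the $\Ep_I$ plus the harmonic-extension characterization), and that $\Ep_{\mathrm{out}}(g)=\Ep(g)=A_2$. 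Then, with $E=\Ep_{\mathrm{out}}$, $t=\delta^{1/p}$ in the convexity inequality and the cross-term estimate (and $\delta^{(1-p)/p}\cdot\delta=\delta^{1/p}$),
\[[\Ep]_B(f)\le\Ep(f')=A_1+\Ep_{\mathrm{out}}(g+\psi)\le A_1+(1-\delta^{1/p})^{1-p}A_2+C'\delta^{1/p}[\Ep]_B(f).\]
Since $[\Ep]_B(f)\le\Ep(f')$, this can be solved for $[\Ep]_B(f)$: $[\Ep]_B(f)(1-C'\delta^{1/p})\le A_1+(1-\delta^{1/p})^{1-p}A_2\le(1-\delta^{1/p})^{1-p}(A_1+A_2)$, hence $[\Ep]_B(f)\le\frac{(1-\delta^{1/p})^{1-p}}{1-C'\delta^{1/p}}<\Ep>_B(f)\le(1+\varepsilon)<\Ep>_B(f)$ as soon as the constant $C$ in $\varepsilon(\delta)$ is $\ge C'$ (which, via $\delta<C^{-p}$, keeps the denominator positive).

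For the lower bound I would take $f'$ an arbitrary extension of $f$ and split $f'=g'+\psi'$ with $g'=\sum_I f'(x_I)1_I$. Then $\Ep_{\mathrm{in}}(f')\ge A_1$ (monotonicity of the trace, $f'|_B=f$), $\Ep_{\mathrm{out}}(g')=\Ep(g')=[[\Ep]]_{A/\mcJ}(u_{f'})\ge A_2$ since $u_{f'}$ extends $u_f$, and $\Ep_{\mathrm{out}}(\psi')^{1/p}\le(C'\delta)^{1/p}\Ep(f')^{1/p}$. The seminorm triangle inequality gives $\Ep_{\mathrm{out}}(f')^{1/p}\ge A_2^{1/p}-(C'\delta)^{1/p}\Ep(f')^{1/p}$; feeding this into $\Ep(f')\ge A_1+\Ep_{\mathrm{out}}(f')$ and a short one-variable optimization (normalize $A_1=\alpha S$, $A_2=(1-\alpha)S$ with $S=A_1+A_2$; we may assume $\Ep(f')<S$, else there is nothing to prove; the resulting bracket is increasing in $\alpha$) yield $\Ep(f')\ge S\,(1-(C'\delta)^{1/p})^p$, hence $[\Ep]_B(f)\ge(1-(C'\delta)^{1/p})^p<\Ep>_B(f)$. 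Since $(1+\varepsilon)^{-1}=(1-C\delta^{1/p})(1-\delta^{1/p})^{p-1}\le 1-C\delta^{1/p}\le(1-(C'\delta)^{1/p})^p$ for $C=C(\#A,p)$ large enough (Bernoulli's inequality), this is $\ge(1+\varepsilon)^{-1}<\Ep>_B(f)$. Taking $C$ larger than the constants arising in both halves makes everything consistent, and the choice $t=\delta^{1/p}$ is exactly what reproduces the stated $\varepsilon(\delta)=\frac{1}{1-C\delta^{1/p}}\big(\frac{\delta^{-1/p}}{\delta^{-1/p}-1}\big)^{p-1}-1$; the remaining work is the bookkeeping sketched above, the main difficulty being the cross-term estimate — that the within-class fluctuations deposit only $O(\delta)$ of the energy into $\Ep_{\mathrm{out}}$, which forces one to unwind $\Ep_{\mathrm{out}}$ through level sets and Lemma \ref{lemma25}(c).
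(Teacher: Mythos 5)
Your proposal is correct and follows essentially the same route as the paper's proof: you compare the $\Ep$-minimal extension with the explicitly built near-optimal extension (within-class harmonic pieces plus a quotient-minimizing constant-on-classes part), prove the same key cross-term estimate that the within-class fluctuation feeds only $O(\delta)\cdot$(energy) into $\sum_i\Ep_i$ (you via level sets and Lemma \ref{lemma25}(c), the paper via Corollary \ref{coro27}, both exploiting $R_i^{(p)}\equiv\infty$ inside classes), and absorb it through the convexity inequality with $t=\delta^{1/p}$, which reproduces exactly the stated $\varepsilon(\delta)$. The only deviations are cosmetic: you route the oscillation bound through $[\Ep]_B$ and trace-preserved resistances instead of through $\Ep(f_2)$, and your lower bound uses a reverse triangle inequality for $\Ep_{\mathrm{out}}^{1/p}$ plus a one-variable optimization (then enlarges $C$ via Bernoulli) where the paper simply runs the same convexity estimate symmetrically on the minimizer $f_1$ — both are valid and yield the claimed two-sided comparison.
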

\begin{proof}
	Let $f\in l(B)$. We consider the following two different extensions of $f$ to $A$:
	
	\noindent 1). $f_1\in l(A)$ is the minimal energy extension of $f$ to $A$ with respect to $\Ep$.
	
	\noindent 2). $f_2\in l(A)$ achieves the minimal value of
	\[\sum_{I\in A/\mcJ}\Ep_I(f'|_I)+\sum_{i=1}^{m}\Ep_i\left(\sum_{I\in A/\mcJ}f'(x_I)1_I\right), \quad\text{ for } f'\in l(A), f'|_B=f,\]
	which equals to $<\Ep>_B(f)$. This can be realized with two steps: first, extend $f$ to $f_2$ on each $I_{I'},I'\in B/{\mcJ|_B}$ with respect to $\Ep_{I_{I'}}$ separately; then, take $f_2$ to be a constant $c_I$ on each remaining equivalence classes $I$, i.e. $I\in A/\mcJ\setminus\{I_{I'}: I'\in B/\mcJ|_B\}$, such that $\Ep\left(\sum_{I\in A/\mcJ}f_2(x_I)1_I\right)$ is minimized. \vspace{0.2cm}
	
	\noindent \textit{Proof of ``$\ [\Ep]_B\leq (1+\varepsilon)<\Ep>_B$''}. \vspace{0.2cm}
	
	For short, we write $g=\sum_{I\in A/\mcJ}f_2(x_I)1_I$. Observe that
	\begin{equation}\label{equation7.11}
\left\|f_2-g\right\|_{l^\infty(A)}\leq\max_{I\in A/\mcJ}\Osc(f_2|_I)\leq \left(\max_{x\mcJ y}R^{(p)}(x,y)\cdot\Ep(f_2)\right)^{1/p}.
\end{equation}

For each $\Ep_i$, we can apply Corollary \ref{coro27} to get an equivalent energy $\bar{E}^{(p)}_i$ of the form $\sum_{x\mcJ\mkern-10.5mu\backslash y}{c^{(i)}_{x,y}}|u(x)-u(y)|^p$ which is comparable with $\Ep_i$. Since $\sum_{i=1}^m\bar{E}^{(p)}_i\asymp\sum_{i=1}^m\Ep_i\leq\Ep$,
\begin{equation*}
R^{(p)}(x,y)^{-1}\gtrsim \sum_{i=1}^m{c^{(i)}_{x,y}},\quad\forall x\mcJ\mkern-10.5mu\backslash y.
\end{equation*}
We then arrive at
\begin{equation}\label{equation7.12}
\sum_{i=1}^m \Ep_i\left(f_2-g\right)\lesssim\sum_{i=1}^m \bar{E}^{(p)}_i\left(f_2-g\right)\lesssim \frac{\big\|f_2-g\big\|_{l^\infty(A)}^p}{\min_{x\mcJ\mkern-10.5mu\backslash y}R^{(p)}(x,y)}.
\end{equation}
	Hence, by \eqref{equation7.11} and \eqref{equation7.12}, we have for some constant $C>0$,
	\begin{equation}\label{eqn78}
		\sum_{i=1}^m \Ep_i\left(f_2-g\right)\leq C\frac{\max_{x\mcJ y}R^{(p)}(x,y)}{\min_{x\mcJ\mkern-10.5mu\backslash y}R^{(p)}(x,y)}\Ep(f_2)<C\delta\Ep(f_2),
	\end{equation}
	where we have used $\delta_{\mcJ}(\Ep)<\delta$ in the last inequality.
	
	Next, we let $\lambda=\frac{1}{\delta^{-1/p}-1}$. Using the $p$-homogeneity and convexity of each $E_i^{(p)}$, we have
	\[
	\begin{aligned}	E_i^{(p)}(f_2)=E_i^{(p)}(g+f_2-g)&=(1+\lambda)^pE_i^{(p)}\left(\frac{1}{1+\lambda}g+\frac{\lambda}{1+\lambda} \cdot\frac{f_2-g}{\lambda}\right)\\
		&\leq(1+\lambda)^{p-1}E_i^{(p)}(g)+\left(1+\frac1{\lambda}\right)^{p-1}E_i^{(p)}(f_2-g).
	\end{aligned}
	\]
	Summing up over $i=1,\cdots,m$ and using (\ref{eqn78}), we achieve
	\begin{equation}\label{equation7.10}
	\sum_{i=1}^m\Ep_i(f_2)\leq \left(\frac{\delta^{-1/p}}{\delta^{-1/p}-1}\right)^{p-1}\sum_{i=1}^m \Ep_i\left(\sum_{I\in A/\mcJ}f_2(x_I)1_I\right)+C\delta^{1/p}\Ep(f_2).
	\end{equation}
	Therefore
	\[\begin{aligned}
		\Ep(f_2)&\leq\sum_{I\in A/\mcJ}\Ep_I(f_2|_I)+\left(\frac{\delta^{-1/p}}{\delta^{-1/p}-1}\right)^{p-1}\sum_{i=1}^m \Ep_i\left(\sum_{I\in A/\mcJ}f_2(x_I)1_I\right)+C\delta^{1/p}\Ep(f_2)\\
		&\leq (\frac{\delta^{-1/p}}{\delta^{-1/p}-1})^{p-1}\cdot <\Ep>_B(f)+C\delta^{1/p}\Ep(f_2).
	\end{aligned}\]
	Hence, if $\delta<\min\{1,C^{-p}\}$,
	\[[\Ep]_B(f)\leq \Ep(f_2)\leq \big(\frac{1}{1-C\delta^{1/p}}\big)\cdot (\frac{\delta^{-1/p}}{\delta^{-1/p}-1})^{p-1}\cdot <\Ep>_B(f).\]
	\vspace{0.2cm}
	
	\noindent \textit{Proof of ``$\ (1+\varepsilon)^{-1}<\Ep>_B\leq [\Ep]_B$''}. \vspace{0.2cm}

	Similar to \eqref{equation7.10}, we have that
	\[\sum_{i=1}^m \Ep_i\left(\sum_{I\in A/\mcJ}f_1(x_I)1_I\right)\leq \left(\frac{\delta^{-1/p}}{\delta^{-1/p}-1}\right)^{p-1}\sum_{i=1}^m \Ep_i(f_1)+C\delta^{1/p}\Ep(f_1).\]
	As a consequence, we have
	\[
	\begin{aligned}
		<\Ep>_B(f)\leq &\sum_{I\in A/\mcJ}\Ep_I(f_1|_I)+\sum_{i=1}^{m}\Ep_i\left(\sum_{I\in A/\mcJ}f_1(x_I)1_I\right)\\
		\leq & \sum_{I\in A/\mcJ}\Ep_I(f_1|_I)+\left(\frac{\delta^{-1/p}}{\delta^{-1/p}-1}\right)^{p-1}\sum_{i=1}^m \Ep_i(f_1)+C\delta^{1/p}\Ep(f_1)\\
		\leq &\left((\frac{\delta^{-1/p}}{\delta^{-1/p}-1})^{p-1}+C\delta^{1/p}\right)\cdot[\Ep]_B(f).
	\end{aligned}
	\]
	\end{proof}

\section{A strengthened criteria of Sabot}\label{sec8}
In this section, we will prove a  criteria of Sabot (in $p$-energy version) concerning the behavior of $\mcT$. The main theorem, Theorem \ref{thm83}, consists of two parts: under certain conditions, one has (\textbf{A}) hold; under some other conditions, (\textbf{A}) does not hold. Unfortunately, like the $p=2$ case, the above two can not cover all possible situations.

We will improve Sabot's result in that we drop the (\textbf{H}) condition (see \cite[Section 5.1]{Sabot}), which will be explained soon in Remark 2 after the statement of Theorem \ref{thm83}. Throughout this section, we will fix $p\in (1,\infty)$ and omit the subscript $p$ or superscript $(p)$ as we did before.

\begin{definition}\label{def81}
	Let $\mcJ$ be an equivalence relation on $V_0$, and $\mathscr{G}$ be a group action on $V_0$.

	(a). We say that $\mcJ$ is a $\mathscr{G}$-relation if
	\[x\mcJ y\Longrightarrow \sigma(x)\mcJ \sigma(y),\]
	for any $x,y\in V_0$ and $\sigma\in \mathscr{G}$.
	
	(b). We define $\mcJ^{(1)}$ on $V_1$ as the minimal equivalence relation such that
	\[x\mcJ y\Longrightarrow F_ix\mcJ^{(1)}F_iy,\]
	for any $x,y\in V_0$ and $1\leq i\leq N$. With some abuse of the notation ``$\mcT$", we define $\mcT\mcJ=\mcJ^{(1)}|_{V_0}$.
	
	(c). We say $\mcJ$ is preserved if $\mcT\mcJ=\mcJ$; otherwise, $\mcJ$ is non-preserved.
\end{definition}

\begin{definition}\label{def82}
	Let $\mcJ$ be a non-trivial preserved equivalence relation on $V_0$.
	
		(a). We identify each $f\in l(V_0/\mcJ)$ with a function in $l(V_0)$ so that $f(x)=f(I)$ for any $x\in I\in V_0/\mcJ$. Do similarly for $f\in l(V_1/\mcJ^{(1)})$.
	
	(b). The operator $\Lambda:\mathcal{M}(V_0)\to \mathcal{M}(V_1)$ is automatically extended to $\Lambda_{V_0/\mcJ}:\mathcal{M}(V_0/\mcJ)\to \mathcal{M}(V_1/\mcJ^{(1)})$: for each $E\in \mathcal{M}(V_0/\mcJ)$, $\Lambda_{V_0/\mcJ}E\in \mathcal{M}(V_1/\mcJ^{(1)})$ is defined as
	\[\Lambda_{V_0/\mcJ} E(f)=\sum_{i=1}^N r_i^{-1}E(f\circ F_i),\quad\text{ for any } f\in l(V_1/\mcJ^{(1)}),\]
	and we define $\mcT_{V_0/\mcJ}:\mathcal{M}(V_0/\mcJ)\to \mathcal{M}(V_0/\mcJ)$ as $\mcT_{V_0/\mcJ}E=[\Lambda_{V_0/\mcJ} E]_{V_0/\mcJ}$.
	
	We define
	\[\underline{\rho}_{V_0/\mcJ}=\sup_{E\in {\mathcal M}(V_0/\mcJ)}\inf(\mcT_{V_0/\mcJ} E|E).\]
	
	(c). We denote  $\widetilde{\mathcal M}(V_0,\mcJ,0)$ the collection of $E\in \widetilde{\mathcal M}$ that takes the form $E=\sum_{I\in A/\mcJ}E_I$ where $E_I\in \mathcal{M}(I)$. We define
	\[\overline{\rho}_{\mcJ}=\inf_{E\in \widetilde{\mathcal M}(V_0,\mcJ,0)}\sup(\mcT E|E),\quad \underline{\rho}_{\mcJ}=\sup_{E\in \widetilde{\mathcal M}(V_0,\mcJ,0)}\inf(\mcT E|E).\]	
\end{definition}

Inspired by Sabot \cite{Sabot}, our criteria for the existence and non-existence results states as follows.
\begin{theorem}\label{thm83}
	(a). \textbf{Non-existence:} If $\underline{\rho}_\mcJ>\underline{\rho}_{V_0/\mcJ'}$ for some non-trivial preserved $\mathscr{G}$-relations $\mcJ$ and $\mcJ'$ on $V_0$, then $\mcT$ does not have a $\mathscr{G}$-symmetric eigenform in $\mathcal{M}$.
	
	(b). \textbf{Existence:} If $\overline{\rho}_\mcJ<\underline{\rho}_{V_0/\mcJ}$ for any non-trivial preserved $\mathscr{G}$-relation $\mcJ$, then condition (\textbf{A}) holds, hence $\mcT$ has a $\mathscr{G}$-symmetric eigenform in $\mathcal{M}$.
\end{theorem}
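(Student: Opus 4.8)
The plan is to reduce both halves to an analysis of $\mcT$ near the degenerate faces $\{\delta_\mcJ(E)=0\}$ attached to preserved $\mathscr{G}$-relations, using the trace machinery of Section~\ref{sec7}; part~(a) then becomes a short comparison, while part~(b) is the genuinely hard direction. For \textbf{(a)}, suppose toward a contradiction that $\mcT E^*=\lambda E^*$ for some $\mathscr{G}$-symmetric $E^*\in\mathcal M$ and $\lambda>0$. I would prove two inequalities valid for every non-trivial preserved $\mathscr{G}$-relation. First, $\underline{\rho}_\mcJ\le\lambda$: given $E\in\widetilde{\mathcal M}(V_0,\mcJ,0)$ (nonempty, since a non-trivial $\mcJ$ forces a class with $\ge 2$ points), choose $f$ attaining $\sup(E|E^*)$; then $f$ is non-constant, $E(f)>0$, and running the proof of Lemma~\ref{lemma34}(c) — which only uses $E^*\in\mathcal M$, not $E\in\mathcal M$ — with $f^*$ the minimal $\Lambda E^*$-energy extension gives $\mcT E(f)\le\Lambda E(f^*)\le\sup(\Lambda E|\Lambda E^*)\,\Lambda E^*(f^*)\le\sup(E|E^*)\,\mcT E^*(f)=\lambda E(f)$, so $\inf(\mcT E|E)\le\lambda$ and, taking $\sup$ over $E$, $\underline{\rho}_\mcJ\le\lambda$. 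Second, $\underline{\rho}_{V_0/\mcJ'}\ge\lambda$: writing $F=[[E^*]]_{V_0/\mcJ'}$ (which lies in $\mathcal M(V_0/\mcJ')$, non-degeneracy being inherited since a class-constant function is constant iff the induced function on $V_0/\mcJ'$ is), unwinding the definitions and using that each $F_i$ maps $\mcJ'$-classes into $\mcJ'^{(1)}$-classes shows $\mcT_{V_0/\mcJ'}F(u)$ equals the minimal $\Lambda E^*$-energy over extensions of the lift of $u$ that are constant on $\mcJ'^{(1)}$-classes, whereas $[[\mcT E^*]]_{V_0/\mcJ'}(u)=\mcT E^*(\text{lift of }u)$ minimizes over all extensions; hence $\lambda F=[[\mcT E^*]]_{V_0/\mcJ'}\le\mcT_{V_0/\mcJ'}F$, giving $\inf(\mcT_{V_0/\mcJ'}F|F)\ge\lambda$ and $\underline{\rho}_{V_0/\mcJ'}\ge\lambda$. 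Combining, $\underline{\rho}_\mcJ\le\lambda\le\underline{\rho}_{V_0/\mcJ'}$ for all such $\mcJ,\mcJ'$, contradicting the hypothesis $\underline{\rho}_\mcJ>\underline{\rho}_{V_0/\mcJ'}$.

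For \textbf{(b)}, by Theorem~\ref{thm42} it suffices to verify (\textbf{A}); I argue by contradiction, assuming $\inf_{n\ge0}\delta(\mcT^nE)=0$ for every $E\in\mathcal M$. Fix a $\mathscr{G}$-symmetric $E_0\in\mathcal S$ and a subsequence $n_k\to\infty$ with $\delta(\mcT^{n_k}E_0)\to0$. By Lemma~\ref{lemma73}(a) there are non-trivial relations $\mcJ_k$ with $\delta_{\mcJ_k}(\mcT^{n_k}E_0)\to 0$; passing to a further subsequence we may take $\mcJ_k\equiv\mcJ$. Since $\mcT^{n_k}E_0$ is $\mathscr{G}$-symmetric, its effective resistances are $\mathscr{G}$-invariant, so the finite set of relations whose $\delta_{\,\cdot\,}(\mcT^{n_k}E_0)$ is below a fixed threshold $<1$ is totally ordered by Lemma~\ref{lemma72} and is permuted by $\mathscr{G}$ through inclusion-preserving maps, i.e.\ through order-automorphisms of a finite chain, hence fixed pointwise; thus $\mcJ$ is a $\mathscr{G}$-relation. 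A short chaining estimate for $\Lambda(\cdot)$ (within-$\mcT\mcJ$ resistances of $\mcT E$ are bounded by $\asymp(\max_i r_i)$ times within-$\mcJ$ resistances of $E$, between-$\mcT\mcJ$ resistances by $\gtrsim(\min_i r_i)$ times between-$\mcJ$ resistances, and trace preserves resistances on $V_0$) gives $\delta_{\mcT\mcJ}(\mcT E)\lesssim(\max_i r_i/\min_i r_i)\,\delta_\mcJ(E)$; iterating this and using that the ascending chain $\mcJ\subset\mcT\mcJ\subset\mcT^2\mcJ\subset\cdots$ stabilizes at a preserved relation, we may replace $\mcJ$ by that stabilization and $n_k$ by $n_k+\mathrm{const}$, so henceforth $\mcJ$ is a non-trivial preserved $\mathscr{G}$-relation with $\delta_\mcJ(\mcT^{n_k}E_0)\to0$.

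Now I would run Sabot's ``bouncing'' mechanism. Fix $\delta_*$ small enough that Lemma~\ref{lemma73} and Proposition~\ref{prop79} apply, and for large $k$ iterate $\mcT$ starting from $\mcT^{n_k}E_0$ for as long as $\delta_\mcJ$ stays below $\delta_*$. On that window, Lemma~\ref{lemma77} and Proposition~\ref{prop79} let me replace each $\mcT^{\,j}E_0$ by a comparable form in $\mathcal M(V_0,\mcJ,\delta_*)$ whose one-step trace is, up to a factor $1+\varepsilon(\delta_*)$, the sum of a within-class part and a quotient part. The within-class part evolves with energy ratios at most $\overline{\rho}_\mcJ+o(1)$ (by the inequality $\inf(\mcT E_{\mathrm{wi}}|E_{\mathrm{wi}})\le\overline{\rho}_\mcJ$ and the fact that in the near-degenerate regime all within-class forms are comparable with uniform constants), while the quotient part $[[\cdot]]_{V_0/\mcJ}$ evolves with energy ratios at least $\underline{\rho}_{V_0/\mcJ}-o(1)$ (from $[[\mcT E]]_{V_0/\mcJ}\le\mcT_{V_0/\mcJ}[[E]]_{V_0/\mcJ}$ and the definition of $\underline{\rho}_{V_0/\mcJ}$). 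Since $\overline{\rho}_\mcJ<\underline{\rho}_{V_0/\mcJ}$, across the window the within-class resistances grow relative to the between-class resistances at a definite geometric rate, so $\delta_\mcJ(\mcT^{n_k+j}E_0)$ climbs back past $\delta_*$ in a bounded number of steps, uniformly in $k$ — contradicting $\delta_\mcJ(\mcT^{n_k}E_0)\to0$, unless the descent continues into a strictly finer relation inside some class; but that can happen only finitely often, which is controlled by the chain structure of Lemma~\ref{lemma72} and the product bound of Proposition~\ref{prop75} (an induction on the number of equivalence classes). Hence (\textbf{A}) holds and Theorem~\ref{thm42} supplies the $\mathscr{G}$-symmetric eigenform.

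The hard part will be the last paragraph: turning the clean but ``one–test–function'' inequalities $\inf(\mcT E_{\mathrm{wi}}|E_{\mathrm{wi}})\le\overline{\rho}_\mcJ$ and $[[\mcT E]]_{V_0/\mcJ}\le\mcT_{V_0/\mcJ}[[E]]_{V_0/\mcJ}$ into control of the full orbit's $\delta_\mcJ$ over a window whose length must grow with $k$, while simultaneously bookkeeping the finitely many possible sub-relations inside the classes via Proposition~\ref{prop75} and keeping every constant uniform. This is precisely the place where Sabot invoked the (\textbf{H}) condition, so dispensing with it — handling all preserved $\mathscr{G}$-relations and all chains of sub-relations at once, with only the comparison results of Section~\ref{sec7} in hand — is the real technical burden and where I expect the bulk of the work to lie.
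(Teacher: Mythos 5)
Your part (a) is fine and essentially the paper's argument: the inequality $\lambda\le\underline{\rho}_{V_0/\mcJ'}$ via $\lambda[[E^*]]_{V_0/\mcJ'}=[[\mcT E^*]]_{V_0/\mcJ'}\le\mcT_{V_0/\mcJ'}[[E^*]]_{V_0/\mcJ'}$ is exactly what the paper does, and your use of the Lemma \ref{lemma34}(c) mechanism for $\underline{\rho}_\mcJ\le\lambda$ is a harmless variant of the paper's iteration $\underline{\rho}_\mcJ^nE'(f)\le C\mcT^nE(f)=C\lambda^nE(f)$. Part (b), however, has genuine gaps. First, your reduction to preserved relations is flawed: you assert that $\mcJ\subset\mcT\mcJ\subset\mcT^2\mcJ\subset\cdots$ is an ascending chain stabilizing at a preserved relation, but there is no reason that $\mcJ\subset\mcT\mcJ$ in general; the orbit of $\mcJ$ under $\mcT$ on the finite set of relations is merely eventually periodic, possibly with period larger than one, in which case no relation in the cycle is preserved and your replacement step collapses. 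The paper's mechanism here is different and essential: for non-preserved $\mcJ$ one proves $\theta(E)\ge C'\delta_\mcJ(E)^{-1}$ (Proposition \ref{prop85}(b)), which combined with the monotonicity $\theta(\mcT^nE)\le\theta(E)$ (Lemma \ref{lemma34}(b)) gives a uniform positive lower bound on $\delta_\mcJ(\mcT^nE_0)$ for every non-preserved relation; your chaining estimate $\delta_{\mcT\mcJ}(\mcT E)\lesssim\delta_\mcJ(E)$ (which is the paper's Proposition \ref{prop85}(a)) cannot by itself rule out degeneration along a cycle of non-preserved relations.

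Second, and more seriously, the core of (b) is only sketched and explicitly deferred in your last paragraph, whereas it constitutes the bulk of the paper's Sections \ref{sec7}--\ref{sec8}. Concretely, what is missing is: (i) the quantitative two-sided comparison $(1+\varepsilon)^{-n}\tilde\mcT^n_\mcJ\bar E\le\mcT^n\bar E\le(1+\varepsilon)^n\tilde\mcT^n_\mcJ\bar E$ over a window where $\delta_\mcJ$ stays small (Proposition \ref{prop89}), which requires re-decomposing the form via Lemma \ref{lemma77} every $n_0$ steps because the trace of a decomposed form need not remain decomposed; (ii) the monotonicity $\underline{\rho}_{V_0/\mcJ}\le\underline{\rho}_{V_0/\mcJ'}$ for $\mcJ\subset\mcJ'$ (Proposition \ref{prop810}(a)), without which the hypothesis $\overline{\rho}_\mcJ<\underline{\rho}_{V_0/\mcJ}$ cannot be applied to pairs; and (iii) the multi-relation bookkeeping through the product bound (\ref{eqn811}) and the nested scales $\delta_i,\eta_i,\gamma_i$ in the Case/Claim induction of Section \ref{subsec84}, which is precisely what replaces Sabot's (\textbf{H}). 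Your claim that $\delta_\mcJ$ ``climbs back past $\delta_*$ in a bounded number of steps, uniformly in $k$'' is also not what is true or needed: in Proposition \ref{prop810}(b) the recovery time grows like $-\log\big(\delta_\mcJ(E)\vee\delta_{\mcJ'}(E)\big)$, and the argument must track both the controlled loss of $\delta(\mcT^mE)$ throughout the excursion, as in (\ref{eqn89}), and the definite gain at its end, as in (\ref{eqn88}). So while your outline correctly identifies the ingredients and the overall bouncing strategy, it does not yet amount to a proof of (b).
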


\noindent\textbf{Remark 1.} For the $p=2$ case handled in \cite[Theorem 5.1]{Sabot}, the uniqueness of the form is proved, while for general $p$, the uniqueness of the $p$-energy form, even on the Sierpinski gasket, is unsettled since the difference of two forms is not necessarily convex.

\noindent\textbf{Remark 2.} In \cite{Sabot}, the following condition (\textbf{H}) is assumed for the existence result.\vspace{0.15cm}

\noindent (\textbf{H}). {There do not exist two non-trivial preserved $\mathscr{G}$-relations $\mcJ,\mcJ'$ such that $\mcJ\subsetneq \mcJ'$.} \vspace{0.2cm}

It is direct to verify Theorem \ref{thm83} (a); while for (b), we need some more preparations.
\begin{proof}[Proof of Theorem \ref{thm83} (a)]
	We will prove by contradiction. Assume $E\in\mathcal M$ is a $\mathscr{G}$-symmetric eigenform of $\mcT$, i.e. $\mcT E=\lambda E$ for some $\lambda>0$.
	
	On one hand, we see that
	\[\lambda E(f)=\mcT E(f)\leq \mcT_{V_0/\mcJ'}[[E]]_{V_0/\mcJ'}(f),\quad\forall f\in l(V_0/\mcJ'),\]
	so that $\lambda\leq \inf\big(\mcT_{V_0/\mcJ'}[[E]]_{V_0/\mcJ'}\big|[[E]]_{V_0/\mcJ'}\big)\leq \underline{\rho}_{V_0/\mcJ'}$.
	
	On the other hand, one can find $E'=\sum_{I\in V_0/\mcJ}E'_I$ where $E'_I\in \mathcal{M}(I)$ for each $I\subset \mcJ$ such that $\mcT E'\geq \underline{\rho}_{\mcJ} E'$. By Proposition \ref{prop26}, there is $C>0$ such that $E'\leq CE$. Now, fix $f\in l(V_0)$ with $E'(f)>0$. For any $n\geq 1$, we have
	\[\underline{\rho}_{\mcJ}^nE'(f)\leq\mcT^n E'(f)\leq C\mcT^n E(f)=C\lambda^nE(f),\]
	which implies that $\lambda\geq \underline{\rho}_{\mcJ}$.
	
	Combining the above two observations, we have $\underline{\rho}_{\mcJ}\leq\lambda\leq \underline{\rho}_{V_0/\mcJ'}$, which contradicts the assumption $\underline{\rho}_{\mcJ}>\underline{\rho}_{V_0/\mcJ'}$. Hence $\mcT$ does not have a $\mathscr{G}$-symmetric eigenform in $\mathcal{M}$.
\end{proof}

\subsection{Non-symmetric Sierpinski gasket}\label{subsec81}
In this subsection, we consider an example, the Sierpinski gasket.

Let $q_1=(0,0)$, $q_2=(1,0)$ and $q_3=(\frac12,\frac{\sqrt{3}}{2})$ be the three vertices of a unit triangle in $\mathbb R^2$. For $i=1,2,3$, let $F_i$ be the contraction on $\mathbb R^2$ of the form $F_i(x)=\frac12 (x-q_i)+q_i$. The self-similar set  w.r.t. the i.f.s. $\{F_1,F_2,F_3\}$ is the Sierpinski gasket (SG) in $\mathbb R^2$.

In \cite{HPS}, Herman, Peirone and Strichartz constructed a fully symmetric $p$-energy on SG. Here, following \cite{Sabot}, we consider a non-symmetric case. In the following we fix a vector of renormalization factors $\bm{r}=(r_1,r_2,r_3)$. In other words, the associated renormalization map $\mcT$ is defined by
	\begin{equation*}
		\mcT E(f)=\min\{ \Lambda E(g):\ g\in l(V_1),g|_{V_0}=f\},\quad E\in\mathcal M(V_0), f\in l(V_0),
	\end{equation*}
	where
	\begin{equation*}
		\Lambda E(g)=r_1^{-1}E(g\circ F_1)+r_2^{-1}E(g\circ F_2)+r_3^{-1}E(g\circ F_3), \quad g\in l(V_1).
\end{equation*}

Clearly, there are three non-trivial preserved relations $\mcJ_1,\mcJ_2$ and $\mcJ_3$, which can be described as
\[\begin{cases}
	q_1\mcJ\mkern-10.5mu\backslash_1q_2,\  q_2\mcJ_1 q_3;\\
	q_2\mcJ\mkern-10.5mu\backslash_2q_3,\  q_1\mcJ_2 q_3;\\
	q_3\mcJ\mkern-10.5mu\backslash_3q_1,\  q_1\mcJ_3 q_2.
\end{cases}
\]
Noticing that for $i=1,2,3$, there is only one $p$-energy form (up to a constant multiplier) in $\mathcal{M}(V_0/\mcJ_i)$ and in $\widetilde{\mathcal{M}}(V_0,\mcJ,0)$. For convenience, we use $\{i,j,k\}$ to denote a permutation of $\{1,2,3\}$. By direct computation, we can see
\[\overline{\rho}_{\mcJ_i}=\underline{\rho}_{\mcJ_i}=\left(r_j^{\frac{1}{p-1}}+r_k^{\frac{1}{p-1}}\right)^{1-p},\quad \underline{\rho}_{V_0/\mcJ_i}=\left(\left(\frac{1}{r_j}+\frac{1}{r_k}\right)^{\frac{1}{1-p}}+r_i^{\frac{1}{p-1}}\right)^{1-p}.\]
For convenience of readers, in the following, we briefly explain the computation.

We first consider the unique $p$-energy (up to a constant multiplier) $E_i\in \widetilde{\mathcal{M}}(V_0,\mcJ_i,0)$ defined as $E_i(f)=|f(q_j)-f(q_k)|^p$ for $f\in l(V_0)$. Then
\begin{align*}
	\mathcal TE_i(f)&=\min_{x,y,z\in\mathbb R} \left\{r_j^{-1}|f(q_j)-x|^p+r_k^{-1}|x-f(q_k)|^p+r_i^{-1}|y-z|^p\right\}\notag\\
	&=\left(r_j^{\frac{1}{p-1}}+r_k^{\frac{1}{p-1}}\right)^{1-p}|f(q_j)-f(q_k)|^p.
\end{align*}
From this, we find that
\begin{equation*}
	\overline{\rho}_{\mcJ_i}=\underline{\rho}_{\mcJ_i}=\mathcal TE_i(f)/E_i(f)=\left(r_j^{\frac{1}{p-1}}+r_k^{\frac{1}{p-1}}\right)^{1-p}.
\end{equation*}
Next, we consider the unique $p$-energy (up to a constant multiplier) $E_i'\in \mathcal{M}(V_0/\mcJ_i)$ defined as $E'_{i}(f)=\big|f(\{q_i\})-f(\{q_j,q_k\})\big|^p$ for $f\in l(V_0/\mcJ_i)$, and hence
\begin{align*}
	\mathcal TE_i'(f)&=\min_{x\in\mathbb R} \left\{r_i^{-1}\big|f(\{q_i\})-x\big|^p+\left(r_j^{-1}+r_k^{-1}\right)\big|x-f(\{q_j,q_k\})\big|^p\right\}\notag\\
	&=\left(\left(\frac{1}{r_j}+\frac{1}{r_k}\right)^{\frac{1}{1-p}}+r_i^{\frac{1}{p-1}}\right)^{1-p}\big|f(\{q_i\})-f(\{q_j,q_k\})\big|^p.
\end{align*}
Thus we have
\begin{equation*}
	\underline{\rho}_{V_0/\mcJ}=\mathcal TE_i'(f)/E_i'(f)=\left(\left(\frac{1}{r_j}+\frac{1}{r_k}\right)^{\frac{1}{1-p}}+r_i^{\frac{1}{p-1}}\right)^{1-p}.
\end{equation*}

\begin{corollary}\label{thmSG}
Assume $r_i\geq r_j\geq r_k$ for $\{i,j,k\}=\{1,2,3\}$.
	
(a). If $r_j^{\frac{1}{p-1}}+r_k^{\frac{1}{p-1}}<\left(\frac{1}{r_j}+\frac{1}{r_k}\right)^{\frac{1}{1-p}}+r_i^{\frac{1}{p-1}}$, then there does not exist a non-degenerate $p$-energy eigenform of $\mathcal{T}$.

(b). If $r_j^{\frac{1}{p-1}}+r_k^{\frac{1}{p-1}}>\left(\frac{1}{r_j}+\frac{1}{r_k}\right)^{\frac{1}{1-p}}+r_i^{\frac{1}{p-1}}$, then there does exist a non-degenerate $p$-energy eigenform of $\mathcal{T}$.
\end{corollary}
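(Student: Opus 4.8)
The plan is to derive the corollary from Theorem \ref{thm83} applied with the \emph{trivial} group $\mathscr{G}=\{\mathrm{id}\}$: then $K$ and $\bm r$ are vacuously $\mathscr{G}$-symmetric, every equivalence relation on $V_0$ is a $\mathscr{G}$-relation, and ``$\mathscr{G}$-symmetric'' imposes nothing on forms, so ``$\mathscr{G}$-symmetric eigenform in $\mathcal M$'' just means ``non-degenerate $p$-energy eigenform''. Since $\#V_0=3$, the only non-trivial equivalence relations on $V_0$ are $\mcJ_1,\mcJ_2,\mcJ_3$, which are all preserved, and the computations preceding the corollary give, with $s:=\frac1{p-1}$ and $\{a,b\}=\{1,2,3\}\setminus\{m\}$,
\[\overline{\rho}_{\mcJ_m}=\underline{\rho}_{\mcJ_m}=\big(r_a^{s}+r_b^{s}\big)^{1-p},\qquad \underline{\rho}_{V_0/\mcJ_m}=\Big(\big(\tfrac{r_ar_b}{r_a+r_b}\big)^{s}+r_m^{s}\Big)^{1-p},\]
where I have rewritten $(r_a^{-1}+r_b^{-1})^{\frac1{1-p}}=(r_ar_b/(r_a+r_b))^{s}$. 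Throughout I will use that $x\mapsto x^{1-p}=x^{-1/s}$ is strictly decreasing on $(0,\infty)$.

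For part (a): with $\{i,j,k\}$ ordered so that $r_i\ge r_j\ge r_k$, the displayed hypothesis is exactly $r_j^{s}+r_k^{s}<(r_jr_k/(r_j+r_k))^{s}+r_i^{s}$, and applying the decreasing map $x\mapsto x^{1-p}$ turns it into $\underline{\rho}_{\mcJ_i}>\underline{\rho}_{V_0/\mcJ_i}$. Then Theorem \ref{thm83}(a), with $\mcJ=\mcJ'=\mcJ_i$ (both non-trivial preserved $\mathscr{G}$-relations), says that $\mcT$ has no $\mathscr{G}$-symmetric eigenform in $\mathcal M$, i.e.\ no non-degenerate $p$-energy eigenform.

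For part (b): the hypothesis reads $\overline{\rho}_{\mcJ_i}=\underline{\rho}_{\mcJ_i}<\underline{\rho}_{V_0/\mcJ_i}$, whereas Theorem \ref{thm83}(b) requires $\overline{\rho}_{\mcJ}<\underline{\rho}_{V_0/\mcJ}$ for \emph{all three} relations $\mcJ_1,\mcJ_2,\mcJ_3$. So the first task is to show that $\underline{\rho}_{\mcJ_i}$ is the largest of $\underline{\rho}_{\mcJ_1},\underline{\rho}_{\mcJ_2},\underline{\rho}_{\mcJ_3}$ and $\underline{\rho}_{V_0/\mcJ_i}$ the smallest of $\underline{\rho}_{V_0/\mcJ_1},\underline{\rho}_{V_0/\mcJ_2},\underline{\rho}_{V_0/\mcJ_3}$; granting this, for every $m$ one gets
\[\overline{\rho}_{\mcJ_m}=\underline{\rho}_{\mcJ_m}\ \le\ \underline{\rho}_{\mcJ_i}\ <\ \underline{\rho}_{V_0/\mcJ_i}\ \le\ \underline{\rho}_{V_0/\mcJ_m},\]
so Theorem \ref{thm83}(b) yields condition (\textbf{A}), hence (via Theorem \ref{thm42}) an eigenform in $\mathcal Q\subset\mathcal M$, which is non-degenerate by membership in $\mathcal M$. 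The first monotonicity is immediate, since $r_j^{s}+r_k^{s}$ is the smallest of the three pairwise sums $r_a^{s}+r_b^{s}$ (as $r_j,r_k$ are the two smallest factors) and $x\mapsto x^{1-p}$ is decreasing. The second amounts to showing that $g(m):=(r_ar_b/(r_a+r_b))^{s}+r_m^{s}$ is maximized at $m=i$, which — comparing $g(i)$ with $g(j)$ (via $a=r_i,b=r_j,c=r_k$) and $g(i)$ with $g(k)$ (via $a=r_i,b=r_k,c=r_j$) — reduces to the elementary inequality
\[a^{s}-b^{s}\ \ge\ \Big(\tfrac{ac}{a+c}\Big)^{s}-\Big(\tfrac{bc}{b+c}\Big)^{s}\qquad(a\ge b>0,\ c>0,\ s>0),\]
which I would prove by checking that $\frac{d}{dx}(xc/(x+c))^{s}=s\,x^{s-1}\big(c/(x+c)\big)^{s+1}\le s\,x^{s-1}=\frac{d}{dx}x^{s}$ pointwise and integrating from $b$ to $a$. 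The only genuine obstacle is this last monotonicity of $\underline{\rho}_{V_0/\mcJ_m}$ in the ``separated'' vertex; the rest is bookkeeping with the formulas derived above and with Theorem \ref{thm83}.
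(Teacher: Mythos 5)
Your proof is correct and takes essentially the same route as the paper's: it reduces the hypothesis to $\underline{\rho}_{\mcJ_i}>\underline{\rho}_{V_0/\mcJ_i}$ (resp.\ $\overline{\rho}_{\mcJ_i}<\underline{\rho}_{V_0/\mcJ_i}$) and applies Theorem \ref{thm83}(a) with $\mcJ=\mcJ'=\mcJ_i$ (resp.\ (b), after propagating the inequality to $\mcJ_j,\mcJ_k$ via the orderings of $\overline{\rho}_{\mcJ_m}$ and $\underline{\rho}_{V_0/\mcJ_m}$). The only difference is that you explicitly verify (with a correct derivative comparison) the monotonicity of $m\mapsto\underline{\rho}_{V_0/\mcJ_m}$, which the paper dismisses as ``one can easily check''.
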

\begin{proof}
One can easily check that $\overline{\rho}_{\mcJ_i}=\underline{\rho}_{\mcJ_i}\geq\overline{\rho}_{\mcJ_j}=\underline{\rho}_{\mcJ_j}\geq \overline{\rho}_{\mcJ_k}=\underline{\rho}_{\mcJ_k}$ and $\underline{\rho}_{V_0/\mcJ_i}\leq \underline{\rho}_{V_0/\mcJ_j}\leq \underline{\rho}_{V_0/\mcJ_k}$. So to apply Theorem \ref{thm83}, it suffices to compare $\overline{\rho}_{\mcJ_i}$ and $\underline{\rho}_{V_0/\mcJ_i}$.

(a). If $\underline{\rho}_{\mcJ_i}=(r_j^{\frac{1}{p-1}}+r_k^{\frac{1}{p-1}})^{1-p}>\left(\left(\frac{1}{r_j}+\frac{1}{r_k}\right)^{\frac{1}{1-p}}+r_i^{\frac{1}{p-1}}\right)^{1-p}=\underline{\rho}_{V_0/\mcJ_i}$.  Then by Theorem \ref{thm83} (a), there does not exist a non-degenerate $p$-energy eigenform of $\mathcal{T}$.

(b). If $\overline{\rho}_{\mcJ_i}=(r_j^{\frac{1}{p-1}}+r_k^{\frac{1}{p-1}})^{1-p}<\left(\left(\frac{1}{r_j}+\frac{1}{r_k}\right)^{\frac{1}{1-p}}+r_i^{\frac{1}{p-1}}\right)^{1-p}=\underline{\rho}_{V_0/\mcJ_i}$. Then it also holds that $\overline{\rho}_{\mcJ_j}<\underline{\rho}_{V_0/\mcJ_j}$ and $\overline{\rho}_{\mcJ_k}<\underline{\rho}_{V_0/\mcJ_k}$. Hence by Theorem \ref{thm83} (b), there exists a non-degenerate $p$-energy eigenform of $\mathcal{T}$.
\end{proof}

\subsection{Non-preserved $\mathscr{G}$-relations}\label{subsec82}
Let $\bm r$ be $\mathscr{G}$-symmetric, so that for any $\mathscr{G}$-symmetric $p$-energy $E\in \mathcal{M}$,  $\mcT^nE$ is also $\mathscr{G}$-symmetric, $n\geq 1$. Then if $\mcJ$ is not a $\mathscr{G}$-relation, it is straightforward to see that $\delta_\mcJ(\mcT^nE)\geq 1$. Hence, to verify (\textbf{A}), we only need to care about $\mathscr{G}$-relations. In this subsection, we further show that we do not need to worry about non-preserved $\mathscr{G}$-relations (Proposition \ref{prop85} (b)).

\begin{lemma}\label{lemma84}
	Let $E,E'\in \mathcal{M}$ and $R,R'$ be the $p$-resistances associated with $E,E'$ respectively. Then
	\[
	\inf(E|E') \leq \min_{x\neq y}\frac{R'(x,y)}{R(x,y)}\leq \max_{x\neq y}\frac{R'(x,y)}{R(x,y)}\leq \sup(E|E').
	\]
\end{lemma}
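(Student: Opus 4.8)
The statement to prove is Lemma~\ref{lemma84}: for $E,E'\in\mathcal M$ with associated $p$-resistances $R,R'$,
\[
\inf(E|E')\leq \min_{x\neq y}\frac{R'(x,y)}{R(x,y)}\leq\max_{x\neq y}\frac{R'(x,y)}{R(x,y)}\leq\sup(E|E').
\]
The middle inequality is trivial, so the content is in the two outer ones, and by the duality $\inf(E|E')=\sup(E'|E)^{-1}$ (already used in the proof of Lemma~\ref{lemma34}) together with the symmetry $R'(x,y)/R(x,y)\leftrightarrow R(x,y)/R'(x,y)$ under swapping $E\leftrightarrow E'$, it suffices to prove just the rightmost inequality $\max_{x\neq y}R'(x,y)/R(x,y)\leq\sup(E|E')$.

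\textbf{First step.} Fix $x\neq y\in V_0$ realizing the maximum on the left. If $R'(x,y)=\infty$ then, since $E'\in\mathcal M$ is non-degenerate and $V_0$ is finite, $R'(x,y)<\infty$ always — in fact every $p$-resistance of a form in $\mathcal M$ is finite by Proposition~\ref{prop24}(c) (it is a metric), so this degenerate case does not occur and all ratios are genuine positive reals. By Proposition~\ref{prop24}(b), pick $f\in l(V_0)$ with $f(x)=0$, $f(y)=1$ and $E(f)=R(x,y)^{-1}$, i.e. $f$ is the minimizer defining $R(x,y)$.

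\textbf{Second step.} This same $f$ is a \emph{competitor} (not necessarily minimizer) for $R'(x,y)$, since it still satisfies $f(x)=0$, $f(y)=1$. Hence by Definition~\ref{def23},
\[
R'(x,y)^{-1}\leq E'(f).
\]
Now combine: $E'(f)\geq R'(x,y)^{-1}$ and $E(f)=R(x,y)^{-1}$ give
\[
\frac{R'(x,y)}{R(x,y)}=\frac{E(f)}{R'(x,y)^{-1}}\cdot\frac{R(x,y)^{-1}}{E(f)}\cdot\frac{R'(x,y)}{R(x,y)}
\]
— more cleanly, $\dfrac{R'(x,y)}{R(x,y)}\le \dfrac{R'(x,y)}{R(x,y)}$; the honest computation is
\[
\frac{E(f)}{E'(f)}\le \frac{R(x,y)^{-1}}{R'(x,y)^{-1}}=\frac{R'(x,y)}{R(x,y)},
\]
wait — that is the wrong direction, so instead use that $f$ minimizes for $E$: for \emph{any} competitor $g$ with $g(x)=0,g(y)=1$ we have $E(f)\le E(g)$, and apply this to the $E'$-minimizer. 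Concretely, let $g$ realize $R'(x,y)$, so $E'(g)=R'(x,y)^{-1}$ and $g(x)=0,g(y)=1$; then $E(g)\ge E(f)=R(x,y)^{-1}$, whence
\[
\sup(E|E')\ge\frac{E(g)}{E'(g)}\ge\frac{R(x,y)^{-1}}{R'(x,y)^{-1}}=\frac{R'(x,y)}{R(x,y)}=\max_{x'\neq y'}\frac{R'(x',y')}{R(x',y')}.
\]
This is exactly the rightmost inequality.

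\textbf{Third step and main obstacle.} Apply the same argument with $E$ and $E'$ interchanged to get $\max_{x\neq y}R(x,y)/R'(x,y)\le\sup(E'|E)$, then invert using $\sup(E'|E)^{-1}=\inf(E|E')$ and $\big(\max R/R'\big)^{-1}=\min R'/R$ to obtain the leftmost inequality $\inf(E|E')\le\min_{x\neq y}R'(x,y)/R(x,y)$. The middle inequality $\min\le\max$ is immediate. There is no serious obstacle here: the only point requiring a moment's care is the direction of the optimization — one must test $\sup(E|E')$ against the \emph{$E'$-minimizer} $g$ (so that the denominator $E'(g)$ is as small as possible), not against the $E$-minimizer — and confirm via Proposition~\ref{prop24}(b) that the minimizers exist as genuine functions, which holds because every $p$-resistance of a form in $\mathcal M$ is finite.
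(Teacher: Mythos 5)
Your final argument is correct and is essentially the paper's own proof: test $\sup(E|E')$ against the $E'$-minimizer $g$ realizing $R'(x_0,y_0)$, use $E(g)\geq R(x_0,y_0)^{-1}$, and obtain the left inequality by the symmetric (or dual) argument. The mid-proof detour through the $E$-minimizer is harmless since you correct the direction before concluding, and the finiteness of all resistances for forms in $\mathcal{M}$ is indeed guaranteed as you note.
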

\begin{proof}
	We pick $x_0\neq y_0$ so that $\frac{R'(x_0,y_0)}{R(x_0,y_0)}=\max_{x\neq y}\frac{R'(x,y)}{R(x,y)}$, and let $f$ be a function such that $f(x_0)=1$, $f(y_0)=0$ and $E'(f)=R'(x_0,y_0)^{-1}$. Noticing that $E(f)\geq R(x_0,y_0)^{-1}$, hence
	\[\sup(E|E')\geq \frac{E(f)}{E'(f)}\geq \frac{R'(x_0,y_0)}{R(x_0,y_0)}=\max_{x\neq y}\frac{R'(x,y)}{R(x,y)}.\]
	Similarly, we have $\inf(E|E') \leq \min_{x\neq y}\frac{R'(x,y)}{R(x,y)}$.
\end{proof}

Recall that for each $E\in \mathcal{M}$, $\theta(E)=\frac{\sup(\mcT E|E)}{\inf (\mcT E|E)}$, see Definition \ref{def33} (b).

\begin{proposition}\label{prop85}
	Let $\mcJ$ be a non-trivial equivalence relation on $V_0$.
	
	(a). There is a constant $C>0$ such that $\delta_{\mcT\mcJ}(\mcT E)\leq C\delta_{\mcJ}(E)$ for any $E\in \mathcal{M}$.
	
	(b). Assume that $\mcT\mcJ\neq\mcJ$. Then there is a constant $C'>0$ such that $\theta(E)\geq C'\cdot\delta_\mcJ(E)^{-1}$ for any $E\in \mathcal{M}$.
\end{proposition}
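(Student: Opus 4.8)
The plan is to reduce both parts to resistance comparisons, exploiting the compatibility between the renormalization map and the effective resistance already recorded in Lemma \ref{lemma34} and Lemma \ref{lemma84}.

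For part (a), let $R$ and $\widehat R$ be the $p$-resistances associated with $E$ and $\mcT E$ respectively. The key observation is that the combinatorial relation $\mcT\mcJ=\mcJ^{(1)}|_{V_0}$ tells us exactly which pairs in $V_0$ become ``short'' at level one: if $x\mcT\mcJ y$ then there is a chain $x=z_0,z_1,\dots,z_k=y$ in $V_1$ with consecutive $z_i,z_{i+1}$ of the form $F_j(a),F_j(b)$ with $a\mcJ b$, and the $p$-resistance across such an edge (with respect to $\Lambda E$, restricted) is at most $r_j \max_{a\mcJ b}R(a,b)$, while restricting to $V_1$ only decreases resistances. Summing along the chain via the triangle inequality for $R^{1/p}$ (Proposition \ref{prop24}(c)) gives $\max_{x\mcT\mcJ y}\widehat R(x,y)\lesssim \max_{a\mcJ b}R(a,b)$, with the implied constant depending only on $\#V_0$, $p$ and $\bm r$ (note $r_j$ ranges over a fixed finite set). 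Conversely, if $x\mcT\mkern-10.5mu\backslash\mcJ\ y$ one needs a lower bound on $\widehat R(x,y)$ in terms of $\min_{a\mcJ\mkern-10.5mu\backslash b}R(a,b)$; this comes from Lemma \ref{lemma25}(c) applied on $V_1$ together with the fact that no ``short edge'' of $\Lambda E$ connects the $\mcT\mcJ$-class of $x$ to that of $y$ — combined with $\widehat R(x,y)\ge$ (resistance in $V_1$) and the quantitative form of Corollary \ref{coro74}/Proposition \ref{prop26}. Dividing the two estimates yields $\delta_{\mcT\mcJ}(\mcT E)\le C\,\delta_\mcJ(E)$.

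For part (b), assume $\mcT\mcJ\neq\mcJ$. Since $\mcT\mcJ\supset\mcJ$ always (the relation $\mcJ^{(1)}$ is defined to force $F_ix\mcJ^{(1)}F_iy$ from $x\mcJ y$, so collapsing can only grow), $\mcT\mcJ\neq\mcJ$ means there exist $x,y\in V_0$ with $x\mcJ\mkern-10.5mu\backslash y$ but $x\,\mcT\mcJ\,y$. By part (a)'s upper-bound half we get $\widehat R(x,y)\lesssim \max_{a\mcJ b}R(a,b)$, whereas $R(x,y)\ge \min_{a\mcJ\mkern-10.5mu\backslash b}R(a,b)$; hence
\[
\frac{\widehat R(x,y)}{R(x,y)}\lesssim \frac{\max_{a\mcJ b}R(a,b)}{\min_{a\mcJ\mkern-10.5mu\backslash b}R(a,b)}=\delta_\mcJ(E).
\]
On the other hand, by Lemma \ref{lemma84} applied to the pair $(E,\mcT E)$ (in both directions), $\inf(\mcT E|E)\le \widehat R(z,w)/R(z,w)$ for any pair realizing the minimum of this ratio, and in particular $\inf(\mcT E|E)\lesssim \widehat R(x,y)/R(x,y)\lesssim \delta_\mcJ(E)$. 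To finish we need $\sup(\mcT E|E)\gtrsim 1$ uniformly in $E\in\mathcal M$; this follows because $\mcT E$ and $E$ are both in $\mathcal M$ on the fixed finite set $V_0$ and Proposition \ref{prop26} (or directly Lemma \ref{lemma34}(c) with $E_1=E_2=E$, giving $\inf(\mcT E|E)\le \sup(\mcT E|E)$, together with a one-sided universal bound $\sup(\mcT E|E)\ge$ const coming from testing on the function attaining $\max R$) bounds the ratio away from $0$. Therefore
\[
\theta(E)=\frac{\sup(\mcT E|E)}{\inf(\mcT E|E)}\gtrsim \frac{1}{\delta_\mcJ(E)},
\]
which is the claim with $C'$ depending only on $\#V_0$, $p$ and $\bm r$.

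The main obstacle I anticipate is the lower bound half of part (a): proving that $\widehat R(x,y)$ stays bounded below by (a constant times) $\min_{a\mcJ\mkern-10.5mu\backslash b}R(a,b)$ when $x\mcT\mkern-10.5mu\backslash\mcJ\ y$. One must argue that a cut in $V_1$ separating the relevant classes cannot be cheap — this requires knowing that all ``expensive'' edges of $\Lambda E$ (those with large resistance) genuinely obstruct connection between the $\mcT\mcJ$-class of $x$ and that of $y$, which is exactly the content of how $\mcT\mcJ$ is built from $\mcJ$. Making this precise will likely use Lemma \ref{lemma73}/Corollary \ref{coro74} to convert ``no short chain'' into a quantitative resistance gap, plus Lemma \ref{lemma25}(c) to pass between $\Ep(1_B)$ and sums of reciprocal resistances. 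The rest is bookkeeping with the triangle inequality for $R^{1/p}$ and the finiteness of $\#V_0$.
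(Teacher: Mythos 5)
Your part (b) rests on the assertion that $\mcJ\subset\mcT\mcJ$ always holds, so that $\mcT\mcJ\neq\mcJ$ forces the existence of $x,y$ with $x\,\mcT\mcJ\,y$ but $x,y$ not $\mcJ$-related. This is false in general: $\mcJ^{(1)}$ is generated only by the pairs $(F_ia,F_ib)$ with $a\mcJ b$, and two points of $V_0$ that are $\mcJ$-related need not be joined by a chain of such pairs inside $V_1$ (for instance on the Vicsek set, with $\mcJ$ identifying two adjacent corners $q_1,q_2$, the $\mcJ^{(1)}$-class of $q_1$ is $\{F_1q_1,F_1q_2\}$ and does not contain $q_2$). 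The paper accordingly splits (b) into two cases, $\mcT\mcJ\not\subset\mcJ$ and $\mcJ\not\subset\mcT\mcJ$; your argument covers only the first. The second case needs exactly the lower-bound half of (a): if $x\mcJ y$ but $x,y$ lie in different $\mcJ^{(1)}$-classes, then $R_{\mcT E}(x,y)\gtrsim\min_{a,b\text{ not }\mcJ\text{-related}}R(a,b)\gtrsim\delta_\mcJ(E)^{-1}R(x,y)$, which via Lemma \ref{lemma84} gives $\inf(\mcT E|E)\lesssim\delta_\mcJ(E)$, and then $\sup(\mcT E|E)\geq\inf(\mcT E'|E')$ for a fixed reference form $E'$ by Lemma \ref{lemma34}(c). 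So the missing case and the missing resistance bound are the same gap.

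That lower bound itself is only flagged in your proposal as an ``anticipated obstacle,'' and the tools you point to (Lemma \ref{lemma73}/Corollary \ref{coro74}) are not the right ones — they produce equivalence relations from gaps in the resistance values, they do not bound a specific cut. The paper's argument is short: for $x,y$ in different $\mcJ^{(1)}$-classes, test $\Lambda E$ against the indicator of the $\mcJ^{(1)}$-class $I^{(1)}_x$, so $R_{\mcT E}(x,y)^{-1}\leq\Lambda E(1_{I^{(1)}_x})$; replace $E$ by the comparable standard form $\bar E(f)=\sum R(a,b)^{-1}|f(a)-f(b)|^p$ from Corollary \ref{coro27}; and observe that $F_i^{-1}(I^{(1)}_x)\cap V_0$ is a union of $\mcJ$-classes, so only pairs that are not $\mcJ$-related contribute, each with weight at most $\big(\min_{a,b\text{ not }\mcJ\text{-related}}R(a,b)\big)^{-1}$. (Equivalently one can use Lemma \ref{lemma25}(c) on each $E(1_{I^{(1)}_x}\circ F_i)$.) Your upper-bound half of (a), by chaining elementary pairs and the triangle inequality for $R^{1/p}$, matches the paper's (and together these give (a)). Finally, in your Case-1 computation the application of Lemma \ref{lemma84} is reversed: with resistances it yields $\sup(\mcT E|E)\geq\max_{x\neq y}R_E(x,y)/R_{\mcT E}(x,y)\gtrsim\delta_\mcJ(E)^{-1}$ (large), not $\inf(\mcT E|E)\lesssim\delta_\mcJ(E)$; the conclusion survives after pairing this with the universal bound $\inf(\mcT E|E)\leq\sup(\mcT E'|E')$ from Lemma \ref{lemma34}(c) with a fixed $E'$ (note that taking $E_1=E_2=E$ there gives nothing), but as written the step is incorrect.
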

\begin{proof}
	Let $E\in \mathcal{M}$ and $R$ be the $p$-resistance associated with $E$. For convenience, we denote $R_1$ the $p$-resistance associated with $\Lambda E$. Also, let $x\neq y\in V_0$.
	
	First, assume $x\mcT\mcJ y$, which is equivalent to say $x\mcJ^{(1)}y$ since $x,y\in V_0$. We can find a path $x=z_0,z_1,z_2,\cdots,z_n=y$ such that $z_i\mcJ^{(1)}z_{i+1}$ for each $0\leq i<n$. Then $n\leq \#V_1$ and $R_1(z_i,z_{i+1})\leq (\max_{1\leq i\leq N} r_i)\cdot (\max_{x'\mcJ y'}R(x',y'))$. Hence, there exists $C_1>0$ independent of $E,x,y$ such that
	\begin{equation}\label{eqn81}
		R_1(x,y)\leq C_1\cdot\max_{x'\mcJ y'}R(x',y').
	\end{equation}
	
	Next, assume $x\mcJ\mkern-10.5mu\backslash^{(1)}\ y$. Then, as in Corollary \ref{coro27}, we define $\bar E$ as
	\[\bar E(f)=\sum_{x,y\in V_0}R(x,y)^{-1}|f(x)-f(y)|^p,\quad \forall f\in l(V_0).\]
	By Corollary \ref{coro27}, we know $E\leq C_2 \bar E$ for some $C_2>0$ independent of $E$. Let $I^{(1)}_x\in V_1/\mcJ^{(1)}$ such that $x\in I^{(1)}_x$, and let $1_{I^{(1)}_x}\in l(V_1)$ be the indicator function of $I^{(1)}_x$. Then,
	\begin{equation}\label{eqn82}
		R_1(x,y)^{-1}\leq \Lambda E(1_{I^{(1)}_x})\leq C_2\Lambda\bar{E}(1_{I^{(1)}_x})\leq C_3\big(\min_{x'\mcJ\mkern-10.5mu\backslash y'}R(x',y')\big)^{-1},
	\end{equation}
	for some $C_3>0$ independent of $E,x,y$.\vspace{0.2cm}
	
Now (a) follows immediately by combining (\ref{eqn82}) with (\ref{eqn81}).
	
\medskip

Then we prove (b). We will consider two different cases. \vspace{0.2cm}
	
	\noindent\textit{Case 1: $\mathcal{T}\mcJ\not\subset\mcJ$}. In this case, we can find $x,y$ such that $x\mathcal{T}\mcJ y$ and $x\mcJ\mkern-10.5mu\backslash y$.
	
	On one hand, by (\ref{eqn81}), $R_1(x,y)\leq C_1\cdot\max_{x'\mcJ y'}R(x',y')\leq C_1\delta_\mcJ(E) R(x,y)$. Hence, by Lemma \ref{lemma84}, we have $\sup(\mcT E|E)\geq C_1^{-1}\delta_\mcJ(E)^{-1}$.
	
	On the other hand, by letting $E'\in\mathcal M$ defined by $E'(f)=\sum_{x\neq y}\big|f(x)-f(y)\big|^p$ for $f\in l(V_0)$ and $C_4=\sup(\mathcal{T}E'|E')>0$, using Lemma \ref{lemma34} (c), we also have $\inf(\mathcal{T}E|E)\leq C_4$.
	
	Thus, $\theta(E)\geq C_1^{-1}C_4^{-1}\cdot\delta_\mcJ(E)^{-1}$, where $C_1,C_4$ are independent of $E$. \vspace{0.2cm}

	\noindent\textit{Case 2: $\mcJ\not\subset\mathcal{T}\mcJ$.} In this case, there exists a pair $x\neq y\in V_0$ such that $x\mcJ y$ and $x\mcJ\mkern-10.5mu\backslash^{(1)}\ y$.
	
	Then, by (\ref{eqn82}), $R_1(x,y)\geq C_3^{-1}\min_{x'\mcJ\mkern-10.5mu\backslash y'}R(x',y')\geq C_3^{-1}\delta_\mcJ(E)^{-1}R(x,y)$. Hence, by Lemma \ref{lemma84}, we know $\inf(\mcT E|E)\leq C_3\delta_\mcJ(E)$.
	
	On the other hand, by letting $E'$ be the same as in Case 1, and $C_5=\inf(\mathcal{T}E'|E')>0$, using Lemma \ref{lemma34} (c), we also have $\sup(\mathcal{T}E|E)\geq C_5$.
	
	Thus, $\theta(E)\geq C_5C_3^{-1}\cdot\delta_\mcJ(E)^{-1}$, where $C_3,C_5$ are independent of $E$.\vspace{0.2cm}
	
	Combining above two cases, we finish the proof of (b) by choosing $C'=\min\{C_1^{-1}C_4^{-1},C_5C_3^{-1}\}$.
\end{proof}

\noindent\textbf{Remark}. Noting that  by Lemma \ref{lemma34} (b), we have $\theta(\mcT^nE)\leq \theta(E),\forall\ n\geq 0$. Together with Proposition \ref{prop85} (b), we see that if $\mcJ$ is non-preserved, then  $\inf_{n\geq 0}\delta_\mcJ(\mcT^nE)\geq C\theta(E)^{-1}$ for some $C>0$.
In particular, by applying Corollary \ref{coro74} for $\mcJ=0,\mcJ'=1$, we can see that Theorem \ref{thm83} (b) holds if there does not exist any non-trivial preserved $\mathscr{G}$-relations.

\subsection{Preserved $\mathscr{G}$-relations}\label{subsec83}
In this subsection, we consider preserved non-trivial $\mathscr{G}$-relations. We will use Proposition \ref{prop79} in this part.

\begin{definition}\label{def87}
	Let $\mcJ$ be a non-trivial $\mathscr{G}$-relation and $E\in \mathcal{M}(V_0,\mcJ,\delta)$ for some $\delta>0$ which admits the decomposition (\ref{eqn77}) that
	\[E(f)=\sum_{I\in V_0/\mcJ}E_I(f|_I)+\sum_{i=1}^mE_i(f), \quad\text{ for any } f\in l(V_0),\]
	where $E_I\in\mathcal M(I)$ and $E_i\in \mathcal{M}$ such that $R_i(x,y)=\infty$ for any $x\mcJ y$, $x\neq y$. We  define
	\[\tilde{\mcT}_\mcJ  E(f)=\mcT\big(\sum_{I\in V_0/\mcJ} E_I\big)(f)+\mcT_{V_0/\mcJ}[[E]]_{V_0/\mcJ}(\sum_{I\in V_0/\mcJ}f(x_I)1_I),\quad\text{ for any } f\in l(V_0).\]
\end{definition}

The following Lemma is an immediate consequence of Proposition \ref{prop79}.
\begin{lemma}\label{lemma88}
	For any $\varepsilon>0$, there is $\delta>0$ so that
	\[(1+\varepsilon)^{-1}\cdot \tilde{\mcT}_{\mcJ}E\leq \mcT E\leq (1+\varepsilon)\cdot\tilde{\mcT}_{\mcJ} E\]
	for any preserved $\mathscr{G}$-relation $\mcJ$ and $E\in \mathcal{M}(V_0,\mcJ,\delta)$.
\end{lemma}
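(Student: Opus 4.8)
The plan is to recognize both $\mcT E$ and $\tilde{\mcT}_{\mcJ}E$ as, respectively, the ordinary trace $[\,\cdot\,]_{V_0}$ and the modified trace $<\cdot>_{V_0}$ of one and the same form, namely $\Lambda E\in\widetilde{\mathcal M}(V_1)$, taken with respect to the equivalence relation $\mcJ^{(1)}$ on $V_1$; then Lemma~\ref{lemma88} becomes an application of Proposition~\ref{prop79} with $A=V_1$, $B=V_0$ and $\mcJ$ replaced by $\mcJ^{(1)}$. Since $\mcJ$ is preserved we have $\mcJ^{(1)}|_{V_0}=\mcT\mcJ=\mcJ$, so $V_0/\mcJ$ embeds into $V_1/\mcJ^{(1)}$ via $I\mapsto I_I$, and $\widehat I\cap V_0$ is either empty or a single $\mcJ$-class for every $\widehat I\in V_1/\mcJ^{(1)}$; this is what makes the bookkeeping in Definition~\ref{def78}(b) align with Definition~\ref{def87}.

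First I would check the membership $\Lambda E\in\mathcal M(V_1,\mcJ^{(1)},\delta')$ for a suitable $\delta'=\delta'(\delta)$. The bound $\delta_{\mcJ^{(1)}}(\Lambda E)\le C\,\delta_\mcJ(E)$ follows from the estimates \eqref{eqn81}--\eqref{eqn82} in the proof of Proposition~\ref{prop85} (these bound the resistance of $\Lambda E$ from above on $\mcJ^{(1)}$-pairs and from below on non-$\mcJ^{(1)}$-pairs, and the same proof applies to all pairs in $V_1$, not only those in $V_0$); thus one may take $\delta'=C\delta$. For the decomposition, if $E(f)=\sum_{I\in V_0/\mcJ}E_I(f|_I)+\sum_{i=1}^m E_i(f)$ is the decomposition witnessing $E\in\mathcal M(V_0,\mcJ,\delta)$, then
\[
\Lambda E(f)=\sum_{\widehat I\in V_1/\mcJ^{(1)}}\widehat E_{\widehat I}(f|_{\widehat I})+\sum_{j=1}^N\sum_{i=1}^m r_j^{-1}E_i(f\circ F_j),\qquad f\in l(V_1),
\]
where $\widehat E_{\widehat I}(g):=\sum_{j,I\,:\,F_j(I)\subseteq\widehat I}r_j^{-1}E_I\big((g\circ F_j)|_I\big)$ is easily checked to lie in $\mathcal M(\widehat I)$ (non-degeneracy on $\widehat I$ uses that every $\mcJ^{(1)}$-chain inside $\widehat I$ proceeds through pieces $F_l(I)\subseteq\widehat I$ on which the $E_I$ detect non-constancy). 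The remaining condition in Definition~\ref{def76}(ii), that each $r_j^{-1}E_i(\cdot\circ F_j)$ has infinite $\mcJ^{(1)}$-resistance, is the delicate point and is addressed last.

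Next I would carry out the identification $<\Lambda E>_{V_0}=\tilde{\mcT}_{\mcJ}E$, where the modified trace is computed with respect to the transported decomposition above. A short computation from Definition~\ref{def78}(a) shows $[[\Lambda E]]_{V_1/\mcJ^{(1)}}=\Lambda_{V_0/\mcJ}\big([[E]]_{V_0/\mcJ}\big)$: writing $\tilde v\in l(V_1)$ constant on $\mcJ^{(1)}$-classes, the function $\tilde v\circ F_j$ is constant on $\mcJ$-classes and $E(\tilde v\circ F_j)=[[E]]_{V_0/\mcJ}(v\circ F_j)$. Hence $\big[[[\Lambda E]]_{V_1/\mcJ^{(1)}}\big]_{V_0/\mcJ}=\mcT_{V_0/\mcJ}\big([[E]]_{V_0/\mcJ}\big)$, which is the quotient summand of $\tilde{\mcT}_{\mcJ}E$; and since the $\mcJ^{(1)}$-classes partition $V_1$ and $\widehat E_{\widehat I}$ involves only $f|_{\widehat I}$, the infimum defining $\big[\sum_{\widehat I}\widehat E_{\widehat I}(\cdot|_{\widehat I})\big]_{V_0}$ decouples over classes and equals $\sum_{I\in V_0/\mcJ}[\widehat E_{I_I}]_I(\cdot|_I)=\mcT\big(\sum_I E_I\big)$, the class-wise summand of $\tilde{\mcT}_{\mcJ}E$. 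Choosing the representatives $x_{I_{I'}}\in I'$ of Definition~\ref{def78} to coincide with the representatives $x_{I'}$ of Definition~\ref{def87} makes $u_f=\sum_I f(x_I)1_I$, so the two modified traces agree. Since also $[\Lambda E]_{V_0}=\mcT E$ by definition, Proposition~\ref{prop79} yields $(1+\varepsilon(\delta'))^{-1}\tilde{\mcT}_{\mcJ}E\le\mcT E\le(1+\varepsilon(\delta'))\tilde{\mcT}_{\mcJ}E$ whenever $\delta'$ is below the threshold of Proposition~\ref{prop79}; as $\varepsilon(\delta')\to0$ when $\delta'\to0$, given $\varepsilon>0$ one fixes $\delta$ so small that $\delta'=C\delta$ meets the threshold and $\varepsilon(\delta')<\varepsilon$. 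This $\delta$ works uniformly over all preserved $\mathscr G$-relations $\mcJ$, since $C$ and the threshold depend only on $\#V_0$ and $p$.

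The main obstacle is the point deferred above: showing each summand $r_j^{-1}E_i(\cdot\circ F_j)$ of the transported decomposition has infinite $p$-resistance between any two $\mcJ^{(1)}$-equivalent points. Its resistance between $x,y\in F_j(V_0)$ equals $r_j\,R_i(F_j^{-1}x,F_j^{-1}y)$ (and is $\infty$ if $x$ or $y$ lies outside $F_j(V_0)$), so the claim reduces to $\mcJ^{(1)}|_{F_j(V_0)}=F_j(\mcJ)$, i.e.\ that $x\mcJ^{(1)}y$ with $x,y\in F_j(V_0)$ forces $F_j^{-1}x\mcJ F_j^{-1}y$. This is exactly where the hypothesis that $\mcJ$ is a preserved $\mathscr G$-relation must be invoked: one has to rule out that a chain realizing $x\mcJ^{(1)}y$ leaves the cell $F_j(V_0)$ through a shared point of two cells and re-enters elsewhere, producing an identification absent from $F_j(\mcJ)$. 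Should this structural fact not be directly available, the fallback is to absorb the offending within-class edges of a standard-form majorant of $\sum_{j,i}r_j^{-1}E_i(\cdot\circ F_j)$ (obtained from Corollary~\ref{coro27}) into the $\widehat E_{\widehat I}$'s and then re-verify the identification $<\Lambda E>_{V_0}=\tilde{\mcT}_{\mcJ}E$ for the adjusted decomposition; carrying out one of these two routes cleanly is the only genuinely technical step of the proof.
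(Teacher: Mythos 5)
Your route is the one the paper intends: the paper's entire proof of Lemma \ref{lemma88} is the sentence that it is an immediate consequence of Proposition \ref{prop79}, and your reading — apply Proposition \ref{prop79} with $A=V_1$, $B=V_0$, relation $\mcJ^{(1)}$ and $\Ep=\Lambda E$, so that $[\Lambda E]_{V_0}=\mcT E$ while the modified trace of the transported decomposition is $\tilde{\mcT}_{\mcJ}E$ — is exactly how it is meant to follow. The parts you carry out are correct: preservedness gives $\mcJ^{(1)}|_{V_0}=\mcJ$, each $\mcJ^{(1)}$-class meets $V_0$ in at most one $\mcJ$-class, the class-wise trace decouples to $\mcT\big(\sum_I E_I\big)$, the quotient term is $\mcT_{V_0/\mcJ}[[E]]_{V_0/\mcJ}$ once the representatives are matched, and $\delta_{\mcJ^{(1)}}(\Lambda E)\leq C\,\delta_{\mcJ}(E)$ follows from the estimates \eqref{eqn81}--\eqref{eqn82} applied to all pairs of $V_1$, with constants depending only on the fixed data, so the choice of $\delta$ is uniform in $\mcJ$.

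The step you defer is, however, precisely the point at which the lemma is more than bookkeeping, and as written your argument is incomplete there: to quote Proposition \ref{prop79} you must verify Definition \ref{def76}(ii) for $\Lambda E$ with respect to $\mcJ^{(1)}$, and the transported cross terms $r_j^{-1}E_i(\cdot\circ F_j)$ satisfy the infinite-resistance requirement exactly when $F_ju\,\mcJ^{(1)}\,F_jv$ forces $u\,\mcJ\,v$; preservedness only controls $\mcJ^{(1)}$ on $V_0$, not on $F_j(V_0)$, so this is not automatic, and you give no argument for it (the paper is silent on the same point). Rather than proving this structural fact or absorbing offending edges and redoing the trace identification, the cleanest repair is to note that the \emph{proof} of Proposition \ref{prop79} needs less than its hypothesis: the cross terms enter only through \eqref{equation7.12}, where each $\Ep_i$ is replaced via Corollary \ref{coro27} by a standard form carried by pairs of finite $R_i$-resistance. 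For the transported term $r_j^{-1}E_i(\cdot\circ F_j)$ this standard form is carried by pairs $\{F_ju,F_jv\}$ with $u$ not $\mcJ$-equivalent to $v$, hence with coefficients bounded by a constant times $\big(\min_{x\,\text{not}\,\mcJ\,y}R(x,y)\big)^{-1}$, and the argument of Proposition \ref{prop79} then runs verbatim with $\max_{x\mcJ^{(1)}y}R_{\Lambda E}$ controlled by $\max_{x\mcJ y}R$, whether or not such a pair happens to be $\mcJ^{(1)}$-equivalent; the identification of the modified trace with $\tilde{\mcT}_{\mcJ}E$ is untouched. Until this (or your proposed structural fact, or the absorption estimate) is actually written down, your proposal does not yet prove the lemma — though, to be fair, it supplies more detail than the paper and stumbles only where the paper's one-line proof elides the same issue.
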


\noindent\textbf{Remark}. Let $E\in \mathcal{M}(V_0,\mcJ,\delta)$ for some $\delta>0$. Then $E$ admits the standard decomposition $E(f)=\sum_{I\in V_0/\mcJ}E_I(f|_I)+\sum_{i=1}^mE_i(f),\forall f\in l(V_0)$. For convenience, we denote $E_\mcJ(f)=\sum_{I\in V_0/\mcJ} E_I(f|_I),\forall f\in l(V_0)$. Then by Definition \ref{def87}, we immediately have the following two claims.

1) If $f\in l(V_0)$ satisfies $f(x_I)=0,\forall I\in V_0/\mcJ$, then $\tilde{\mcT}^m_\mcJ E(f)=\mcT^mE_\mcJ(f),\forall m\geq 1$.

2) If  $f\in l(V_0)$ satisfies $f(x)=\sum_{I\in V_0/\mcJ}f(x_I)\cdot 1_I$, then $\tilde{\mcT}^m_\mcJ E(f)=\mcT^m_{V_0/\mcJ}E(f), \forall m\geq 1$.\vspace{0.2cm}

The main result of this subsection is the following Proposition \ref{prop89}. For the special $p=2$ case, the proposition can be easily proved by using Lemma \ref{lemma88}. For general $p$, additional difficulty comes from the fact that the trace of $\mcT E$ may not be a good form even if $E\in \mathcal{Q}$.

\begin{proposition}\label{prop89}
	For any $\varepsilon>0$, there is $\delta>0$ and a constant $C\geq 1$ so that for any preserved non-trivial $\mathscr{G}$-relation $\mcJ$ and $E\in \mathcal{M}$, if $n\geq 1$ and $\delta_\mcJ(\mcT^mE)<\delta,\ \forall\ 0\leq m\leq n$, then the following (a) and (b) hold.

	(a). There exists $f\in l(V_0)$ such that $\Osc(f)=1$, $f(x_I)=0,\forall\ I\in V_0/\mcJ$ and \[\mcT^nE(f)\leq C^2(1+\varepsilon)^n\overline{\rho}_\mcJ^n E(f).\]
	
	(b). There exists $f\in l(V_0)$ such that $\Osc(f)=1$, $f(x)=\sum_{I\in V_0/\mcJ}f(x_I)\cdot 1_I$ and
	\[\mcT^nE(f)\geq C^{-2}(1+\varepsilon)^{-n}\underline{\rho}_{V_0/\mcJ}^n E(f).\]
	
\end{proposition}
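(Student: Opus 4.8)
The plan is to iterate Lemma \ref{lemma88} along the orbit $E,\mcT E,\dots,\mcT^{n}E$, using at each step the modified renormalization $\tilde{\mcT}_{\mcJ}$ together with the two ``splitting'' claims recorded in the Remark after Lemma \ref{lemma88}. First, for a given $\varepsilon>0$ choose $\delta>0$ small enough that Lemma \ref{lemma88} applies with the multiplicative error $1+\varepsilon$ (and also small enough for Lemma \ref{lemma77} to produce, at the start, a comparable $\bar E^{(p)}\in\mathcal{M}(V_0,\mcJ,C^2\delta_\mcJ(E))$ with $C^{-1}E\le\bar E^{(p)}\le CE$; this is where the uniform constant $C\ge 1$ comes from). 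The hypothesis $\delta_\mcJ(\mcT^mE)<\delta$ for all $0\le m\le n$ guarantees, via Proposition \ref{prop85}(a) (which gives $\delta_{\mcJ}(\mcT^{m}E)=\delta_{\mcT\mcJ}(\mcT^m E)$ since $\mcJ$ is preserved), that each iterate $\mcT^mE$ lies in a class $\mathcal{M}(V_0,\mcJ,\delta')$ to which Lemma \ref{lemma88} is applicable — so one can legitimately replace $\mcT$ by $\tilde{\mcT}_\mcJ$ at every level, paying a factor $1+\varepsilon$ each time, for a total factor $(1+\varepsilon)^{n}$.

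For part (a): pick the optimal $f\in l(V_0)$ realizing $\overline{\rho}_\mcJ$ for the ``diagonal block'' form $E_\mcJ=\sum_{I}E_I$, normalized so $\Osc(f)=1$ and $f(x_I)=0$ for all $I\in V_0/\mcJ$ (possible because $\overline\rho_\mcJ$ is an infimum over $\widetilde{\mathcal M}(V_0,\mcJ,0)$, whose elements vanish on functions that are constant on each class; more precisely we use $\mcT E_\mcJ\le\overline\rho_\mcJ E_\mcJ$ after possibly replacing $E_\mcJ$ by a near-optimal choice and using Proposition \ref{prop26} to absorb the gap into $C$). Claim 1) of the Remark gives $\tilde{\mcT}^m_\mcJ E(f)=\mcT^m E_\mcJ(f)$ for all $m$, so iterating Lemma \ref{lemma88} yields
\[
\mcT^n E(f)\le (1+\varepsilon)^n\tilde{\mcT}^n_\mcJ E(f)=(1+\varepsilon)^n\mcT^n E_\mcJ(f)\le (1+\varepsilon)^n\overline\rho_\mcJ^{\,n}E_\mcJ(f),
\]
and since $E_\mcJ(f)\le E(f)$ (as $E\ge E_\mcJ$) — or, if one works with $\bar E^{(p)}$, $E_\mcJ(f)\le C^2 E(f)$ — this is the desired bound $\mcT^nE(f)\le C^2(1+\varepsilon)^n\overline\rho_\mcJ^{\,n}E(f)$. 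Part (b) is dual: take $f$ of the form $\sum_I f(x_I)1_I$ (constant on each class) realizing $\underline\rho_{V_0/\mcJ}$ for $\mcT_{V_0/\mcJ}$ on $l(V_0/\mcJ)$, normalized with $\Osc(f)=1$; Claim 2) gives $\tilde{\mcT}^m_\mcJ E(f)=\mcT^m_{V_0/\mcJ}[[E]]_{V_0/\mcJ}(f)$, and iterating Lemma \ref{lemma88} the other way,
\[
\mcT^n E(f)\ge (1+\varepsilon)^{-n}\tilde{\mcT}^n_\mcJ E(f)=(1+\varepsilon)^{-n}\mcT^n_{V_0/\mcJ}[[E]]_{V_0/\mcJ}(f)\ge (1+\varepsilon)^{-n}\underline\rho_{V_0/\mcJ}^{\,n}[[E]]_{V_0/\mcJ}(f),
\]
and comparing $[[E]]_{V_0/\mcJ}(f)$ with $E(f)$ (again up to the universal constant $C^{\pm 2}$, using that on functions constant on classes $E(f)=[[E]]_{V_0/\mcJ}(f)$ exactly, or $\asymp$ after passing to $\bar E^{(p)}$) gives the claim.

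The main obstacle, and the reason the proposition needs Proposition \ref{prop79} rather than being ``obvious'' as for $p=2$, is the bookkeeping that keeps every iterate inside the good class $\mathcal{M}(V_0,\mcJ,\delta)$: the trace $[\mcT E]_{V_0}$ of a form $E\in\mathcal{Q}$ need not itself be a nice (quasi-standard) form, so one cannot blithely say $\mcT^{m}E$ has a clean decomposition \eqref{eqn77}. The fix is to work with the comparable surrogate $\bar E^{(p)}$ from Lemma \ref{lemma77} at every stage and to invoke Proposition \ref{prop85}(a) to control how $\delta_\mcJ$ propagates, so that the hypothesis $\delta_\mcJ(\mcT^m E)<\delta$ for $0\le m\le n$ really does let Lemma \ref{lemma88} fire $n$ times in a row; tracking that all the resulting comparison constants collapse into a single $C$ (independent of $n$, $E$, and $\mcJ$) is the delicate part of the argument.
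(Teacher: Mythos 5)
Your skeleton (surrogate from Lemma \ref{lemma77}, iteration of Lemma \ref{lemma88} through $\tilde{\mcT}_\mcJ$, the two splitting claims from the Remark, comparison at the ends) matches the paper's strategy, but the two places you flag as ``delicate'' or handle loosely are exactly where the real content lies, and as written the argument does not close. First, the uniform-in-$n$ bookkeeping: you cannot ``work with the comparable surrogate $\bar E$ from Lemma \ref{lemma77} at every stage,'' since each invocation costs a factor $C^2$ and $n$ invocations give $C^{2n}$, which cannot be absorbed into $C^2(1+\varepsilon)^n$; nor can you avoid re-applying Lemma \ref{lemma77} altogether, because Lemma \ref{lemma88} must be applied to $\tilde{\mcT}_\mcJ^m\bar E$ (the object carrying a decomposition), whose $\delta_\mcJ$ is only controlled through comparison with $\mcT^m\bar E$, and that comparison drifts like $(1+\varepsilon)^{2m}$ — so a single $\delta$ independent of $n$ does not keep all iterates inside the admissible class. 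The paper resolves this with a block argument you do not supply: fix $n_0$ with $C^2(1+\varepsilon/2)^{n_0}\le(1+\varepsilon)^{n_0}$, take $\delta=C^{-4}(1+\varepsilon/2)^{-2n_0}\delta_0$, prove by induction that $(1+\varepsilon/2)^{-m}\tilde{\mcT}^m_\mcJ\bar E\le\mcT^m\bar E\le(1+\varepsilon/2)^m\tilde{\mcT}^m_\mcJ\bar E$ for $m\le n_0$, and then reset via Lemma \ref{lemma77} once per block of length $n_0$, splitting the resulting comparison into the diagonal and quotient parts so that $\tilde{\mcT}^{n_0}_\mcJ\bar E_{l-1}\le C^{-1}(1+\varepsilon)^{n_0(l-1)}\tilde{\mcT}^{n_0 l}_\mcJ\bar E$; the $C^2$ reset cost is absorbed by upgrading $\varepsilon/2$ to $\varepsilon$ across each block.

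Second, the endgame is wrong as sketched. You propose to ``pick the optimal $f$ realizing $\overline{\rho}_\mcJ$'' for the diagonal form $E_\mcJ$, replacing $E_\mcJ$ by a near-optimal competitor if necessary and absorbing the gap into $C$ via Proposition \ref{prop26}. But $\overline{\rho}_\mcJ$ is an infimum over forms in $\widetilde{\mathcal M}(V_0,\mcJ,0)$, not attained by your particular $E_\mcJ$; a one-step bound $\sup(\mcT E_\mcJ|E_\mcJ)\le\rho$ with $\rho>\overline{\rho}_\mcJ$ exponentiates to $(\rho/\overline{\rho}_\mcJ)^n$, and the comparison constant between $E_\mcJ$ and a near-optimal competitor depends on $E$ — neither can be absorbed into a constant $C$ independent of $n$ and $E$. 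What is actually needed, and what the paper proves, is the exact inequality $\inf(\mcT^n E_\mcJ\,|\,E_\mcJ)\le\overline{\rho}_\mcJ^{\,n}$ (and dually $\sup(\mcT^n_{V_0/\mcJ}[[\bar E]]_{V_0/\mcJ}\,|\,[[\bar E]]_{V_0/\mcJ})\ge\underline{\rho}_{V_0/\mcJ}^{\,n}$ for (b)), obtained by contradiction: if it failed for some $\rho>\overline{\rho}_\mcJ$, comparing with a competitor $E'$ satisfying $\sup(\mcT E'|E')<\rho$ and using the monotonicity $\inf(\mcT E_1|\mcT E_2)\ge\inf(E_1|E_2)$ of Lemma \ref{lemma34} (a) yields $\inf(\mcT^nE'|\mcT^nE_\mcJ)<\inf(E'|E_\mcJ)$, a contradiction. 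Only with this step does one get a test function $f$ with $\mcT^nE_\mcJ(f)\le\overline{\rho}_\mcJ^{\,n}E_\mcJ(f)$ (and with the required normalization $f(x_I)=0$), after which your concluding chain of inequalities is correct.
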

\begin{proof}
	Let $n\in \mathbb{N}$ and $\varepsilon>0$ be the same as in the statement of the proposition. $E$ satisfies the condition of Proposition \ref{prop89}, with the constant $\delta$ to be explained soon.
	
	We apply Lemma \ref{lemma77} to introduce $\bar{E}\in \mathcal{M}\big(V_0,\mcJ,C^2\delta_\mcJ(E)\big)$ satisfying $C^{-1}E\leq \bar{E}\leq C E$  for some constant $C\geq 1$ independent of $\mcJ,E$. 
For convenience, we will first focus on $\bar{E}$ in the proof, noticing that by $C^{-1}E\leq \bar{E}\leq C E$, we have $C^{-1}\mcT^mE\leq \mcT^m\bar{E}\leq C \mcT^mE$, and consequently,
\begin{equation*}\delta_\mcJ(\mcT^m\bar{E})\leq C^2\delta_\mcJ(\mcT^mE),\qquad\forall m\geq 1.
\end{equation*}

	\medskip

	Next, let us introduce the choice of $\delta$.\vspace{0.2cm}
	
	\noindent\textbf{Choice of $\delta$}: we choose a large enough $n_0\in \mathbb{N}$ so that
	\[C^2\cdot (1+\varepsilon/2)^{n_0}\leq (1+\varepsilon)^{n_0}.\]
	By Lemma \ref{lemma88}, we can choose $\delta_0>0$ such that whenever $E\in \mathcal{M}(V_0,\mcJ,\delta_0)$, it holds
\begin{equation}\label{equation8.3}
(1+\varepsilon/2)^{-1}\tilde{\mcT}_\mcJ E\leq \mcT E\leq (1+\varepsilon/2)\tilde\mcT_{\mcJ}E.
 \end{equation}
 We choose $\delta=C^{-4}\cdot(1+\varepsilon/2)^{-2n_0}\cdot\delta_0$. \vspace{0.2cm}
	
	The key to prove (a), (b) is the following claims together with the remark below Lemma \ref{lemma88}.\vspace{0.2cm}
	
	\noindent\textit{Claim 1. If $\delta_{\mcJ}(\mcT^m\bar{E})<(1+\varepsilon/2)^{-2m}\cdot\delta_0$ for any $0\leq m\leq (n_0\wedge n)-1$, then for any $0\leq m\leq (n_0\wedge n)$, we have
	\begin{equation}\label{equation8.4}
(1+\varepsilon/2)^{-m}\tilde\mcT^m_{\mcJ}\bar{E}\leq \mcT^m\bar{E}\leq (1+\varepsilon/2)^m\tilde\mcT^m_{\mcJ}\bar{E}.
\end{equation}}
	
	\begin{proof}[Proof of Claim 1] We will use induction. The case $m=0$ is trivial. If \eqref{equation8.4} holds for some $0\leq m\leq (n_0\wedge n)-1$, i.e.
		\begin{equation}\label{equation8.5}
(1+\varepsilon/2)^{-m}\tilde\mcT^{m}_{\mcJ}\bar{E}\leq \mcT^{m}\bar{E}\leq (1+\varepsilon/2)^{m}\tilde\mcT^{m}_{\mcJ}\bar{E},
\end{equation}
		then consequently,
\begin{equation}\label{equation8.6}
\delta_\mcJ(\tilde\mcT^{m}_{\mcJ}\bar{E})\leq (1+\varepsilon/2)^{2m}\delta_\mcJ(\mcT^{m}\bar{E})<\delta_0.
\end{equation}

Hence by \eqref{equation8.6} and applying \eqref{equation8.3} to $\tilde\mcT^{m}_{\mcJ}\bar{E}$, we have
\begin{equation}\label{equation8.7}
(1+\varepsilon/2)^{-1}\tilde\mcT^{m+1}_{\mcJ}\bar{E}\leq \mcT\tilde\mcT^{m}_{\mcJ}\bar{E}\leq (1+\varepsilon/2)\tilde\mcT^{m+1}_{\mcJ}\bar{E}.
\end{equation}
Now by acting $\mcT$ to \eqref{equation8.5} and comparing with \eqref{equation8.7}, we obtain
\begin{equation*}
(1+\varepsilon/2)^{-m-1}\tilde\mcT^{m+1}_{\mcJ}\bar{E}\leq \mcT^{m+1}\bar{E}\leq (1+\varepsilon/2)^{m+1}\tilde\mcT^{m+1}_{\mcJ}\bar{E}.
\end{equation*}
The induction is completed and hence Claim 1 holds.
	\end{proof}
	
	\noindent\textit{Claim 2. $(1+\varepsilon)^{-n}\tilde\mcT^n_{\mcJ}\bar{E}\leq \mcT^n\bar{E}\leq (1+\varepsilon)^n\tilde\mcT^n_{\mcJ}\bar{E}$.}
	
	\begin{proof}[Proof of Claim 2]
		If $n\leq n_0$, then Claim 2 follows by Claim 1. Now assume $n>n_0$, write $n=kn_0+s$ for some $k\geq 1$ and $0\leq s<n_0$.
		
		For the ``$\leq$'' side, we still use induction. Assume for $2\leq l\leq k$, we have (noticing that this is  true by Claim 1 when $l=2$)
		\[
		\mcT^{n_0(l-1)}\bar{E}\leq C^{-2}(1+\varepsilon)^{n_0(l-1)}\tilde\mcT^{n_0(l-1)}_{\mcJ}\bar{E}.
		\]
		Applying Lemma \ref{lemma77}, we can find $\bar E_{l-1}\in \mathcal M_p\big(V_0,\mcJ, C^2\delta_\mcJ(\mcT^{n_0(l-1)}\bar E)\big)$ independent of $\mcJ$, so that
		\begin{equation*}
			C^{-1}\mcT^{n_0(l-1)}\bar{E}\leq \bar{E}_{l-1}\leq C\mcT^{n_0(l-1)}\bar{E}.
		\end{equation*}
		Hence
		\begin{equation}\label{eqn84}
			\bar{E}_{l-1}\leq C\mcT^{n_0(l-1)}\bar{E}\leq C^{-1}(1+\varepsilon)^{n_0(l-1)}\tilde\mcT^{n_0(l-1)}_{\mcJ}\bar{E}.
		\end{equation}
		Just as in the remark below Lemma \ref{lemma88}, one can decompose $\bar{E}_{l-1}$ into two parts $\bar{E}_{l-1,\mcJ}$ and $[[\bar{E}_{l-1}]]_{V_0/\mcJ}$. One will have the following two estimates
		\begin{eqnarray}
			\label{eqn85}&\bar{E}_{l-1,\mcJ}\leq C^{-1}(1+\varepsilon)^{n_0(l-1)}\mcT^{n_0(l-1)}\bar{E}_{\mcJ},\\
			\label{eqn86}&{[}{[}\bar{E}_{l-1}{]}{]}_{V_0/\mcJ}\leq C^{-1}(1+\varepsilon)^{n_0(l-1)}\mcT^{n_0(l-1)}_{V_0/\mcJ}[[\bar{E}]]_{V_0/\mcJ}.
		\end{eqnarray}
		In fact, (\ref{eqn86}) is immediate from the remark and (\ref{eqn84}); to check (\ref{eqn85}), one only need to test $f\in l(V_0)$ such that $f(x_I)=0,\forall I\in V_0/\mcJ$, then $\bar{E}_{l-1}(f)\leq C^{-1}(1+\varepsilon)^{n_0(l-1)}\tilde\mcT^{n_0(l-1)}_{\mcJ}\bar{E}(f)=C^{-1}(1+\varepsilon)^{n_0(l-1)}\mcT^{n_0(l-1)}\bar{E}_{\mcJ}(f)$.
		
		As a consequence of (\ref{eqn85}) and (\ref{eqn86}), one can see
		\[
		\tilde{\mcT}^{n_0}_{\mcJ}\bar{E}_{l-1}\leq C^{-1}(1+\varepsilon)^{n_0(l-1)}\tilde{\mcT}^{n_0l}_{\mcJ}\bar{E}.
		\]
		Noticing that one can apply Claim 1 to $\bar{E}_{l-1}$ as $\delta_{\mcJ}(\mcT^m\bar{E}_{l-1})\leq C^2\delta_{\mcJ}(\mcT^{m+n_0(l-1)}\bar E)\leq C^4\delta_{\mcJ}(\mcT^{m+n_0(l-1)} E)<(1+\varepsilon/2)^{-2n_0}\cdot \delta_0$, $\forall 0\leq m<n_0$, we have
		\[\begin{aligned}
			\mcT^{n_0l}\bar{E}\leq C\mcT^{n_0}\bar{E}_{l-1}&\leq C(1+\varepsilon/2)^{n_0}\tilde{\mcT}^{n_0}_{\mcJ}\bar{E}_{l-1}\\
			&\leq (1+\varepsilon/2)^{n_0}(1+\varepsilon)^{n_0(l-1)}\tilde\mcT^{n_0l}_{\mcJ}\bar{E}\leq C^{-2}(1+\varepsilon)^{n_0l}\tilde\mcT^{n_0l}_{\mcJ}\bar{E}.
		\end{aligned}\]
		Thus, by the above induction, we get $\mcT^{n_0k}\bar{E}\leq C^{-2}(1+\varepsilon)^{n_0k}\tilde\mcT^{n_0k}_{\mcJ}\bar{E}$.
		
		Finally,  still by Lemma \ref{lemma77}, we choose $\bar E_k\in \mathcal M_p\big(V_0,\mcJ, C^2\delta_\mcJ(\mcT^{n_0k}\bar E)\big)$ independent of $\mcJ$, so that
		\begin{equation*}
			C^{-1}\mcT^{n_0k}\bar{E}\leq \bar{E}_{k}\leq C\mcT^{n_0k}\bar{E}.
		\end{equation*}
		 By the same argument as above, one can see
		\[\mcT^n\bar{E}\leq C\mcT^s\bar{E}_k\leq C(1+\varepsilon/2)^s\tilde{\mcT}_\mcJ^s\bar{E}_k\leq (1+\varepsilon/2)^s(1+\varepsilon)^{n_0k}\tilde\mcT^{n}_{\mcJ}\bar{E}\leq (1+\varepsilon)^{n}\tilde\mcT^{n}_{\mcJ}\bar{E},\]
		so ``$\leq$'' is proved.

The  ``$\geq$'' side can be obtained in a same way and hence Claim 2 holds.
	\end{proof}
	
	The rest of the proof is routine by using Claim 2 and the remark below Lemma \ref{lemma88}.
	
(a). We first show that $\inf(\mcT^n\bar{E}_\mcJ|\bar{E}_\mcJ)\leq \overline{\rho}^n_\mcJ$.

 Indeed, if it is not true, there is $\rho>\overline{\rho}_\mcJ$ such that
  		\begin{equation}\label{equation8.11}
		\inf(\mcT^n\bar{E}_\mcJ|\bar{E}_\mcJ)>\rho^n,
		\end{equation}
and by $\rho>\overline{\rho}_\mcJ$, there exists $E'\in\widetilde{\mathcal M}(V_0,\mcJ,0)$ such that
\begin{equation}\label{equation8.12}
	\sup(\mcT E'|E')<\rho.
		\end{equation}
Now by Lemma \ref{lemma34} (a) and using \eqref{equation8.12}, we see that
\begin{equation}\label{equation8.13}
	\sup(\mcT^n E'|E')=\sup_{f}\prod_{i=0}^{n-1}\frac{\mcT^{i+1} E'(f)}{\mcT^{i}E'(f)}\leq (\sup(\mcT E'| E'))^n<\rho^n.
		\end{equation}
Combining \eqref{equation8.11} and \eqref{equation8.13}, we obtain
$
	\inf(\mcT^n E'|\mcT^n \bar E_\mcJ)<\inf( E'| \bar E_\mcJ),
$
a contradiction with Lemma \ref{lemma34} (a). So we have $\inf(\mcT^n\bar{E}_\mcJ|\bar{E}_\mcJ)\leq \overline{\rho}^n_\mcJ$.

Hence there exists $f$ satisfying $\Osc(f)=1$, $f(x_I)=0,\forall I\in V_0/\mcJ$ such that $\mcT^n\bar{E}_\mcJ(f)\leq \overline{\rho}^n_\mcJ\bar{E}_\mcJ(f)$. Then by the second inequality in Claim 2 and the remark after Lemma \ref{lemma88}, we have
	\[\begin{aligned}
		\mcT^nE(f)\leq C\mcT^n\bar{E}(f)&\leq C(1+\varepsilon)^n\tilde\mcT_\mcJ^n\bar{E}(f)\\
		&=C(1+\varepsilon)^n\mcT^n\bar{E}_\mcJ(f)\leq C(1+\varepsilon)^n\overline{\rho}^n_\mcJ\bar{E}_\mcJ(f)\leq C^2(1+\varepsilon)^n\overline{\rho}^n_\mcJ E(f).
	\end{aligned}\]

 (b). Similar to (a), we can show that $\sup(\mcT_{V_0/\mcJ}^n[[\bar{E}]]_{V_0/\mcJ}|[[\bar{E}]]_{V_0/\mcJ})\geq \underline{\rho}^n_{V_0/\mcJ}$. Hence there exists $f\in l(V_0)$  of the form $f(x)=\sum_{I\in V_0/\mcJ}f(x_I)1_I$ with $\Osc(f)=1$, such that $\mcT^n\bar{E}(f)\geq \underline{\rho}^n_{V_0/\mcJ}\bar{E}(f)$. Then by the first inequality in Claim 2 and the remark after Lemma \ref{lemma88}, we have
 \[\begin{aligned}
		\mcT^nE(f)\geq C^{-1}\mcT^n\bar{E}(f)&\geq C^{-1}(1+\varepsilon)^{-n}\tilde\mcT_\mcJ^n\bar{E}(f)
		=C^{-1}(1+\varepsilon)^{-n}\mcT^n\bar{E}(f)\\&\geq C^{-1}(1+\varepsilon)^{-n}\underline{\rho}^n_{V_0/\mcJ}\bar{E}(f)\geq C^{-2}(1+\varepsilon)^{-n}\underline{\rho}^n_{V_0/\mcJ} E(f).
	\end{aligned}\]
\end{proof}

\subsection{Proof of Theorem \ref{thm83} (b)}\label{subsec84}
First, we consider a pair of preserved $\mathscr{G}$-relations $\mcJ,\mcJ'$ such that $\mcJ\subset \mcJ'$ (which can be equal).

\begin{proposition}\label{prop810}There exists $\delta\in(0,1)$,  $C\in (0,1)$ and $\gamma>0$, so that for any pair of  preserved non-trivial $\mathscr{G}$-relations  $\mcJ,\mcJ'$ satisfying $\mcJ\subset \mcJ'$ and $\overline{\rho}_{\mcJ}<\underline{\rho}_{V_0/\mcJ}$, the following holds.
	
	(a). $\overline{\rho}_\mcJ<\underline{\rho}_{V_0/\mcJ'}$.
	
	(b). If $E\in \mathcal{M}$ and $\delta_{\mcJ}(E)\vee \delta_{\mcJ'}(E)<\delta$, then we can find $n\geq 1$ (depending on $E$) such that
	\begin{equation}\label{eqn87}
		\delta_{\mcJ}(\mcT^nE)\vee \delta_{\mcJ'}(\mcT^nE)\geq  \sqrt{\delta_{\mcJ}(E)\vee \delta_{\mcJ'}(E)},
	\end{equation}
	and
	\begin{equation}\label{eqn88}
		\delta_{\mcJ,\mcJ'}(\mcT^nE)\geq C\cdot\big(\delta_{\mcJ}(E)\vee \delta_{\mcJ'}(E)\big)^{-\gamma}\cdot \delta(E).
	\end{equation}
	 In addition,
	\begin{equation}\label{eqn89}
		\delta_{\mcJ,\mcJ'}(\mcT^mE)\geq C\cdot\delta(E),\quad \forall 0\leq m\leq n.
	\end{equation}
\end{proposition}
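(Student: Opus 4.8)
The plan is to analyze the behavior of $\mcT$ on a form $E$ whose resistances are ``clustered'' according to both $\mcJ$ and $\mcJ'$, using Proposition \ref{prop89} as the engine. Part (a) should be the easy consequence: since $\mcJ\subset\mcJ'$, one trivially has $\underline{\rho}_{V_0/\mcJ'}\geq\underline{\rho}_{V_0/\mcJ}$ (coarsening the quotient only enlarges the class of test forms one can push forward, or more precisely a form on $V_0/\mcJ'$ pulls back to a form on $V_0/\mcJ$ preserving the relevant infimum), so $\overline{\rho}_\mcJ<\underline{\rho}_{V_0/\mcJ}\leq\underline{\rho}_{V_0/\mcJ'}$. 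I would state this monotonicity as a short lemma-style observation (it mirrors Lemma \ref{lemma34}(a) applied to the quotient operators $\mcT_{V_0/\mcJ}$, $\mcT_{V_0/\mcJ'}$) and dispatch (a) in one paragraph.

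For (b), first I fix $\varepsilon>0$ small enough that $(1+\varepsilon)^2\overline{\rho}_\mcJ<\underline{\rho}_{V_0/\mcJ}$ (possible by (a) applied with $\mcJ'=\mcJ$, or directly from the hypothesis), and let $\delta>0$, $C\geq 1$, $n_0$ be the constants produced by Proposition \ref{prop89} for this $\varepsilon$; shrink $\delta$ further so that all the smallness requirements of Lemma \ref{lemma73}, Corollary \ref{coro74}, Proposition \ref{prop75}, and Lemma \ref{lemma77} are met. Now suppose $\delta_\mcJ(E)\vee\delta_{\mcJ'}(E)<\delta$ and, \emph{for contradiction toward \eqref{eqn87}}, suppose $\delta_\mcJ(\mcT^mE)\vee\delta_{\mcJ'}(\mcT^mE)<\sqrt{\delta_\mcJ(E)\vee\delta_{\mcJ'}(E)}$ for all $m$ up to some large $n$ to be chosen as a multiple of $n_0$. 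Then Proposition \ref{prop89} applies at every level: (a) of that proposition gives a function $f_1$ with $\Osc(f_1)=1$, vanishing on the $x_I$'s, with $\mcT^nE(f_1)\leq C^2(1+\varepsilon)^n\overline{\rho}_\mcJ^n E(f_1)$, while (b) gives $f_2$ constant on $\mcJ$-classes with $\mcT^nE(f_2)\geq C^{-2}(1+\varepsilon)^{-n}\underline{\rho}_{V_0/\mcJ}^n E(f_2)$. Comparing the two ratios $\mcT^nE(f_i)/E(f_i)$ and using Lemma \ref{lemma25}(c) (or Lemma \ref{lemma77}) to relate $E(f_1)$ and $E(f_2)$ to resistances across $\mcJ$- and $\mcJ'$-boundaries, one extracts a pair $x,y$ witnessing a ratio of $\mcT^nE$-resistances of size at least $\bigl((1+\varepsilon)^{-2}\underline{\rho}_{V_0/\mcJ}/\overline{\rho}_\mcJ\bigr)^n$ times a controlled power of $\delta(E)^{\pm1}$; since the base exceeds $1$, for $n$ large this forces $\delta_\mcJ(\mcT^nE)\vee\delta_{\mcJ'}(\mcT^nE)$ to be \emph{large}, contradicting the standing assumption. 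This yields the first $n$ at which \eqref{eqn87} holds; take the minimal such $n$, so \eqref{eqn87} fails for all smaller levels and the Proposition \ref{prop89} machinery is valid on $[0,n]$.

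Having fixed this $n$, I prove \eqref{eqn88} by the same two-function comparison but now tracking constants carefully: $f_1$ probes resistances \emph{within} $\mcJ$-classes (and within $\mcJ'$-classes that are unions of $\mcJ$-classes) and $f_2$ probes the quotient resistances; the quotient $\delta_{\mcJ,\mcJ'}(\mcT^nE)=\max_{x\mcJ y}R_n(x,y)/\min_{x\mcJ\mkern-10mu\backslash' y}R_n(x,y)$ is bounded below by dividing the estimate $\mcT^nE(f_1)\leq C^2(1+\varepsilon)^n\overline{\rho}_\mcJ^n E(f_1)$ (which, since $f_1$ lives on the ``inner'' coordinates, controls $\max_{x\mcJ y}R_n$ from above after a resistance-to-energy translation via Lemma \ref{lemma25}(c)) by $\mcT^nE(f_2)\geq C^{-2}(1+\varepsilon)^{-n}\underline{\rho}_{V_0/\mcJ}^n E(f_2)$ (controlling $\min_{x\mcJ\mkern-10mu\backslash' y}R_n$ from below). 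The bound \eqref{eqn88} then reads as $\delta_{\mcJ,\mcJ'}(\mcT^nE)\gtrsim C^{-4}\bigl((1+\varepsilon)^{-2}\underline{\rho}_{V_0/\mcJ}/\overline{\rho}_\mcJ\bigr)^n\cdot E(f_1)/E(f_2)$, and the ratio $E(f_1)/E(f_2)$ is bounded below by $c\cdot\delta(E)\cdot(\delta_\mcJ(E)\vee\delta_{\mcJ'}(E))^{-\gamma}$ using Proposition \ref{prop75} (which expresses $\delta(E)$ as a product of the $\delta_{\mcJ_m}$'s times $\delta_{\mcJ_{m_1},\mcJ_{m_2}}$) together with the resistance structure of $E\in\mathcal{M}(V_0,\mcJ,\cdot)$; here $\gamma$ absorbs the exponent $\#V_0(\#V_0-1)/2$ appearing in Corollary \ref{coro74}. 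Finally \eqref{eqn89} follows because on $[0,n]$ we have $\delta_\mcJ(\mcT^mE)\vee\delta_{\mcJ'}(\mcT^mE)<\sqrt{\delta}<1$, so Proposition \ref{prop89} applies at each such $m$ and the same comparison — without needing the smallness-forcing contradiction — gives $\delta_{\mcJ,\mcJ'}(\mcT^mE)\geq C^{-2}(1+\varepsilon)^{-m}\underline{\rho}_{V_0/\mcJ}^m/\overline{\rho}_\mcJ^m\cdot(\text{stuff})\geq C\delta(E)$ after noting the base is $\geq 1$ and $m\leq n$ with $n$ itself controlled; one may need to enlarge the meaning of the constant $C$ in the statement versus the $C$ from Proposition \ref{prop89}, which is harmless.

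The main obstacle I anticipate is the bookkeeping in \eqref{eqn88}: converting the energy estimates $\mcT^nE(f_i)\asymp \rho^n E(f_i)$ into a clean lower bound on the \emph{resistance ratio} $\delta_{\mcJ,\mcJ'}(\mcT^nE)$ requires going back and forth between energies and resistances via Lemma \ref{lemma25}(c) / Proposition \ref{prop26} (each step costing a factor depending only on $\#V_0$ and $p$), and simultaneously feeding in Proposition \ref{prop75}'s product decomposition of $\delta(E)$ with the right choice of the intermediate relations $\mcJ_{m_1}=\mcJ$, $\mcJ_{m_2}=\mcJ'$ — one must be careful that $\mcJ$ and $\mcJ'$ actually appear in the chain $\mcJ_1\subsetneq\cdots\subsetneq\mcJ_M$ of all relations with $\delta_{\mcJ_m}(E)<\delta$, which holds since $\delta_\mcJ(E),\delta_{\mcJ'}(E)<\delta$ by hypothesis. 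Keeping the exponent $\gamma$ uniform (independent of the particular pair $\mcJ\subset\mcJ'$, of which there are only finitely many) is what makes the statement usable in the proof of Theorem \ref{thm83}(b), and is the delicate point rather than any single inequality.
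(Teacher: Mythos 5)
Your outline follows the paper's strategy (two test functions from Proposition \ref{prop89}, energy-to-resistance translation, a blow-up contradiction to get \eqref{eqn87}, then minimality of $n$), and your part (a) reaches the right monotonicity $\underline{\rho}_{V_0/\mcJ}\leq\underline{\rho}_{V_0/\mcJ'}$, but there are two genuine gaps in part (b). First, you apply Proposition \ref{prop89} (b) with the relation $\mcJ$, producing a function constant on $\mcJ$-classes and the rate $\underline{\rho}_{V_0/\mcJ}$. Such a function only controls $\min R_n(x,y)$ over pairs in \emph{different} $\mcJ$-classes, which may include pairs lying in the same $\mcJ'$-class; since those resistances can be much smaller than the ones across $\mcJ'$-classes, you get no upper bound on the denominator $\min_{x,y:\,x\text{ not }\mcJ'\text{-related to }y}R_n(x,y)$ of $\delta_{\mcJ,\mcJ'}(\mcT^nE)$. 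You must apply Proposition \ref{prop89} (b) with the relation $\mcJ'$ (a $\mcJ'$-class-constant function and the rate $\underline{\rho}_{V_0/\mcJ'}$); this is exactly why part (a), $\overline{\rho}_\mcJ<\underline{\rho}_{V_0/\mcJ'}$, is proved first, and your sketch never uses it. (Your $\mcJ$-only version does suffice for the contradiction yielding \eqref{eqn87}, but not for \eqref{eqn88}--\eqref{eqn89}, which concern $\delta_{\mcJ,\mcJ'}$.)

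Second, the factor $\big(\delta_{\mcJ}(E)\vee\delta_{\mcJ'}(E)\big)^{-\gamma}$ in \eqref{eqn88} cannot come from the time-zero energy ratio via Proposition \ref{prop75}, as you propose. With $f$ from Proposition \ref{prop89} (a) and $g$ from (b), the correct comparison is $\delta_{\mcJ,\mcJ'}(\mcT^nE)\gtrsim \mcT^nE(g)/\mcT^nE(f)\gtrsim\big((1+\varepsilon)^{2}\overline{\rho}_\mcJ/\underline{\rho}_{V_0/\mcJ'}\big)^{-n}\,E(g)/E(f)$, and at time zero one only has $E(g)/E(f)\gtrsim\delta(E)$; in fact $E(g)/E(f)\lesssim\delta_{\mcJ,\mcJ'}(E)$, which is \emph{small}, so no large factor can be extracted there (also note your ratio $E(f_1)/E(f_2)$ is written upside down, and $f_1$ yields a \emph{lower}, not upper, bound on $\max_{x\mcJ y}R_n$). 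The large factor comes instead from a lower bound on $n$: by Proposition \ref{prop85} (a), $\delta_\mcJ(\mcT E)\vee\delta_{\mcJ'}(\mcT E)\leq C_3\big(\delta_\mcJ(E)\vee\delta_{\mcJ'}(E)\big)$, so \eqref{eqn87} forces $C_3^n\geq\big(\delta_\mcJ(E)\vee\delta_{\mcJ'}(E)\big)^{-1/2}$, i.e. $n\gtrsim-\log\big(\delta_\mcJ(E)\vee\delta_{\mcJ'}(E)\big)$, and substituting this into the exponentially growing base $>1$ gives \eqref{eqn88}. The same one-step growth bound (with threshold $C_3^{-1}\delta_0$ and $\delta=C_3^{-2}\delta_0^2$) is also what guarantees that at the minimal $n$ the quantities are still below $\delta_0$, so Proposition \ref{prop89} legitimately applies on all of $[0,n]$ for \eqref{eqn89}; your proposal glosses over this point and never establishes any lower bound on $n$, so \eqref{eqn88} is not reached by the argument as written.
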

\begin{proof}
	(a). If suffices to prove that $\underline{\rho}_{V_0/\mcJ}\leq \underline{\rho}_{V_0/\mcJ'}$. Notice that for any $E_{V_0/\mcJ'}\in \mathcal{M}(V_0/\mcJ')$, one can find $E\in \mathcal{M}$ such that $E_{V_0/\mcJ'}=[[E]]_{V_0/\mcJ'}$; also notice that $f'\in l(V_0/\mcJ')$ naturally induces $f\in l(V_0/\mcJ)$ by defining $f(I)=f'(I')$ if $I\subset I'$. Then, it is easy to check
	\[\mcT_{V_0/\mcJ}[[E]]_{V_0/\mcJ}(f)\leq \mcT_{V_0/\mcJ'}[[E]]_{V_0/\mcJ'}(f'),\quad\forall f'\in l(V_0/\mcJ').\]
	Noticing that $[[E]]_{V_0/\mcJ}(f)=[[E]]_{V_0/\mcJ'}(f')$, we have
	\[\inf(\mcT_{V_0/\mcJ}[[E]]_{V_0/\mcJ}\big|[[E]]_{V_0/\mcJ})\leq \inf(\mcT_{V_0/\mcJ'}[[E]]_{V_0/\mcJ'}\big|[[E]]_{V_0/\mcJ'}).\]
By taking the supremum on each side of the above inequality, we obtain $\underline{\rho}_{V_0/\mcJ}\leq \underline{\rho}_{V_0/\mcJ'}$, and hence (a) follows.
	
\medskip

	(b).
By (a), we can choose $\varepsilon>0$ so that $(1+\varepsilon)^2\cdot \frac{\overline{\rho}_\mcJ}{\underline{\rho}_{V_0/\mcJ'}}<1$. By Proposition \ref{prop89}, one can find $\delta_0>0$ and $C_1>0$ independent of $E$, $\mcJ$ and $\mcJ'$, so that if $\delta_{\mcJ}(\mcT^mE)\vee \delta_{\mcJ'}(\mcT^mE)<\delta_0,\forall 0\leq m\leq n$, then we can find $f\in l(V_0)$ such that
	\[f(x_I)=0,\forall I\in V_0/\mcJ,\ \Osc(f)=1\text{, and }\mcT^nE(f)\leq C_1(1+\varepsilon)^n\overline{\rho}^n_\mcJ E(f);\]
	and $g\in l(V_0)$ such that
	\[g(x)=\sum_{I\in V_0/\mcJ'}g(x_I)1_I(x),\forall x\in V_0,\ \Osc(g)=1,\text{ and }\mcT^nE(g)\geq C^{-1}_1(1+\varepsilon)^{-n}\underline{\rho}^n_{V_0/\mcJ'} E(g).\]

We first show that for any $C'\geq1$, there is some $n\geq 1$ such that
\begin{equation}\label{equation8.17}
\delta_\mcJ(\mcT^nE)\vee \delta_{\mcJ'}(\mcT^n E)\geq C'^{-1}\delta_0.
\end{equation}
 We argue by contradiction. Assume $\delta_{\mcJ}(\mcT^nE)\vee\delta_{\mcJ'}(\mcT^nE)<C'^{-1}\delta_0\leq\delta_0$ for all $n\geq1$.
By a direct application of Corollary \ref{coro27} to $\mcT^nE$, we obtain
	\begin{equation*}
\max_{x\mcJ y} R_n(x,y)\gtrsim \frac1{\mcT^nE(f)},\qquad \min_{x\mcJ\mkern-10.5mu\backslash' y} R_n(x,y)\lesssim \frac1{\mcT^nE(g)},
\end{equation*}
where $R_n$ is the $p$-resistance associated with $\mcT^nE$.
Hence one can find $C_2>0$ depending only on $\#V_0$ and $p$ so that
	\begin{equation}\label{eqn810}
	\begin{aligned}
		\delta_{\mcJ,\mcJ'}(\mcT^n E)\geq C_2\frac{\mcT^n E(g)}{\mcT^n E(f)}&\geq C_2C_1^{-2}(1+\varepsilon)^{-2n}\left(\frac{\overline{\rho}_\mcJ}{\underline{\rho}_{V_0/\mcJ'}}\right)^{-n}\frac{E(g)}{E(f)}\\
		&\geq C^2_2C_1^{-2}(1+\varepsilon)^{-2n}\left(\frac{\overline{\rho}_\mcJ}{\underline{\rho}_{V_0/\mcJ'}}\right)^{-n}\delta(E),
	\end{aligned}
	\end{equation}
where we used Corollary \ref{coro27} again to $E$ in the last inequality.
By that $(1+\varepsilon)^2\cdot \frac{\overline{\rho}_\mcJ}{\underline{\rho}_{V_0/\mcJ'}}<1$, we see from \eqref{eqn810} that $\delta_{\mcJ,\mcJ'}(\mcT^n E)\rightarrow\infty$ as $n\rightarrow\infty$.
But clearly, we have \[\delta_{\mcJ,\mcJ'}(\mcT^n E)\leq\delta_{\mcJ}(\mcT^n E)\vee \delta_{\mcJ'}(\mcT^n E),\]
a contradiction.

By Proposition \ref{prop85} (a), we see that there is $C_3>1$ (independent of $E$, $n$, $\mcJ$, $\mcJ'$) such that
\begin{equation}\label{equation8.19}
\delta_\mcJ(\mcT^nE)\vee \delta_{\mcJ'}(\mcT^n E)\leq C_3\delta_\mcJ(\mcT^{n-1}E)\vee \delta_{\mcJ'}(\mcT^{n-1} E)\leq C_3^n\delta_\mcJ(E)\vee \delta_{\mcJ'}(E).
\end{equation}
	
We choose $n$ to be the smallest positive integer such that (\ref{equation8.17}) holds with $C'=C_3$, then by letting $\delta=C_3^{-2}\delta_0^2$ in the proposition, we can find $n$ so that (\ref{eqn87}) holds;
also by \eqref{equation8.19}, we must have
\begin{equation*}
\delta_\mcJ(\mcT^nE)\vee \delta_{\mcJ'}(\mcT^n E)\leq C_3\delta_\mcJ(\mcT^{n-1}E)\vee \delta_{\mcJ'}(\mcT^{n-1} E)< C_3C_3^{-1}\delta_0=\delta_0,
\end{equation*}
then (\ref{eqn89}) is a consequence of (\ref{eqn810}). Finally, we notice that by \eqref{eqn87} and \eqref{equation8.19}, we see that
\begin{equation*}
C_3^n\geq \left(\delta_\mcJ(E)\vee \delta_{\mcJ'}(E)\right)^{-\frac12},
\end{equation*}
which implies for some $C_4>0$,
\begin{equation*}
n\geq-C_4\log(\delta_\mcJ(E)\vee \delta_{\mcJ'}(E)).
\end{equation*}
Substituting this back into \eqref{eqn810}, we obtain (\ref{eqn88}).
\end{proof}

The proof of Theorem \ref{thm83} (b) will be completed by an inductive argument as well.
\begin{definition}\label{def811}
	Let $\mcJ$ be a non-trivial preserved $\mathscr G$-relation.
	
	(a). Define $S(\mcJ)=\max\{m\geq 0:\text{there exist $m$ different non-trivial preserved $\mathscr{G}$-relations}\\ \mcJ_1,\mcJ_2,\cdots,
	\mcJ_m\text{ such that } \mcJ_1\subsetneq \mcJ_2\subsetneq\cdots \subsetneq \mcJ_m\subsetneq \mcJ\}$.
	
	(b). Define $L(\mcJ)=\max\{m\geq 0:\text{there exist $m$ different non-trivial preserved $\mathscr{G}$-relations}\\\mcJ_1,\mcJ_2,\cdots,\mcJ_m\text{ such that }\mcJ_1\supsetneq \mcJ_2\supsetneq\cdots \supsetneq \mcJ_m\supsetneq \mcJ\}$.

	In particular, $\mcJ$ is a minimal non-trivial preserved $\mathscr G$-relation if $S(\mcJ)=0$; $\mcJ$ is a maximal non-trivial preserved $\mathscr G$-relation if $L(\mcJ)=0$.
	
\end{definition}

\begin{proof}[Proof of Theorem \ref{thm83} (b).]
	Let $E_0\in \mathcal{M}$ with $\mathscr{G}$-symmetry, our goal is to prove that \[\inf_{n\geq 0}\delta(\mcT^nE_0)>0.\] It suffices to show that $\min_{\mcJ}\inf_{n\geq 0}\delta_{\mcJ}(\mcT^nE_0)>0$ by Corollary \ref{coro74}.
	
	We do not worry about $\mcJ$ without $\mathscr{G}$-symmetry since $\delta_\mcJ(\mcT^nE_0)\geq 1,\forall n\geq 0$. In addition, by Lemma \ref{lemma34} (b) and Proposition \ref{prop85} (b), we know that there is $0<C_1<1$ depending on $E_0$ such that $\delta_\mcJ(\mcT^nE_0)\geq C_1,\forall n\geq 1$ for each non-preserved $\mathscr{G}$-relation $\mcJ$. Hence, we only need to deal with preserved $\mathscr{G}$-relations.
	
	Let us recall some important facts from Proposition \ref{prop75}. \emph{For convenience, throughout the proof, $E$ always denotes a $p$-energy of the form $\mcT^nE_0,n\geq 1$.} We let $\mathcal{C}=\mathcal{C}(E)$ be the collection of non-trivial preserved $\mathscr{G}$-relations $\mcJ$ such that $\delta_{\mcJ}(E)<C_1$, then we can order them by `$\subsetneq$' by Proposition \ref{prop75}. Moreover, if $\mathcal{C}$ is not empty, then for any pair $\mcJ,\mcJ'\in \mathcal{C}(E)$ such that $\mcJ\subset \mcJ'$ (allow $\mcJ=\mcJ'$), we have by Proposition \ref{prop75},
	\begin{equation}\label{eqn811}
		\delta(E)\geq C_2\cdot \delta_{\mcJ,\mcJ'}(E)\cdot \prod_{\tilde{\mcJ}\in \mathcal{C},\tilde\mcJ\subsetneq \mcJ}\delta_{\tilde\mcJ}(E)\cdot \prod_{\tilde{\mcJ}\in \mathcal{C},\mcJ'\subsetneq \tilde\mcJ}\delta_{\tilde\mcJ}(E),
	\end{equation}
	where $C_2\in(0,1)$ is a constant independent of $E=\mcT^n E_0$ and $\mcJ,\mcJ'$.
	
	The proof will be done by induction, and we will introduce a sequence of claims named Claim $i$, $i\geq 0$, where the proof of Claim $i$ is based on Claim $0,1,\cdots,i-1$ for $i>0$. In the proof, we will introduce a sequence of positive small numbers $1\gg\delta_0\gg\delta_1\gg\delta_2\gg\cdots$, where ``$\gg$" means ``much larger than". We also use ``$\ll$" to mean ``much smaller than".

Let $\gamma>0$ be the same constant as in Proposition \ref{prop810} (b). We choose $\delta_0$ small enough so that Proposition \ref{prop810} holds, and we require $\delta_0^{-\gamma/2}\cdot C_2C_3\geq1$, where $C_3\in(0,1)$ stands for the constant $C$ in Proposition \ref{prop810}. \vspace{0.2cm}

	\noindent\textit{Case $0$. There exists $\mcJ,\mcJ'$ such that $\delta_\mcJ(E)\vee\delta_{\mcJ'}(E)<\delta_0$ and }
	\[\mcJ\subset \mcJ',\quad S(\mcJ)+L(\mcJ')=0.\]
	\noindent\textit{Claim $0$: If Case $0$ happens, then there exist $n_0\geq 1$, $\eta_{0}>0$ and $\gamma_0>0$ such that\\
		1). Case $0$ does not happen for $\mcT^{n_0}E$;\\
		2). $\delta(\mcT^m E)\geq \eta_{0}\delta(E),\forall\ 1\leq m\leq n_0$;\\
		3). $\delta(\mcT^{n_0} E)\geq \delta(E)\cdot(\delta_{\mcJ}(E)\vee \delta_{\mcJ'}(E))^{-\gamma_0}$,
		where $\mcJ$, $\mcJ'$ can be any pair satisfying Case $0$ (unique for $i=0$ case here). }
	
	\begin{proof}[Proof of Claim $0$]

		If Case $0$ happens, we let $\mcJ,\mcJ'$ be a pair as in the statement, and let $n_{0,0}$ be the $n$ as in Proposition \ref{prop810} (b). Then by Proposition \ref{prop810} and using \eqref{eqn811} , we can see the following 0.1) and 0.2) hold:
		
		\noindent0.1). $\delta(\mcT^mE)\geq C_2\delta_{\mcJ,\mcJ'}(\mcT^mE)\geq  C_2C_3\delta(E),\forall\ 0\leq m\leq n_{0,0}$;
		
		\noindent0.2). $\delta(\mcT^{n_{0,0}}E)\geq C_2\delta_{\mcJ,\mcJ'}(\mcT^{n_{0,0}}E)\geq C_2C_3\big(\delta_{\mcJ}(E)\vee \delta_{\mcJ'}(E)\big)^{-\gamma}\delta(E)\geq \big(\delta_{\mcJ}(E)\vee \delta_{\mcJ'}(E)\big)^{{-\gamma/2}}\delta(E)$.
		
		Now if 1) also holds for $\mcT^{n_{0,0}}E$, then we set $n_0=n_{0,0}$, $\eta_0=C_2C_3$ and $\gamma_0=\gamma/2$.
		
		Otherwise, we apply the above process to $E'=\mcT^{n_{0,0}}E$, and define $n'_{0,1}$ in a same manner, and we let $n_{0,1}=n_{0,0}+n'_{0,1}$, with the following 0.3) and 0.4) hold:

\noindent0.3). $\delta(\mcT^mE)\geq (C_2C_3)^2\cdot\delta(E),\forall\ 0\leq m\leq n_{0,1}$;

\noindent0.4). $\delta(\mcT^{n_{0,1}}E)\geq \big(\delta_{\mcJ}(E')\vee \delta_{\mcJ'}(E')\big)^{-\gamma/2}\delta(E')$.

Then if 1) holds for $\mcT^{n_{0,1}}E$, then we set $n_0=n_{0,1}$. If $n_{0,1}$ is still not satisfying, repeat the above process to find $n_{0,2}$ and so on, noticing that by 0.2) and 0.4), we have
\begin{equation}\label{equation8.21}
\delta(E)\leq \delta_0^{\gamma/2}\delta(\mcT^{n_{0,0}}E)\leq \delta_0^{\gamma}\delta(\mcT^{n_{0,1}}E)\leq \delta_0^{3\gamma/2}\delta(\mcT^{n_{0,2}}E)\leq\cdots.
 \end{equation}
Since $\delta(E)>0$ and $\delta_0<1$ and each term $\delta(\mcT^{n_{0,k}}E)\leq 1$, wee see that ``$\cdots$" in \eqref{equation8.21} will finally stop when we find a satisfying $n_{0,k}$, which we denote by $n_0$. Then Claim 0 holds with this $n_0$ and $\eta_0=(C_2C_3)^{k+1}$, $\gamma_0=\gamma/2$.
	\end{proof}
	
	For $i\geq1$, we inductively choose positive $\eta_i\ll\eta_{i-1}$ and $\delta_i\ll\delta_{i-1}\wedge \eta_i$, where the requirements will be made clear soon.\vspace{0.2cm}
	
	\noindent\textit{Case $i$. There exists $\mcJ,\mcJ'$ such that $\delta_\mcJ(E)\vee\delta_{\mcJ'}(E)<\delta_i$ and }
	\[\mcJ\subset \mcJ',\quad S(\mcJ)+L(\mcJ')=i.\]
	\noindent\textit{Claim $i$.  If Case $i$ happens and none of Cases $0,1,\cdots,i-1$ happens, then there exist $n_i\geq 1$ and $\gamma_i>0$ such that\\
		1). None of Cases $0,1,\cdots,i$ happens for $\mcT^{n_i}E$;\\
		2). $\delta(\mcT^m E)\geq \eta_{i}\delta(E)$ for all $1\leq m\leq n_i$;\\
		3). $\delta(\mcT^{n_i} E)\geq \delta(E)\cdot (\delta_{\mcJ}(E)\vee \delta_{\mcJ'}(E))^{-\gamma_i}$.}
	
	\begin{proof}[Proof of Claim i]

		If Case $i$ happens, we let $\mcJ,\mcJ'$ be a pair as in the statement, and let $n'_{i,0}$ be the $n$ as in Proposition \ref{prop810} (b). Hence
\begin{equation}\label{eq8.26}
		\delta_{\mcJ}(\mcT^mE)\vee \delta_{\mcJ'}(\mcT^mE)\lesssim\sqrt{\delta_{i}}\ll \delta_{i-1}\quad\text{ for $0\leq m\leq n'_{i,0}-1$.}
\end{equation}		
Now, if one of Cases $0,1,\cdots,i-1$ happens at $n'_{i,0}$, we let $n_{i,0}>n'_{i,0}$ be the smallest number such that non of the cases $0,1\cdots,i-1$ happens (the existence of $n_{i,0}$ is provided by Claims $0,1\cdots,i-1$ and that $\sup_{n\geq1}\delta(\mcT^{n}E)\leq1$); otherwise, simply let $n_{i,0}=n'_{i,0}$.
		
		In addition, we can see $i.1$), $i.2$) hold:\vspace{0.2cm}
		
		\noindent$i.1$). $\delta(\mcT^mE)\geq \eta_i\delta(E),\forall 0\leq m\leq n_{i,0}$.
		
		We first show $i.1$) with $0\leq m\leq n'_{i,0}$. Indeed, if none of Cases $0,1,\cdots,i-1$ happens for $\mcT^mE$, then by \eqref{eqn811}, and observe that  by \eqref{eq8.26}, $\delta_{\tilde\mcJ}(E)\gtrsim \delta_{i-1}$ for either $\tilde\mcJ\subsetneq \mcJ$ or $\tilde\mcJ\supsetneq \mcJ'$, we see that $\delta(\mcT^mE)\gtrsim\delta_{\mcJ,\mcJ'}(\mcT^mE)$, which implies $i.1$) by using Proposition \ref{prop810} (b).
Otherwise, if some of Cases $0,1,\cdots,i-1$ happen at $m$, let $m'<m$ be the nearest time such that none of Cases $0,1,\cdots,i-1$ happens, whose existence is guaranteed by the assumption of the claim. Then at each $k$ between $m'+1$ and $m$, there must holds some Case $j$ with $0\leq j\leq i-1$. Note that each associated claim gives a ``loop" in which $\delta(\mcT^{k}E)$ has a significant growth at the end of the loop comparing to the beginning. Since $m$ must be located in some loop, we have $\delta(\mcT^mE)\gtrsim \delta(\mcT^{m'+1}E)\gtrsim \delta(\mcT^{m'}E)$. Thus $\delta(\mcT^mE)\gtrsim \delta(\mcT^{m'}E)\geq\eta_i\delta(E)$ as previous.

By a similar argument of comparing $\delta(\mcT^{n_{i,0}}E)$ with $\delta(\mcT^{n'_{i,0}}E)$ as above, we see that $i.1$) also holds for $0\leq m\leq n_{i,0}$ with a slightly adjustment of the constant $\eta_i$.

\vspace{0.2cm}

		\noindent$i.2$). $\delta(\mcT^{n_{i,0}}E)\geq \big(\delta_{\mcJ}(E)\vee \delta_{\mcJ'}(E)\big)^{-\gamma_i/2}\delta(E)$.
		
To see $i.2$), we only need to check the case $n_{i,0}> n'_{i,0}$ since otherwise $i.2)$ is immediate due to Proposition \ref{prop810} (b). We need to check the following two possibilities.
		
		(Possibility 1). For any $\tilde\mcJ$ such that $\tilde{\mcJ}\subsetneq \mcJ$ or $\tilde{\mcJ}\supsetneq \mcJ'$,
		\[\delta_{\tilde{\mcJ}}(\mcT^{n_{i,0}}E)\geq \big(\delta_{\mcJ}(E)\vee \delta_{\mcJ'}(E)\big)^{\frac{\gamma}{4i}}.\]
		In this possibility, we can see by (\ref{eqn811}) and Proposition \ref{prop810} (b),
		\[\begin{aligned}
			\delta(\mcT^{n_{i,0}}E) &\geq C_2\big(\delta_{\mcJ}(E)\vee \delta_{\mcJ'}(E)\big)^{\frac{\gamma}{4i}\cdot i}\delta_{\mcJ,\mcJ'}(\mcT^{n_{i,0}}E)\\
			&\geq \big(\delta_{\mcJ}(E)\vee \delta_{\mcJ'}(E)\big)^{-\gamma/4}\delta(E).
		\end{aligned}\]

		(Possibility 2). There exists some $\tilde\mcJ$ such that $\tilde{\mcJ}\subsetneq \mcJ$ or $\tilde{\mcJ}\supsetneq \mcJ'$,
		\[\delta_{\tilde{\mcJ}}(\mcT^{n_{i,0}}E)< \big(\delta_{\mcJ}(E)\vee \delta_{\mcJ'}(E)\big)^{\frac{\gamma}{4i}}.\]
In this possibility, we apply 2) and Case $j$ for some $0\leq j<i$,
		\[
		\begin{aligned}
			\delta(\mcT^{n_{i,0}}E)\gtrsim \delta(\mcT^{n'_{i,0}}E)\gtrsim \big(\delta_{\mcJ}(E)\vee \delta_{\mcJ'}(E)\big)^{-\gamma}\delta(E).
		\end{aligned}
		\]
		Hence, by choosing some $\delta_i,\gamma_i$ small enough, we can see $i.2$) happens for both possibilities.\vspace{0.1cm}
		
Finally, similar to Claim 0, we let $n_i=n_{i,0}$ if none of Cases $0,1,\cdots, i$ holds at $n_{i,0}$; otherwise repeat the procedure, noticing that none of Cases $0,1,\cdots,i-1$ holds for $\mcT^{n_{i,0}}E$, the procedure will finally stop since $\delta(E)\ll\delta(\mcT^{n_{i,0}}E)\ll\delta(\mcT^{n_{i,1}}E)\ll\cdots$. Therefore, we can find $n_i$ such that 1) holds, while 2) is from $i.1$) and 3) is from $i.2$).
	\end{proof}

	We can finalize the proof with Claims $0$-$M$, where $M+1$ is the maximum length of chains of non-trivial preserved $\mathscr{G}$-relations (hence there is no Case $M+1$, and escaping Case $0$-$M$ means escaping all possible cases): if none of Cases $0$-$M$ holds for $\mcT^nE_0$, then $\delta(\mcT^nE_0)$ is uniformly bounded from below; otherwise, we let $n'<n$ be the last time that none of the cases holds (by choosing $\delta_i$ small enough, we can assume none of the cases holds for $E_0$, and hence $n'$ exists), and apply the corresponding Claim $j$ 2) for each loop between $n'$ and $n$, we see that $\delta(\mcT^nE_0)\gtrsim\delta(\mcT^{n'}E_0)$, and hence $\delta(\mcT^nE_0)$ has a uniform positive lower bounded as well.
\end{proof}

\appendix
\renewcommand{\appendixname}{Appendix~\Alph{section}}
\section{Properties of $\mathcal{Q}_p(A)$}\label{AppendixA}
In this appendix, we consider some useful properties concerning $\Ep\in \mathcal{Q}_p(A)$, where $A$ is a finite set. We begin with a lemma concerning the norm $\|\cdot\|_{\widetilde{\mathcal{M}}_p(A)}$.

\begin{lemma}\label{lemmaa1}
	Let $\Ep_n\in \widetilde{\mathcal{M}}_p(A),n\geq 1$ and $\Ep\in \widetilde{\mathcal{M}}_p(A)$, and assume $\|\Ep_n-\Ep\|_{\widetilde{\mathcal{M}}_p(A)}\to 0$ as $n\to\infty$. Then, for any $f_n\in l(A),n\geq 1$ and $f\in l(A)$ such that $f_n\to f$ pointwisely, we have
	\[\lim\limits_{n\to\infty}\Ep_n(f_n)=\Ep(f).\]
\end{lemma}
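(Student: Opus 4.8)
The plan is to show joint continuity of $(E^{(p)},f)\mapsto E^{(p)}(f)$ from the norm convergence of the energies together with pointwise (equivalently, since $A$ is finite, uniform) convergence of the functions. First I would reduce to a single-energy estimate: writing
\[
|\Ep_n(f_n)-\Ep(f)|\leq |\Ep_n(f_n)-\Ep(f_n)|+|\Ep(f_n)-\Ep(f)|,
\]
I handle the two terms separately. For the first term, the definition of $\|\cdot\|_{\widetilde{\mathcal M}_p(A)}$ gives $|\Ep_n(g)-\Ep(g)|\leq \|\Ep_n-\Ep\|_{\widetilde{\mathcal M}_p(A)}\cdot\max_{x,y}|g(x)-g(y)|^p$ for every $g$; applying this with $g=f_n$ and noting that $\max_{x,y}|f_n(x)-f_n(y)|^p$ is bounded (since $f_n\to f$ uniformly on the finite set $A$), this term tends to $0$.

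For the second term I need continuity of the fixed energy $\Ep\in\widetilde{\mathcal M}_p(A)$ at $f$ with respect to uniform convergence of functions. This is exactly the equicontinuity-type estimate already carried out in the proof of Lemma~\ref{lemma43}(a): using homogeneity and convexity, for $f_n\neq f$ one writes, for $t\in(0,1)$,
\[
\Ep(f_n)\leq t^{1-p}\Ep(f_n-f)+(1-t)^{1-p}\Ep(f),
\]
and symmetrically with $f_n,f$ interchanged, so that
\[
|\Ep(f_n)-\Ep(f)|\leq t^{1-p}\Ep(f_n-f)+\big((1-t)^{1-p}-1\big)\max\{\Ep(f_n),\Ep(f)\}.
\]
Since $\Ep(f_n-f)\leq \|f_n-f\|_\infty^p\|\Ep\|_{\widetilde{\mathcal M}_p(A)}\to 0$ and $\Ep(f_n)$ stays bounded (again by the norm bound and uniform convergence), choosing $t=t_n\to 0$ appropriately — e.g.\ $t_n=\big(\Ep(f_n-f)\big)^{1/p}\to 0$, exactly as in Lemma~\ref{lemma43}(a) — makes both terms on the right tend to $0$. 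This gives $\Ep(f_n)\to\Ep(f)$.

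Combining the two estimates yields $\Ep_n(f_n)\to\Ep(f)$, completing the proof. I expect no genuine obstacle here: the only mild subtlety is that homogeneity with a negative exponent $1-p$ is being used (note $1-p<0$), so one must be sure $t\in(0,1)$ and that the degenerate case $f_n=f$ is handled trivially; otherwise the argument is a direct repackaging of the estimate already appearing in the proof of Lemma~\ref{lemma43}(a), now applied with a varying sequence of functions rather than a fixed compact set.
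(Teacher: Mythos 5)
Your argument is correct, and it is a close cousin of the paper's proof, but the decomposition is different: the paper splits $|\Ep_n(f_n)-\Ep(f)|\leq |\Ep_n(f_n)-\Ep_n(f)|+|\Ep_n(f)-\Ep(f)|$, controls the second term by the norm, and handles the first term by writing $f_n=(1-\varepsilon_n)f+\varepsilon_n g_n$ and $f=(1-\varepsilon_n)f_n+\varepsilon_n g_n'$ with bounded auxiliary functions, using convexity together with a uniform bound $\Ep_n(h)\leq C_0$ (valid for all $n$ and all $h$ with oscillation $1$, extracted from the norm convergence). You instead split through $\Ep(f_n)$, so the norm term absorbs the varying energies and the continuity-in-the-function step involves only the single fixed form $\Ep$, for which you reuse the quantitative homogeneity--convexity modulus from the proof of Lemma \ref{lemma43}(a) with $t_n=\big(\Ep(f_n-f)\big)^{1/p}$. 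Your route is marginally cleaner in that no equicontinuity (uniformity in $n$) is needed, while the paper's route avoids the explicit choice of $t_n$ at the cost of the uniform bound $C_0$; both yield the same conclusion. One small point to tidy: the genuinely degenerate case is not $f_n=f$ but $\Ep(f_n-f)=0$ with $f_n\neq f$, which can occur for forms in $\widetilde{\mathcal{M}}_p(A)$; there your $t_n$ is $0$, but the same two-sided inequality with arbitrary $t\in(0,1)$ and $t\to 0$ gives $\Ep(f_n)=\Ep(f)$ directly, so the gap is cosmetic (the paper's own proof of Lemma \ref{lemma43}(a) leaves the analogous case implicit as well).
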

\begin{proof}
	First, from the definition of $\|\cdot\|_{\widetilde{M}_p(A)}$, we have
	\begin{equation}\label{equation2.7}
		|E^{(p)}_n(f)-E^{(p)}(f)|\leq \text{Osc}(f,A)^p\|E^{(p)}_n-E^{(p)}\|_{\widetilde{M}_p(A)},
	\end{equation}
	where $\text{Osc}(f,A)=\max\{|f(x)-f(y)|:\ x,y\in A\}$. Then by the condition that $\lim_{n\rightarrow\infty}\|\Ep_n-\Ep\|_{\widetilde{\mathcal{M}}_p(A)}=0$, we have
	\begin{equation}\label{equation2.8}
		\lim_{n\rightarrow\infty}\big|E^{(p)}_n(f)-E^{(p)}(f)\big|=0.
	\end{equation}
	
	We then estimate $\big|E^{(p)}_n(f_n)-E^{(p)}_n(f)\big|$. Note that by \eqref{equation2.7}, we see that
	$\big|E^{(p)}_n(f)-E^{(p)}(f)\big|$ is uniformly bounded for all $n\geq1$ and for all $f$ with $\text{Osc}(f,A)=1$, and from this, there is $C_0>0$ such that
	\begin{equation}\label{equation2.9}
		E^{(p)}_n(f)\leq C_0,\qquad\forall n\geq1, \forall f \text{ with } \text{Osc}(f,A)=1.
	\end{equation}
	Since $f_n\rightarrow f$ as $n\rightarrow\infty$, we can write both $f_n=(1-\varepsilon_n) f+\varepsilon_n g_n$ and $f=(1-\varepsilon_n) f_n+\varepsilon_n g'_n$, where $\varepsilon_n\in(0,1)$ satisfying $\lim_{n\rightarrow\infty}\varepsilon_n=0$ and $g_n,g_n'$ are uniformly bounded in $n$. Now using the convexity of $E^{(p)}_n$, we see that
	\begin{align*}
		E^{(p)}_n(f_n)&\leq (1-\varepsilon_n)E^{(p)}_n(f)+\varepsilon_nE^{(p)}_n(g_n),\\
		E^{(p)}_n(f)&\leq (1-\varepsilon_n)E^{(p)}_n(f_n)+\varepsilon_nE^{(p)}_n(g'_n),
	\end{align*}
	which gives that
	\begin{equation}\label{equation2.10}
		\varepsilon_nE_n^{(p)}(f_n)-\varepsilon_nE_n^{(p)}(g'_n)\leq E^{(p)}_n(f_n)-E^{(p)}_n(f)\leq -\varepsilon_nE_n^{(p)}(f)+\varepsilon_nE_n^{(p)}(g_n).
	\end{equation}
	Since $f_n,g_n,g_n'$ are all uniformly bounded, by \eqref{equation2.9}, we see that there is $C'_0>0$ such that $E_n^{(p)}(f)$, $E_n^{(p)}(f_n)$, $E_n^{(p)}(g_n)$, $E_n^{(p)}(g_n')$ are all bounded by $C'_0$. Thus using \eqref{equation2.10}, we obtain
	\begin{equation}\label{equation2.11}
		\big|E^{(p)}_n(f_n)-E^{(p)}_n(f)\big|\leq 2\varepsilon_nC_0',
	\end{equation}
	which goes to $0$ as $n\rightarrow\infty$.
	
	Above all, \eqref{equation2.8} together with \eqref{equation2.11} implies that
	$
	\lim\limits_{n\to\infty}\Ep_n(f_n)=\Ep(f)
	$
	as desired.
\end{proof}

We will need to compare the $p$-harmonic (minimal energy) extensions of different functions in Section \ref{sec5}. Noticing that by Remark 2 after Definition \ref{def28}, $\Ep\in \mathcal{Q}_p(B)$ is always strictly convex, so the $p$-harmonic extension of a function is always unique.
\begin{lemma}\label{lemmaa2}
	Let $\Ep\in \mathcal{Q}_p(B)$ and assume $A\subset B$. Let $f,g\in l(B)$ and assume $\Ep(f)={[\Ep]_A}(f|_A)$, $\Ep(g)={[\Ep]_A}(g|_A)$. Then
	\[f(x)\leq g(x),\forall x\in A\Longrightarrow f(x)\leq g(x),\forall x\in B.\]
\end{lemma}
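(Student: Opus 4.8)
The plan is to combine a submodularity (Markov-type) inequality for energies in $\mathcal{Q}_p(B)$ with the uniqueness of minimal-energy extensions, the latter coming from strict convexity.

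The first — and main — step is to prove that for every $\Ep\in\mathcal{Q}_p(B)$ and all $u,v\in l(B)$,
\[\Ep(u\wedge v)+\Ep(u\vee v)\leq \Ep(u)+\Ep(v).\]
I would establish this along the chain $\mathcal{S}_p(B)\to\mathcal{Q}'_p(B)\to\mathcal{Q}_p(B)$. For $\Ep\in\mathcal{S}_p(B)$, i.e. $\Ep(f)=\sum_{x,y}c_{x,y}|f(x)-f(y)|^p$ with $c_{x,y}\geq0$, it reduces term by term to the elementary pointwise inequality
\[\big|\min(a,c)-\min(b,d)\big|^p+\big|\max(a,c)-\max(b,d)\big|^p\leq|a-b|^p+|c-d|^p\qquad(a,b,c,d\in\mathbb R),\]
which, after reducing by symmetry to the case $a\geq c$, $b<d$, follows from the convexity of $t\mapsto|t|^p$ on $\mathbb R$: one checks that $(a-b)+(c-d)=(c-b)+(a-d)$ while $a-b$ is the largest and $c-d$ the smallest of these four numbers, so $\{a-b,c-d\}$ is a ``spread out'' version of $\{c-b,a-d\}$ with the same sum. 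For $\Ep\in\mathcal{Q}'_p(B)$, write $\Ep=[\Ep_{B'}]_B$ with $\Ep_{B'}\in\mathcal{S}_p(B')$; if $\tilde u,\tilde v\in l(B')$ are $\Ep_{B'}$-minimal extensions of $u,v$, then $\tilde u\wedge\tilde v$ and $\tilde u\vee\tilde v$ restrict on $B$ to $u\wedge v$ and $u\vee v$, so
\[[\Ep_{B'}]_B(u\wedge v)+[\Ep_{B'}]_B(u\vee v)\leq \Ep_{B'}(\tilde u\wedge\tilde v)+\Ep_{B'}(\tilde u\vee\tilde v)\leq \Ep_{B'}(\tilde u)+\Ep_{B'}(\tilde v)=[\Ep_{B'}]_B(u)+[\Ep_{B'}]_B(v).\]
Finally, for $\Ep\in\mathcal{Q}_p(B)$ take $\Ep_n\in\mathcal{Q}'_p(B)$ with $\|\Ep_n-\Ep\|_{\widetilde{\mathcal{M}}_p(B)}\to0$; since $|\Ep_n(h)-\Ep(h)|\leq\Osc(h)^p\|\Ep_n-\Ep\|_{\widetilde{\mathcal{M}}_p(B)}$ for each fixed $h$, the inequality passes to the limit.

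The second ingredient is the uniqueness of minimal-energy extensions: if $A\neq\emptyset$ and $h_1,h_2\in l(B)$ both restrict to the same $\varphi\in l(A)$ with $\Ep(h_1)=\Ep(h_2)=[\Ep]_A(\varphi)$, then $h_1=h_2$. Indeed, any $\Ep\in\mathcal{Q}_p(B)$ is uniformly convex (Remark 2 after Definition \ref{def28}), hence strictly convex; if $h_1\neq h_2$ then $h_1-h_2$ is non-constant (it vanishes on the nonempty set $A$ but is not identically zero), so strict convexity gives $\Ep\big(\tfrac12(h_1+h_2)\big)<\tfrac12\big(\Ep(h_1)+\Ep(h_2)\big)=[\Ep]_A(\varphi)$, contradicting minimality since $\tfrac12(h_1+h_2)$ restricts to $\varphi$.

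Granting these, the proof concludes quickly. Set $u=f\wedge g$ and $v=f\vee g$. Since $f\leq g$ on $A$ we have $u|_A=f|_A$ and $v|_A=g|_A$, so $u$ and $v$ are extensions of $f|_A$ and $g|_A$ respectively; by minimality of $f$ and $g$, $\Ep(f)\leq\Ep(u)$ and $\Ep(g)\leq\Ep(v)$. Adding these to the submodularity inequality gives $\Ep(f)+\Ep(g)\leq\Ep(u)+\Ep(v)\leq\Ep(f)+\Ep(g)$, forcing equality throughout; in particular $\Ep(u)=\Ep(f)=[\Ep]_A(f|_A)$, so $u$ is a minimal extension of $f|_A$, whence $u=f$ by uniqueness. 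Thus $f\wedge g=f$ on all of $B$, i.e. $f\leq g$ on $B$. The only nontrivial point is the submodularity inequality, and specifically pushing it through to the closure $\mathcal{Q}_p(B)$; the reduction through $\mathcal{S}_p$ and $\mathcal{Q}'_p$ and the concluding argument are soft.
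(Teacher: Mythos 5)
Your proof is correct, but it runs along a genuinely different route from the paper's. The paper proves the comparison first for $\Ep\in\mathcal{S}_p(B)$ via the Euler--Lagrange (stationarity) equations and a discrete maximum-principle propagation argument, lifts it to $\mathcal{Q}'_p(B)$ through the unique minimal extensions to the larger set $V$, and then handles the closure $\mathcal{Q}_p(B)$ by taking minimizers $f_n,g_n$ for approximating forms $\Ep_n$, extracting a convergent subsequence, and invoking Lemma \ref{lemmaa1} together with strict convexity to identify the limits with $f,g$. You instead isolate the submodularity inequality $\Ep(u\wedge v)+\Ep(u\vee v)\leq\Ep(u)+\Ep(v)$: for $\mathcal{S}_p$ it reduces to the pointwise inequality for the convex function $t\mapsto|t|^p$ (your ``same sum, more spread out'' argument is the standard majorization step and is valid); the passage to $\mathcal{Q}'_p$ via minimal extensions (using that restriction commutes with $\wedge,\vee$) and to $\mathcal{Q}_p$ via $|\Ep_n(h)-\Ep(h)|\leq\Osc(h)^p\|\Ep_n-\Ep\|_{\widetilde{\mathcal M}_p(B)}$ for fixed $h$ is sound; and the conclusion from $u=f\wedge g$, $v=f\vee g$ plus uniqueness of minimizers (from the uniform convexity noted in Remark 2 after Definition \ref{def28}, which is also what the paper leans on) is airtight. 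What your approach buys: the closure step becomes trivial, since a pointwise inequality of functionals on fixed functions passes to the limit directly, so you never need Lemma \ref{lemmaa1}, convergence of minimizers, or the connectivity/propagation argument; what the paper's approach buys is the explicit maximum principle for standard forms, which is closer in spirit to the harmonic-function toolkit it uses elsewhere. Only cosmetic gaps remain in your write-up (existence of the minimal extensions $\tilde u,\tilde v$ over $B'$, which follows from the Markov property and compactness, and the degenerate case where both energies vanish), and these are routine.
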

\begin{proof}
	First, we assume $\Ep\in \mathcal{S}_p(B)$, so there are $c_{x,y}\geq 0$ depending on $x,y$ such that $\Ep(u)=\sum_{x,y\in B}c_{x,y}|u(x)-u(y)|^p,\forall u\in l(B)$. In this situation, the conditions $\Ep(f)={[\Ep]_A}(f|_A)$, $\Ep(g)={[\Ep]_A}(g|_A)$ imply that
	\begin{equation}\label{eqn27}
		\begin{cases}
			\sum_{y\neq x}c_{x,y}\cdot p\big|f(x)-f(y)\big|^{p-1}\cdot \text{sgn}\big(f(x)-f(y)\big)=0,\quad\forall x\in B\setminus A,\\
			\sum_{y\neq x}c_{x,y}\cdot p\big|g(x)-g(y)\big|^{p-1}\cdot \text{sgn}\big(g(x)-g(y)\big)=0,\quad\forall x\in B\setminus A.
		\end{cases}
	\end{equation}
	Now we show $\max_{x\in B}\big(f(x)-g(x)\big)=\max_{x\in A}\big(f(x)-g(x)\big)$, then the lemma (for this special case) follows. In fact, assume the equality does not hold, then we can find $x_0\in B\setminus A$, so that $f(x_0)-g(x_0)=\max_{x\in B}\big(f(x)-g(x)\big)>\max_{x\in A}\big(f(x)-g(x)\big)$. Then, we have $f(x_0)-f(x)\geq g(x_0)-g(x)$ for any $x\in B$, so that
	\[\big|f(x_0)-f(x)\big|^{p-1}\cdot \text{sgn}\big(f(x_0)-f(x)\big)\geq \big|g(x_0)-g(x)\big|^{p-1}\cdot \text{sgn}\big(g(x_0)-g(x)\big).\]
	Then, by (\ref{eqn27}) (taking $x=x_0$ there), we know that the above inequality must be equality, so we get $f(x_0)-f(x)=g(x_0)-g(x)$, or equivalently $f(x)-g(x)=f(x_0)-g(x_0)$, for any $x\in B$ such that $c_{x,x_0}>0$. We can repeat the above argument to see there is some $x\in A$ such that $f(x)-g(x)=f(x_0)-g(x_0)$, since $B$ is finite. A contradiction.
	
	Next, assume $\Ep\in \mathcal{Q}_p'(B)$, so we can find $V\supset B$ and $\Ep_V\in \mathcal{S}_p(V)$ such that $[\Ep_V]_B=\Ep$. Noticing that $\Ep_V$ is strictly convex, we can extend $f,g$ uniquely to $f_V,g_V\in l(V)$ so that $f_V|_B=f,g_V|_B=g$ and
	\[\begin{cases}
		\Ep_V(f_V)=[\Ep_V]_B(f)=\Ep(f)=[\Ep]_A(f|_A),\\ \Ep_V(g_V)=[\Ep_V]_B(g)=\Ep(g)=[\Ep]_A(g|_A).
	\end{cases}\]
	By the previous paragraph, we then get $f(x)\leq g(x),\forall x\in A$ implies $f(x)\leq g(x),\forall x\in V$ where $V\supset B$.
	
	Finally, let $\Ep\in \mathcal{Q}_p(B)$, so we can find $\Ep_n\in \mathcal{Q}_p'(B)$ such that $\|\Ep_n-\Ep\|_{\widetilde{\mathcal{M}}_p(B)}\to 0$ as $n\to \infty$. Define $f_n,g_n\in l(B)$ as the unique functions such that $f_n|_A=f|_A$, $\Ep_n(f_n)={[\Ep_n]_A}(f|_A)$ and $g_n|_A=g|_A$, $\Ep_n(g_n)={[\Ep_n]_A}(g|_A)$, noticing that $\Ep_n$ are strictly convex. In addition, $f_n\leq g_n$ by the previous paragraph. Then, by passing to a subsequence, we have $f_{n_l}\to f'$ and $g_{n_l}\to g'$ for some $f',g'\in l(B)$. We claim that $f'=f$ and $g'=g$ so the proof is completed. In fact, we have $f'|_A=f|_A$ and by Lemma \ref{lemmaa1},
	\[\begin{aligned}
		\Ep(f')=\lim_{l\to\infty}\Ep_{n_l}(f_{n_l})\leq \lim_{l\to\infty}\Ep_{n_l}(f)=\Ep(f),\\
	\end{aligned}\]
	and the same argument works for $g$.
\end{proof}

 The following Lemma \ref{lemmaa3} will play a key role in the proof of $\mathscr{H}\subset C(K)$ in Section \ref{sec5}.

\begin{lemma}\label{lemmaa3}
Let $\Ep\in \mathcal{Q}_p(A)$ and $f\in l(A)$.  Define
\[\partial_{X,-}\Ep(f)=\lim\limits_{t\nearrow 0}\frac{\Ep(f+t\cdot 1_X)-\Ep(f)}{t},\]
where $X\subset A$ and $1_X\in l(A)$ is the indicator function of $X$.

Let $M_f=\big\{x\in A:f(x)=\max_{y\in A}f(y)\big\}$. Then  $\partial_{X,-}\Ep(f)\geq 0$ if $X\subset M_f$. In addition, if $M_f\neq A$, then $\partial_{M_f,-}\Ep(f)>0$.
\end{lemma}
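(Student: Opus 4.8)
The plan is to first reduce to the standard case $\Ep\in\mathcal{S}_p(A)$, then compute the one-sided directional derivative explicitly, and finally pass to the limit using the norm convergence of standard forms together with the continuity established in Lemma \ref{lemmaa1} and the uniqueness of minimizers/harmonic extensions from Lemma \ref{lemmaa2}. Throughout, recall that the map $t\mapsto \Ep(f+t\cdot 1_X)$ is convex by Definition \ref{def21}(ii)--(iii), so the left derivative $\partial_{X,-}\Ep(f)$ exists (as an extended real, but finite here since $\Ep$ is finite-valued on $l(A)$) and is monotone in $t$; in particular $\partial_{X,-}\Ep(f)\leq \partial_{X,+}\Ep(f)$.

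First I would prove the statement for $\Ep\in \mathcal{S}_p(A)$, say $\Ep(g)=\sum_{x,y\in A}c_{x,y}|g(x)-g(y)|^p$ with $c_{x,y}\geq 0$. For $t<0$ and $X\subset M_f$, write out
\[
\Ep(f+t\cdot 1_X)-\Ep(f)=\sum_{x\in X,\,y\notin X}c_{x,y}\Big(|f(x)-f(y)+t|^p-|f(x)-f(y)|^p\Big),
\]
the terms with both endpoints inside $X$ (or both outside) cancelling. Since $x\in X\subset M_f$ we have $f(x)-f(y)\geq 0$ for all $y$, and for $y\notin X$ with $c_{x,y}>0$; dividing by $t<0$ and letting $t\nearrow 0$, each summand tends to $-p\,c_{x,y}|f(x)-f(y)|^{p-1}\cdot(-1)=p\,c_{x,y}(f(x)-f(y))^{p-1}\geq 0$ (using the left derivative of $s\mapsto|s|^p$ at $s=f(x)-f(y)\geq 0$, which equals $p\,s^{p-1}$, interpreting $0^{p-1}$ as $0$). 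Hence $\partial_{X,-}\Ep(f)\geq 0$. For the strict statement with $X=M_f\neq A$, nondegeneracy (Definition \ref{def21}(vi)) forces $f$ nonconstant, so there is some edge $c_{x,y}>0$ with $x\in M_f$, $y\notin M_f$ (otherwise $f$ would be constant on the $c$-connected component of any point of $M_f$, and by Lemma \ref{lemma25}(c) / nondegeneracy the form is connected, giving $f$ constant); then $f(x)-f(y)>0$ and that summand contributes $p\,c_{x,y}(f(x)-f(y))^{p-1}>0$, so $\partial_{M_f,-}\Ep(f)>0$.

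For general $\Ep\in\mathcal{Q}_p(A)$, take $\Ep_n\in\mathcal{Q}'_p(A)$ with $\|\Ep_n-\Ep\|_{\widetilde{\mathcal M}_p(A)}\to 0$; each $\Ep_n=[\Ep_{B_n}]_A$ for some $\Ep_{B_n}\in\mathcal{S}_p(B_n)$, $B_n\supset A$. For the standard form $\Ep_{B_n}$ and the ``lifted'' function $f^{(n)}\in l(B_n)$ which is the (unique, by strict convexity) $p$-harmonic extension of $f$ with respect to $\Ep_{B_n}$, observe that $M_{f^{(n)}}\cap B_n\setminus A$ consists of points where the harmonic function attains the global max, and since $f^{(n)}|_A=f$, any such point is $c$-connected to $M_f\subset A$; restricting the computation of the previous paragraph on $B_n$ to perturbations supported on $X\subset M_f\subset A$, one checks $\partial_{X,-}\Ep_{B_n}(f^{(n)}_X)\geq 0$ where $f^{(n)}_X$ is the harmonic extension of $f+t1_X$, and by the envelope/minimality argument this passes to the trace: $\partial_{X,-}\Ep_n(f)\geq 0$, and $>0$ for $X=M_f\neq A$ provided we know the max of $f^{(n)}$ is still attained only on a set $c$-connected to $M_f$ — here I would instead argue directly on the trace using that $\Ep_n\in\mathcal{M}_p(A)$ and hence, by Corollary \ref{coro27} / Proposition \ref{prop26}, is comparable to a genuine standard form, which keeps the sign of the derivative. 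Then finish: by convexity in $t$, $\partial_{X,-}\Ep_n(f)\leq \frac{\Ep_n(f+t1_X)-\Ep_n(f)}{t}$ for every $t<0$; let $n\to\infty$ using $\|\Ep_n-\Ep\|_{\widetilde{\mathcal M}_p(A)}\to 0$ (which gives pointwise convergence $\Ep_n(g)\to\Ep(g)$ on $l(A)$, a fortiori via Lemma \ref{lemmaa1}) to get $0\leq\liminf_n\partial_{X,-}\Ep_n(f)\leq \frac{\Ep(f+t1_X)-\Ep(f)}{t}$ for all $t<0$; finally let $t\nearrow 0$ to conclude $\partial_{X,-}\Ep(f)\geq 0$. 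For strictness, one needs a quantitative lower bound $\partial_{M_f,-}\Ep_n(f)\geq c>0$ uniform in $n$: this follows from the uniform comparability constant in Corollary \ref{coro27} applied to the forms $\Ep_n$ together with the strict positivity computed for standard forms, since all constants there depend only on $\#A$ and $p$.

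The main obstacle I anticipate is controlling the strict inequality in the limit: the value $\partial_{M_f,-}\Ep_n(f)$ could in principle degenerate to $0$ along the sequence even though each term is positive. Handling this cleanly requires replacing $\Ep_n$ by comparable standard forms $\bar E^{(p)}_n$ (Corollary \ref{coro27}) with comparison constants depending only on $\#A$ and $p$, and then exhibiting the positive derivative contribution $p\,c^{(n)}_{x,y}(f(x)-f(y))^{p-1}$ for an edge realizing the effective resistance between $M_f$ and its complement; the lower bound on $c^{(n)}_{x,y}$ and the fixed gap $\min_{x\in M_f,\,y\notin M_f}(f(x)-f(y))>0$ (which depends only on $f$, not on $n$) then yield the uniform positive lower bound. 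A secondary technical point is justifying that $\partial_{X,-}$ of the trace form equals the corresponding one-sided derivative of the lifted standard form along the harmonic extension — this is the standard ``differentiating under a minimum'' argument, valid because the minimizer is unique (Lemma \ref{lemmaa2}) and depends continuously on the perturbation parameter $t$ (again via Lemma \ref{lemmaa1} applied to the convergence of near-minimizers).
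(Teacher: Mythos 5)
Your reduction to standard forms is fine as far as it goes (the explicit computation for $\Ep\in\mathcal{S}_p(A)$ and the connectivity argument for strictness are correct), but the passage from $\mathcal{S}_p$/$\mathcal{Q}'_p$ to $\mathcal{Q}_p$ contains two genuine gaps. First, the shortcut you fall back on — that $\Ep_n$ is comparable to a standard form $\bar E^{(p)}_n$ via Corollary \ref{coro27}, ``which keeps the sign of the derivative'' — is false: the two-sided bound $C^{-1}\bar E^{(p)}_n\leq \Ep_n\leq C\bar E^{(p)}_n$ constrains values, not directional derivatives, and two comparable convex functionals can have one-sided derivatives of opposite signs at the same point. (Your first instinct, lifting to $B_n$ and differentiating the trace at the unique $p$-harmonic extension via an envelope argument, is the workable route for each fixed $n$; for the weak inequality you only need $X\subset M_{f^{(n)}}$, which follows from the maximum principle, not the connectivity condition you worry about.) Second, and more fatally, the limiting step $n\to\infty$ does not work as written: for a convex $u$ and $t<0$ one has $\frac{u(t)-u(0)}{t}\leq \partial_-u(0)$, the reverse of the inequality you use; and, independently of that sign error, positivity of left derivatives is simply not stable under $\|\Ep_n-\Ep\|_{\widetilde{\mathcal M}_p(A)}\to 0$. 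Take $u_n(t)=|t+\tfrac1n|$: each $u_n$ is convex with $\partial_-u_n(0)=1$, uniformly bounded below, yet the limit $|t|$ has $\partial_-(0)=-1$. So neither $\liminf_n\partial_{X,-}\Ep_n(f)\geq 0$ nor the uniform lower bound you propose for the strict case yields anything about $\partial_{X,-}\Ep(f)$: the ``kink'' (the minimizer of $t\mapsto\Ep_n(f+t1_X)$) can slide to $0$ in the limit, which is exactly the degenerate scenario the lemma must rule out.

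The paper's proof avoids approximation entirely at this stage and works with the limit form itself: since $\Ep\in\mathcal{Q}_p(A)$ is strictly convex, $u_X(t)=\Ep(f+t\cdot 1_X)$ has a unique minimizer $s$, the Markov property gives $s\leq 0$ when $X\subset M_f$ (and $s<0$ when $X=M_f\neq A$, which yields the strict inequality), and the only problematic case $s=0$ is handled by adjoining an auxiliary point $o$ at height $c-\delta$ attached to $X$: the comparison principle (Lemma \ref{lemmaa2}) forces the minimizer of the augmented functional strictly below $0$, whence $\partial_{X,-}\Ep(f)+p\delta^{p-1}>0$, and letting $\delta\to 0$ gives $\partial_{X,-}\Ep(f)\geq 0$. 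To repair your argument you would need some device of this kind that controls the location of the minimizer for $\Ep$ itself, not just for the approximants; as proposed, the final limiting step does not close.
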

\begin{proof}
	Define $u_X(t)=\Ep(f+t\cdot 1_X)$, then $u_X$ is a convex function with non-negative real values, so $u_X$ has left derivatives $\frac{d}{dt-}u_X(t)$ everywhere. Hence $\partial_{X,-}\Ep(f)$ is well defined.
	
	We are only interested in the case that $X\neq A$, since otherwise the lemma is trivial. In this case, since $\Ep\in \mathcal{Q}_p(A)$ is strictly convex, $u_X$ is also strictly convex. Hence there is a unique $s$ such that
	\[u_X(s)=\min\{u_X(t):t\in \mathbb{R}\}.\]
	Clearly, if $X\subset M_f$, then $s\leq 0$ by the Markov property. In addition, by the strict convexity, we know that $\frac{d}{dt-}u_X(t)>0$ if $t>s$. In particular, this holds when $X=M_f\neq A$. To deal with the case $t=s$, we introduce the following construction. \vspace{0.2cm}
	
	\noindent\textit{Construction.} Let $A_*=(A/X)\cup \{o\}$, where $A/X:\ ={(A\setminus X)}\cup\{X\}$ (we view $X$ as a single point in $A_*$) and $o$ is a new point outside $A$. For each $g_*\in l(A_*)$, we can define $g\in l(A)$ by
	\[g(x)=\begin{cases}
		g_*(x),\text{ if }x\in A\setminus X,\\
		g_*(X), \text{ if }x\in X.
	\end{cases}\]
    Then, we define
	\[\Ep_*(g_*)=\Ep(g)+\big|g_*(X)-g_*(o)\big|^p,\quad\forall g_*\in l(A_*).\]
	It is easy to see that $\Ep_*\in \mathcal{Q}_p(A_*)$.  \vspace{0.2cm}
	
	Returning to our basic settings, one can see that $f|_X\equiv c$, where $c=\max_{y\in A}f(y)$. So, for each $\delta>0$, one can define $f_{*,\delta}\in l(A_*)$ by
	\[f_{*,\delta}|_{A\setminus X}=f|_{A\setminus X},\quad f_{*,\delta}(X)=c,\quad\text{and}\quad f_{*,\delta}(o)=c-\delta.\]

	Define $u_{X,*,\delta}(t)=\Ep_*(f_{*,\delta}+t\cdot 1_{X})$. Let $s_\delta$ be the unique value such that $u_{X,*,\delta}(s_\delta)=\min\{u_{X,*,\delta}(t):t\in \mathbb{R}\}$. Now we claim that $s_\delta< 0$. Indeed, let $g\in l(A_*)$ be such that $g|_{A\setminus X}=c$, $g(o)=c-\delta$, and $g$ be $p$-harmonic at $X$ w.r.t. $\Ep_*$. It is clear that $g(X)<c$ (by a same argument as the proof of Lemma \ref{lemma54}). Now we compare $f_{*,\delta}+s_\delta\cdot 1_{X}$ with $g$. By Lemma \ref{lemmaa2}, we see that \[c+s_\delta=f_{*,\delta}(X)+s_\delta\cdot 1_{X}(X)\leq g(X)<c.\]
	
	Finally, by the same argument as before, we see
	\[0<\frac{d}{dt-}u_{X,*,\delta}(0)=\partial_{X,-}\Ep_*(f_{*,\delta})=\partial_{X,-}\Ep(f)+p\delta^{p-1}.\]
	Since $\delta$ can be arbitrarily small, we have $\partial_{X,-}\Ep(f)\geq 0$ as desired.
\end{proof}

\section*{Acknowledgments}
We received the sad news that Prof. Ka-Sing Lau and Prof. Robert S. Strichartz passed away, when we were finishing the paper. We are grateful for their long time support, and would like to dedicate this paper to the memory of them.

\bibliographystyle{amsplain}
	
\end{document}